 \newcommand{\fl}{\mathfrak{fl}}
\newcommand{\as}{\operatorname{a}}
\newcommand{\asc}{\operatorname{asc}}
 \newcommand{\C}{\mathbb{C}}
\newcommand{\N}{\mathbb{N}}
 \newcommand{\Z}{\mathbb{Z}}
 \newcommand{\Sym}{\mathfrak{S}}
\newcommand{\core}{\mathscr{C}}
\newcommand{\Fr}{\mathfrak F}
\newcommand{\Ar}{\mathcal A}
\newcommand{\Le}{\mathcal L}
\newcommand{\Arv}{\mathfrak A}
\newcommand{\Lev}{\mathfrak L}
\newdimen\shadedBaseline\shadedBaseline=-4mm
\newcommand\ShadedTableau[2][\relax]{%
  \begin{tikzpicture}[scale=0.4,draw/.append style={thick,black},baseline=\shadedBaseline]
    \ifx\relax#1\relax%
    \else 
      \foreach\bx in {#1} { \filldraw[blue!20]\bx+(-.5,-.5)rectangle++(.5,.5); }
    \fi
    \tableauRow=0
    \foreach \Row in {#2} {
       \tableauCol=1
       \foreach\k in \Row {
          \draw(\the\tableauCol,\the\tableauRow)+(-.5,-.5)rectangle++(.5,.5);
          \draw(\the\tableauCol,\the\tableauRow)node{\k};
          \global\advance\tableauCol by 1
       }
       \global\advance\tableauRow by -1
    }
  \end{tikzpicture}%
}
\newcommand\diag[3][\relax]{%
  \begin{tikzpicture}[scale=0.4,draw/.append style={thick,black},baseline=\shadedBaseline]
    \ifx\relax#1\relax%
    \else 
      \foreach\bx in {#1} { \filldraw[blue!20,dashed]\bx+(-.5,-.5)rectangle++(.5,.5); }
    \fi
    \tableauRow=0
    \foreach \Row in {#2} {
       \tableauCol=1
       \foreach\k in \Row {
          \draw(\the\tableauCol,\the\tableauRow)+(-.5,-.5)rectangle++(.5,.5);
          \draw(\the\tableauCol,\the\tableauRow)node{\k};
          \global\advance\tableauCol by 1
       }
       \global\advance\tableauRow by -1
    }
    \foreach \x in {#3} {
      \draw[red,dashed](\x+3,-2-\x)+(-.5,-.5)rectangle++(.5,.5);
    }
  \end{tikzpicture}%
}
\newcommand\frob[7][\relax]{%
  \begin{tikzpicture}[scale=0.4,draw/.append style={black},baseline=\shadedBaseline]
    \ifx\relax#1\relax%
    \else 
      \foreach\bx in {#1} { \filldraw[gray!20,dashed]\bx+(-.5,-.5)rectangle++(.5,.5); }
    \fi
    \tableauRow=0
    \foreach \Row in {#2} {
       \tableauCol=1
       \foreach\k in \Row {
          \draw(\the\tableauCol,\the\tableauRow)+(-.5,-.5)rectangle++(.5,.5);
          \draw(\the\tableauCol,\the\tableauRow)node{\k};
          \global\advance\tableauCol by 1
       }
       \global\advance\tableauRow by -1
    }
        \foreach \x in {#3} {
      \draw[gray!80,dashed](\x+3,-2-\x)+(-.5,-.5)rectangle++(.5,.5);
    }
    \foreach\bx in {#4} {
        \draw[thick]\bx+(-.3,0)rectangle++(.3,.0);
    }
    \foreach\bx in {#5} {
        \draw[thick]\bx+(0,.3)rectangle++(0,-.3);
    }
    \foreach\bx in {#6} {
        \draw[thick,densely dotted]\bx+(-.3,0)--++(.3,.0);
    }
    \foreach\bx in {#7} {
        \draw[thick,densely dotted]\bx+(0,.3)--++(0,-.3);
    }

  \end{tikzpicture}%
}
\newtheorem{theorem}{Theorem}[section] 
\newtheorem{lemma}[theorem]{Lemma}     
\newtheorem{corollary}[theorem]{Corollary}
\newtheorem{proposition}[theorem]{Proposition}
\newtheorem{definition}[theorem]{Definition}
\newtheorem{notation}[theorem]{Notation}
\newtheorem{example}[theorem]{Example}
\theoremstyle{definition}
\newtheorem{remark}[theorem]{Remark}
\title[]
{Cores and quotients of partitions 
through the Frobenius symbol}
\author{Olivier Brunat}
\address{Universit\'e Paris-Diderot Paris 7\\ Institut de math\'ematiques de
         Jussieu -- Paris Rive Gauche\\ UFR de math\'e\-matiques\\ Case
7012\\ 75205 Paris Cedex 13\\
         France.}
\email{olivier.brunat@imj-prg.fr}
\author{Rishi Nath}
\address{York College, City University of New York, 
94--20 Guy R. Brewer Blvd. \\
Jamaica, NY 11435\\
USA
}
\email{rnath@york.cuny.edu}
\subjclass[2010]{05A17,\, 11P83,\, 20C30}
\begin{document} 

\begin{abstract} 

The Frobenius symbol was first introduced in 1900 by Frobenius as a way to
encode an integer partition. In 1941, motivated by the modular representation
theory of the symmetric group, Nakayama introduced the idea of a $p$-core
partition, for $p$ prime, using hook removals.  In the following decade,
Robinson, Littlewood, Staal and Farahat codified the $p$-quotient of such
a partition, using variations of the star diagram.  Since the 1970s, the
convention has been to build up the theory of both core and quotients with
the abacus construction, first introduced by G. James.
 
In this paper we return to the earlier point of view. First we show that,
for any positive integer $t$, the $t$-core and $t$-quotient of an integer
partition can be directly obtained from its Frobenius symbol. The argument
also works in the opposite direction: that is, given the Frobenius symbol
of a $t$-core and a $t$-tuple of Frobenius symbols, one can recover the
Frobenius symbol of the corresponding partition. One immediate application
is the calculation of the Durfee number of the associated partition from
the Frobenius symbols of the core and quotient.

In 1991, J. Scopes gathered together the $p$-core partitions into families
to prove that Donovan's conjecture holds for the symmetric groups at the
prime $p$.  We describe, using our methods, the action of the affine Weyl
group $W_p$ of type $A$ on Frobenius symbols, and use this to parametrize
and compute the explicit number of Scopes families. In particular we
enumerate both the infinite and finite Scopes families.

Core partitions have also attracted recent interest in number theory. By
constructing explicit and combinatorial bijections, we revisit some
well-known identities originally obtained using sophisticated methods.

We end with a close study of the relationship between certain hooks in the
quotient and certain hooks in the associated partition.
\end{abstract}
\maketitle

\section{Introduction} 

A \emph{partition} of a nonnegative integer $n$ is a non-increasing
sequence of positive integers, called \emph{parts}, that sum to $n$. The
\emph{size} of a partition $\lambda$ of $n$ is $|\lambda|=n$. We denote by
$\mathcal P$ the set of partitions of integers, and by $\mathcal P_n$ the
one of partitions of $n$.

The \emph{Young diagram} $[\lambda]$ of a partition $\lambda$ is a
collection of boxes arranged in left- and top-aligned rows, so that if
$\lambda=(\lambda_1\geq\lambda_2\geq\ldots\geq\lambda_r)$, then the number
of boxes in the $j$th row (for $1\leq j\leq r$) of $[\lambda]$ is
$\lambda_j$.  Each box of $[\lambda]$ has coordinate $(i,j)$ using matrix
notation.

\begin{example}
\label{ex:example1}
Let $\lambda=(5,5,4,2,1,1)$ be a partition of $n=18$. The Young diagram of
$\lambda$ is 
\medskip
\begin{center} 
\frob[(3,-1)]
    {{\relax,\relax,\relax,\relax,\relax},{\relax,\relax,\relax,\relax,\relax},{\relax,\relax,\relax,\relax},{\relax,\relax},{\relax},{\relax}}
    {}
    {}
    {}
    {}
    {}
\end{center}
\medskip
The colored box has coordinate $(2,3)$.
\bigskip
\end{example}

The \emph{Frobenius symbol}, introduced in 1900 by Frobenius
\cite{frobenius}, is obtained from the Young diagram of $\lambda$ as
follows. For any $1\leq j\leq s$ such that $(j,j)$ are coordinates of a
box of $[\lambda]$, write $a_{s-j}$ for the number of boxes of $[\lambda]$
in the same row and to its right, and $l_{s-j}$ for the ones in the same
column and below. The integers $a_j$ and $l_j$ (for $1\leq j\leq s$) are
respectively called \emph{the arms} and \emph{the legs} of $\lambda$. The
integer $s$ is called \emph{the Durfee number} of $\lambda$, and also
counts the dimensions of the largest square that can fit inside the Young
diagram. The Frobenius symbol of $\lambda$ is denoted by
$$\Fr(\lambda)=(l_{s-1},\ldots,l_0\mid a_0,\ldots,a_{s-1}).$$ 

\begin{example}
\label{ex:example2}
We continue Example~\ref{ex:example1} with $\lambda=(5,5,4,2,1,1)$. The
arms and legs are represented by horizontal and vertical dashes on
$[\lambda]$.
\medskip

\begin{center} 
\frob[(1,0),(2,-1),(3,-2)]
    {{\relax,\relax,\relax,\relax,\relax},{\relax,\relax,\relax,\relax,\relax},{\relax,\relax,\relax,\relax},{\relax,\relax},{\relax},{\relax}}
    {}
    {(2,0),(3,0),(4,0),(5,0),(3,-1),(4,-1),(5,-1),(4,-2)}
    {(1,-1),(1,-2),(1,-3),(1,-4),(1,-5),(2,-2),(2,-3)}
    {}
    {}
\end{center}
\medskip

Note that the box $(3,3)$ has no boxes in the same column and below. The
leg $l_0$ of this box is then $0$. We obtain
$$\Fr(\lambda)=(5,2,0\mid 1,3,4).$$
\end{example}


Let $t$ be a positive integer and $\lambda$ be a partition. The $t$-core
of $\lambda$ was first introduced by Nakayama \cite{Nakayama} in 1941
for $t$ prime using hook removals on Young diagrams. In 1952, Farahat
\cite{Farahat} showed that the star diagram method, introduced by
Robinson \cite{Robinson} in 1947 and furthered by Staal \cite{Staal}
in 1950, was equivalent to the $t$-quotient construction proposed by
Littlewood \cite{Littlewood} in 1951. For $t$ (not necessary a prime),
G. James \cite{james} proposed in 1978 the abacus method to arriving
at the $t$-core and $t$-quotient of an integer partition. One of the
main results of this article is to show that the $t$-core and the
$t$-quotient of $\lambda$ can be obtained directly from the Frobenius
symbol of $\lambda$. Conversely, we will show that the process, albeit
more complicated, also works in the opposite direction: we can recover
the Frobenius symbol of $\lambda$ from the ones of the $t$-core and of
the partitions appearing in the $t$-quotient.

The paper is organized as follows. In Section~\ref{sec:part1}, we recall
some usual concepts on partitions interpreted in term of Frobenius symbol.
In particular, in \S\ref{subsec:frobeniuslabel}, we will introduce a new
labeling of the boxes of the Young diagram of the partition with respect
to the Frobenius symbol. This labeling, called the Frobenius label of the
partition, is helpful to describe its parts, hooks, hooklengths etc, so
that we can read this information directly on the Frobenius symbol.
In~\S\ref{subsec:pointedabacus} we introduce the \emph{pointed abacus}
associated to a partition. In a way, this abacus is a geometric
interpretation of the Frobenius symbol, and our preferred representative
of the equivalence class of abaci associated to the partition. Note that
the approach proposed in the present work was also pursued by Olsson
in~\cite{OlssonFrob}, who used it to investigate the bar-core and
bar-quotient of a bar-partition.

In Section~\ref{sec:part2}, we discuss the $t$-core and $t$-quotient of a
integer partition. In order to make the connection with the Frobenius
symbol, we first introduce the \emph{pointed $t$-abacus} of a partition.
This gives a $t$-tuple of abaci, each of which, are {\it not}, in general,
the pointed abacus of some partition. We then introduce the
\emph{characteristic vector} of the partition, that appears naturally as a
correction factor to obtain $t$ pointed abaci of partitions (corresponding
to a $t$-tuple of partitions giving the $t$-quotient of the partition).
The characteristic vector is itself a $t$-tuple of integers whose entries
sum to $0$. We note that Garvan, Kim and Stanton~\cite{GarvanKimStanton},
using a star diagram-type approach, first noticed that $t$-core partitions
can be labeled by characteristic vectors. However, their construction is
not explicit, and the process introduced in the present paper gives a
natural bijection between the set of characteristic vectors and the one of
$t$-core partitions, which is of independent interest.  In
\S\ref{subsec:main} we describe how to recover the Frobenius symbol of a
partition from those of its $t$-cores and the partitions appearing in its
$t$-quotient. As a consequence, in \S\ref{subsec:durfee}, we calculate the
Durfee square of the partition from the ones of its $t$-core and
$t$-quotient. For this we will need some additional information, which we
can also obtain from the Frobenius symbol of the partition. 

In Section~\ref{sec:part3}, we give some applications in representation
theory. First, for any integer $t\geq 2$ and $h\in\Z$ coprime to $t$, we
describe the level $k$ action of the affine Weyl group $W_t$ of type $A$
on the Frobenius symbol of an integer partition. The action of $W_t$ on
the set of partitions was initially studied by Lascoux~\cite{Lascoux} for
the level $1$ and generalized for height level $k$ by Fayers
in~\cite{Fayers}. We then connect this to the process developed by Scopes
in $1991$ while proving Donovan's conjecture for the symmetric
groups~\cite{scopes}; that is, for any prime number $p$, there are a
finite number of $p$-blocks of symmetric groups with a given defect group
up to Morita equivalence. To do this, Scopes endows the set of $p$-blocks
of symmetric groups with an equivalence relation so that when two
$p$-blocks are in the same class, they are Morita equivalent. In the
following, the equivalence classes for this relation will be called Scopes
families. On the other hand, Scopes associates to each $p$-block of
symmetric groups a $p$-abacus, and shows that we can read on it by an
algorithmic process whenever two $p$-blocks lie in the same Scopes family.
Scopes then proves combinatorially that there are a finite number of
families by giving an explicit bound for this number.
In~\S\ref{subsec:scopes}, using the level $1$ action of $W_p$ above
described, we give alternate description of the Scopes's process. As a
consequence, we explicitly count the number of Scopes families. We also
compute the number of Scopes families that are infinite, as well as those
that are finite.

In Section~\ref{sec:part4}, we give some number-theoretic consequences of
our formulation for $t$-core partitions.  Such partitions appear in the
context of Ramanujan's congruences identities~\cite{Garvan}.
In~\cite[Corollary 2]{GarvanKimStanton}, it is proved, using generating
functions, that the number of self-conjugate $5$-core partitions of $n$ is
the same as that of $2n+1$ and of $5n+4$.  They also show that the numbers
of self-conjugate $7$-core partitions of $n$ is equal to that of $6n+4$.
In~\cite[Theorem 5.3]{BaruahBerndt}, Baruah and Berndt prove, using
modular forms, that the number of $3$-core partitions of $n$ and of $4n+1$
are equal.

Such relations are generalized in~\cite{HirschhornSellers} for $3$-core
partitions, and in~\cite{BaruahNath} for self-conjugate $5$-core and
$7$-core partitions. Note also that in~\cite[Theorem 4.1]{BaruahSarmah},
it is proved that the number of self-conjugate $9$-core partitions of $2n$
and that of $8n+10$ are the same.

Here we give explicit and combinatorial bijections that explain the
original equalities and their generalizations. 

Finally, in Section~\ref{sec:part5}, we discuss connections between
hooklengths of a partition and those of its $t$-quotient. In particular,
recall that the set of diagonal hooks of self-conjugate partitions arises
in the study of the representation theory of the alternating groups, since
it describes the non integral values of the character table of these
groups. In \S\ref{subsec:diagsym}, we will show that the diagonal hooks of
a self-conjugate partition can be recovered from its characteristic vector
and the Frobenius symbol of the partitions appearing in its $t$-quotient.
Obtaining such $t$-information is of independent interest. For example, a
crucial argument in the authors' previous work verifying the Navarro
conjecture for the alternating groups~\cite{BrNa} involved obtaining the
diagonal hooks from the $t$-core and $t$-quotient of self-conjugate
partitions, in order to describe the action on the character table of
alternating groups of some suitable Galois automorphisms that act on
$t'$-roots of unity by a power of $t$.  To finish, we will also describe
in \S\ref{subsec:bij} an explicit bijection between the set of boxes of
the partitions appearing in $t$-quotient of $\lambda$, and the boxes of
$\lambda$ whose hooklengths are divisible by $t$. This makes precise a
well-known correspondence between these two sets~\cite{MorrisOlsson}.

\section{The Frobenius symbol and the pointed abacus}
\label{sec:part1}

Throughout this section $\lambda$ denotes a partition of an integer $n$,
with Durfee number $s$ and Frobenius symbol
$\Fr(\lambda)=(l_{s-1},\ldots,l_0\mid a_0,\ldots,a_{s-1})$.  Note that, if
$n=0$ and $\lambda=\emptyset$, then we write $\Fr(\emptyset)=(\emptyset
\mid \emptyset)$ by convention.

\subsection{Arms, legs, coarms and colegs}
\label{subsec:armlegcoarmcoleg}

We denote by $\Ar^+(\lambda)=\{a_0,\ldots,a_{s-1}\}$ and
$\Le^+(\lambda)=\{l_0,\ldots,l_{s-1}\}$ the arm and leg sets of $\lambda$.

\begin{remark}
\label{rk:armlegbijpart}
Arms and legs sequences are strictly increasing and completely determine
$\lambda$. In particular, these sequences (and Frobenius symbol) can be
recovered from $\Ar^+(\lambda)$ and $\Le^+(\lambda)$, and we have
\begin{equation}
\label{eq:sizebyarmleg}
|\lambda|=s+\sum_{a\in\Ar^+(\lambda)}a+\sum_{l\in\Le^+(\lambda)}l.
\end{equation}
Conversely, any two finite sets $A$ and $L$ of nonnegative, strictly
increasing integers with the same cardinality $s$ can be interpreted as
the arms and legs sets of a partition.
\end{remark}

Now, we define the sets of \emph{coarms} $\Ar^-(\lambda)$ and of
\emph{colegs} $\Le^-(\lambda)$ associated to $\lambda$ by setting
$$\Ar^-(\lambda)=\{0,1,\ldots,a_{s-1}\}\backslash
\Le^+(\lambda)\quad\text{and}\quad\Le^-(\lambda)=
\{0,1,\ldots,l_{s-1}\}\backslash\Ar^+(\lambda),$$

\begin{remark}
\label{rk:abusdenotation}
Strictly speaking, arms, legs, coarms and colegs are non-negative
integers. Nevertheless, sometime in the following, we abuse the language
by confusing these integers and the corresponding boxes on the Young
diagram (see the picture of Example~\ref{ex:example1}). We can then speak
about the horizontal or vertical ``line'' that supports them.
\end{remark}

A diagonal box is a box of the positive quarter plane with coordinate
$(j,j)$ for some positive integer $j$, and we denote by $\mathfrak b_j$
the corresponding box in the following. We write $\mathfrak D^+(\lambda)$
for the set of diagonal boxes of $[\lambda]$.

To any arm, leg, coarm and coleg $x$ of $\lambda$, we associate a diagonal
box $d(x)$ by intersecting the diagonal and the line that supports $x$.
Note that if $x\in\Ar^+(\lambda)\cup\Le^+(\lambda)$, then
$d(x)\in\mathfrak D^+(\lambda)$. We denote by $\mathfrak D^-(\lambda)$ the
set of $d(x)$ for $x\in\Ar^-(\lambda)\cup\Le^-(\lambda)$. In particular,
for a coarm $a$ of $\lambda$, the number of horizontal boxes between the
Young diagram of $\lambda$ and $d(a)$ is $a$. Similarly, when $l$ is a
coleg, the number of vertical boxes is $l$. Note here what is meant by $x$
its set of boxes, not simply the integer; see
Remark~\ref{rk:abusdenotation}.

\begin{example}
\label{ex:example3}
For $\lambda=(5,5,4,2,1,1)$, we have 
$$\Ar^-(\lambda)=\{1,3,4\}\quad \text{and}\quad
\Le^-(\lambda)=\{0,2\}.$$ 
To each row outside of the Durfee square there corresponds a coarm, which
counts the horizontal boxes between the end of the row in the diagram and
the diagonal. Similarly for each column beyond the Durfee square, there is
a colegs which counts the number of vertical boxes between the end of the
column in the diagram and the diagonal.
\medskip

\begin{center} 
\frob[]
    {{\relax,\relax,\relax,\relax,\relax},{\relax,\relax,\relax,\relax,\relax},{\relax,\relax,\relax,\relax},{\relax,\relax},{\relax},{\relax}}
    {1,2,3}
    {}
    {}
    {(3,-3),(3,-4),(3,-5),(2,-5),(4,-5),(5,-5),(4,-4),(2,-4)}
    {(5,-2),(5,-3)}
\end{center}
\medskip

We have
$$\mathfrak D^-(\lambda)=\{\mathfrak b_4,\,\mathfrak
b_5,\,\mathfrak b_6\}.$$
Again, we notice that there are no empty boxes above $\mathfrak b_4$ and
the diagram. Hence, $0$ is a coleg of $\lambda$. We have $d(3)=\mathfrak
b_5=d(2)$. Note that $\mathfrak b_6$ is associated to the coarm $4$ but
has no associated coleg. In particular, there is no reason that the
numbers of coarms and colegs of $\lambda$ coincide.

We return again to Remark~\ref{rk:abusdenotation}.  The notation $d(3)$
can be confusing, because $3$ can be an arm, leg, coarm or coleg. It then
depends on the context, and again, $3$ can only be interpreted as the
three associated boxes of the Young diagram.  When $\lambda=(5,5,4,1,1,1)$
we have both $d(2)=\mathfrak b_4$ and $d(2)=\mathfrak b_5$. In the first
case, $2$ is a coarm, and in the second case, it is a coleg.
\end{example}

\subsection{Frobenius label}
\label{subsec:frobeniuslabel}

Having defined the sets of arms, legs, coarms and colegs of $\lambda$, we
can now introduce a new parametrization, called the \emph{Frobenius label}
of $\lambda$, of the boxes of $[\lambda]$ with respect to the diagonal
boxes. This section is devoted to the description of this parametrization.

First, we remark that to any two diagonal boxes $d$ and $d'$, we can
associate two boxes $x^+$ and $x^-$ that are the two other corners of the
square with diagonal $[dd']$. Note that when $d=d'$, the square has only
one box, and the associated box is $x^+=x^-=d$.
\bigskip

\begin{center}
\begin{tikzpicture}[scale=0.4,draw/.append style={black},baseline=\shadedBaseline]
           \foreach \x in {-1,0,1,2,3,4,5} {
      \draw[gray!80,dashed](\x+3,-2-\x)+(-.5,-.5)rectangle++(.5,.5);
      }
      \filldraw[gray!20,dashed](7,-2)+(-.5,-.5)rectangle++(.5,.5);
      \filldraw[gray!20,dashed](3,-6)+(-.5,-.5)rectangle++(.5,.5);
      \draw(3,-2)node{$d$}; 
      \draw(7,-2)node{$x^+$};
      \draw(3,-6)node{$x^-$};
      \draw(7,-6)node{$d'$};
      \draw[dashed](3,-3)+(0,0.6)rectangle++(0,-2.7);
      \draw[dashed](7,-2.5)+(0,0.3)rectangle++(0,-3);
      \draw[dashed](4,-2)+(-.6,0)rectangle++(3,.0);
      \draw[dashed](4,-6)+(-.4,0)rectangle++(3,.0);
\end{tikzpicture}
\end{center}
\bigskip
Let $j,\,j'$ be to positive integers. We set $d=\mathfrak b_j$ and
$d'=\mathfrak b_{j'}$. When $j\leq j'$, we write $$d\leq d'.$$ Assume
$d\leq d'$. We then denote $x^-_{dd'}$ and $x^+_{dd'}$ for the boxes
corresponding to $d$ and $d'$ (as in the above picture).
\bigskip

Let $\lambda$ be a partition. We write $\Ar^{\pm}(\lambda)$ and
$\Le^{\pm}(\lambda)$ for the sets of arms, coarms, legs and colegs of
$\lambda$ as above, and we set $$\mathfrak D(\lambda)=\mathfrak
D^+(\lambda)\cup\mathfrak D^-(\lambda),$$ where $\mathfrak
D^{\pm}(\lambda)$ are defined in \S\ref{subsec:armlegcoarmcoleg}. 

Let $d\in\mathfrak D(\lambda)$. If $d\in\mathfrak D(\lambda)^+$, then we denote
by $a(d)$ and $l(d)$ the arm and the leg of $\lambda$ attached to $d$. If
$d\in\mathfrak D(\lambda)^-$, then there is exactly one coarm $a(d)$ or
one coleg $l(d)$ of $\lambda$ that corresponds to $d$. 

\begin{lemma} 
\label{lem:justificationfroblabel}
Let $\lambda$ be a partition, $d\in\mathfrak D(\lambda)^+$ and
$d'\in\mathfrak D(\lambda)^-$.
\begin{enumerate}[(i)]
\item Assume $l(d')\in\Le^-(\lambda)$. Then the box $x^+_{dd'}$ belongs to
$[\lambda]$ if and only if $a(d)>l(d')$.
\item Assume $a(d')\in\Ar^-(\lambda)$. Then the box $x^-_{dd'}$ belongs to
$[\lambda]$ if and only if $l(d)>a(d')$.
\end{enumerate}
\end{lemma}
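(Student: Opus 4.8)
The plan is to translate the geometric statement into coordinates and reduce both parts to a single symmetric inequality. Write $d=\mathfrak b_j$ and $d'=\mathfrak b_{j'}$. Since $d\in\mathfrak D(\lambda)^+$ we have $1\le j\le s$, so $(j,j)\in[\lambda]$, while $d'\in\mathfrak D(\lambda)^-$ forces $j'>s$ (a coarm comes from a row, and a coleg from a column, lying beyond the Durfee square). In particular $j<j'$, so $d\le d'$ and the corner boxes are well defined, with $x^+_{dd'}=(j,j')$ lying strictly above the diagonal and $x^-_{dd'}=(j',j)$ strictly below it. I will use throughout that the arm and leg attached to $d$ are $a(d)=\lambda_j-j$ and $l(d)=\lambda'_j-j$, where $\lambda'$ denotes the conjugate partition, and that a box sits in $[\lambda]$ exactly when its row is long enough, equivalently when its column is tall enough. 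I would first settle (i) and then obtain (ii) by conjugation.

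For (i), the hypothesis $l(d')\in\Le^-(\lambda)$ means $d'$ is attached to the column $j'$, and counting the empty boxes of that column between its foot and $\mathfrak b_{j'}$ gives $l(d')=j'-\lambda'_{j'}-1$, i.e. $\lambda'_{j'}=j'-l(d')-1$. Now $x^+_{dd'}=(j,j')$ lies in $[\lambda]$ precisely when $\lambda_j\ge j'$. The key point is the identity
$$a(d)>l(d')\iff \lambda_j+\lambda'_{j'}\ge j+j',$$
which is immediate after substituting $a(d)=\lambda_j-j$ and $l(d')=j'-\lambda'_{j'}-1$ and rearranging over $\Z$. It then remains to check that $\lambda_j\ge j'$ is equivalent to $\lambda_j+\lambda'_{j'}\ge j+j'$, and this is where the partition structure enters: since $\lambda_j\ge j'\iff\lambda'_{j'}\ge j$, either both inequalities hold, forcing the sum to be $\ge j+j'$, or both fail, giving $\lambda_j\le j'-1$ and $\lambda'_{j'}\le j-1$ and hence a sum $\le j+j'-2$. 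Thus the sum reaches $j+j'$ exactly when $x^+_{dd'}\in[\lambda]$, and chaining the two equivalences proves (i).

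Part (ii) is the transpose of (i). Conjugation $(r,c)\mapsto(c,r)$ carries $[\lambda]$ to $[\lambda']$, fixes each diagonal box $\mathfrak b_j$, interchanges $x^+_{dd'}$ with $x^-_{dd'}$, and swaps arms with legs and coarms with colegs; under it the hypothesis $a(d')\in\Ar^-(\lambda)$ becomes the coleg hypothesis of (i) applied to $\lambda'$, the leg $l(d)$ becomes the arm of $d$ in $\lambda'$, and $x^-_{dd'}\in[\lambda]$ becomes $x^+_{dd'}\in[\lambda']$. So (ii) follows by applying (i) to $\lambda'$; alternatively one repeats the computation directly with $a(d')=j'-\lambda_{j'}-1$ and the mirror identity $l(d)>a(d')\iff\lambda'_j+\lambda_{j'}\ge j+j'$. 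I expect the only genuine obstacle to be bookkeeping rather than mathematics: one must keep the matrix convention straight so that $x^+$ and $x^-$ land on the correct sides of the diagonal, and track the off-by-one $-1$ in the coleg/coarm count, since it is exactly this $-1$ that converts the non-strict inequality $\lambda_j+\lambda'_{j'}\ge j+j'$ into the strict threshold $a(d)>l(d')$.
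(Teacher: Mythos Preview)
Your proof is correct. The approach differs from the paper's in presentation rather than in depth: the paper gives a two-picture argument, simply drawing the arm $a(d)$ along row $j$ and the coleg $l(d')$ along column $j'$ and asserting from the diagram that $x^+_{dd'}$ lies in $[\lambda]$ exactly when the arm reaches far enough; part (ii) is declared ``similar''. You instead pass to explicit coordinates, writing $a(d)=\lambda_j-j$ and $l(d')=j'-\lambda'_{j'}-1$, and reduce the claim to the symmetric inequality $\lambda_j+\lambda'_{j'}\ge j+j'$, which you then settle via the standard conjugate-partition equivalence $\lambda_j\ge j'\iff\lambda'_{j'}\ge j$. Your route makes transparent exactly where the $-1$ in the coleg count turns the nonstrict sum inequality into the strict comparison $a(d)>l(d')$, a point the paper's pictures leave implicit (indeed, the printed text of the paper's proof contains a sign slip, writing ``$a(d)<l(d')$'' in the second case where ``$a(d)>l(d')$'' is meant). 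The conjugation argument you give for (ii) is the same as the paper's ``the proof of (ii) is similar''.
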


\begin{proof}
We only prove (i). The proof of (ii) is similar. Consider
\begin{center}
\begin{tabular}{lcr}
\begin{tikzpicture}[scale=0.4,draw/.append style={black},baseline=\shadedBaseline]
      \foreach \x in {-1,0,1,2,3,4,5} {
      \draw[gray!80,dashed](\x+3,-2-\x)+(-.5,-.5)rectangle++(.5,.5);
      }
      \filldraw[gray!30,dashed](8,-1)+(-.5,-.5)rectangle++(.5,.5);
      \draw[gray,dashed](8,-1)+(-.5,-.5)rectangle++(.5,.5);

      \draw(2,-1)node{$d$}; 
      \draw(8,-7)node{$d'$};
      \draw[latex-latex,dashed] (8,-1.5)--(8,-6.5);
      \draw(5,0.5)node{$l(d')$};
      \draw(4.5,-2)node{$a(d)$};
      \draw(9.2,-4)node{$l(d')$};
      \draw[latex-latex,dashed] (2.5,0)--(7.5,0);
      \draw[latex-latex,dashed] (2.5,-1)--(6.5,-1);
      \draw[gray](3,-1)+(-.5,-.5)rectangle++(3.5,.5);
\end{tikzpicture}
&\hspace{2cm}
&
 \begin{tikzpicture}[scale=0.4,draw/.append style={black},baseline=\shadedBaseline]
         \foreach \x in {-1,0,1,2,3,4,5} {
       \draw[gray!80,dashed](\x+3,-2-\x)+(-.5,-.5)rectangle++(.5,.5);
       }
       \draw[fill,gray!30,dashed](8,-1)+(-.5,-.5)rectangle++(.5,.5);
       \draw[gray,dashed](8,-1)+(-.5,-.5)rectangle++(.5,.5);
       \draw(2,-1)node{$d$}; 
       \draw(8,-7)node{$d'$};
       \draw[latex-latex,dashed] (8,-1.5)--(8,-6.5);
       \draw(5,0.5)node{$l(d')$};
       \draw(6.5,-2)node{$a(d)$};
       \draw(9.2,-4)node{$l(d')$};
       \draw[latex-latex,dashed] (2.5,0)--(7.5,0);
       \draw[latex-latex,dashed] (2.5,-1)--(10.5,-1);
       \draw[gray](3,-1)+(-.5,-.5)rectangle++(7.5,.5);
 \end{tikzpicture}
\end{tabular}
\end{center}

The picture on the left shows that if $l(d')\geq a(d)$, then the box
$x^+_{dd'}$ does not belong to $[\lambda]$.  In the picture on the right,
we see that if $a(d)<l(d')$ then $x^+_{dd'}$ belongs to $[\lambda]$, as
required.
\end{proof}

\begin{proposition}
\label{prop:froblabelprop}
Let $\lambda$ be a partition.
\begin{enumerate}[(i)]
\item Let $l\in \Le^+(\lambda)$ and $a\in\Ar^+(\lambda)$. Then there exist
$d$ and $d'$ in $\mathfrak D^+(\lambda)$ such that $a(d)=a$ and $l(d')=l$,
and we associate to $(a,l)$ the box $x^+_{dd'}$ if $d\leq d'$ and
$x^-_{dd'}$ if $d\geq d'$. 
\item Let $l\in \Le^-(\lambda)$ and $a\in\Ar^+(\lambda)$ be such that
$a>l$. Then there exist $d\in\mathfrak D^+(\lambda)$ and $d'\in\mathfrak
D^(\lambda)$ such that $d\leq d'$, $a=a(d)$ and $l=l(d')$, and we
associate to $(a,l)$ the box $x^+_{dd'}$. 
\item Let $l\in \Le^+(\lambda)$ and $a\in\Ar^-(\lambda)$ be such that
$l>a$. Then there exist $d\in\mathfrak D^+(\lambda)$ and $d'\in\mathfrak
D^(\lambda)$ such that $d\leq d'$, $l=l(d)$ and $a=a(d')$, and we
associate to $(a,l)$ the box $x^-_{dd'}$. 
\end{enumerate}
Furthermore, each box of $\lambda$ is labeled by a unique pair $(a,l)$ as
in (i), (ii) or (iii).
\end{proposition}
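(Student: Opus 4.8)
The plan is to construct the inverse of the proposed labelling explicitly, region by region, and then read off the bijection. To a box $x$ of $[\lambda]$ with matrix coordinates $(i,j)$ I would attach the two diagonal boxes $\mathfrak b_i$ and $\mathfrak b_j$ sitting on the same row and the same column as $x$; these are the boxes $d,d'$ of the statement. Since $x\in[\lambda]$, the diagonal box on the ``short'' side of $x$ (namely $\mathfrak b_i$ when $i\le j$, and $\mathfrak b_j$ when $j\le i$) automatically lies in $[\lambda]$ and hence in $\mathfrak D^+(\lambda)$, because e.g. $i\le j$ forces $\lambda_i\ge j\ge i$, so $i\le s$. The other diagonal box may lie in $\mathfrak D^+(\lambda)$ or in $\mathfrak D^-(\lambda)$, and this dichotomy is exactly what separates item (i) from items (ii) and (iii). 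In this way $x$ receives a pair $(a,l)$, where $a=a(\mathfrak b_i)$ is the arm or coarm supported by row $i$ and $l=l(\mathfrak b_j)$ is the leg or coleg supported by column $j$.

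First I would partition $[\lambda]$ into three regions according to the position of $i,j$ relative to the Durfee number $s$: the Durfee square $R_1=\{(i,j):i,j\le s\}$, the region $R_2=\{(i,j)\in[\lambda]:i\le s<j\}$ to its right, and the region $R_3=\{(i,j)\in[\lambda]:j\le s<i\}$ below it. The essential point is that no box of $[\lambda]$ can have both $i>s$ and $j>s$, since such a box would enlarge the Durfee square; hence $R_1,R_2,R_3$ are disjoint and cover $[\lambda]$.

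Then I would match these three regions with the three items of the statement. For $R_1$: because arms (resp.\ legs) are strictly increasing, the maps $p\mapsto a(\mathfrak b_p)$ and $q\mapsto l(\mathfrak b_q)$ are bijections from $\{1,\dots,s\}$ onto $\Ar^+(\lambda)$ and onto $\Le^+(\lambda)$; composing, $(a,l)\mapsto x^{\pm}_{dd'}=(p,q)$ is a bijection from $\Ar^+(\lambda)\times\Le^+(\lambda)$ onto $R_1$, the only thing to verify being that every $(p,q)$ with $p,q\le s$ lies in $[\lambda]$, which is immediate since it belongs to the (full) Durfee square. For $R_2$ the row side gives $\mathfrak b_i\in\mathfrak D^+(\lambda)$ with arm $a$, while the column side gives a coleg $l\in\Le^-(\lambda)$; Lemma~\ref{lem:justificationfroblabel}(i) then states that $x^+_{dd'}=(i,j)\in[\lambda]$ precisely when $a(d)>l(d')$, i.e.\ $a>l$, which is exactly the constraint of (ii). The region $R_3$ is handled symmetrically through Lemma~\ref{lem:justificationfroblabel}(ii), matching (iii).

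Finally I would assemble the bijection: the box $\mapsto$ pair assignment is single-valued by construction, its three target sets of pairs are disjoint (distinguished by whether $l$ is a leg or a coleg and whether $a$ is an arm or a coarm), and the region-by-region matchings above show it is inverse to the pair $\mapsto$ box assignment of the statement; uniqueness of the label then follows at once. I expect the main difficulty to be bookkeeping rather than conceptual. One must confirm that the pair $\mapsto$ box assignment genuinely lands in $[\lambda]$ (automatic on $R_1$, and governed by Lemma~\ref{lem:justificationfroblabel} on $R_2,R_3$), and one must be careful that a single integer can be simultaneously an arm and a coarm, since $\Ar^+(\lambda)$ and $\Ar^-(\lambda)$ may coincide as sets of integers (as already happens in Example~\ref{ex:example3}). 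Consequently the bijectivity of the assignment ``value $\mapsto$ its diagonal box'' must be invoked separately on $\Ar^+(\lambda)$ and on $\Ar^-(\lambda)$ (and likewise for legs and colegs), with the type of the pair dictating which interpretation to use.
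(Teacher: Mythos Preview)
Your proposal is correct and follows essentially the same route as the paper: the paper's proof is the single sentence ``This is a consequence of Lemma~\ref{lem:justificationfroblabel}.'' You have simply unpacked that consequence, partitioning $[\lambda]$ into the Durfee square and the two outer regions and invoking Lemma~\ref{lem:justificationfroblabel} exactly where it is needed, so there is no substantive difference in strategy.
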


\begin{proof}
This is a consequence of Lemma~\ref{lem:justificationfroblabel}. 
\end{proof}

\begin{definition}
\label{def:nombdesboxes}
The boxes obtained in (i) of Proposition~\ref{prop:froblabelprop} are
called the \emph{Durfee boxes} of $\lambda$. The ones obtained in (ii) and
(iii) are respectively the \emph{Arm-Coleg boxes} and the \emph{Leg-Coarm
boxes} of $\lambda$. A pair $(a,l)$ satifying the conditions of (ii) or
(iii) is called an \emph{admissible pair}.
\end{definition}

\begin{notation}
For a box $b$ of $\lambda$ with corresponding pair $(a,l)$ as in
Proposition~\ref{prop:froblabelprop}, we write 
$$\fl(b)=
\begin{cases}
(a,l)_{++}&\text{if $b$ is a Durfee box},\\
(a,l)_{+-}&\text{if $b$ is an Arm-Coleg box},\\
(a,l)_{-+}&\text{if $b$ is a Leg-Coarm box}.
\end{cases}
$$
Note that the notation $(a,l)_{\delta\nu}$ directly indicates that
$a\in\Ar^\delta(\lambda)$ and $l\in\Le^\nu(\lambda)$, and in the case that
$\delta\neq\nu$, the pair $(a,l)$ is admissible. For any box $b$ of
$\lambda$, $\fl(b)$ is called the \emph{Frobenius label} of $b$, and the
labeling of the boxes of $[\lambda]$ obtained in this way is \emph{the
Frobenius label} of $\lambda$.
\end{notation}

\begin{remark}
\label{rk:froblabelparam}
The Durfee boxes of $\lambda$ are labeled by
$\Ar^+(\lambda)\times\Le^+(\lambda)$. There are $s^2$ such boxes, where
$s$ is the Durfee number of $\lambda$. The Arm-Coleg boxes and the
Leg-Coarm boxes of $\lambda$ are respectively labeled by
$$ \{(a,l)_{+-}\in \Ar^+(\lambda)\times
\Le^-(\lambda)\mid a>l)\}\quad\text{and}\quad 
\{(a,l)_{-+}\in\Ar^-(\lambda)\times
\Le^+(\lambda)\mid l>a\}.$$
\end{remark}

\begin{remark}
There is a geometric interpretation to the box $b$ of $[\lambda]$ with
Frobenius label $\fl(b)=(a,l)_{\delta\nu}$ for signs $\delta$ and $\nu$.
By construction, it is the box of $[\lambda]$ at the intersection of the
lines supporting $a$ and $l$.
\end{remark}

\begin{example}
Consider 
\medskip

\begin{center} 
\frob[(1,0),(2,-1),(3,-2)]
    {{\relax,\relax,\relax,\relax,\relax},{\relax,\relax,$\alpha$,\relax,\relax},{\relax,\relax,\relax,\relax},{$\beta$,\relax},{\relax},{\relax}}
    {1,2,3}
    {}
    {}
    {}
    {}
\end{center}
\medskip

The box $\alpha$ is a Durfee box of $\lambda$. We have $\alpha=x^+_{dd'}$
where $d$ and $d'$ are the diagonal boxes $\mathfrak b_2$ and $\mathfrak
b_3$.  Furthermore, $a(d)=3$, $l(d)=2$, $a(d')=1$ and $l(d')=0$. Since
$d<d'$, the Frobenius label of $\alpha$ is $(a(d),l(d'))_{++}=(3,0)_{++}$,
and $\alpha$ is at the intersection of the second row (the line that
supports $a(d)$) and the third column (the line that supports $l(d')$).

Consider now $\beta=x^-_{dd'}$, where $d=\mathfrak b_1\in\mathfrak
D^+(\lambda)$ and $d'=\mathfrak b_4 \in\mathfrak D^-(\lambda)$. The
Frobenius label of $\beta$ is then $(a(d'),l(d))_{-+}=(1,5)_{-+}$. Note
that we can detect directly from the label that $\beta$ is a box of
$[\lambda]$ because $5-1=4>0$, hence $(1,5)$ is an admissible pair.
Nevertheless, the diagonal boxes $\mathfrak b_3\in\mathfrak D^+(\lambda)$ and
$\mathfrak b_5\in\mathfrak D^-(\lambda)$ label no boxes of $[\lambda]$. Indeed, we
have $a(\mathfrak b_3)=1$, $l(\mathfrak b_3)=0$, $a(\mathfrak b_5)=3$
and $l(\mathfrak b_5)=2$, and 
$$a(\mathfrak b_3)-l(\mathfrak b_5)=-1<0\quad\text{and}\quad
l(\mathfrak b_3)-a(\mathfrak b_5)=-3<0.$$ 
On the other hand, no box in the $[\lambda]$ is labeled by the choice of
$d=\mathfrak b_3$ on the diagonal and $d'=\mathfrak b_5$ on the extended
diagonal. Hence, it is an not an admissible pair.
\end{example} 
\smallskip
\begin{lemma}
\label{lem:parts}
Let $\lambda$ be a partition with Frobenius symbol
$\Fr(\lambda)=(l_{s-1},\ldots,l_0\mid a_0,\ldots,a_{s-1})$. The parts of
$\lambda$ are parametrized by $\Ar^+(\lambda)\cup\Ar^-(\lambda)$. 
\begin{enumerate}[(i)]
\item If $a\in\Ar^+(\lambda)$, then the corresponding part is
$s+|\{l\in\Le^-(\lambda)\mid l<a\}|$. 
\item If $a\in\Ar^-(\lambda)$, then the corresponding part is
$|\{l\in\Le^+(\lambda)\mid a<l\}|$.
\end{enumerate}
\end{lemma}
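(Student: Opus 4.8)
The plan is to organise the boxes of $[\lambda]$ by rows and to read off each row-length directly from the Frobenius label set up in Proposition~\ref{prop:froblabelprop}. The starting point is the bijection between parts and the disjoint union $\Ar^+(\lambda)\sqcup\Ar^-(\lambda)$: the first $s$ rows each meet the diagonal, and the $j$-th such row ($1\le j\le s$) is the horizontal line supporting the arm $a_{s-j}=\lambda_j-j\in\Ar^+(\lambda)$; the remaining non-empty rows lie below the Durfee square and are, by the geometric description of coarms recalled in Example~\ref{ex:example3}, the horizontal lines supporting the coarms in $\Ar^-(\lambda)$. Since arms (resp.\ coarms) are strictly increasing by Remark~\ref{rk:armlegbijpart}, both correspondences are bijections, so together they label all parts. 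A point I would flag at once: although a single integer may occur both as an arm and as a coarm, it then supports two \emph{different} rows, which is exactly why the parametrization is by a disjoint union and why the sign in the Frobenius label must be retained.

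The key step, from which both formulas will follow, is that the boxes lying in a fixed row are precisely those whose Frobenius label $(a,l)_{\delta\nu}$ has a prescribed first datum, namely the arm (or coarm, \emph{together with its sign}) supporting that row. This is immediate from the geometric reading of the label recalled just before the lemma: the box $b$ with $\fl(b)=(a,l)_{\delta\nu}$ sits at the intersection of the horizontal line supporting $a$ and the vertical line supporting $l$. Hence counting the boxes of a row reduces to counting the admissible labels with that first datum, and here I would invoke the explicit description in Remark~\ref{rk:froblabelparam}.

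For (i), I would fix $a\in\Ar^+(\lambda)$ and the Durfee row it supports. By Remark~\ref{rk:froblabelparam}, the labels whose first datum is $a$ as an arm are the Durfee labels $(a,l)_{++}$ with $l\in\Le^+(\lambda)$, of which there are $|\Le^+(\lambda)|=s$, together with the Arm-Coleg labels $(a,l)_{+-}$ subject to $l\in\Le^-(\lambda)$ and $a>l$; no Leg-Coarm label has sign $+$ in its first datum, so none contributes. Summing yields the part $s+|\{l\in\Le^-(\lambda)\mid l<a\}|$. For (ii), I would fix $a\in\Ar^-(\lambda)$ and the sub-Durfee row it supports. Now the only labels with first datum $a$ as a coarm are the Leg-Coarm labels $(a,l)_{-+}$ with $l\in\Le^+(\lambda)$ and $l>a$, so the part is $|\{l\in\Le^+(\lambda)\mid a<l\}|$, as claimed.

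I expect the only delicate point to be the bookkeeping underlying the second paragraph: one must keep ``first datum $a$'' meaning $a$ together with its sign ($a\in\Ar^+$ versus $a\in\Ar^-$), so that the Durfee and Arm-Coleg boxes of (i) are never conflated with the Leg-Coarm boxes of (ii), even when the same integer appears simultaneously as an arm and as a coarm. Once the sign convention of the Frobenius label is applied consistently, the two families of labels are disjoint and exhaust each row, and the lemma will follow directly from Proposition~\ref{prop:froblabelprop} and Remark~\ref{rk:froblabelparam}, with no further computation required.
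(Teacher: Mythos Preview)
Your proposal is correct and follows essentially the same approach as the paper: both arguments identify each row with its supporting arm or coarm, then count the boxes in that row via the Frobenius labels from Proposition~\ref{prop:froblabelprop} and Remark~\ref{rk:froblabelparam}, splitting into Durfee plus Arm-Coleg labels in case~(i) and Leg-Coarm labels in case~(ii). Your version is more explicit about the sign bookkeeping, but the argument is the same.
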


\begin{proof}
To each row of $[\lambda]$ there is exactly one associated arm or coarm.
Let $a\in\Ar^+(\lambda)$. The boxes of $[\lambda]$ lying in the row
associated to $a$ have Frobenius label 
$$\{(a,l)_{++}\mid l\in \Le^+(\lambda)\}\cup\{(a,l)_{+-}\mid l\in
\Le^-(\lambda),\, a-l>0\}.$$ 
Since $s=|\{(a,l)_{++}\mid l\in \Le^+(\lambda)\}|$, (i) follows.
Similarly, if $a$ is a coarm of $\lambda$, the admissible pairs $(a,l)$
with $l\in\Le^+(\lambda)$ label the boxes of the rows of $[\lambda]$
corresponding to $a$. 
\end{proof}
\smallskip

\begin{example}
Let $\lambda$ be such that $\Fr(\lambda)=(5,2,0\mid 1,3,4)$. This
partition has $s+|\Ar^-(\lambda)|=3+3=6$ parts. Then $\{(4,0),(4,1)\}$ are
the admissible pairs with arm $4$. Hence, the first part of $\lambda$ is
$3+2=5$.  
\end{example}

\begin{remark}
\label{rk:colonnes}
Let $\lambda$ be the partition with Frobenius symbol $(\Ar\mid\Le)$.  
By an argument similar to the proof of Lemma~\ref{lem:parts}, the columns
of $[\lambda]$ are parametrized by the set
$\Le^+(\lambda)\cup\Le^-(\lambda)$.  More precisely, if
$l\in\Le^+(\lambda)$, then the boxes of $[\lambda]$ lying in the column
associated to $l$ have Frobenius label 
$$\{(a,l)_{++}\mid a\in
\Ar^+(\lambda)\}\cup\{(a,l)_{-+}\mid a\in \Ar^-(\lambda),\, l-a>0\}.$$  
If $l\in\Le^-(\lambda)$, then $\{(a,l)_{+-}\mid a\in\Ar^+(\lambda)\mid
l<a\}$ is the set of the Frobenius label of the corresponding column boxes
of $[\lambda]$. 
\end{remark}

\subsection{Conjugate partitions}
\begin{definition}
Let $\lambda$ be a partition with Frobenius symbol
$\Fr(\lambda)=(\Le\mid\Ar)$. The \emph{conjugate partition} of $\lambda$,
denoted by $\lambda^*$, is the partition with Frobenius symbol
$(\Ar\mid\Le)$. A partition $\lambda$ is called \emph{self-conjugate} when
$\lambda=\lambda^*$. 
\end{definition}

We note that geometrically, $[\lambda^*]$ is obtained from $[\lambda]$ by
applying a reflection with respect to the diagonal.  In particular, we
have the following result.

\begin{lemma} Let $\Ar$ and $\Le$ be two sets with the same cardinality.
Then the parts of the partition with Frobenius symbol $(\Ar\mid\Le)$ are
the columns of the partition with Frobenius symbol $(\Le\mid\Ar)$.
\end{lemma}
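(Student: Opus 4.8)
The plan is to deduce the statement from the counting formulas already in hand, the key being that Frobenius conjugation interchanges arms with legs and coarms with colegs in all four categories. Throughout, write $\lambda$ for the partition with $\Fr(\lambda)=(\Le\mid\Ar)$ and $\lambda^*$ for the one with $\Fr(\lambda^*)=(\Ar\mid\Le)$, with common Durfee number $s$ (the arm and leg sets have equal cardinality).

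First I would record the conjugation identities on the four sets. Directly from the definition of the conjugate, the leg part of $\Fr(\lambda^*)$ is the arm part of $\Fr(\lambda)$ and conversely, so $\Ar^+(\lambda^*)=\Le^+(\lambda)$ and $\Le^+(\lambda^*)=\Ar^+(\lambda)$. Feeding this into the definitions of the coarm and coleg sets from \S\ref{subsec:armlegcoarmcoleg}, and using that $\max\Ar^+(\lambda^*)=\max\Le^+(\lambda)$ and $\max\Le^+(\lambda^*)=\max\Ar^+(\lambda)$, the interval-minus-set defining $\Ar^-(\lambda^*)$ becomes $\{0,\ldots,\max\Le^+(\lambda)\}\setminus\Ar^+(\lambda)=\Le^-(\lambda)$, and symmetrically $\Le^-(\lambda^*)=\Ar^-(\lambda)$. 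Thus $\Ar^{\pm}(\lambda^*)=\Le^{\pm}(\lambda)$ and $\Le^{\pm}(\lambda^*)=\Ar^{\pm}(\lambda)$.

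Next I would observe that the two sides are parametrized by the same index set. By Lemma~\ref{lem:parts} applied to $\lambda^*$, the parts of $\lambda^*$ are parametrized by $\Ar^+(\lambda^*)\cup\Ar^-(\lambda^*)=\Le^+(\lambda)\cup\Le^-(\lambda)$, which by Remark~\ref{rk:colonnes} is exactly the set parametrizing the columns of $\lambda$. It then suffices to match the length formulas across this common index set, which splits into two cases. For $l\in\Le^+(\lambda)$, viewed as an element of $\Ar^+(\lambda^*)$, Lemma~\ref{lem:parts}(i) gives the part $s+|\{x\in\Le^-(\lambda^*)\mid x<l\}|=s+|\{x\in\Ar^-(\lambda)\mid x<l\}|$, which is precisely the column length $s+|\{a\in\Ar^-(\lambda)\mid a<l\}|$ from Remark~\ref{rk:colonnes}. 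For $l\in\Le^-(\lambda)=\Ar^-(\lambda^*)$, Lemma~\ref{lem:parts}(ii) gives the part $|\{x\in\Le^+(\lambda^*)\mid l<x\}|=|\{a\in\Ar^+(\lambda)\mid l<a\}|$, again matching the coleg column formula of Remark~\ref{rk:colonnes}. Since $s$ is shared and the two cases exhaust both index sets, the parts of $\lambda^*$ and the columns of $\lambda$ agree term by term.

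The step I expect to demand the most care is the coarm/coleg interchange: the coarm and coleg sets are obtained by deleting the leg (respectively arm) set from an interval whose right endpoint is the maximal arm (respectively leg), so one must verify that conjugation sends these truncating endpoints to one another correctly before the set differences line up. Once that bookkeeping is in place, the remainder is a routine comparison of the formulas in Lemma~\ref{lem:parts} and Remark~\ref{rk:colonnes}; no genuinely new combinatorics is needed, consistent with the observation that conjugation is geometrically just reflection across the diagonal.
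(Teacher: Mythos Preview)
Your proof is correct. The paper itself does not give a formal proof of this lemma: it simply precedes the statement with the remark that $[\lambda^*]$ is obtained from $[\lambda]$ by reflection across the diagonal, so that rows become columns. Your argument instead stays entirely inside the Frobenius-symbol formalism, verifying the conjugation identities $\Ar^{\pm}(\lambda^*)=\Le^{\pm}(\lambda)$ and $\Le^{\pm}(\lambda^*)=\Ar^{\pm}(\lambda)$ and then matching the row formulas of Lemma~\ref{lem:parts} for $\lambda^*$ against the column formulas of Remark~\ref{rk:colonnes} for $\lambda$. This is a genuinely different route: the paper appeals to the ambient Young-diagram geometry, whereas you derive the result purely from the arm/leg/coarm/coleg bookkeeping already set up. Your approach is longer but has the virtue of being self-contained within the Frobenius-label language the paper is developing; the paper's one-line geometric justification is shorter but steps outside that language.
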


\begin{remark}
The Durfee number of a partition is invariant under conjugation.
\end{remark}
\begin{example}
Let $\lambda=(5,5,4,2,1,1)$. Then $\lambda^*$ has Frobenius symbol
$(4,3,1\mid 0,2,5)$, that is, $\lambda^*=(6,4,3,3,2).$
\end{example}

\subsection{Hooks and hooklengths of a partition}
The notion of \emph{hooks} and \emph{hooklengths} attached to a partition
arise from the representation theory of the symmetric groups. For example
the dimension of the complex irreducible representation labeled by a
partition $\lambda$ can be derived from the hooklengths of $\lambda$.
Recall that the \emph{hook} associated to the box $b$ of $[\lambda]$ is
the set consisting of $b$ and of the boxes of $[\lambda]$ to the right and
below $b$.  The box $b$ is called \emph{the corner} of the hook. The set
of boxes in the hook is denoted by $\mathcal H(b)$ and the
\emph{hooklength} of the hook is $\mathfrak h(b)=|\mathcal H(b)|$. A hook
with $k$ boxes will be call a $k$-hook.

\begin{example}
\label{ex:hookcorner}
The hook with corner $(2,1)$ of the partition $\lambda=(5,5,4,2,1,1)$ is
\medskip

\begin{center} 
\frob[(1,-1)]
    {{\relax,\relax,\relax,\relax,\relax},{$\star$,$\star$,$\star$,$\star$,$\star$},{$\star$,\relax,\relax,\relax},{$\star$,\relax},{$\star$},{$\star$}}
    {}
    {}
    {}
    {}
    {}
\end{center}
\medskip
Its corresponding hooklength is $9$.
\end{example}

We now can describe the hooks and hooklengths of $\lambda$ using just the
Frobenius label of $\lambda$. 

\begin{lemma}
\label{lem:hooklength} Let $\lambda$ be a partition.
\begin{enumerate}[(i)]
\item If $a\in\Ar^+(\lambda)$ and $l\in\Le^+(\lambda)$, then
$$\mathfrak h(b)=a+l+1,$$
where $b$ is the Durfee box of $\lambda$ with Frobenius label
$(a,l)_{++}$.
\item Assume that $(a,l)$ is an admissible pair labeling a Leg-Coarm box
$b$ of $\lambda$.
Then 
$$\mathfrak h(b)=l-a.$$
\item Assume that $(a,l)$ is an admissible pair labeling an Arm-Coleg box
$b$ of $\lambda$.
Then 
$$\mathfrak h(b)=a-l.$$
\end{enumerate}
\end{lemma}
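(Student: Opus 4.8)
The plan is to pass to Cartesian (matrix) coordinates and reduce every case to a one-line substitution. The starting point is the classical hooklength formula: if $b=(r,c)$ is a box of $[\lambda]$, then
$$\mathfrak h(b)=(\lambda_r-c)+(\lambda^*_c-r)+1,$$
where the three terms are, respectively, the arm of $b$, the leg of $b$, and $b$ itself (here $\lambda^*_c$ denotes the length of the $c$-th column). What I need to feed into this formula is the dictionary between the Frobenius data of $\lambda$ and the row/column lengths, which I would record first. For a genuine diagonal box $\mathfrak b_i=(i,i)$ with $i\leq s$, the defining counts in \S\ref{subsec:armlegcoarmcoleg} give $a(\mathfrak b_i)=\lambda_i-i$ and $l(\mathfrak b_i)=\lambda^*_i-i$. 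For an extended diagonal box $\mathfrak b_i=(i,i)$ with $i>s$, a coarm on its row counts the empty cells strictly between the end of the (non-empty) row $i$ and $\mathfrak b_i$, so it equals $i-1-\lambda_i$; symmetrically a coleg on its column equals $i-1-\lambda^*_i$.

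For (i), Proposition~\ref{prop:froblabelprop}(i) places the Durfee box $b$ at the intersection of the row carrying $a$ and the column carrying $l$: writing $\mathfrak b_i$ for the diagonal box with $a(\mathfrak b_i)=a$ and $\mathfrak b_{i'}$ for the one with $l(\mathfrak b_{i'})=l$, this says $b=(i,i')$, whether $b$ is $x^+_{dd'}$ or $x^-_{dd'}$. Since $i,i'\leq s$, I substitute $\lambda_i=i+a$ and $\lambda^*_{i'}=i'+l$ into the hooklength formula to get $\mathfrak h(b)=(i+a-i')+(i'+l-i)+1=a+l+1$, with no dependence on the sign of $i-i'$. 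Cases (ii) and (iii) run identically, the only new ingredient being the shifted dictionary for the extended diagonal. In (iii), Proposition~\ref{prop:froblabelprop}(ii) gives $b=x^+_{dd'}=(i,i')$ with $i\leq s<i'$, $a=a(\mathfrak b_i)=\lambda_i-i$ and $l$ the coleg on column $i'$, so $\lambda^*_{i'}=i'-1-l$; substituting yields $\mathfrak h(b)=(i+a-i')+(i'-1-l-i)+1=a-l$. In (ii), Proposition~\ref{prop:froblabelprop}(iii) gives $b=x^-_{dd'}=(i',i)$ with $i\leq s<i'$, $l=l(\mathfrak b_i)=\lambda^*_i-i$ and $a$ the coarm on row $i'$, so $\lambda_{i'}=i'-1-a$; substituting yields $\mathfrak h(b)=(i'-1-a-i)+(i+l-i')+1=l-a$.

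The one place demanding care --- and the only real obstacle --- is the bookkeeping of conventions: I must confirm that the corner $x^\pm_{dd'}$ produced by Proposition~\ref{prop:froblabelprop} really sits in the row of the box carrying the arm and the column of the box carrying the leg, and I must keep the ``$-1$'' shift that separates a coarm/coleg (counting empty cells strictly between the diagram and the extended diagonal) from an ordinary arm/leg. Once those two conventions are pinned down, each case collapses by the same two-line cancellation. I note finally that the admissibility inequalities $a>l$ in (iii) and $l>a$ in (ii) are not what makes the arithmetic work, but are exactly the conditions --- supplied by Lemma~\ref{lem:justificationfroblabel} --- under which $b$ is genuinely a box of $[\lambda]$ in the first place, so that the arm $\lambda_r-c$ and leg $\lambda^*_c-r$ appearing in the formula are nonnegative.
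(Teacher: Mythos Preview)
Your proof is correct. The dictionary you set up --- $a(\mathfrak b_i)=\lambda_i-i$ and $l(\mathfrak b_i)=\lambda^*_i-i$ for $i\leq s$, and $a(\mathfrak b_i)=i-1-\lambda_i$, $l(\mathfrak b_i)=i-1-\lambda^*_i$ for $i>s$ --- is exactly what the paper's definitions in \S\ref{subsec:armlegcoarmcoleg} encode, and once it is in place the classical formula $\mathfrak h(r,c)=(\lambda_r-c)+(\lambda^*_c-r)+1$ does all the work by cancellation.

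The paper, by contrast, does not pass through Cartesian coordinates or invoke the classical hooklength formula at all. Its argument is purely pictorial: for a Durfee box $b=x^+_{dd'}$, the square symmetry of the construction forces the segment from $b$ to $d$ and the segment from $b$ to $d'$ to have the same length, so the hook of $b$ decomposes as the $a$ boxes of the arm attached to $d$ (above the diagonal), the $l$ boxes of the leg attached to $d'$ (below the diagonal), and the diagonal box $d'$ itself. For the Leg--Coarm case it uses the same ``equal distances to the diagonal'' observation to see that the arm of $b$ has length $m-a$ and the leg has length $l-m$ for a common $m$, summing to $l-a$. Your route is more algebraic and treats all three cases uniformly by one substitution; the paper's route is coordinate-free and makes the role of the diagonal geometrically transparent, at the cost of needing a separate picture for each case. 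Neither approach is harder than the other --- yours has the advantage of being entirely mechanical once the dictionary is fixed, while the paper's has the advantage of not importing the classical formula as an external ingredient.
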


\begin{proof} 
Assume $d,\,d'\in \mathfrak D^+(\lambda)$ with $d\leq d'$ are such that
$a=a(d)$ and $l=l(d')$. Then $b=x^+_{dd'}$, and by construction the number
of boxes from $b$ to $d$ and to $d'$ is the same, in particular, the
number of boxes in the hook associated to $b$ over the diagonal is $a$.
The number of boxes under the diagonal is $l$.  Finally, to obtain
$\mathfrak h(b)$, we have to add one that corresponds to the box $d'$.

\begin{center}
\vspace{-10pt}
\begin{tikzpicture}[scale=0.4,draw/.append style={black},baseline=\shadedBaseline]
      \foreach \x in {0,1,2,3,4} {
      \draw[gray!30](\x+3,-2-\x)+(-.5,-.5)rectangle++(.5,.5);
      }
      \draw[gray!50](10,-2)node{$\star$};
      \draw[gray!50](9,-2)node{$\star$};
      \draw[gray!50](11,-2)node{$\star$};
      \draw(6,-2)node{$\star$};
      \draw(5,-2)node{$\star$};
      \draw(4,-2)node{$\star$};
      \draw(7,-3)node{$\star$};
      \draw(7,-4)node{$\star$};
      \draw(7,-5)node{$\star$};
      \draw[gray!50](12,-2)node{$\star$};
      \draw[gray!50](8,-2)node{$\star$};
      \draw[gray!50](7,-8)node{$\star$};
      \draw[gray!50](7,-7)node{$\star$};
      \draw[gray!30](12,-2)+(-.5,-.5)rectangle++(.5,.5);
      \draw[gray!30](11,-2)+(-.5,-.5)rectangle++(.5,.5);
      \draw[gray!30](10,-2)+(-.5,-.5)rectangle++(.5,.5);
      \draw[gray!30](9,-2)+(-.5,-.5)rectangle++(.5,.5);
      \draw[gray!30](8,-2)+(-.5,-.5)rectangle++(.5,.5);
      \draw[gray!30](7,-3)+(-.5,-.5)rectangle++(.5,.5);
      \draw[gray!30](7,-4)+(-.5,-.5)rectangle++(.5,.5);
      \draw[gray!30](7,-5)+(-.5,-.5)rectangle++(.5,.5);
      \draw[gray!30](7,-7)+(-.5,-.5)rectangle++(.5,.5);
      \draw[gray!30](7,-8)+(-.5,-.5)rectangle++(.5,.5);
      \draw(7,-2)+(-.5,-.5)rectangle++(.5,.5);
      \draw(7,-6)+(-.5,-.5)rectangle++(.5,.5);
      \draw(3,-2)+(-.5,-.5)rectangle++(.5,.5);
      \draw(3,-2)node{$d$}; 
      \draw[gray!50](7,-2)node{$\star$};
      \draw(7,-6)node{$d'$};
\end{tikzpicture}
\end{center}
The argument is similar for the case that $x^-_{dd'}=b$.  This proves (i).
We now consider $d\in\mathfrak D^+(\lambda)$ and  $d'\in\mathfrak
D^-(\lambda)$ such that $l(d)=l$ and $a(d')=a$. Hence, $b=x^{-}_{d,d'}$.
By construction, the number of boxes from $b$ to $d$ and $d'$ is the same,
denoted by $m$. Since the number of boxes in $[\lambda]$ right of $b$ is
equal to $m-a$, (ii) follows.

\begin{center}
\vspace{-10pt}
\begin{tikzpicture}[scale=0.4,draw/.append style={black},baseline=\shadedBaseline]
      \foreach \x in {0,1,2} {
      \draw[gray!30](\x+3,-2-\x)+(-.5,-.5)rectangle++(.5,.5);
      }
       \foreach \x in {3,4,5} {
      \draw[gray!30,dashed](\x+3,-2-\x)+(-.5,-.5)rectangle++(.5,.5);
      }
      \draw(3,-3)node{$\star$};
      \draw(3,-4)node{$\star$};
      \draw(3,-5)node{$\star$};
      \draw(3,-6)node{$\star$};
      \draw[gray!50](3,-7)node{$\star$};
      \draw[gray!50](3,-8)node{$\star$};
      \draw[gray!50](3,-9)node{$\star$};
      \draw(4,-7)node{$\star$};
      \draw(5,-7)node{$\star$};
      \draw(6,-7)node{$\star$};
      \draw(7,-7)node{$\star$};

      \foreach \x in {5,6,7} {
      \draw[gray!30](3,-\x-2)+(-.5,-.5)rectangle++(.5,.5);
      }
      
      \draw[gray!30](4,-7)+(-.5,-.5)rectangle++(.5,.5);
      \draw(3,-7)+(-.5,-.5)rectangle++(.5,.5);
      \draw(3,-2)+(-.5,-.5)rectangle++(.5,.5);
      \draw[dashed](8,-7)+(-.5,-.5)rectangle++(.5,.5);
      \draw(3,-2)node{$d$}; 
      \draw(8,-7)node{$d'$};
\end{tikzpicture}
\end{center}
\medskip

The argument is similar for (iii).
\end{proof}

\begin{remark}
\label{rk:removehooks}
Let $\lambda$ be a partition and $b$ be a box of $[\lambda]$. We now
describe, in terms of Frobenius symbol, how to define a new partition
after removing a hook with box $b$.
\begin{enumerate}[(i)]
\item If $b$ is a Durfee box, then the Frobenius label of $b$ is
$(a,l)_{++}$ where $a\in\Ar^+(\lambda)$ and $l\in\Le^+(\lambda)$. We then
associate a new partition $\lambda\backslash\mathcal{H}(b)$ with sets of
arms and legs 
$$\Ar^+(\lambda\backslash\mathcal H(b))=\Ar^+(\lambda)\backslash\{a\}
\quad\text{and}\quad 
\Le^+(\lambda\backslash\mathcal H(b))=\Le^+(\lambda)\backslash\{l\}.$$
In particular, Equality~(\ref{eq:sizebyarmleg}) gives 
$$|\lambda\backslash\mathcal H(b)|=|\lambda|-(a+l+1)=|\lambda|-\mathfrak h(b).$$
We say that $\lambda\backslash\mathcal H(b)$ is the partition obtained
from $\lambda$ by removing the $\mathfrak H(b)$-hook with corner $b$.
Geometrically, to obtain $[\lambda\backslash\mathcal H(b)]$ from
$[\lambda]$ we proceed as follows. First, remove, in $[\lambda]$, the arm
$a$, the leg $l$ and the diagonal box $\mathfrak b_s$, where $s$ is
the Durfee number of $\lambda$.  Then, slide up along the diagonal the
arms of $\lambda$ smaller that $a$, and the legs smaller that $l$. 
\end{enumerate}
\bigskip 

\begin{example}
We remove the hook $b$ of $\lambda=(5,5,4,2,1,1)$ with corner $(2,1)$; see
Example~\ref{ex:hookcorner}. The Frobenius label of $b$ is $(3,5)_{++}$.
Then
$$\Fr(\lambda\backslash\mathcal H(b))=(2,0\mid 1,4),$$
and $\lambda\backslash\mathcal H(b)=(5,2,1)$.
\bigskip

\begin{center} 
\frob[(1,-1),(1,-2),(1,-3),(1,-4),(1,-5),(3,-1),(4,-1),(5,-1), (3,-2)]
    {{\relax,\relax,\relax,\relax,\relax},{$\bullet$,$\relax$,$\relax$,$\relax$,$\relax$},{$\relax$,\relax,\relax,\relax},{$\relax$,\relax},{$\relax$},{$\relax$}}
    {}
    {}
    {}
    {}
    {}
\hspace{2cm}
\begin{tikzpicture}[scale=0.4,draw/.append style={black},baseline=\shadedBaseline]
      \draw[gray!10,fill](0,0)+(-.5,-.5)rectangle++(.5,.5);
      \draw(0,0)+(-.5,-.5)rectangle++(.5,.5);
      \draw(1,0)+(-.5,-.5)rectangle++(.5,.5);
      \draw(2,0)+(-.5,-.5)rectangle++(.5,.5);
      \draw(3,0)+(-.5,-.5)rectangle++(.5,.5);
      \draw(4,0)+(-.5,-.5)rectangle++(.5,.5);
      \draw[gray!10,fill](1,-1)+(-.5,-.5)rectangle++(.5,.5);
      \draw[gray!10,fill](2,-2)+(-.5,-.5)rectangle++(.5,.5);
      \draw[gray!10,fill](3,-3)+(-.5,-.5)rectangle++(.5,.5);
      \draw(1,-1)+(-.5,-.5)rectangle++(.5,.5);
      \draw(1,-2)+(-.5,-.5)rectangle++(.5,.5);
      \draw(1,-3)+(-.5,-.5)rectangle++(.5,.5);
      \draw(3,-2)+(-.5,-.5)rectangle++(.5,.5);
      \draw[latex-,dashed](0,-2)--(2,-4);
      \draw[latex-,dashed](0,-1)--(2,-3);
      \draw[latex-,dashed](2,-1)--(4,-3);
\end{tikzpicture}
\hspace{1cm}
\frob[]
    {{\relax,\relax,\relax,\relax,\relax},{$\relax$,\relax,\relax},{$\relax$}}
    {}
    {}
    {}
    {}
    {}
\end{center}
\medskip
\end{example}
\begin{enumerate}[(ii)]
\item Assume $b$ is a Leg-Coarm box of $\lambda$ with Frobenius label
$(a,l)_{-+}$. We define $\lambda\backslash\mathcal H(b)$ by the partition
with arm and leg sets
$$\Ar^+(\lambda\backslash\mathcal H(b))=\Ar^+(\lambda)\quad\text{and}\quad
\Le^+(\lambda\backslash\mathcal H(b))=\Le^+(\lambda)\backslash\{l\}\cup\{a\}.$$
We remark that $\lambda$ and $\lambda\backslash\mathcal H(b)$ have the
same Durfee number $s$, and
$$|\lambda\backslash\mathcal H(b)|=s+\sum_{x\in\Ar^+(\lambda)}x+\sum_{y\in\Le^+(\lambda)\backslash\{l\}\cup\{a\}}y=|\lambda|+a-l=|\lambda|-\mathfrak h(b).$$
As above, $\lambda\backslash\mathcal H(b)$ is called the partition
obtained from $\lambda$ by removing the $\mathfrak H(b)$-hook with corner
$b$. We describe the process to move to $[\lambda\backslash\mathcal
H(b)]$ from $[\lambda]$ as follows. First, we remove the leg $l$ of
$[\lambda]$. We then slide up along the diagonal past the legs smaller
that $l$, and we insert a leg $a$ so that the decreasing order of legs is
respected. 
\end{enumerate}

\begin{example}  
Consider $\lambda=(5,5,4,2,1,1)$ such that $\Fr(\lambda)=(5,2,0\mid
1,3,4)$. Now we remove the box with position $(4,1)$. Its Frobenius label
is $(1,5)_{-+}\in\Ar^-(\lambda)\times\Le^+(\lambda)$. Then
$$\Fr(\lambda\backslash\mathfrak H(b))=(2,1,0 \mid 1,3,4)\quad\text{and}\quad\lambda\backslash\mathfrak H(b)=(5,5,4).$$
\vspace{-10pt}
\begin{center} 
\frob[(1,-1),(1,-2),(1,-3),(1,-4),(1,-5)]
    {{\relax,\relax,\relax,\relax,\relax},{$\relax$,$\relax$,$\relax$,$\relax$,$\relax$},{$\relax$,\relax,\relax,\relax},{$\bullet$,\relax},{$\relax$},{$\relax$}}
    {}
    {}
    {}
    {}
    {}
\hspace{1cm}
\begin{tikzpicture}[scale=0.4,draw/.append style={black},baseline=\shadedBaseline]
      \draw(0,0)+(-.5,-.5)rectangle++(.5,.5);
      \draw(1,0)+(-.5,-.5)rectangle++(.5,.5);
      \draw(2,0)+(-.5,-.5)rectangle++(.5,.5);
      \draw(3,0)+(-.5,-.5)rectangle++(.5,.5);
      \draw(4,0)+(-.5,-.5)rectangle++(.5,.5);
      \draw(2,-1)+(-.5,-.5)rectangle++(.5,.5);
      \draw(3,-1)+(-.5,-.5)rectangle++(.5,.5);
      \draw(4,-1)+(-.5,-.5)rectangle++(.5,.5);
      \draw(2,-2)+(-.5,-.5)rectangle++(.5,.5);
      \draw[gray!20,fill](2,-3)+(-.5,-.5)rectangle++(.5,.5);

            \draw(1,-1)+(-.5,-.5)rectangle++(.5,.5);
      \draw(1,-2)+(-.5,-.5)rectangle++(.5,.5);
      \draw(1,-3)+(-.5,-.5)rectangle++(.5,.5);
      \draw(3,-2)+(-.5,-.5)rectangle++(.5,.5);
      \end{tikzpicture}
\hspace{1cm}
\begin{tikzpicture}[scale=0.4,draw/.append style={black},baseline=\shadedBaseline]
      \draw[gray!10,fill](0,0)+(-.5,-.5)rectangle++(.5,.5);
      \draw[](0,0)+(-.5,-.5)rectangle++(.5,.5);
      \draw(1,0)+(-.5,-.5)rectangle++(.5,.5);
      \draw(2,0)+(-.5,-.5)rectangle++(.5,.5);
      \draw(3,0)+(-.5,-.5)rectangle++(.5,.5);
      \draw(4,0)+(-.5,-.5)rectangle++(.5,.5);
      \draw(2,-1)+(-.5,-.5)rectangle++(.5,.5);
      \draw(3,-1)+(-.5,-.5)rectangle++(.5,.5);
      \draw(4,-1)+(-.5,-.5)rectangle++(.5,.5);
      \draw(2,-2)+(-.5,-.5)rectangle++(.5,.5);
      \draw(2,-3)+(-.5,-.5)rectangle++(.5,.5);

      \draw[gray!10,fill](1,-1)+(-.5,-.5)rectangle++(.5,.5);
      \draw[gray!10,fill](2,-2)+(-.5,-.5)rectangle++(.5,.5);
      \draw(2,-2)+(-.5,-.5)rectangle++(.5,.5);

      \draw(1,-1)+(-.5,-.5)rectangle++(.5,.5);
      \draw(1,-2)+(-.5,-.5)rectangle++(.5,.5);
      \draw(1,-3)+(-.5,-.5)rectangle++(.5,.5);
      \draw(3,-2)+(-.5,-.5)rectangle++(.5,.5);
      \draw[latex-,dashed](0,-2)--(2,-4);
      \draw[latex-,dashed](0,-1)--(3,-4);
\end{tikzpicture}
\hspace{1cm}
\frob[]
    {{\relax,\relax,\relax,\relax,\relax},{$\relax$,\relax,\relax,\relax,\relax},{$\relax$,\relax,\relax,\relax}}
    {}
    {}
    {}
    {}
    {}
\end{center}
\medskip
\end{example}
The Arm-Coleg case is similar.
\end{remark}

\subsection{Frobenius symbol and pointed abacus}
\label{subsec:pointedabacus}
There is a natural way to represent a Frobenius symbol using its
associated pointed abacus.  An \emph{abacus} is a ``strip'', also called
\emph{runner}, with an infinity of regularly spaced slots. A \emph{pointed
abacus} is an abacus which is ``pointed'', that is two consecutive slots
are chosen and a dash, called \emph{the fence} of the abacus, is drawn
between them. We then index the slots over and the slots under the fence
by the sets of nonnegative integers. The slots over the fence are called
\emph{the positive slots}; the ones under the fence \emph{the negative
slots}.  We likewise assume that there are white or black beads at each
slots and that the number $u$ of black beads over the fence and $v$ the
one of white beads under are finite. The beads over and under the fence
are called \emph{positive} and \emph{negative}, respectively.  

Consider a pointed abacus. With the notation as above, when $u=v$, we say
this is a\emph{ pointed abacus of a partition}.

Let $\lambda$ be a partition with $\Fr(\lambda)=(l_{s-1},\ldots,l_0\mid
a_0,\ldots, a_{s-1})$.  The pointed abacus of $\lambda$ is constructed as
follows. First, we take a pointed runner with fence $\mathfrak f$, and
place white beads on the positive slots and black beads on the negative
ones. Then for $0\leq j\leq s-1$, we replace the white bead at the $a_j$
positive slot by a black bead, and black bead at the $l_j$ negative slot
by a white bead. 

Remark~\ref{rk:armlegbijpart} shows that there is a bijection between the
set of pointed abacus of partitions and the set of partitions of integers. 

\begin{example}
\label{ex:pointabacus}
The pointed abacus of $\lambda=(5,5,4,2,1,1)$ with $\Fr(\lambda)=(5,2,0|1,3,4)$ is
\medskip
\begin{center}
\definecolor{sqsqsq}{rgb}{0.12549019607843137,0.12549019607843137,0.12549019607843137}
\begin{tikzpicture}[line cap=round,line join=round,>=triangle
45,x=0.7cm,y=0.7cm, scale=0.8,every node/.style={scale=0.8}]
\draw (0.,6.3)-- (0.,-4.3);
\draw [dash pattern=on 2pt off 2pt](0.,4.)-- (0.,5.);
\draw [dash pattern=on 2pt off 2pt](-1,1.5)-- (1,1.5);
\draw(-0.5,2)node{$0$};
\draw(-0.5,3)node{$1$};
\draw(-0.5,4)node{$2$};
\draw(-0.5,5)node{$3$};
\draw(-0.5,6)node{$4$};

\draw(-1.5,1.5)node{$\mathfrak f$};
\draw(0.5,1)node{$0$};
\draw(0.5,0)node{$1$};
\draw(0.5,-1)node{$2$};
\draw(0.5,-2)node{$3$};
\draw(0.5,-3)node{$4$};
\draw(0.5,-4)node{$5$};

\begin{scriptsize}
\draw [] (0.,2.) circle (2.5pt);
\draw [fill=black] (0.,3.) circle (2.5pt);
\draw [] (0.,4.) circle (2.5pt);
\draw [fill=black] (0.,5.) circle (2.5pt);
\draw [fill=black] (0.,6.) circle (2.5pt);

\draw [] (0.,1.) circle (2.5pt);
\draw [fill=black] (0.,0.) circle (2.5pt);

\draw  (0.,-1.) circle (2.5pt);
\draw [fill=black](0.,-2.) circle (2.5pt);
\draw [fill=black] (0.,-3.) circle (2.5pt);
\draw [] (0.,-4.) circle (2.5pt);

\end{scriptsize}
\end{tikzpicture}
\end{center}
\end{example}
\begin{remark}
\label{rk:abacusinfo}
Many of properties of $\lambda$ can be directly seen on its pointed
abacus. Here is a summary.
\begin{enumerate}[(i)]
\item Coarms and colegs can be immediately read on the pointed abacus of
$\lambda$. Indeed, the coarms to the black beads below, and colegs to the
white beads above, the fence.  
\item The number of parts of $\lambda$ is the number of black beads that
appear above the first white bead. The part associated to a black bead is
the number of white beads under it. See Lemma~\ref{lem:parts}.
\item By Remark~\ref{rk:froblabelparam}, the boxes of $[\lambda]$ are in
bijection with the pairs of beads $(b,w)$ where $b$ is black, $w$ is
white, and $w$ is under $b$. 
\item By Lemma~\ref{lem:hooklength}, the hook of $\lambda$ with corner
labeled by $(b,w)$ as in (iii) has hooklength the number of beads strictly
between $b$ and $w$ plus one. On the other hand, if we exchange the color
of these two beads, then we obtain the pointed abacus of the partition
obtained from $\lambda$ by removing the hook.
\smallskip
\end{enumerate}
\end{remark}

\begin{remark} 
\label{rk:usualabacus}
We now recall the ``usual'' way to represent a partition on an
abacus~\cite{james}. Write $\mathcal S$ for the set of sequences
$(l_j)_{j\in\Z}$ with $l_j\in\{0,1\}$ and there are $a,\,b\in\Z$ such that
$l_j=1$ (resp. $l_j=0$) for any $j\leq a$ (resp. $j\geq b$). For any
partition $\lambda$, we associate to $\lambda$ such a sequence as follows.
First fix $a\in \Z$. For any $j\leq a$, set $l_j=1$. We begin on the
bottom of the Young diagram of $[\lambda]$ and following the rim, set, at
each step $k$, $l_{a+k}=0$ if we go right, or $l_{a+k}=1$ if we go up. We
continue by following the $x$-axis, hence $l_j=0$ for all $j\geq b$ for
some $b\geq a$. Note that this construction depends on a choice of
$a\in\Z$. Thus, we associate bijectively to $\lambda$ such a sequence, up
to a shift.

Let $(l_j)_{j\in\Z}\in\mathcal S$ be associated to $\lambda$ as above.  A
corresponding $\beta$-sequence is the set $X=\{j\in\Z\mid l_j=1\}$.  Here
we consider here $\beta$-sequences instead $\beta$-sets as it is
in~\cite{james}. They are essentially the same object: a $\beta$-sequence
can be viewed as a $\beta$-set completed by the negative numbers; see also
\cite[page 224]{OlssonFrob}.  For $r\in\Z$, we set $X^{+r}=X+r$ which is
another $\beta$-sequence associated to the same partition $\lambda$. Such
a $\beta$-sequence $X^{+r}$ can be represented on an abacus (where slot
are labeled by $\Z$) by putting a black bead on the slot labeled $x\in
X^{+r}$ and a white bead otherwise. The abacus of $X^{+r}$ can be obtained
from the one of $X$ by pushing it $r$ times up if $r\geq 0$ and $|r|$
times down otherwise. Let $\lambda=(\lambda_1,\ldots,\lambda_m)$. For
$j>m$, set $\lambda_j=0$. Then \cite[2.2.7]{James-Kerber} implies that the
$\beta$-sequences of $\lambda$ are of the form
$$X^{+r}(\lambda)=\{\lambda_j-j+r\mid j\geq 1\}\quad\text{for }r\in\Z.$$
Now, we will show that the pointed abacus of $\lambda$ corresponds to the
usual abacus of $\lambda$ associated to $r=0$.  Write $\phi:\N\rightarrow
\Z_-^*,\, t\mapsto -t-1$, and $Y$ for the $\beta$-sequence whose elements
are $\Ar^+(\lambda)\cup\phi(\Ar^-(\lambda))$. The abacus attached to $Y$
is the pointed abacus of $\lambda$. Furthermore, we have
$$\Ar^+(\lambda)=\{\lambda_j-j\mid \lambda_j-j\geq 0\}.$$ Let $j\geq 1$ be
such that $\lambda_j-j<0$. By Lemma~\ref{lem:parts}, $\lambda_j$ is
parametrized by a coarm $a\in \Ar^-(\lambda)$.  Observe that
\medskip

\begin{center}
\vspace{-10pt}
\begin{tikzpicture}[scale=0.4,draw/.append style={black},baseline=\shadedBaseline]
      \draw(3,-2)+(3.6,-.5)rectangle++(8,.5);
      \draw(3,-2)+(-3.5,-.5)rectangle++(3.5,.5);
      \draw(11.6,-2)+(-.5,-.5)rectangle++(.5,.5);
      \draw(3,-2)node{$\lambda_j$}; 
      \draw(9,-2)node{$a$};
      \draw(6,-4)node{$j$};
      \draw[latex-latex,dashed] (-0.6,-3.4)--(12.1,-3.4);
\end{tikzpicture}
\end{center}
\bigskip
Hence, $\lambda_j+a+1=j$, and 
$$\alpha(a)=\alpha(j-\lambda_j-1)=\lambda_j-j+1-1=\lambda_j-j.$$
It follows that $$Y=\{\lambda_j-j\mid j\geq 1\},$$ and the pointed abacus
of $\lambda$ is then the usual abacus of $\lambda$ obtained for $r=0$, as
required.
\end{remark}

\begin{remark}
\label{rk:pointedconjugate}
Let $\lambda$ be a partition with $\Fr(\lambda)=(\Le\mid\Ar)$. Since
$\Fr(\lambda^*)=(\Ar\mid\Le)$, the pointed abacus of the conjugate
partition $\lambda^*$ can be obtained from the one of $\lambda$ by 
\begin{enumerate}[(i)]
\item Reflecting the beads with respect to across the fence line.
\item Exchanging the color of each bead.
\end{enumerate}

Let $\mathcal R$ be a runner with white and black beads and a fence. Here
we do not assume that the abacus is pointed. We denote by $\overline{R}$
the abacus obtained by applying operations (i) and (ii) described in
Remark \ref{rk:pointedconjugate}. We have

\begin{lemma}
\label{lem:pointedtabaconjugate}
An abacus $\mathcal R$ is the pointed abacus of $\lambda$ if and only if
$\overline{\mathcal R}$ is the pointed abacus of $\lambda^*$.
\end{lemma}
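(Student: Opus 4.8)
The plan is to split the biconditional into its forward implication, which is essentially the content of Remark~\ref{rk:pointedconjugate}, and its converse, which I will obtain for free once I observe that the bar operation is an involution.

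First I would establish that $\mathcal R\mapsto\overline{\mathcal R}$ is an involution on runners. By definition it is the composite of two operations: the reflection across the fence line, which exchanges, for each index $k$, the positive slot $k$ with the negative slot $k$; and the swap of the color of every bead. Each of these is visibly an involution, and since one acts on positions and the other on colors they commute, so $\overline{\overline{\mathcal R}}=\mathcal R$ for every runner $\mathcal R$. I would also note at this point that the bar operation preserves the balance condition defining a pointed abacus of a partition: if $\mathcal R$ carries $u$ black beads above the fence and $v$ white beads below it, then $\overline{\mathcal R}$ carries $v$ black beads above and $u$ white beads below, so the condition $u=v$ is preserved, and the bar of a pointed abacus of a partition is again one.

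For the forward direction, assume $\mathcal R$ is the pointed abacus of $\lambda$ with $\Fr(\lambda)=(\Le\mid\Ar)$. Tracking bead positions through the two operations shows that the black beads above the fence of $\overline{\mathcal R}$ occupy exactly the slots indexed by $\Le^+(\lambda)$, while the white beads below occupy exactly the slots indexed by $\Ar^+(\lambda)$; by the construction of the pointed abacus recalled in \S\ref{subsec:pointedabacus}, this is precisely the abacus attached to the Frobenius symbol $(\Ar\mid\Le)=\Fr(\lambda^*)$, so $\overline{\mathcal R}$ is the pointed abacus of $\lambda^*$. This is exactly Remark~\ref{rk:pointedconjugate}. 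For the converse, I would apply this forward implication with $\lambda^*$ in the role of $\lambda$: if $\overline{\mathcal R}$ is the pointed abacus of $\lambda^*$, then $\overline{\overline{\mathcal R}}$ is the pointed abacus of $(\lambda^*)^*=\lambda$, and the involution property identifies $\overline{\overline{\mathcal R}}$ with $\mathcal R$, giving the equivalence.

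The argument is essentially bookkeeping, so there is no serious obstacle; the one step that demands care is the bead-position computation underlying the forward direction, where one must check that reflecting across the fence and swapping colors genuinely interchanges the roles of $\Ar^+(\lambda)$ and $\Le^+(\lambda)$ in the manner dictated by $\Fr(\lambda^*)=(\Ar\mid\Le)$. Once that verification and the involution property are in hand, the two implications assemble immediately.
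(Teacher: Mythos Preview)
Your proposal is correct and takes essentially the same approach as the paper, which states the lemma immediately after Remark~\ref{rk:pointedconjugate} without a separate proof, treating it as a direct consequence of that remark. You have simply made explicit the involution argument needed for the converse implication, which is the natural way to complete what the paper leaves implicit.
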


\begin{example} Let $\lambda=(5,5,4,2,1,1)$.  In
Figure~\ref{fig:exconjugue}, we describe the process to obtain the abacus
$\overline{\mathcal R}$ of $\lambda^*$ from the one of $\lambda$ given in
Example~\ref{ex:pointabacus}. 
\begin{figure}
\small
\medskip

\begin{center}
\begin{tabular}{lcccr}
\begin{tikzpicture}[line cap=round,line join=round,>=triangle
45,x=0.7cm,y=0.7cm, scale=0.8,every node/.style={scale=0.8}]
\draw (0.,6.3)-- (0.,-4.3);
\draw [dash pattern=on 2pt off 2pt](0.,4.)-- (0.,5.);
\draw [dash pattern=on 2pt off 2pt](-1,1.5)-- (1,1.5);
\draw(-0.5,3)node{$1$};
\draw(-0.5,5)node{$3$};
\draw(-0.5,6)node{$4$};

\draw(-1.5,1.5)node{$\mathfrak f$};
\draw(0.5,1)node{$0$};
\draw(0.5,-1)node{$2$};
\draw(0.5,-4)node{$5$};

\draw(0.1,-5)node{$\mathcal R$};

\begin{tiny}
\draw [] (0.,2.) circle (2.5pt);
\draw [fill=black] (0.,3.) circle (2.5pt);
\draw [] (0.,4.) circle (2.5pt);
\draw [fill=black] (0.,5.) circle (2.5pt);
\draw [fill=black] (0.,6.) circle (2.5pt);

\draw [] (0.,1.) circle (2.5pt);
\draw [fill=black] (0.,0.) circle (2.5pt);
\draw  (0.,-1.) circle (2.5pt);
\draw [fill=black](0.,-2.) circle (2.5pt);
\draw [fill=black] (0.,-3.) circle (2.5pt);
\draw [] (0.,-4.) circle (2.5pt);

\end{tiny}
\end{tikzpicture}
&
\hspace{0.1cm}
\begin{tikzpicture}[line cap=round,line join=round,>=triangle
45,x=0.7cm,y=0.7cm, scale=0.8,every node/.style={scale=0.8}]
\path[line width=1pt,-latex](-1,10.65) edge (1,10.65);
\draw [dash pattern=on 2pt off 2pt](-1,10.65)-- (1,10.6);
\draw(-0.5,4)node{\ };
\draw(-0.1,11.2)node{(i)};
\end{tikzpicture}
\hspace{0.1cm}
&
\begin{tikzpicture}[line cap=round,line join=round,>=triangle
45,x=0.7cm,y=0.7cm, scale=0.8,every node/.style={scale=0.8}]
\draw (0.,7.3)-- (0.,-4.3);
\draw [dash pattern=on 2pt off 2pt](0.,4.)-- (0.,5.);
\draw [dash pattern=on 2pt off 2pt](-1,1.5)-- (1,1.5);
\draw(-0.5,2)node{$0$};
\draw(-0.5,4)node{$2$};
\draw(-0.5,7)node{$5$};

\draw(-1.5,1.5)node{$\mathfrak f$};
\draw(0.5,0)node{$1$};
\draw(0.5,-2)node{$3$};
\draw(0.5,-3)node{$4$};
\draw(0.1,-5.1)node{\ };

\begin{tiny}

\draw [] (0.,1.) circle (2.5pt);
\draw [fill=black] (0.,0.) circle (2.5pt);
\draw [] (0.,-1.) circle (2.5pt);
\draw [fill=black] (0.,-2.) circle (2.5pt);
\draw [fill=black] (0.,-3.) circle (2.5pt);
\draw (0.,-4.) circle (2.5pt);

\draw [] (0.,2.) circle (2.5pt);
\draw [fill=black] (0.,3.) circle (2.5pt);
\draw (0.,4.) circle (2.5pt);
\draw [fill=black] [] (0.,5) circle (2.5pt);
\draw [fill=black] (0.,6) circle (2.5pt);
\draw [] (0.,7) circle (2.5pt);

\end{tiny}
\end{tikzpicture}
&
\hspace{0.1cm}
\begin{tikzpicture}[line cap=round,line join=round,>=triangle
45,x=0.7cm,y=0.7cm, scale=0.8,every node/.style={scale=0.8}]
\path[line width=1pt,-latex](-1,10.65) edge (1,10.65);
\draw [dash pattern=on 2pt off 2pt](-1,10.65)-- (1,10.6);
\draw(-0.5,4)node{\ };
\draw(-0.1,11.2)node{(ii)};
\end{tikzpicture}
\hspace{0.1cm}
&
\begin{tikzpicture}[line cap=round,line join=round,>=triangle
45,x=0.7cm,y=0.7cm, scale=0.8,every node/.style={scale=0.8}]
\draw (0.,7.3)-- (0.,-4.3);
\draw [dash pattern=on 2pt off 2pt](0.,4.)-- (0.,5.);
\draw [dash pattern=on 2pt off 2pt](-1,1.5)-- (1,1.5);
\draw(-0.5,2)node{$0$};
\draw(-0.5,4)node{$2$};
\draw(-0.5,7)node{$5$};

\draw(-1.5,1.5)node{$\mathfrak f$};
\draw(0.5,0)node{$1$};
\draw(0.5,-2)node{$3$};
\draw(0.5,-3)node{$4$};
\draw(0.1,-5)node{$\overline{ \mathcal R}$};

\begin{tiny}

\draw [fill=black] (0.,1.) circle (2.5pt);
\draw [] (0.,0.) circle (2.5pt);
\draw [fill=black] (0.,-1.) circle (2.5pt);
\draw [] (0.,-2.) circle (2.5pt);
\draw [] (0.,-3.) circle (2.5pt);
\draw [fill=black] (0.,-4.) circle (2.5pt);

\draw [fill=black] (0.,2.) circle (2.5pt);
\draw [] (0.,3.) circle (2.5pt);
\draw [fill=black](0.,4.) circle (2.5pt);
\draw [] (0.,5) circle (2.5pt);
\draw [] (0.,6) circle (2.5pt);
\draw [fill=black] (0.,7) circle (2.5pt);

\end{tiny}
\end{tikzpicture}
\end{tabular}
\end{center}
\caption{Abacus of the conjugate partition of $\lambda=(5,5,4,2,1,1)$}
\label{fig:exconjugue}
\end{figure}
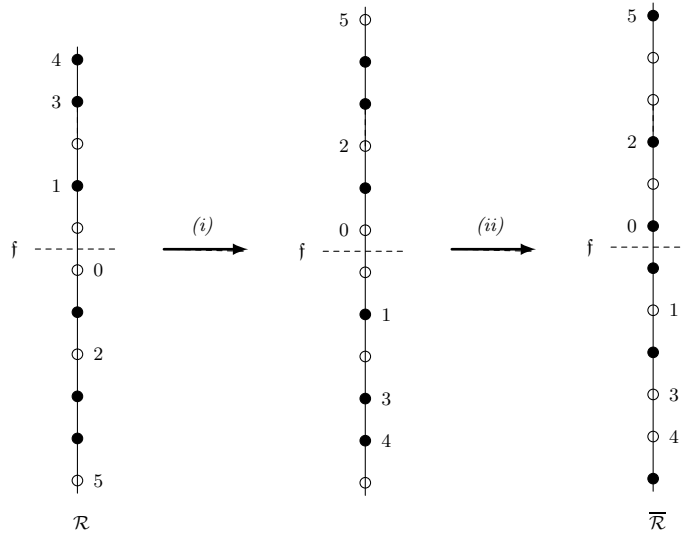
\end{example}
\normalsize
\end{remark}

\section{Quotient, core and Frobenius symbol}
\label{sec:part2}

Let $t$ be a positive integer. In this section, we recover some well-known
results from the point of view of the Frobenius symbol.  In particular, we
will see that the $t$-core and $t$-quotient of a partition can be directly
read on its Frobenius symbol.

\subsection{Pointed $t$-abacus, characteristic vector and $t$-quotient of
a partition}
\label{subsec:quotient}
A $t$-abacus is a picture consisting of $t$ runners, labeled from $0$ to
$t-1$, so that each runner is an abacus. The $t$-abacus is said to be
\emph{pointed} when the abacus on runner $j$ (for $0\leq j\leq t-1$) is a
pointed abacus and each fences are aligned on the picture.
\smallskip
\begin{center}
\definecolor{sqsqsq}{rgb}{0.12549019607843137,0.12549019607843137,0.12549019607843137}
\begin{tikzpicture}[line cap=round,line join=round,>=triangle
45,x=0.7cm,y=0.7cm, scale=0.8,every node/.style={scale=0.8}]

\draw [dash pattern=on 2pt off 2pt](-2.5,1.5)-- (7,1.5);
\draw(-3,1.5)node{$\mathfrak f$};
\draw (1.2,4) node[anchor=north west] {$\cdots$};
\draw (1.2,-0.2) node[anchor=north west] {$\cdots$};
\draw (3.2,4) node[anchor=north west] {$\cdots$};
\draw (3.2,-0.2) node[anchor=north west] {$\cdots$};

\draw (-2,-2.4) node[anchor=north west] {$0$};

\draw (-1.7,6)-- (-1.7,-2.4);
\draw [dash pattern=on 2pt off 2pt](-1.7,4.)-- (-1.7,5.);
\draw(-2.1,2)node{$0$};
\draw(-2.1,3)node{$1$};
\draw(-2.1,4)node{$2$};
\draw(-2.1,5)node{$3$};

\draw(-1.3,1)node{$0$};
\draw(-1.3,0)node{$1$};
\draw(-1.3,-1)node{$2$};
\draw(-1.3,-2)node{$3$};

\begin{scriptsize}
\draw (-1.7,2)-- ++(-2.5pt,0 pt) -- ++(5.0pt,0 pt) ++(-2.5pt,-2.5pt) -- ++(0 pt,5.0pt);
\draw (-1.7,3)-- ++(-2.5pt,0 pt) -- ++(5.0pt,0 pt) ++(-2.5pt,-2.5pt) -- ++(0 pt,5.0pt);
\draw (-1.7,4)-- ++(-2.5pt,0 pt) -- ++(5.0pt,0 pt) ++(-2.5pt,-2.5pt) -- ++(0 pt,5.0pt);
\draw (-1.7,5)-- ++(-2.5pt,0 pt) -- ++(5.0pt,0 pt) ++(-2.5pt,-2.5pt) -- ++(0 pt,5.0pt);

\draw (-1.7,1)-- ++(-2.5pt,0 pt) -- ++(5.0pt,0 pt) ++(-2.5pt,-2.5pt) -- ++(0 pt,5.0pt);
\draw (-1.7,0)-- ++(-2.5pt,0 pt) -- ++(5.0pt,0 pt) ++(-2.5pt,-2.5pt) -- ++(0 pt,5.0pt);
\draw (-1.7,-1)-- ++(-2.5pt,0 pt) -- ++(5.0pt,0 pt) ++(-2.5pt,-2.5pt) -- ++(0 pt,5.0pt);
\draw (-1.7,-2)-- ++(-2.5pt,0 pt) -- ++(5.0pt,0 pt) ++(-2.5pt,-2.5pt) -- ++(0 pt,5.0pt);

\end{scriptsize}

\draw (-0.3,-2.4) node[anchor=north west] {$1$};

\draw (0.,6)-- (0.,-2.4);
\draw [dash pattern=on 2pt off 2pt](0.,4.)-- (0.,5.);
\draw(-0.4,2)node{$0$};
\draw(-0.4,3)node{$1$};
\draw(-0.4,4)node{$2$};
\draw(-0.4,5)node{$3$};

\draw(0.4,1)node{$0$};
\draw(0.4,0)node{$1$};
\draw(0.4,-1)node{$2$};
\draw(0.4,-2)node{$3$};
\begin{scriptsize}
	
\draw (0,2)-- ++(-2.5pt,0 pt) -- ++(5.0pt,0 pt) ++(-2.5pt,-2.5pt) -- ++(0 pt,5.0pt);
\draw (0,3)-- ++(-2.5pt,0 pt) -- ++(5.0pt,0 pt) ++(-2.5pt,-2.5pt) -- ++(0 pt,5.0pt);
\draw (0,4)-- ++(-2.5pt,0 pt) -- ++(5.0pt,0 pt) ++(-2.5pt,-2.5pt) -- ++(0 pt,5.0pt);
\draw (0,5)-- ++(-2.5pt,0 pt) -- ++(5.0pt,0 pt) ++(-2.5pt,-2.5pt) -- ++(0 pt,5.0pt);

\draw (0,1)-- ++(-2.5pt,0 pt) -- ++(5.0pt,0 pt) ++(-2.5pt,-2.5pt) -- ++(0 pt,5.0pt);
\draw (0,0)-- ++(-2.5pt,0 pt) -- ++(5.0pt,0 pt) ++(-2.5pt,-2.5pt) -- ++(0 pt,5.0pt);
\draw (0,-1)-- ++(-2.5pt,0 pt) -- ++(5.0pt,0 pt) ++(-2.5pt,-2.5pt) -- ++(0 pt,5.0pt);
\draw (0,-2)-- ++(-2.5pt,0 pt) -- ++(5.0pt,0 pt) ++(-2.5pt,-2.5pt) -- ++(0 pt,5.0pt);

\end{scriptsize}

\draw (5,-2.4) node[anchor=north west] {$t-1$};

\draw (5.7,6)-- (5.7,-2.4);
\draw [dash pattern=on 2pt off 2pt](5.7,4.)-- (5.7,5.);
\draw(5.3,2)node{$0$};
\draw(5.3,3)node{$1$};
\draw(5.3,4)node{$2$};
\draw(5.3,5)node{$3$};

\draw(6.1,1)node{$0$};
\draw(6.1,0)node{$1$};
\draw(6.1,-1)node{$2$};
\draw(6.1,-2)node{$3$};

\begin{scriptsize}
\draw (5.7,2)-- ++(-2.5pt,0 pt) -- ++(5.0pt,0 pt) ++(-2.5pt,-2.5pt) -- ++(0 pt,5.0pt);
\draw (5.7,3)-- ++(-2.5pt,0 pt) -- ++(5.0pt,0 pt) ++(-2.5pt,-2.5pt) -- ++(0 pt,5.0pt);
\draw (5.7,4)-- ++(-2.5pt,0 pt) -- ++(5.0pt,0 pt) ++(-2.5pt,-2.5pt) -- ++(0 pt,5.0pt);
\draw (5.7,5)-- ++(-2.5pt,0 pt) -- ++(5.0pt,0 pt) ++(-2.5pt,-2.5pt) -- ++(0 pt,5.0pt);

\draw (5.7,1)-- ++(-2.5pt,0 pt) -- ++(5.0pt,0 pt) ++(-2.5pt,-2.5pt) -- ++(0 pt,5.0pt);
\draw (5.7,0)-- ++(-2.5pt,0 pt) -- ++(5.0pt,0 pt) ++(-2.5pt,-2.5pt) -- ++(0 pt,5.0pt);
\draw (5.7,-1)-- ++(-2.5pt,0 pt) -- ++(5.0pt,0 pt) ++(-2.5pt,-2.5pt) -- ++(0 pt,5.0pt);
\draw (5.7,-2)-- ++(-2.5pt,0 pt) -- ++(5.0pt,0 pt) ++(-2.5pt,-2.5pt) -- ++(0 pt,5.0pt);

\end{scriptsize}
\end{tikzpicture}
\end{center}
\smallskip
Let $m$ be an integer. We denote by $q_t(m)$ and $r_t(m)$ the quotient and
the remainder obtained by the euclidean division of $m$ by $t$.  We obtain
a bijection 
$$\varphi_t:\N\longrightarrow \N\times\{0,\ldots,t-1\},\ 
m\longmapsto (q_t(m),r_t(m)).$$
Using $\varphi_t$, we associate to every pointed abacus a pointed
$t$-abacus as follows.
\begin{itemize}
\item  If $x$ is a positive slot labeled by $m$, then we associate to $x$
the $q_t(m)$ positive slot on the runner $r_t(m)$. 
\item  If $x$ is a negative slot labeled by $m$, then we associate to $x$
the $q_t(m)$ negative slot on the runner $t-r_t(m)-1$. 
\end{itemize}
By abuse of notation, we still denote by $\varphi_t(x)$ the image of $x$
on the pointed $t$-abacus described above.
\smallskip

\begin{center}
\hspace{1cm}
\definecolor{sqsqsq}{rgb}{0.12549019607843137,0.12549019607843137,0.12549019607843137}
\begin{tikzpicture}[line cap=round,line join=round,>=triangle
45,x=0.7cm,y=0.7cm, scale=0.8,every node/.style={scale=0.8}]

\draw [dash pattern=on 2pt off 2pt](-2.5,1.5)-- (7,1.5);
\draw(-3,1.5)node{$\mathfrak f$};
\draw (1.2,4) node[anchor=north west] {$\cdots$};
\draw (1.2,-0.2) node[anchor=north west] {$\cdots$};
\draw (-2.5,4) node[anchor=north west] {$\cdots$};
\draw (-2.5,-0.2) node[anchor=north west] {$\cdots$};
\draw (5.5,4) node[anchor=north west] {$\cdots$};
\draw (5.5,-0.2) node[anchor=north west] {$\cdots$};

\draw(-10,1.5)node{$\mathfrak f$};
\draw [dash pattern=on 2pt off 2pt](-9.5,1.5)-- (-8.5,1.5);

\draw(-6,1.4)--(-6,1.6);
\draw[-latex](-6,1.5)--(-4.5,1.5);
\draw (-9,1.5)-- (-9,0.8);
\draw[dashed] (-9,0.8)-- (-9,-0.8);
\draw (-9,-0.8)-- (-9,-2.3);
\draw (-9,1.5)-- (-9,2.2);
\draw[dashed] (-9,2.2)-- (-9,3.6);
\draw (-9,3.5)-- (-9,6);

\draw(-9.4,2)node{$0$};
\draw(-9.5,4)node{$m$};

\draw(-8.6,1)node{$0$};
\draw(-8.4,-1)node{$m'$};
\path[line width=0.5pt,-latex,dashed,gray](-8,-1) edge [out=0] (3.5,-1);
\path[line width=0.5pt,-latex,dashed,gray](-9.2,4) edge [out=0] (-0.2,4.3);

\begin{scriptsize}
\draw (-9,2)-- ++(-2.5pt,0 pt) -- ++(5.0pt,0 pt) ++(-2.5pt,-2.5pt) -- ++(0 pt,5.0pt);
\draw (-9,4)-- ++(-2.5pt,0 pt) -- ++(5.0pt,0 pt) ++(-2.5pt,-2.5pt) -- ++(0 pt,5.0pt);
\draw (-9,5)-- ++(-2.5pt,0 pt) -- ++(5.0pt,0 pt) ++(-2.5pt,-2.5pt) -- ++(0 pt,5.0pt);

\draw (-9,1)-- ++(-2.5pt,0 pt) -- ++(5.0pt,0 pt) ++(-2.5pt,-2.5pt) -- ++(0 pt,5.0pt);
\draw (-9,-1)-- ++(-2.5pt,0 pt) -- ++(5.0pt,0 pt) ++(-2.5pt,-2.5pt) -- ++(0 pt,5.0pt);
\draw (-9,-2)-- ++(-2.5pt,0 pt) -- ++(5.0pt,0 pt) ++(-2.5pt,-2.5pt) -- ++(0 pt,5.0pt);

\end{scriptsize}

\draw (-0.7,-2.4) node[anchor=north west] {$r_t(m)$};

\draw (0,1.5)-- (0,0.8);
\draw[dashed] (0,0.8)-- (0,-0.8);
\draw[dashed] (0,-0.8)-- (0,-2.3);
\draw (0,1.5)-- (0,2.2);
\draw[dashed] (0,2.2)-- (0,3.6);
\draw (0,3.5)-- (0,6);

\draw(-0.4,2)node{$0$};
\draw(-0.4,3)node{$1$};
\draw(-0.8,4)node{$q_t(m)$};
\draw(-0.4,5)node{$3$};

\draw(0.4,1)node{$0$};
\begin{scriptsize}
	
\draw (0,2)-- ++(-2.5pt,0 pt) -- ++(5.0pt,0 pt) ++(-2.5pt,-2.5pt) -- ++(0 pt,5.0pt);
\draw (0,3)-- ++(-2.5pt,0 pt) -- ++(5.0pt,0 pt) ++(-2.5pt,-2.5pt) -- ++(0 pt,5.0pt);
\draw (0,4)-- ++(-2.5pt,0 pt) -- ++(5.0pt,0 pt) ++(-2.5pt,-2.5pt) -- ++(0 pt,5.0pt);
\draw (0,5)-- ++(-2.5pt,0 pt) -- ++(5.0pt,0 pt) ++(-2.5pt,-2.5pt) -- ++(0 pt,5.0pt);

\draw (0,1)-- ++(-2.5pt,0 pt) -- ++(5.0pt,0 pt) ++(-2.5pt,-2.5pt) -- ++(0 pt,5.0pt);
\draw (0,0)-- ++(-2.5pt,0 pt) -- ++(5.0pt,0 pt) ++(-2.5pt,-2.5pt) -- ++(0 pt,5.0pt);
\draw (0,-1)-- ++(-2.5pt,0 pt) -- ++(5.0pt,0 pt) ++(-2.5pt,-2.5pt) -- ++(0 pt,5.0pt);
\draw (0,-2)-- ++(-2.5pt,0 pt) -- ++(5.0pt,0 pt) ++(-2.5pt,-2.5pt) -- ++(0 pt,5.0pt);

\end{scriptsize}

\draw (2.4,-2.4) node[anchor=north west] {$t-r_t(m')-1$};

\draw (3.7,1.5)-- (3.7,0.8);
\draw[dashed] (3.7,0.8)-- (3.7,-0.8);
\draw[] (3.7,-0.8)-- (3.7,-2.3);
\draw (3.7,1.5)-- (3.7,2.2);
\draw[dashed] (3.7,2.2)-- (3.7,3.6);
\draw[dashed] (3.7,3.5)-- (3.7,6);

\draw(3.3,2)node{$0$};

\draw(4.1,1)node{$0$};
\draw(4.7,-1)node{$q_t(m')$};
\begin{scriptsize}

\draw (3.7,2)-- ++(-2.5pt,0 pt) -- ++(5.0pt,0 pt) ++(-2.5pt,-2.5pt) -- ++(0 pt,5.0pt);
\draw (3.7,3)-- ++(-2.5pt,0 pt) -- ++(5.0pt,0 pt) ++(-2.5pt,-2.5pt) -- ++(0 pt,5.0pt);
\draw (3.7,4)-- ++(-2.5pt,0 pt) -- ++(5.0pt,0 pt) ++(-2.5pt,-2.5pt) -- ++(0 pt,5.0pt);
\draw (3.7,5)-- ++(-2.5pt,0 pt) -- ++(5.0pt,0 pt) ++(-2.5pt,-2.5pt) -- ++(0 pt,5.0pt);

\draw (3.7,1)-- ++(-2.5pt,0 pt) -- ++(5.0pt,0 pt) ++(-2.5pt,-2.5pt) -- ++(0 pt,5.0pt);
\draw (3.7,0)-- ++(-2.5pt,0 pt) -- ++(5.0pt,0 pt) ++(-2.5pt,-2.5pt) -- ++(0 pt,5.0pt);
\draw (3.7,-1)-- ++(-2.5pt,0 pt) -- ++(5.0pt,0 pt) ++(-2.5pt,-2.5pt) -- ++(0 pt,5.0pt);
\draw (3.7,-2)-- ++(-2.5pt,0 pt) -- ++(5.0pt,0 pt) ++(-2.5pt,-2.5pt) -- ++(0 pt,5.0pt);

\end{scriptsize}
\end{tikzpicture}
\end{center}

\begin{remark}
If a positive (resp. negative) slot is at position $q$ on the $j$th
runner, then the label of the corresponding positive bead on the pointed
abacus is $tq+j$ (resp. $tq+t-j-1$). 
\end{remark}

Let $\lambda$ be a partition. The \emph{pointed $t$-abacus associated to
$\lambda$} is the $t$-abacus with beads obtained from the pointed
$t$-abacus of $\lambda$ so that the bead at the position $m$ (on the
positive or negative slot of the pointed abacus of $\lambda$) is put on
the corresponding slot described by the previous process.

\begin{example}
\label{ex:pointedtabacus}
Let $t=5$ and $\lambda=(5,5,4,2,1,1)$. The pointed $t$-abacus associated
to $\lambda$ is
\medskip
\begin{center}
\definecolor{sqsqsq}{rgb}{0.12549019607843137,0.12549019607843137,0.12549019607843137}
\begin{tikzpicture}[line cap=round,line join=round,>=triangle
45,x=0.7cm,y=0.7cm, scale=0.8,every node/.style={scale=0.8}]

\draw [dash pattern=on 2pt off 2pt](-2.5,1.5)-- (7,1.5);
\draw(-3,1.5)node{$\mathfrak f$};

\draw (-2,-1.4) node[anchor=north west] {$0$};

\draw (-1.7,3)-- (-1.7,-1.4);

\begin{scriptsize}

\draw (-1.7,2) circle (2.5pt);
\draw (-1.7,3) circle (2.5pt);

\draw [fill=black] (-1.7,1) circle (2.5pt);
\draw [fill=black] (-1.7,0) circle (2.5pt);
\draw [fill=black] (-1.7,-1) circle (2.5pt);
\end{scriptsize}

\draw (-0.3,-1.4) node[anchor=north west] {$1$};

\draw (0.,3)-- (0.,-1.4);

\draw(-0.4,2)node{$1$};

\begin{scriptsize}
	
\draw [fill=black] (0,2) circle (2.5pt);
\draw (0,3) circle (2.5pt);

\draw [fill=black](0,1) circle (2.5pt);
\draw [fill=black] (0,0) circle (2.5pt);
\draw [fill=black] (0,-1) circle (2.5pt);

\end{scriptsize}

\draw (1.4,-1.4) node[anchor=north west] {$2$};

\draw (1.7,3)-- (1.7,-1.4);

\draw(2.1,1)node{$2$};
\begin{scriptsize}
	
\draw (1.7,2) circle (2.5pt);
\draw (1.7,3) circle (2.5pt);

\draw (1.7,1) circle (2.5pt);
\draw [fill=black] (1.7,0) circle (2.5pt);
\draw [fill=black] (1.7,-1) circle (2.5pt);

\end{scriptsize}

\draw (3.1,-1.4) node[anchor=north west] {$3$};

\draw (3.4,3)-- (3.4,-1.4);

\draw(3,2)node{$3$};

\begin{scriptsize}
	
\draw [fill=black] (3.4,2) circle (2.5pt);
\draw (3.4,3) circle (2.5pt);

\draw [fill=black] (3.4,1) circle (2.5pt);
\draw [fill=black] (3.4,0) circle (2.5pt);
\draw [fill=black] (3.4,-1) circle (2.5pt);

\end{scriptsize}

\draw (4.8,-1.4) node[anchor=north west] {$4$};

\draw (5.1,3)-- (5.1,-1.4);

\draw(4.7,2)node{$4$};

\draw(5.5,1)node{$0$};
\draw(5.5,0)node{$5$};

\begin{scriptsize}

\draw [fill=black] (5.1,2) circle (2.5pt);
\draw (5.1,3) circle (2.5pt);

\draw (5.1,1) circle (2.5pt);
\draw (5.1,0) circle (2.5pt);
\draw [fill=black] (5.1,-1) circle (2.5pt);

\end{scriptsize}
\end{tikzpicture}
\end{center}
Here the number just to the left of a bead refers to its label in the
pointed abacus of $\lambda$.
\end{example}

Let a pointed $t$-abacus be such that the numbers of positive black beads
and negative white beads are finite. Such an $t$-abacus corresponds to a
partition if and only if these two numbers are equal. In this case, we say
that it is  \emph{a pointed $t$-abacus of a partition}.  In fact, by
construction and following the bijection given before, the positions of
the black beads above $\mathfrak f$ correspond to the value of the arms of
$\lambda$ and the positions of the white beads below $\mathfrak f$ to its
legs. More precisely, for any $0\leq j\leq q-1$, set
\begin{equation}
\label{eq:armj}
\Ar_j^+(\lambda)=\{a\in\Ar^+(\lambda)\mid r_t(a)=j\} \quad \text{and}\quad
\widetilde{\Ar}^+_j(\lambda)=\{q_t(a)\mid a\in\Ar_j^+(\lambda)\},
\end{equation}
and
\begin{equation}
\label{eq:legj}
\Le_j^+(\lambda)=\{l\in\Le^+(\lambda)\mid r_t(l)=t-j-1\} \quad
\text{and}\quad \widetilde{\Le}^+_j(\lambda)=\{q_t(l)\mid l\in\Le_j^+(\lambda)\}.
\end{equation}
We remark that the arms congruent to $j$ modulo $t$ lie on the
$jth$-runner.  Furthermore, if we know the pointed $t$-abacus of a
partition, then we can recover its set of arms with residue $j$ modulo $t$
by
\begin{equation}
\label{eq:armjrecover}
\Ar_j^+(\lambda)=\left\{qt+j\mid q\in\widetilde{\Ar}_j^+(\lambda)\right\}.
\end{equation}
Similarly, the legs on the $jth$-runner are congruent to $t-j-1$ and
\begin{equation}
\label{eq:legjrecover}
\Le_j^+(\lambda)=\left\{qt+t-j-1\mid
q\in\widetilde{\Le}_j^+(\lambda)\right\}.
\end{equation}

Note also that we can read the \emph{same} information on a pointed abacus
of a partition as on its pointed $t$-abacus. In particular, the
observations in Remark~\ref{rk:abacusinfo} are valid.  Furthermore, we
remark that for all nonnegative integers $x$ and $y$, we have $$x\leq y
\quad\text{if and only if}\quad (q_t(x),r_t(x))\leq (q_t(y),r_t(y)),$$
where the order relation in the right member is the lexicographic order.

\begin{remark}
When we look at a pointed $t$-abacus of a partition, however, we observe
that each individual the runner is not necessarily a pointed abacus of a
partition. The runners $1$, $2$, $3$ and $4$ in
Example~\ref{ex:pointedtabacus} are an example of this.
\end{remark}

In order to recover a $t$-tuple of pointed abaci, we now will introduce
the concept of \emph{characteristic vector} of a pointed $t$-abacus.

\medskip

For any abacus, we can produce a new abacus by moving each bead one slot
position up with respect to the fence. We define hence  the \emph{push up}
operation $p^+$. Similarly, we define  the \emph{push down} operation
$p^-$ by moving each bead one slot position down. Let $\delta\in\Z$.  The
\emph{$\delta$-push operation} $p^{\delta}$ on the abacus is obtained by
applying $\delta$ times $p^+$ successively on the abacus if $\delta\geq
0$, and $-\delta$ times $p^-$ otherwise.

\begin{lemma}
\label{lem:pushdown}
Consider a pointed abacus, and put beads in slots so that the numbers of
positive black beads $u$ and of negative white beads $v$ are finite.  Then
$p^{v-u}$ produces a pointed abacus of a partition.
\end{lemma}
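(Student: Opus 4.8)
The plan is to attach to any such pointed abacus a single integer invariant, the difference $u-v$ between the number of positive black beads and the number of negative white beads, and to show that each elementary push changes it by exactly one. Concretely, I would first index all slots by $\Z$: the positive slot $i$ by the integer $i\geq 0$ and the negative slot $j$ by the integer $-j-1\leq -1$, so that the fence $\mathfrak f$ sits between $0$ and $-1$. In these coordinates the push-up $p^+$ is the bijection of beads carrying the bead at position $c$ to position $c+1$, while $p^-$ carries $c$ to $c-1$. Finiteness of $u$ and of $v$ is visibly preserved by any finite iterate of $p^{\pm}$ (each push alters these counts by at most one), so every $p^{\delta}(\mathcal R)$ is again a pointed abacus with finite $u$ and $v$, and the invariant $u-v$ is defined along the whole orbit.

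Next I would analyse a single push-up. The only bead that crosses $\mathfrak f$ is the one occupying negative slot $0$ (position $-1$), which is carried to positive slot $0$ (position $0$); every other bead stays on the same side of the fence. Hence $u-v$ can only change through this bead, and I would split on its colour. If it is black, it was not counted by $v$ and now becomes a positive black bead, so $u$ increases by $1$ and $v$ is unchanged; if it is white, it was counted by $v$ and now becomes a positive white bead, invisible to $u$, so $v$ decreases by $1$ and $u$ is unchanged. In either case $u-v$ increases by exactly $1$ under $p^+$. The symmetric argument, applied to the bead in positive slot $0$ that crosses $\mathfrak f$ downward, shows that $u-v$ decreases by exactly $1$ under $p^-$.

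Combining the two facts and iterating (an immediate induction on $|\delta|$) gives, for every $\delta\in\Z$, that $p^{\delta}$ changes $u-v$ by $\delta$. Starting from the given abacus, whose invariant is $u-v$, and taking $\delta=v-u$ then yields a pointed abacus whose invariant equals $(u-v)+(v-u)=0$, that is, one with equal numbers of positive black beads and negative white beads. By definition this is a pointed abacus of a partition, which is exactly the assertion of the lemma.

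I do not expect a serious obstacle here: the whole content is the bookkeeping that exactly one bead crosses $\mathfrak f$ per push and that its contribution to $u-v$ is $+1$ irrespective of its colour. The only point requiring a little care is to state the colour case-split cleanly, using that a black bead below the fence is invisible to $v$ and a white bead above the fence is invisible to $u$, so that the two cases genuinely produce the same increment; once this is pinned down, the induction and the final choice $\delta=v-u$ are routine.
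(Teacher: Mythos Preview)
Your proof is correct and follows essentially the same approach as the paper: both analyse the single bead that crosses the fence under one push and observe that $u-v$ changes by exactly $1$ regardless of its colour. The paper then counts all $|v-u|$ crossing beads at once rather than inducting, but this is only a cosmetic difference.
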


\begin{proof}
When we apply $p^+$, if the bead on the negative slot $0$ is white, then
the resulting abacus has $u$ positive black beads and $v-1$ negative white
beads.  If it is black, then the new abacus has $u+1$ positive black beads
and $v$ negative white beads. This is similar for $p^-$, when the bead on
the positive slot $0$ is black, the new abacus has $u-1$ positive black
beads and $v$ negatives white beads, and when the bead is white, there is
$u$ positive black beads and $v+1$ negative white beads in the resulting
abacus. 

Let $d$ be a positive integer. Denote by $\alpha$ and $\beta$ the numbers
of white and black beads on the negative slots labeled from $0$ to $d-1$.
Then the new abacus obtained from the operation $p^c$ has $u+\beta$
positive black beads and $v-\alpha$ negative white beads. In particular,
$\alpha+\beta=d$, and we have $u+\beta=v-\alpha$ if and only if
$\alpha+\beta=v-u$. Hence, for $d=v-u$, the resulting abacus is a pointed
abacus of a partition. Now, when $d$ is negative, $\alpha$ and $\beta$
denote the numbers of white and black beads on the first $d$ positive
slots, and $p^c$ produces an abacus with $u-\beta$ positive black beads
and $v+\alpha$ negative white beads.  Since $|d|=\alpha+\beta$, we obtain
that $u-\beta=v+\alpha$ if and only if $v-u=-\alpha-\beta=-|d|=d$, as
required.
\end{proof}

\begin{lemma}
\label{lem:char}
Let $t$ be a positive integer and $\lambda$ be a partition, and consider
its pointed $t$-abacus. For all $0\leq j\leq t-1$, denote by $u_j$ and
$v_j$ the numbers of positive black beads and negative white beads on the
$j$th runner. Write $c_j=u_j-v_j$. Then 
$$\sum_{j=0}^{t-1}c_j=0.$$  
\end{lemma}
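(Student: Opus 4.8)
The plan is to reduce the statement to the single fact, recorded just before Definition of the pointed abacus of a partition, that the pointed abacus of $\lambda$ is \emph{a pointed abacus of a partition}, i.e. that in it the number of positive black beads equals the number of negative white beads. Concretely, by the construction of the pointed abacus of $\lambda$, the positive black beads sit exactly at the slots labeled $a_0,\ldots,a_{s-1}$ and the negative white beads exactly at the slots labeled $l_0,\ldots,l_{s-1}$, where $s$ is the Durfee number. Hence the total number of positive black beads is $s$, and likewise the total number of negative white beads is $s$.

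Next I would transfer this count to the pointed $t$-abacus via the map $\varphi_t$. The key point is that $\varphi_t$ sends a positive slot to a positive slot (on runner $r_t(m)$) and a negative slot to a negative slot (on runner $t-r_t(m)-1$), and that a bead is merely relocated without changing its color. Thus the multiset of positive black beads on the whole pointed $t$-abacus is in bijection, through $\varphi_t$, with the positive black beads of the pointed abacus of $\lambda$; partitioning the former according to the runner on which they land gives
\begin{equation*}
\sum_{j=0}^{t-1}u_j=s.
\end{equation*}
The identical argument applied to the negative white beads yields $\sum_{j=0}^{t-1}v_j=s$.

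Subtracting these two equalities gives
\begin{equation*}
\sum_{j=0}^{t-1}c_j=\sum_{j=0}^{t-1}(u_j-v_j)=s-s=0,
\end{equation*}
as required. I do not expect any genuine obstacle here: the whole content is that $\varphi_t$ is a color-preserving, sign-preserving bijection, so the global tallies of positive black beads and of negative white beads are the same whether read off the pointed abacus or the pointed $t$-abacus, and these two tallies are equal precisely because we started from the pointed abacus of an honest partition. The only care needed is to keep the two types of beads (positive black versus negative white) cleanly separated throughout, and to note that the runner index assigned to negative slots ($t-r_t(m)-1$ rather than $r_t(m)$) is irrelevant for the total count, since we only sum over all runners.
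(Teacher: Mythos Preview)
Your argument is correct and is in fact the more direct route: you simply observe that $\varphi_t$ is a color- and sign-preserving bijection between the slots of the pointed abacus of $\lambda$ and those of its pointed $t$-abacus, so the global counts $\sum_j u_j$ and $\sum_j v_j$ both equal the Durfee number $s$, and the difference vanishes.

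The paper takes a different path. Rather than appealing directly to the global equality of positive black and negative white beads inherited from the pointed abacus, it applies the push operation $p^{-c_j}$ to each runner (invoking Lemma~\ref{lem:pushdown}) so that every runner becomes individually the pointed abacus of a partition. It then carefully bookkeeps how many positive black beads and negative white beads appear or disappear under these pushes, and deduces $c^+=-c^-$ from the fact that the resulting $t$-abacus again has equal totals. Your argument is shorter and conceptually cleaner for this particular lemma; the paper's argument, while more laborious here, rehearses the push machinery that is about to be used to define the $t$-quotient and the characteristic vector, so it serves a secondary expository purpose.
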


\begin{proof}
The pointed $t$-abacus of $\lambda$ has the same number $x$ of positive
black beads and negative white beads. Write $c^+$ and $c^-$ for the sum of
positive and negative $c_j$'s.  First, we apply $p^{-c_j}$ on the runner
$j$ for all $0\leq j\leq t-1$.  After these operations, each runner is a
pointed abacus of a partition by Lemma~\ref{lem:pushdown}. In particular,
in the resulting pointed $t$-abacus, the numbers  of positive black beads
and negative white beads are equal, and denoted by $y$. Now, denote by
$k^+$ the number of positive black beads that appear after these
operations on all runners with $c_j\leq 0$, and $k^-$ the number of
negative white beads that appear on all runners with $c_j\geq 0$. The
proof of Lemma~\ref{lem:pushdown} implies that $c^+-k^-$ positive black
beads and $|c^-|-k^+$ negative white beads disappear.  Hence, the number
of positive black beads and negative white beads in the new pointed
$t$-abacus are $x+k^+-(c^+-k^-)$ and $x+k^--(|c^-|-k^+)$, and it follows
that
$$x+k^+-(c^+-k^-)=y\quad\text{and}\quad x+k^--(|c^-|-k^+)=y,$$
and $k^++k^--|c^-|=k^++k^--c^+$. Thus, $c^+=-c^-$ and the result follows.
\end{proof}

\begin{definition}
\label{def:charquotient}
Let $t$ be a positive integer and $\lambda$ be a partition.  For all
$0\leq j\leq t-1$, define $c_j\in\Z$ as in Lemma~\ref{lem:char}.  The
tuple $c_t(\lambda)=(c_0,\ldots,c_{t-1})$ is called \emph{the
characteristic vector} of $\lambda$. Furthermore, denote by $\lambda^j$
the partition with pointed abacus obtained by applying $p^{-c_j}$ on the
$j$th runner of the pointed $t$-abacus of $\lambda$. The $t$-tuple of
partitions
$$\lambda^{(t)}=(\lambda^0,\ldots,\lambda^{t-1})\in\mathcal P^t$$ 
is called \emph{the $t$-quotient} of $\lambda$.   
\end{definition}

\begin{remark}
By Remark~\ref{rk:usualabacus}, the $t$-quotient of a partition introduced
in Definition~\ref{def:charquotient} coincides with the ``usual''
$t$-quotient of partitions given in~\cite[2.7.29]{James-Kerber}.
\end{remark}

Let $t$ be a positive integer and $c=(c_0,\ldots,c_{t-1})$ be a $t$-tuple
of integers that sum to $0$. For every pointed $t$-abacus $\mathcal
T=(\mathcal R_0,\ldots,\mathcal R_{t-1})$, we define its \emph{$c$-push}
to be the pointed $t$-abacus
\begin{equation}
\label{eq:push}
p^c(\mathcal T)=(p^{c_0}(\mathcal R_0),\ldots,p^{c_{t-1}}(\mathcal
R_{t-1})),
\end{equation}
obtained by applying $p^{c_j}$ the $c_j$-push to the runner $j$ for all
$0\leq j\leq t-1$.

\begin{example}
\label{ex:carquotient}
We continue Example~\ref{ex:pointedtabacus}. We see that
$c_5(\lambda)=(0,1,-1,1,-1)$. Hence, after applying the
$(0,-1,1,-1,1)$-push to the pointed  $5$-abacus of $\lambda$, we obtain
\medskip
\begin{center}
\definecolor{sqsqsq}{rgb}{0.12549019607843137,0.12549019607843137,0.12549019607843137}
\begin{tikzpicture}[line cap=round,line join=round,>=triangle
45,x=0.7cm,y=0.7cm, scale=0.8,every node/.style={scale=0.8}]

\draw [dash pattern=on 2pt off 2pt](-2.5,1.5)-- (7,1.5);
\draw(-3,1.5)node{$\mathfrak f$};

\draw (-2,0.7) node[anchor=north west] {$0$};

\draw (-1.7,3)-- (-1.7,0.7);

\begin{scriptsize}

\draw (-1.7,2) circle (2.5pt);
\draw (-1.7,3) circle (2.5pt);

\draw [fill=black] (-1.7,1) circle (2.5pt);
\end{scriptsize}

\draw (-0.3,0.7) node[anchor=north west] {$1$};

\draw (0.,3)-- (0.,0.7);

\begin{scriptsize}
	
\draw  (0,2) circle (2.5pt);
\draw (0,3) circle (2.5pt);

\draw [fill=black](0,1) circle (2.5pt);

\end{scriptsize}

\draw (1.4,0.7) node[anchor=north west] {$2$};

\draw (1.7,3)-- (1.7,0.7);

\begin{scriptsize}
	
\draw (1.7,2) circle (2.5pt);
\draw (1.7,3) circle (2.5pt);

\draw [fill=black](1.7,1) circle (2.5pt);

\end{scriptsize}

\draw (3.1,0.7) node[anchor=north west] {$3$};

\draw (3.4,3)-- (3.4,0.7);

\begin{scriptsize}
	
\draw (3.4,2) circle (2.5pt);
\draw (3.4,3) circle (2.5pt);

\draw [fill=black] (3.4,1) circle (2.5pt);

\end{scriptsize}

\draw (4.8,0.7) node[anchor=north west] {$4$};

\draw (5.1,3)-- (5.1,0.7);

\begin{scriptsize}

\draw (5.1,2) circle (2.5pt);
\draw [fill=black](5.1,3) circle (2.5pt);

\draw (5.1,1) circle (2.5pt);

\end{scriptsize}
\end{tikzpicture}
\end{center}
Then $\Fr(\lambda^4)=(0|1)$ and $\lambda^4=(2)$. It follows that the
$5$-quotient of $\lambda$ is
$$\lambda^{(5)}=(\emptyset,\emptyset,\emptyset,\emptyset,(2)).$$
\end{example}

\begin{lemma}
\label{lem:addchar}
Let $t$ be a positive integer and $c=(c_0,\ldots,c_{t-1})$ be a $t$-tuple
of integers that sum to $0$. Then for any pointed $t$-abacus $\mathcal T$
of a partition, $p^c(\mathcal T)$ is also a pointed $t$-abacus of a
partition. 
\end{lemma}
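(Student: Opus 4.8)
The plan is to reduce the whole statement to tracking a single integer invariant on each runner. For a runner $\mathcal{R}$ on which the numbers of positive black beads and of negative white beads are both finite, write $f(\mathcal{R})$ for the difference (positive black beads) $-$ (negative white beads). Since a pointed $t$-abacus of a partition $\mathcal{T}=(\mathcal{R}_0,\ldots,\mathcal{R}_{t-1})$ has, by definition, equal (and finite) total numbers of positive black and negative white beads, and since these totals dominate the corresponding counts on each individual runner, every $f(\mathcal{R}_j)$ is finite and the partition condition reads simply $\sum_{j=0}^{t-1} f(\mathcal{R}_j)=0$. Thus it suffices to show that the $c$-push preserves finiteness on each runner and leaves the sum of the $f(\mathcal{R}_j)$ equal to $0$.

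First I would record the effect of a single push on $f$. The case analysis already carried out in the proof of Lemma~\ref{lem:pushdown} shows that applying $p^+$ to a runner moves the bead on the negative slot $0$ across the fence: if that bead is white the negative white count drops by one, and if it is black the positive black count rises by one. In either case $f$ increases by exactly $1$, and symmetrically $p^-$ decreases $f$ by exactly $1$. Iterating, for every $\delta\in\Z$ one obtains
\begin{equation}
\label{eq:finc}
f\bigl(p^{\delta}(\mathcal{R})\bigr)=f(\mathcal{R})+\delta .
\end{equation}
Moreover a push disturbs only finitely many beads near the fence, so finiteness of both counts is preserved.

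Applying \eqref{eq:finc} runner by runner with $\delta=c_j$, the $c$-push sends each $f(\mathcal{R}_j)$ to $f(\mathcal{R}_j)+c_j$, still finite. Summing over $j$ and using the hypothesis $\sum_{j} c_j=0$ together with $\sum_{j} f(\mathcal{R}_j)=0$ (valid because $\mathcal{T}$ is a pointed $t$-abacus of a partition) yields $\sum_{j} f\bigl(p^{c_j}(\mathcal{R}_j)\bigr)=0$. Hence $p^c(\mathcal{T})$ again has equal finite numbers of positive black and negative white beads, so it is a pointed $t$-abacus of a partition. There is no genuine obstacle here: the only delicate point is the sign bookkeeping when a bead crosses the fence under a push, which is precisely the situation already isolated in the proof of Lemma~\ref{lem:pushdown}; once the increment formula \eqref{eq:finc} is in hand, the conclusion is a one-line summation.
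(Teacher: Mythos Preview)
Your proof is correct. Both arguments rest on the same case analysis from the proof of Lemma~\ref{lem:pushdown}, but you package it more cleanly: you isolate the per-runner invariant $f(\mathcal R)=u-v$ and observe once and for all that $f(p^{\delta}(\mathcal R))=f(\mathcal R)+\delta$, after which the conclusion is an immediate summation. The paper instead tracks the total numbers of positive black and negative white beads separately, splitting the runners according to the sign of $c_j$ and counting how many beads of each type appear or disappear; this gives the same answer but requires keeping four auxiliary quantities $c^+,c^-,k^+,k^-$ in play. Your invariant-based formulation avoids the sign split entirely and makes the linearity in $\delta$ transparent, at the cost of relying on (rather than reproving) the two-case analysis already done in Lemma~\ref{lem:pushdown}.
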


\begin{proof} 
Write $c^+$ and $c^-$ for the sum of positive and negative $c_j$'s. Let
$k^+$ be the number of positive black beads that appear after the push on
all runner with positive $c_j$ and $k^-$ be the number of negative beads
that appear on all runner with negative $c_j$. Then $c^+-k^+$ negative
white beads and $|c^-|-k^-$ positive black beads disappear after the
manipulation.  Since we start with a pointed $t$-abacus of a partition,
the number of positive black beads and negative white beads coincide and
is denoted by $a$. After the $c$-push, the number of positive black beads
is
$$b=a+k^+-(|c^-|-k^-)=a+k^++k^--|c^-|=a+k^++k^-+c^-,$$
and the number of negative white beads is
$$w=a+k^--(c^+-k^+)=a+k^++k^--c^+=a+k^++k^-+c^-$$
since $c^++c^-=0$. Thus, $b=w$, as required.
\end{proof}
\begin{remark}
\label{rk:carquotient}
\noindent
\begin{enumerate}[(i)]
\item 
Write 
\begin{equation}
\label{eq:defct}
\core_t=\left\{(c_0,\ldots,c_{t-1})\mid \sum_{j=0}^{t-1}c_j=0\right\}.
\end{equation}
The application 
\begin{equation}
\label{eq:bijectioncharquo}
\psi_t: \mathcal P\longrightarrow \core_t\times \mathcal P^t,
\quad\lambda\longmapsto(c_t(\lambda),\lambda^{(t)})
\end{equation}
is bijective. We give its inverse map. Let $c=(c_0,\ldots,c_{t-1})\in
\core_t$ and $\underline
\lambda=(\lambda^0,\ldots,\lambda^{t-1})\in\mathcal P^t$ be a $t$-tuple of
partitions. First, we construct a pointed $t$-abacus of partition so that
its $j$th runner is the pointed abacus of $\lambda^j$ for $0\leq j\leq
t-1$. Then we apply the $c_j$-push to each runner $j$. Since
$\sum_{j}c_j=0$, we obtain by Lemma~\ref{lem:addchar} a pointed $t$-abacus
of a partition $\lambda$. By construction, we have
$\psi_t^{-1}=(c,\underline\lambda)\mapsto \lambda$.  \item The
characteristic vector of $\lambda$ can be obtained from it Frobenius
symbol. Indeed, for $0\leq j\leq t-1$, Lemma~\ref{lem:char} gives
\begin{equation}
\label{eq:valchar}
c_j=|\widetilde{\Ar}_j^+(\lambda)|-|\widetilde{\Le}_j^+(\lambda)|,
\end{equation}
where $\widetilde{\Ar}_j^+(\lambda)$ and $\widetilde{\Le}_j^+(\lambda)$
are defined in~(\ref{eq:armj}) and~(\ref{eq:legj}).  
\item Let $0\leq j\leq t-1$.  To recover the parts of the partition
$\lambda^j$ of the $t$-quotient of $\lambda$, we do not have to construct
explicitly its pointed abacus. Indeed the property
Remark~\ref{rk:abacusinfo}(ii) is invariant under the $c$-push. So we can
recover the parts of $\lambda_j$ directly from the $j$th runner of the
original pointed $t$-abacus of $\lambda$.  For example, we see in
Example~\ref{ex:pointedtabacus} that $\lambda^4$ has one part (because it
has only one arm corresponding to the positive black bead and no coarm) of
size $2$ because there are two white beads under the positive black bead.
See also Example~\ref{ex:carquotient}.  
\end{enumerate} 
\end{remark}
\medskip

\subsection{The characteristic vector and $t$-quotient of the conjugate
partition}

For $\mathcal T=(\mathcal R_0,\ldots,\mathcal R_{t-1})$ a pointed
$t$-abacus with runners $\mathcal R_j$ (for $0\leq j\leq t-1$), we define
a pointed $t$-abacus (not necessarily of a partition) by setting
$$\mathcal T^*=(\overline{\mathcal R}_{t-1},\ldots,\overline{\mathcal
R}_0).$$

\begin{lemma}
\label{lem:preservestar}
Let $\mathcal R$ be an abacus (not necessarily pointed) and $c\in\Z$.
Then $$\overline{p^c(\mathcal R)}=p^{-c}(\overline{\mathcal R}).$$
\end{lemma}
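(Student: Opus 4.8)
The plan is to reduce the identity to a bead-by-bead verification after fixing a single integer coordinate on the slots of the runner. First I would index the slots of $\mathcal R$ by $\Z$, assigning the coordinate $z=k$ to the positive slot $k$ (for $k\geq 0$) and the coordinate $z=-k-1$ to the negative slot $k$ (for $k\geq 0$), so that the fence sits halfway between the slots $z=0$ and $z=-1$. Since every slot of an abacus carries a bead, $\mathcal R$ is completely encoded by the colours of the beads at the positions $z\in\Z$, and both $p^c$ and the bar operation act as bijections of this slot set (the bar operation additionally swapping colours). It is therefore legitimate to track the fate of a single bead under each composite and check that they agree.

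Next I would record the two operations as explicit maps. The push $p^c$ sends the bead in slot $z$ to slot $z+c$ and leaves its colour unchanged: for $c=1$ a bead moves one step up, i.e.\ in the direction of increasing $z$, for $c=-1$ one step down, and the general case follows by iteration. The reflection across the fence of Remark~\ref{rk:pointedconjugate}(i) is the involution $z\mapsto -1-z$, since it must interchange the positive slot $k$ at $z=k$ with the negative slot $k$ at $z=-k-1$ (the fence being at the half-integer position $-\tfrac12$ forces $z\mapsto 2(-\tfrac12)-z$ rather than $z\mapsto -z$). Combining with step (ii), the bar operation sends a bead of colour $\chi$ in slot $z$ to a bead of the opposite colour $\overline\chi$ in slot $-1-z$.

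With these formulas the lemma is a one-line composition check. Applying $p^c$ and then the bar operation sends a bead of colour $\chi$ in slot $z$ to slot $-1-(z+c)=-1-z-c$ with colour $\overline\chi$; applying the bar operation and then $p^{-c}$ sends the same bead to slot $(-1-z)+(-c)=-1-z-c$, again with colour $\overline\chi$. As the two composites agree on every slot, the runners $\overline{p^c(\mathcal R)}$ and $p^{-c}(\overline{\mathcal R})$ coincide. Note that the argument uses neither finiteness nor the equality of the bead counts, which is exactly why it holds for an arbitrary (not necessarily pointed) runner.

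I do not expect a genuine obstacle here: the content is entirely the arithmetic identity $-1-(z+c)=(-1-z)-c$. The only point needing care is fixing the conventions consistently—in particular the half-integer location of the fence, which pins down the reflection as $z\mapsto -1-z$, and the sign convention that ``push up'' increases $z$. Once these are settled the equality is immediate.
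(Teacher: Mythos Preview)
Your proof is correct and follows the same bead-by-bead strategy as the paper, but you streamline it by passing to the single $\Z$-coordinate (essentially the map $\iota$ that the paper introduces only later, in \S\ref{sec:part3}). The paper instead keeps the separate positive/negative slot labels throughout and therefore splits into three cases according to whether the bead starts above the fence, starts below but crosses the fence after the push, or stays below; each case is then checked by hand. Your uniform coordinate collapses all three into the single arithmetic identity $-1-(z+c)=(-1-z)-c$, which is cleaner and makes the symmetry between $c$ and $-c$ transparent (so you need not treat $c\geq 0$ and $c<0$ separately, as the paper does). The content is the same; the gain is purely in presentation.
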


\begin{proof}
We give a proof only for $c\geq 0$. The other case is similar.  For a
color $u\in\{\text{black},\text{white}\}$, we write $\overline u$ for its
opposite color.  Let $x\in\N$. Suppose that the bead at the slot labeled
by $x$ has color $u$. We have three cases to consider. 

First, suppose that $x$ labels a positive slot of $\mathcal R$. Then $x$
is sent to $x+c$ by the $c$-push, and it gives a bead of color $\overline
u$ at negative slot $x+c$ after applying manipulations (i) and (ii) of
Remark~\ref{rk:pointedconjugate}.  On the other hand, $x$ is sent to the
negative slot $x$ with bead $\overline u$ by (i) and (ii). Applying
$p^{-c}$, we also obtain a bead with color $\overline u$ at the negative
slot $x+c$.

Assume now that $0\leq x\leq c-1$. After the $c$-push, the new slot is
$c-x-1$ in the positive part. Then (i) and (ii) give a bead of color
$\overline u$ at the negative slot $c-x-1$. On the other hand, the bead on
$x$ is sent to a bead of color $\overline u$ at the positive slot $x$ by
(i) and (ii). Applying $p^{-c}$, we obtain, at the negative slot $c-x-1$,
a bead colored by $\overline u$, as required.

Finally, suppose that $x\leq c$ labels a negative slot. After the
$c$-push, there is a bead colored by $u$ at the negative slot $x-c$, and
(i) and (ii) give a bead with color $\overline u$ at the positive slot
$x-c$.  Now, (i) and (ii) send the initial bead on a bead colored by
$\overline u$ at the positive slot $x$. Applying $p^{-c}$, this bead is
sent at positive slot $x-c$.
\end{proof}

For any characteristic vector $c=(c_0,\ldots,c_{t-1})\in \core_t$ and
$\underline \lambda=(\lambda^0,\ldots,\lambda^{t-1})\in \mathcal P^t$, we
define
\begin{equation}
\label{eq:conjcharquotient} 
c^*=(-c_{t-1},\ldots,-c_0)\quad\text{and}\quad\underline\lambda^*=
\left(\lambda_{t-1}^*,\ldots,\lambda_0^*)\right).
\end{equation}

\begin{proposition}
Let $t$ be a positive integer and $\lambda$ be a conjugate partition with
characteristic vector $c(\lambda)=(c_0,\ldots,c_{t-1})$ and $t$-quotient
$\lambda^{(t)}=(\lambda_0,\ldots,\lambda_{t-1})$. Then
$$\psi_t(\lambda^*)=\left(c(\lambda)^*,(\lambda^{(t)})^*\right).$$
In particular, $c(\lambda^*)=c(\lambda)^*$ and
$(\lambda^*)^{(t)}=(\lambda^{(t)})^*$.
\end{proposition}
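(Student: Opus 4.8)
The plan is to reduce the whole statement to a single geometric fact about the pointed $t$-abacus, after which both claims follow formally from Lemmas~\ref{lem:pointedtabaconjugate} and~\ref{lem:preservestar}. Write $\mathcal T=(\mathcal R_0,\ldots,\mathcal R_{t-1})$ for the pointed $t$-abacus of $\lambda$, and recall that $\mathcal T^*=(\overline{\mathcal R}_{t-1},\ldots,\overline{\mathcal R}_0)$. The central claim I would prove first is that \emph{the pointed $t$-abacus of $\lambda^*$ is exactly $\mathcal T^*$}.

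To establish this claim I would track the conjugation of the single pointed abacus---which by Remark~\ref{rk:pointedconjugate} reflects every bead across the fence and swaps its color---through the distribution map $\varphi_t$. A positive slot sitting at position $q$ on runner $j$ of $\mathcal T$ carries the single-abacus label $tq+j$, and conjugation moves it to the negative slot labeled $tq+j$ with the opposite color. Since $q_t(tq+j)=q$ and $t-r_t(tq+j)-1=t-j-1$, this bead lands on the negative slot at position $q$ of runner $t-j-1$. Symmetrically, a negative slot at position $q$ on runner $j$ has label $tq+t-j-1$ and is sent to the positive slot at position $q$ of runner $t-j-1$, again with the opposite color. Hence runner $t-1-j$ of the pointed $t$-abacus of $\lambda^*$ is obtained from runner $j$ of $\mathcal T$ by reflecting across the fence and swapping colors, that is, it equals $\overline{\mathcal R}_j$; this is precisely the claim. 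I would check the relabeling $j\mapsto t-1-j$ against the worked Example~\ref{ex:pointedtabacus} to guard against an index error.

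Granting the claim, the characteristic vector is immediate. Let $u_k^*$ and $v_k^*$ denote the numbers of positive black and negative white beads on runner $k$ of the abacus of $\lambda^*$, namely on $\overline{\mathcal R}_{t-1-k}$. The bar operation interchanges positive black beads with negative white beads, so $u_k^*=v_{t-1-k}$ and $v_k^*=u_{t-1-k}$, where $u_j,v_j$ are the counts for $\lambda$ as in Lemma~\ref{lem:char}. Therefore $c_k(\lambda^*)=u_k^*-v_k^*=-(u_{t-1-k}-v_{t-1-k})=-c_{t-1-k}$, which is exactly $c(\lambda^*)=c(\lambda)^*$. For the quotient, Definition~\ref{def:charquotient} says the $k$th component $\lambda^{*,k}$ has pointed abacus $p^{-c_k(\lambda^*)}(\overline{\mathcal R}_{t-1-k})=p^{\,c_{t-1-k}}(\overline{\mathcal R}_{t-1-k})$. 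By Lemma~\ref{lem:preservestar} this equals $\overline{p^{-c_{t-1-k}}(\mathcal R_{t-1-k})}$, and $p^{-c_{t-1-k}}(\mathcal R_{t-1-k})$ is the pointed abacus of $\lambda^{t-1-k}$ by Definition~\ref{def:charquotient}; applying Lemma~\ref{lem:pointedtabaconjugate} identifies its bar as the pointed abacus of $(\lambda^{t-1-k})^*$. Thus $\lambda^{*,k}=(\lambda^{t-1-k})^*$ for every $k$, which reads off as $(\lambda^*)^{(t)}=(\lambda^{(t)})^*$. Combining the two computations yields $\psi_t(\lambda^*)=(c(\lambda)^*,(\lambda^{(t)})^*)$.

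I expect the only genuine obstacle to be the bookkeeping inside the central claim: the asymmetry between the rule for positive slots (runner $r_t(m)$) and for negative slots (runner $t-r_t(m)-1$) is exactly what forces the runners of $\mathcal T^*$ to be reversed, and it is easy to slip on an off-by-one error in the index $t-1-j$ or to misread which sign and color are exchanged. Once the claim is secured with the correct indexing, the passages to $c(\lambda^*)$ and to $(\lambda^*)^{(t)}$ are purely formal applications of the two lemmas above.
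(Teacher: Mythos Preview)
Your proposal is correct and follows essentially the same approach as the paper: first establish that the pointed $t$-abacus of $\lambda^*$ is $\mathcal T^*$, then read off $c(\lambda^*)=c(\lambda)^*$ from bead counts on the barred runners, and finally obtain $(\lambda^*)^{(t)}=(\lambda^{(t)})^*$ via Lemma~\ref{lem:preservestar} and Lemma~\ref{lem:pointedtabaconjugate}. The only cosmetic difference is that the paper verifies the central claim by tracking individual arms and legs through conjugation rather than generic slots, but the content is identical.
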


\begin{proof}
Recall that if $\Fr(\lambda)=(\Le\mid\Ar)$, then
$\Fr(\lambda^*)=(\Ar\mid\Le)$. Let $0\leq j\leq t-1$, and $at+j\in
\Ar_j^+(\lambda)$. Thus, $at+j\in\Le^+(\lambda^*)$ and by construction of
the pointed $t$-abacus of $\lambda^*$, this leg is sent on runner $t-j-1$
at negative position $a$. Similarly, if $lt+t-j-1\in\Le_j^+(\lambda)$ then
$lt+t-j-1\in\Ar^+(\lambda^*)$ is sent on the runner $t-j-1$ at positive
position $l$. 
Hence, if we denote by $\mathcal T=(\mathcal R_0,\ldots,\mathcal R_{t-1})$
the pointed $t$-abacus of $\lambda$, then $\mathcal T^*$ is the one of
$\lambda^*$. In particular, 
$$|\widetilde{\Le}^+_{j}(\lambda^*)|=|\widetilde{\Ar}^+_{t-j-1}(\lambda)|\quad\text{and}\quad
|\widetilde{\Ar}^+_j(\lambda^*)|=|\widetilde{\Le}^+_{t-j-1}(\lambda)|,$$
and it follows from~(\ref{eq:valchar}) that
$$c_j^*=|\widetilde{\Ar}_j^+(\lambda^*)|-|\widetilde{\Le}_j^+(\lambda^*)|=
|\widetilde{\Le}_{t-j-1}^+(\lambda)|-|\widetilde{\Ar}_{t-j-1}^+(\lambda)|
=-c_{t-j-1},$$
where $c(\lambda^*)=(c_0^*,\ldots,c_{t-1}^*)$.  Denote by $(\mathcal
T_0,\ldots,\mathcal T_{t-1})$ and $(\mathcal S_0,\ldots,\mathcal S_{t-1})$
the pointed $t$-abaci that define the $t$-quotient of $\lambda$ and
$\lambda^*$ respectively (see Definition~\ref{def:charquotient}).  
Hence, we have
$$(\mathcal T_0,\ldots,\mathcal T_{t-1})=(p^{-c_0}(\mathcal
R_0),\ldots,p^{-c_{t-1}}(\mathcal R_{t-1})).$$
Since $c(\lambda^*)=(-c_{t-1},\ldots,-c_0)$ and $\mathcal T^*$ is the
pointed $t$-abacus of $\lambda^*$, applying Lemma~\ref{lem:preservestar},
we deduce that
\begin{align*}
(\mathcal
S_0,\ldots,\mathcal S_{t-1})&=(p^{c_t}(\overline{\mathcal
R}_{t-1}),\ldots,p^{c_0}(\overline{\mathcal R}_0))\\
&=(\overline{p^{-c_t}(\mathcal
R_{t-1})},\ldots,\overline{p^{-c_0}(\mathcal R_0)})\\
&=(\overline{\mathcal T}_{t-1},\ldots,\overline{\mathcal T}_0).
\end{align*}
Since $\mathcal T_j$ is a pointed abacus for all $0\leq j\leq t-1$,
Lemma~\ref{lem:pointedtabaconjugate} gives the result.
\end{proof}

\begin{corollary}
\label{cor:descconjugate}
Let $\lambda$ be a self-conjugate partition with characteristic vector
$c(\lambda)=(c_0,\ldots,c_{t-1})$ and $t$-quotient
$\lambda^{(t)}=(\lambda^0,\ldots,\lambda^{t-1})$. Then
$$c(\lambda)=
\begin{cases}
(c_0,\ldots,c_{(t-3)/2},0,-c_{(t-3)/2},\ldots,-c_0)&\text{if $t$ is odd,}\\
(c_0,\ldots,c_{t/2},-c_{t/2},\ldots,-c_0)&\text{if $t$ is even.}\\
\end{cases}
$$ 
and
$$\lambda^{(t)}=
\begin{cases}
(\lambda^0,\ldots,\lambda^{(t-3)/2},\lambda^{(t-1)/2},(\lambda^{(t-3)/2})^*,\ldots,(\lambda^0)^*)&\text{if $t$ is odd,}\\
(\lambda^0,\ldots,\lambda^{t/2},(\lambda^{t/2})^*,\ldots,(\lambda^0)^*)&\text{if $t$ is even,}\\
\end{cases}
$$ 
where $\lambda^{(t-1)/2}$ is a self-conjugate partition (when $t$ is odd).
\end{corollary}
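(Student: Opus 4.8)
The plan is to deduce this directly from the preceding Proposition by specializing to a self-conjugate $\lambda$. Since $\lambda=\lambda^*$, the ``in particular'' clause of that Proposition gives $c(\lambda)=c(\lambda^*)=c(\lambda)^*$ and $\lambda^{(t)}=(\lambda^*)^{(t)}=(\lambda^{(t)})^*$. Equivalently, one may invoke the bijectivity of $\psi_t$ recorded in Remark~\ref{rk:carquotient}(i): the pair $(c(\lambda),\lambda^{(t)})=\psi_t(\lambda)=\psi_t(\lambda^*)$ must then be fixed by the involution $(c,\underline\lambda)\mapsto(c^*,\underline\lambda^*)$ on $\core_t\times\mathcal P^t$. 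Either way, the entire statement reduces to unwinding the two fixed-point relations.

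First I would make the two relations explicit using the definitions in~(\ref{eq:conjcharquotient}). The identity $c(\lambda)=c(\lambda)^*$ reads componentwise as $c_j=-c_{t-1-j}$ for every $0\le j\le t-1$, and the identity $\lambda^{(t)}=(\lambda^{(t)})^*$ reads as $\lambda^j=(\lambda^{t-1-j})^*$ for every $j$. These two families of equalities already contain all the needed information; what remains is purely a matter of recording which entries are free and which are determined, according to the parity of $t$, in order to match the displayed form.

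Next I would split into cases according to the fixed points of the involution $j\mapsto t-1-j$. When $t$ is even this involution has no fixed point, so it pairs the indices $\{0,t-1\},\{1,t-2\},\ldots$; the first half of the entries, $c_0,\ldots$ and $\lambda^0,\ldots$, are free, and the remaining entries are their negatives (respectively conjugates) listed in reverse order, which is exactly the claimed palindromic shape. When $t$ is odd the single index $j=(t-1)/2$ is fixed: the relation $c_{(t-1)/2}=-c_{(t-1)/2}$ forces $c_{(t-1)/2}=0$, and $\lambda^{(t-1)/2}=(\lambda^{(t-1)/2})^*$ says precisely that the middle partition is self-conjugate, while the off-middle entries pair up as in the even case. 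This yields both displayed formulas.

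I do not expect a genuine obstacle here, since the corollary is a formal consequence of the preceding Proposition together with the involution structure of $\psi_t$. The only point demanding care is the index bookkeeping in the middle of the tuple: one must correctly isolate the fixed index $(t-1)/2$ in the odd case and verify that the constraint it imposes is exactly $c_{(t-1)/2}=0$ together with the self-conjugacy of $\lambda^{(t-1)/2}$, rather than some neighbouring index. Once this is checked, reading off the two expressions is immediate.
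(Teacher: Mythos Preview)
Your proposal is correct and is exactly the intended argument: the paper states this corollary immediately after the Proposition computing $\psi_t(\lambda^*)=(c(\lambda)^*,(\lambda^{(t)})^*)$ and does not supply a separate proof, treating it as a direct specialization to $\lambda=\lambda^*$. Your unwinding of the fixed-point conditions $c_j=-c_{t-1-j}$ and $\lambda^j=(\lambda^{t-1-j})^*$ according to the parity of $t$ is precisely what is needed.
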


\begin{example}
\label{ex:sympart}
Let $\lambda=(12,11,6,5^2,3,2^5,1^2)$. Assume $t=5$. We remark that
$\Fr(\lambda)=(11,9,2,1,0\mid 0,1,2,9,11)$, and the pointed $5$-abacus of
$\lambda$ is
\bigskip

\begin{center}
\definecolor{sqsqsq}{rgb}{0.12549019607843137,0.12549019607843137,0.12549019607843137}
\begin{tikzpicture}[line cap=round,line join=round,>=triangle
45,x=0.7cm,y=0.7cm, scale=0.8,every node/.style={scale=0.8}]

\draw [dash pattern=on 2pt off 2pt](-2.5,1.5)-- (7,1.5);
\draw(-3,1.5)node{$\mathfrak f$};

\draw (-2,-1.4) node[anchor=north west] {$0$};

\draw (-1.7,4)-- (-1.7,-1.4);

\begin{scriptsize}

\draw [fill=black](-1.7,2) circle (2.5pt);
\draw (-1.7,3) circle (2.5pt);
\draw (-1.7,4) circle (2.5pt);

\draw [fill=black] (-1.7,1) circle (2.5pt);
\draw (-1.7,0) circle (2.5pt);
\draw [fill=black] (-1.7,-1) circle (2.5pt);
\end{scriptsize}

\draw (-0.3,-1.4) node[anchor=north west] {$1$};

\draw (0.,4)-- (0.,-1.4);

\begin{scriptsize}
	
\draw [fill=black] (0,2) circle (2.5pt);
\draw (0,3) circle (2.5pt);
\draw [fill=black](0,4) circle (2.5pt);

\draw [fill=black](0,1) circle (2.5pt);
\draw [fill=black] (0,0) circle (2.5pt);
\draw [fill=black] (0,-1) circle (2.5pt);

\end{scriptsize}

\draw (1.4,-1.4) node[anchor=north west] {$2$};

\draw (1.7,4)-- (1.7,-1.4);

\begin{scriptsize}
	
\draw [fill=black](1.7,2) circle (2.5pt);
\draw (1.7,3) circle (2.5pt);
\draw (1.7,4) circle (2.5pt);

\draw (1.7,1) circle (2.5pt);
\draw [fill=black] (1.7,0) circle (2.5pt);
\draw [fill=black] (1.7,-1) circle (2.5pt);

\end{scriptsize}

\draw (3.1,-1.4) node[anchor=north west] {$3$};

\draw (3.4,4)-- (3.4,-1.4);

\begin{scriptsize}
	
\draw (3.4,2) circle (2.5pt);
\draw (3.4,3) circle (2.5pt);
\draw (3.4,4) circle (2.5pt);

\draw (3.4,1) circle (2.5pt);
\draw [fill=black] (3.4,0) circle (2.5pt);
\draw (3.4,-1) circle (2.5pt);

\end{scriptsize}

\draw (4.8,-1.4) node[anchor=north west] {$4$};

\draw (5.1,4)-- (5.1,-1.4);

\begin{scriptsize}

\draw (5.1,2) circle (2.5pt);
\draw [fill=black](5.1,3) circle (2.5pt);
\draw (5.1,4) circle (2.5pt);

\draw (5.1,1) circle (2.5pt);
\draw [fill=black](5.1,0) circle (2.5pt);
\draw [fill=black] (5.1,-1) circle (2.5pt);

\end{scriptsize}
\end{tikzpicture}
\end{center}
Hence, we deduce that
$$c(\lambda)=(0,2,0,-2,0)\quad \text{and}\quad \lambda^{(5)}=((1^2),(1),(1),(1),(2)).$$
\end{example}

\subsection{Characteristic vectors and $t$-core partitions} 
\label{subsec:charcore}
Let $t$ be a positive integer. Let $c=(c_0,\ldots,c_{t-1})\in \core_t$ be
a tuple of integers that sum to $0$; see Remark~\ref{rk:carquotient}. We
associate to $c$ a pointed $t$-abacus by beginning with the empty
partition, that is only black beads below the fence, and only white beads
above, and then putting $c_j$ positive black beads on the runner $j$ if
$c_j>0$ and $|c_j|$ negative white beads otherwise. Since
$\sum_{j=0}^{t-1}c_j=0$, the numbers of positive black beads and negative
white beads are equal. Hence, we obtain a pointed $t$-abacus of a
partition. The partitions obtained by this process are called
\emph{$t$-core partitions}. Denote by $\lambda_c$ the $t$-core partition
corresponding to $c$. If $\lambda$ is a partition with characteristic
vector $c_t(\lambda)$, then $\lambda_{c_t(\lambda)}$, also denoted
$\lambda_{(t)}$, is the \emph{$t$-core of $\lambda$}.  

\begin{example} Assume $t=5$ and $c= (0,1,-1,1,-1)\in
\core_5$. The associated
pointed $5$-abacus is
\medskip
\begin{center}
\definecolor{sqsqsq}{rgb}{0.12549019607843137,0.12549019607843137,0.12549019607843137}
\begin{tikzpicture}[line cap=round,line join=round,>=triangle
45,x=0.7cm,y=0.7cm, scale=0.8,every node/.style={scale=0.8}]

\draw [dash pattern=on 2pt off 2pt](-2.5,1.5)-- (7,1.5);
\draw(-3,1.5)node{$\mathfrak f$};

\draw (-2,0.7) node[anchor=north west] {$0$};

\draw (-1.7,3)-- (-1.7,0.7);

\begin{scriptsize}

\draw (-1.7,2) circle (2.5pt);
\draw (-1.7,3) circle (2.5pt);

\draw [fill=black] (-1.7,1) circle (2.5pt);
\end{scriptsize}

\draw (-0.3,0.7) node[anchor=north west] {$1$};

\draw (0.,3)-- (0.,0.7);

\begin{scriptsize}
	
\draw  (0,2)[fill=black] circle (2.5pt);
\draw (0,3) circle (2.5pt);

\draw [fill=black](0,1) circle (2.5pt);

\end{scriptsize}

\draw (1.4,0.7) node[anchor=north west] {$2$};

\draw (1.7,3)-- (1.7,0.7);

\begin{scriptsize}
	
\draw (1.7,2) circle (2.5pt);
\draw (1.7,3) circle (2.5pt);

\draw (1.7,1) circle (2.5pt);

\end{scriptsize}

\draw (3.1,0.7) node[anchor=north west] {$3$};

\draw (3.4,3)-- (3.4,0.7);

\begin{scriptsize}
	
\draw [fill=black](3.4,2) circle (2.5pt);
\draw (3.4,3) circle (2.5pt);

\draw [fill=black] (3.4,1) circle (2.5pt);

\end{scriptsize}

\draw (4.8,0.7) node[anchor=north west] {$4$};

\draw (5.1,3)-- (5.1,0.7);

\begin{scriptsize}

\draw (5.1,2) circle (2.5pt);
\draw (5.1,3) circle (2.5pt);

\draw (5.1,1) circle (2.5pt);

\end{scriptsize}
\end{tikzpicture}
\end{center}
The corresponding $5$-core has Frobenius symbol $\Fr(\lambda_c)=(2,0\mid
1,3)$, hence $\lambda_c=(4,3,1)$.
\end{example}

\begin{remark}
\label{rk:usualcore}
Denote by $\mathcal T=(\mathcal R_0,\ldots,\mathcal R_{t-1})$ and
$\mathcal T'=(\mathcal R_0',\ldots,\mathcal R_{t-1}')$ the pointed
$t$-abacus of $\lambda$ and $\lambda_{(t)}$.  We can deduce $\mathcal T'$
from $\mathcal T$ as follows.  For all $0\leq j\leq t-1$, we look at the
first white bead (by reading from bottom to top) on the runner $\mathcal
R_j$. If there is a black bead above, then we exchange the two beads.  We
repeat the procedure successively until it is no longer possible. By
definition of the characteristic vector of $\lambda$, we obtain $\mathcal
T'$ at the end. This shows that $\lambda_{(t)}$ is the $t$-core of the
partition $\lambda$ defined in~\cite[\S2.7]{James-Kerber}.
\end{remark}
\begin{remark}
Denote by $\mathcal{C}_t$ the set of $t$-core partitions. The bijection
$\psi_t$ given in Remark~\ref{rk:abacusinfo} induces a bijection
\begin{equation}
\label{eq:partcorequotient}
\Psi_t:\mathcal P\longrightarrow \mathcal{C}_t\times \mathcal P^t,\
\lambda\longmapsto (\lambda_{(t)},\lambda^{(t)}).
\end{equation}
\end{remark}
\smallskip

\begin{remark} Let $\mathcal T=(\mathcal R_0,\ldots,\mathcal R_{t-1})$ be
the pointed $t$-abacus of $\lambda$. Denote by $\mathcal R$ its pointed
abacus.  By Remark~\ref{rk:abacusinfo}(iv), exchanging a pair of
consecutive black and white beads (denoted by $b$ and $w$ and satisfying
$b$ over $w$) on the runner $\mathcal R_j$ removes a hook of size $1$
(that is a box) of $\lambda^{j}$. On the other hand, on $\mathcal T$,
there are $j$ slots at the right of $b$ and $t-j-1$ slots at the right of
$w$. This means that there are (strictly) $j+t-j-1=t-1$ slots between
$\varphi^{-1}_t(b)$ and $\varphi^{-1}_t(w)$ on $\mathcal R$. Hence,
Remark~\ref{rk:abacusinfo}(iv) again gives that the hooklength associated
to $(b,w)$ is $t$.  Now, consider the opposite procedure of
Remark~\ref{rk:usualcore},  that allow us to obtain $\mathcal T$ from the
pointed $t$-abacus of $\lambda_{(t)}$ by exchanging a black bead $b$ and a
white bead $w$ with $w$ over $b$. We can assume, without loss of
generality, that we exchange only consecutive beads. Therefore,
Remark~\ref{rk:removehooks} shows that at each step, the new partition has
$t$ boxes more.  Beginning with the pointed $t$-abacus of $\lambda_{(t)}$,
we obtain $\mathcal T$ after $|\lambda^0|+\cdots+|\lambda^{t-1}|$ steps.
In particular,
$$|\lambda|=|\lambda_{(t)}|+t\sum_{j=0}^{t-1}|\lambda_j|.$$  
\end{remark}

\begin{lemma}
\label{lem:armlegcore}
Let $c\in \core_t$ and $0\leq j\leq t-1$.
\begin{enumerate}[(i)] 
\item If $c_j\geq 0$, then $\Le^+_j(\lambda_c)=\emptyset$ and
$$
\Ar_j^+(\lambda_c)=\{qt+j\mid q\in C_j\},$$ where $C_j=\{0,\ldots,c_j-1\}$.
\item If $c_j\leq 0$, then $\Ar^+_j(\lambda_c)=\emptyset$ and
$$\
\Le_j^+(\lambda_c)=\{qt+t-j-1\mid q\in C_j\},$$
where $C_j=\{0,\ldots,|c_j|-1\}$.
\end{enumerate}
\end{lemma}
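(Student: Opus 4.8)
The plan is to read the arm and leg sets directly off the pointed $t$-abacus of $\lambda_c$, exactly as prescribed by the definition of $\lambda_c$ in \S\ref{subsec:charcore} together with the recovery formulas~(\ref{eq:armjrecover}) and~(\ref{eq:legjrecover}). First I would recall the precise shape of the $j$th runner $\mathcal R_j$ of this abacus. One begins with the empty configuration on each runner (a white bead on every positive slot, a black bead on every negative slot), and then modifies runner $j$ according to the sign of $c_j$: if $c_j>0$ one turns the $c_j$ lowest positive slots black, namely those at positions $0,1,\ldots,c_j-1$; if $c_j<0$ one turns the $|c_j|$ highest negative slots white, namely those at positions $0,1,\ldots,|c_j|-1$; and if $c_j=0$ the runner is left unchanged.

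Next I would record, in each case, the positions of the positive black beads and of the negative white beads on $\mathcal R_j$, since by~(\ref{eq:armj}) and~(\ref{eq:legj}) these are precisely the sets $\widetilde{\Ar}^+_j(\lambda_c)$ and $\widetilde{\Le}^+_j(\lambda_c)$. For case (i), assume $c_j\geq 0$. Then every negative slot of $\mathcal R_j$ carries a black bead, so there are no negative white beads and $\widetilde{\Le}^+_j(\lambda_c)=\emptyset$, giving $\Le^+_j(\lambda_c)=\emptyset$; meanwhile the positive black beads sit exactly at positions $0,\ldots,c_j-1$, so $\widetilde{\Ar}^+_j(\lambda_c)=C_j=\{0,\ldots,c_j-1\}$ (the empty set when $c_j=0$). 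Substituting into~(\ref{eq:armjrecover}) yields $\Ar^+_j(\lambda_c)=\{qt+j\mid q\in C_j\}$, which is (i). For case (ii), assume $c_j\leq 0$; then every positive slot is white, so $\widetilde{\Ar}^+_j(\lambda_c)=\emptyset$ and hence $\Ar^+_j(\lambda_c)=\emptyset$, while the negative white beads occupy positions $0,\ldots,|c_j|-1$, giving $\widetilde{\Le}^+_j(\lambda_c)=C_j$; substituting into~(\ref{eq:legjrecover}) then gives $\Le^+_j(\lambda_c)=\{qt+t-j-1\mid q\in C_j\}$, which is (ii).

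The argument is essentially bookkeeping, so there is no serious obstacle; the only point requiring care is the conventions linking a bead's runner and position to the residue and quotient of the corresponding arm or leg. In particular, a negative white bead at position $q$ on runner $j$ corresponds to a leg $l$ with $q_t(l)=q$ and $r_t(l)=t-j-1$, which is the source of the shift $t-j-1$ appearing in~(\ref{eq:legjrecover}) and in statement (ii); I would double-check this against the definition of $\varphi_t$ on negative slots. The boundary case $c_j=0$ is automatically consistent, since then $C_j=\emptyset$ and both (i) and (ii) assert $\Ar^+_j(\lambda_c)=\Le^+_j(\lambda_c)=\emptyset$.
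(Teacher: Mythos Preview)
Your proposal is correct and follows exactly the same approach as the paper's proof: identify $\widetilde{\Ar}_j^+(\lambda_c)$ and $\widetilde{\Le}_j^+(\lambda_c)$ directly from the construction of the pointed $t$-abacus of $\lambda_c$, then invoke the recovery formulas~(\ref{eq:armjrecover}) and~(\ref{eq:legjrecover}). The paper's version is simply more terse, asserting ``by construction of $\lambda_c$'' what you spell out in detail about the bead positions on each runner.
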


\begin{proof}
Let $0\leq j\leq t-1$. By construction of $\lambda_c$, we have
$\widetilde{\Ar}_j^+(\lambda_c)=C_j$ and
$\widetilde{\Le}_j^+(\lambda_c)=\emptyset$ if $c_j\geq 0$, and
$\widetilde{\Le}_j^+(\lambda_c)=C_j$ and
$\widetilde{\Ar}_j^+(\lambda_c)=\emptyset$ if $c_j\leq 0$.
The result then follows from~(\ref{eq:armjrecover}) and~(\ref{eq:legjrecover}).
\end{proof}

\begin{corollary}
\label{cor:sizecore}
Let $t$ be a positive integer, and $c\in\mathcal C_t$. Then
$$|\lambda_c|=\frac t 2\sum_{j=0}^{t-1}c_j^2 +\sum_{j=0}^{t-1}jc_j.$$
\end{corollary}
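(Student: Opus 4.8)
The plan is to compute $|\lambda_c|$ directly from the size formula~(\ref{eq:sizebyarmleg}), using Lemma~\ref{lem:armlegcore} to describe the arms and legs of $\lambda_c$ explicitly on each runner. First I would record the Durfee number: by Lemma~\ref{lem:armlegcore}, each runner $j$ with $c_j\geq 0$ contributes exactly $c_j$ arms and no legs, while each runner with $c_j\leq 0$ contributes $|c_j|$ legs and no arms; hence the Durfee number is $s=|\Ar^+(\lambda_c)|=\sum_{c_j>0}c_j$, and since $\sum_j c_j=0$ this also equals $|\Le^+(\lambda_c)|=\sum_{c_j<0}|c_j|$, which confirms consistency.

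Next I would sum the arms and legs runner by runner. On a runner with $c_j>0$, the arms are $\{qt+j\mid 0\leq q\leq c_j-1\}$, so summing this arithmetic progression gives $\frac t2 c_j(c_j-1)+jc_j=\frac t2 c_j^2-\frac t2 c_j+jc_j$. On a runner with $c_j<0$, writing $m=|c_j|$, the legs are $\{qt+t-j-1\mid 0\leq q\leq m-1\}$, whose sum is $\frac t2 m(m-1)+m(t-j-1)$; substituting $m=-c_j$ and simplifying yields $\frac t2 c_j^2-\frac t2 c_j+jc_j+c_j$. Runners with $c_j=0$ contribute nothing, and each of the per-runner expressions above vanishes at $c_j=0$, so I may harmlessly extend both sums over all $j$.

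Then I would assemble. Summing the common part $\frac t2 c_j^2-\frac t2 c_j+jc_j$ over all $j$ and invoking $\sum_j c_j=0$ kills the linear-in-$t$ term, leaving exactly $\frac t2\sum_j c_j^2+\sum_j jc_j$, which is the desired right-hand side. It remains to account for the two leftover contributions: the additive $+s$ coming from~(\ref{eq:sizebyarmleg}) and the extra $+c_j$ appearing only on the negative runners. This is the one place needing care, since these two corrections must cancel exactly: indeed $\sum_{c_j<0}c_j=-\sum_{c_j>0}c_j=-s$ (again because $\sum_j c_j=0$), so that $s+\sum_{c_j<0}c_j=0$, completing the computation.

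The main obstacle is the asymmetry between the arm labels $qt+j$ and the leg labels $qt+t-j-1$ on the pointed $t$-abacus, which is precisely what produces the stray $+c_j$ term on the negative runners; the crux is to recognize that this asymmetry is compensated exactly by the Durfee-number term $s$ once $\sum_j c_j=0$ is used. Everything else reduces to the routine evaluation of arithmetic progressions.
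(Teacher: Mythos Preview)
Your proposal is correct and follows essentially the same approach as the paper: both use Lemma~\ref{lem:armlegcore} to write down the arms and legs of $\lambda_c$ explicitly, sum them runner by runner, plug into~(\ref{eq:sizebyarmleg}), and simplify using $\sum_j c_j=0$. Your algebraic organization---rewriting each runner's contribution in terms of $c_j$ rather than $|c_j|$ so that a single ``common part'' emerges---is slightly tidier than the paper's, but the underlying argument is the same.
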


\begin{proof}
Write $C^+$ and $C^-$ for the sets of $0\leq j\leq t-1$ such that $c_j$ is
positive or negative. By Lemma~\ref{lem:armlegcore}, the Durfee number of
$\lambda_c$ is 
\begin{equation}
\label{eq:duftycore1}
s=\sum_{j\in C^+}c_j=\sum_{j\in C^-}|c_j|.
\end{equation}
In particular, $2s=\sum_{j\in C^+}c_j+\sum_{j\in C^-}|c_j|$ hence
\begin{equation}
\label{eq:duftycore2}
s=\frac{1}{2}\left(\sum_{j\in C^+}c_j+\sum_{j\in C^-}|c_j|\right)=\frac 1 2\sum_{j=0}^{t-1}|c_j|. 
\end{equation}
Furthermore, Lemma~\ref{lem:armlegcore} gives
\begin{align}
\label{eq:armcore}
\sum_{a\in\Ar^+(\lambda_c)}a&=\sum_{j=0}^{t-1}\sum_{a\in\Ar^+_j(\lambda_c)}a
=\sum_{j\in C^+}\sum_{a\in \Ar^+_j(\lambda_c)}a\\\nonumber
&=\sum_{j\in C^+}\sum_{q=0}^{c_j-1}(qt+j)\\\nonumber
&=\sum_{j\in C^+}\left(jc_j +\frac{c_j(c_j-1)}{2}t\right).
\end{align}
Similarly,
\begin{equation}
\label{eq:legcore}
\sum_{l\in\Le^-(\lambda_c)}l=\sum_{j\in C^-}\left((t-j-1)|c_j|+
t\frac{|c_j|(|c_j|-1)}{2}\right).
\end{equation}
Now, by~(\ref{eq:duftycore1}), (\ref{eq:duftycore2}), (\ref{eq:armcore}),
(\ref{eq:legcore}) and (\ref{eq:sizebyarmleg}), we obtain
\begin{align*}
|\lambda_c|&=s+t\sum_{j=0}^{t-1}\frac{|c_j|(|c_j|-1|)}2+\sum_{j\in C^+}jc_j+\sum_{j\in C^-}-j|c_j|+(t-1)\sum_{j\in C^-}|c_j|\\
&=s+t\sum_{j=0}^{t-1}\frac{|c_j|(|c_j|-1|)}2+\sum_{j=0}^{s-1}jc_j+(t-1)s\\
&=t\left(\sum_{j=0}^{t-1}\frac 1 2(|c_j|^2-|c_j|)+s\right)+\sum_{j=0}^{t-1}jc_j\\
&=t\left(\frac 1 2\sum_{j=0}^{t-1}|c_j|^2-\frac 1
2\sum_{t=0}^{t-1}|c_j|+\frac 1
2\sum_{j=0}^{t-1}|c_j|\right)+\sum_{j=0}^{t-1}jc_j\\
&=\frac t 2\sum_{j=0}^{t-1}c_j^2+\sum_{j=0}^{t-1}jc_j,\\
\end{align*}
as required.
\end{proof}

\begin{corollary}
\label{cor:sizesym}
Let $t$ be a positive integer.
\begin{enumerate}[(i)]
\item Assume $t=2m+1$. Let $c=(-c_m,\ldots,-c_{1},0,c_{1},\ldots,c_m)\in
SC_{t,n}$. Then
$$n=t\sum_{j=1}^mc_j^2+\sum_{j=1}^m 2j  c_j.$$
\item Assume $t=2m$. Let $c=(-c_m,\ldots,-c_{1},c_{1},\ldots,c_m)\in
SC_{t,n}$. Then
$$n=t\sum_{j=1}^mc_j^2+\sum_{j=1}^m (2j-1) c_j.$$
\end{enumerate}
\end{corollary}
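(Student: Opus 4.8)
The plan is to feed the symmetric characteristic vector of the statement into the general size formula of Corollary~\ref{cor:sizecore}, and then collapse the two resulting sums using the antisymmetry of the vector about its centre. Since $c\in SC_{t,n}$ is the characteristic vector of a self-conjugate $t$-core of $n$, we have $n=|\lambda_c|$, so, writing the components of the vector as $(c_0,\ldots,c_{t-1})$, Corollary~\ref{cor:sizecore} gives
$$n=\frac{t}{2}\sum_{p=0}^{t-1}c_p^2+\sum_{p=0}^{t-1}p\,c_p.$$
The hypothesis (consistent with Corollary~\ref{cor:descconjugate}) is precisely that these components obey the antisymmetry $c_p=-c_{t-1-p}$, with a central zero entry in the odd case $t=2m+1$, and that the lower half is the negated reverse of the parameters $c_1,\ldots,c_m$ appearing in the statement.

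For the quadratic sum, $c_p^2=c_{t-1-p}^2$ makes $\sum_{p}c_p^2$ equal to twice its restriction to the lower half (the central entry contributing nothing when $t$ is odd); re-indexing that lower half by $j$ turns it into $2\sum_{j=1}^m c_j^2$, and the prefactor $t/2$ then yields $t\sum_{j=1}^m c_j^2$ in both parities. For the linear sum, I would pair each index $p$ with its mirror $t-1-p$; since $c_{t-1-p}=-c_p$ the pair contributes
$$p\,c_p+(t-1-p)\,c_{t-1-p}=(2p-t+1)\,c_p,$$
so $\sum_{p}p\,c_p=\sum_{p<t/2}(2p-t+1)\,c_p$. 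Substituting $t=2m+1$ and re-indexing the lower half $p=0,\ldots,m-1$ by $j=m-p$ sends $2p-t+1$ to $-2j$ and $c_p$ to $-c_j$, producing $\sum_{j=1}^m 2j\,c_j$; substituting $t=2m$ instead sends $2p-t+1$ to $-(2j-1)$, producing $\sum_{j=1}^m(2j-1)\,c_j$. Adding the quadratic and linear contributions gives the two claimed identities.

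The argument is essentially index bookkeeping, so the only real obstacle is keeping the two parities apart. In the odd case the centre of symmetry is the genuine index $p=m$ carrying a zero entry, whereas in the even case it lies between $p=m-1$ and $p=m$; this single half-step shift is exactly what replaces the linear weight $2j$ by $2j-1$, and it is the one place where the two cases cannot be treated uniformly.
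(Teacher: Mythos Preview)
Your proposal is correct and follows essentially the same route as the paper: both apply Corollary~\ref{cor:sizecore} to the antisymmetric vector and then reduce the two sums by pairing mirror indices and re-indexing. The only cosmetic difference is that you first collapse each mirror pair into the single term $(2p-t+1)c_p$ before re-indexing, whereas the paper splits the linear sum directly into its lower- and upper-half pieces and simplifies; the arithmetic is identical.
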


\begin{proof}
Assume $t=2m+1$. By Corollary~\ref{cor:sizecore}, we have
\begin{eqnarray*}
n&=&t \sum_{j=1}^mc_j^2
+\sum_{j=1}^{m-1}j(-c_{m-j})+\sum_{j=1}^m(m+j)c_j,\\
&=&t\sum_{j=1}^mc_j^2+\sum_{j=1}^{m-1}(j-m)c_j+\sum_{j=1}^{m-1}(m+j)c_j+2mc_m,\\
&=&t\sum_{j=1}^m c_j^2+2\sum_{j=1}^{m-1}jc_j+2mc_m,
\end{eqnarray*}
as required. Assume now $t=2m$. Similarly, 
\begin{eqnarray*}
n&=&t \sum_{j=1}^mc_j^2
+\sum_{j=1}^{m-1}j(-c_{m-j})+\sum_{j=1}^m(m+j-1)c_j,\\
&=&t\sum_{j=1}^mc_j^2+\sum_{j=1}^{m-1}(j-m)c_j+(m-m)c_m+\sum_{j=1}^{m}(m+j-1)c_j,\\
&=&t\sum_{j=1}^m c_j^2+2\sum_{j=1}^{m}(2j-m+m-1)c_j,
\end{eqnarray*}
and the result follows.
\end{proof}

\subsection{Moving between the Frobenius symbols of a partition and its
$t$-core and $t$-quotient}
\label{subsec:main}
In this section, we show first, in Proposition \ref{prop:mainsensdirect},
how to obtain the Frobenius symbols of the quotient from the
characteristic vector and the Frobenius symbol of the partition.
Afterwards, we arrive at main result, Theorem \ref{thm:main}, which shows
the reverse direction; that is how to obtain the Frobenius symbol of a
partition $\lambda$ from the characteristic vector and the Frobenius
symbols of the $t$-quotient. We begin with some notation.

Let $\lambda$ be a partition and $t$ be a positive integer.  We set 
\begin{equation}
\label{eq:virtualarm}
\Arv^-(\lambda)=\Ar^-(\lambda)\cup \{k\in\N\mid
k>\max(\Le^+(\lambda))\},
\end{equation}
and
\begin{equation}
\label{eq:virtualleg}
\Lev^-(\lambda)=\Le^-(\lambda)\cup \{k\in\N\mid
k>\max(\Ar^+(\lambda))\}.
\end{equation}
Furthermore, we define
\begin{equation}
\label{eq:coarmj}
\Ar_j^-(\lambda)=\{a\in\Arv^-(\lambda)\mid r_t(a)=t-j-1\} \ \text{and}\
\widetilde{\Ar}^-_j(\lambda)=\{q_t(a)\mid a\in\Ar_j^-(\lambda)\},
\end{equation}
and
\begin{equation}
\label{eq:colegj}
\Le_j^-(\lambda)=\{l\in\Lev^-(\lambda)\mid r_t(l)=j\} \quad
\text{and}\quad \widetilde{\Le}^-_j(\lambda)=\{q_t(l)\mid l\in\Le_j^-(\lambda)\}.
\end{equation}

\begin{remark}
Note that, for $\varepsilon\in\{-,+\}$, we have
$\widetilde{\Ar}_j^\varepsilon\cup \widetilde{\Le}_j^{-\varepsilon}=\N.$ 
\end{remark}
We denote by $\lambda^{(t)}=(\lambda^0,\ldots,\lambda^{t-1})$ the
$t$-quotient of $\lambda$, and for $0\leq j\leq t-1$, we write 
$$\Fr(\Le_j|\Ar_j),$$
where $\Le_j=\Le^+(\lambda^j)$ and $\Ar_j=\Ar^+(\lambda^j)$ are the sets
of legs and arms of $\lambda^j$.
\medskip
\begin{proposition}
\label{prop:mainsensdirect}
Let $c_t(\lambda)=(c_0,\ldots,c_{t-1})\in \core_t$ be the characteristic
vector of $\lambda$, and $0\leq j\leq t-1$. 
\begin{enumerate}[(i)]
\item If $c_j\geq 0$, then
$$\Ar_j= \{a-c_j\mid a\in \widetilde{\Ar}_j^+(\lambda)\quad
\text{and}\quad
a\geq c_j\},$$
and
$$\Le_j=\{l+c_j\mid l\in\widetilde{\Le}_j^+(\lambda)\}\,\cup\, \{c_j-l-1\mid
l\in\widetilde{\Le}_j^-(\lambda)\ \text{such that}\ l<c_j\}.$$
\item If $c_j\leq 0$, then
$$\Ar_j=\{a+|c_j|\mid a\in\widetilde{\Ar}_j^+(\lambda)\}\,\cup\,
\{|c_j|-a-1\mid
a\in\widetilde{\Ar}_j^-(\lambda)\ \text{such that}\ a<|c_j|\},$$
and
$$\Le_j= \{l-|c_j|\mid l\in \widetilde{\Le}_j^+(\lambda)\quad
\text{and}\quad l\geq |c_j|\}.$$
\end{enumerate}
\end{proposition}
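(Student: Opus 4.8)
The plan is to argue directly on the $j$th runner $\mathcal R_j$ of the pointed $t$-abacus of $\lambda$, since by Definition~\ref{def:charquotient} the pointed abacus of $\lambda^j$ is exactly $p^{-c_j}(\mathcal R_j)$, and the arms and legs of $\lambda^j$ are read off as the positive black beads and negative white beads of this pushed runner (Remark~\ref{rk:abacusinfo}). First I would record the four families of beads on $\mathcal R_j$: by~(\ref{eq:armj}) and~(\ref{eq:legj}) the positive black beads sit at positions $\widetilde{\Ar}_j^+(\lambda)$ and the negative white beads at $\widetilde{\Le}_j^+(\lambda)$; while by~(\ref{eq:coarmj}) and~(\ref{eq:colegj}), combined with the complementarity relations $\widetilde{\Ar}_j^+\cup\widetilde{\Le}_j^-=\N$ and $\widetilde{\Le}_j^+\cup\widetilde{\Ar}_j^-=\N$, the remaining positive (white) beads sit at $\widetilde{\Le}_j^-(\lambda)$ and the remaining negative (black) beads at $\widetilde{\Ar}_j^-(\lambda)$. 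The virtual sets $\Arv^-(\lambda)$ and $\Lev^-(\lambda)$ are introduced precisely so that these infinite families of ``empty'' positions are accounted for.

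Next I would fix a global coordinate on the runner, sending the positive slot $q$ to $q$ and the negative slot $q$ to $-q-1$, so that the fence lies between $-1$ and $0$; a positive black bead at coordinate $x\ge 0$ is then an arm of value $x$, and a negative white bead at coordinate $x\le -1$ is a leg of value $-x-1$. In this coordinate the push operation $p^{-c_j}$ is simply the translation $x\mapsto x-c_j$. By Lemma~\ref{lem:pushdown}, since $c_j=u_j-v_j$ with $u_j,v_j$ the numbers of positive black and negative white beads on $\mathcal R_j$, the translated runner is genuinely the pointed abacus of a partition, namely $\lambda^j$, so that reading its positive black beads and negative white beads does compute $\Ar_j$ and $\Le_j$.

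It then remains to track, case by case, which beads cross the fence. For $c_j\ge 0$ (translation downwards) I would check that a positive black bead at $a\in\widetilde{\Ar}_j^+(\lambda)$ remains an arm exactly when $a\ge c_j$, of value $a-c_j$, while every negative black bead stays strictly below the fence and contributes no arm; that a negative white bead at coordinate $-l-1$ with $l\in\widetilde{\Le}_j^+(\lambda)$ remains a leg, now of value $l+c_j$; and that a positive white bead at $l\in\widetilde{\Le}_j^-(\lambda)$ crosses below the fence exactly when $l<c_j$, becoming a leg of value $c_j-l-1$. Collecting these four bookkeeping facts yields the two formulas of~(i). The case $c_j\le 0$ is the mirror image: the translation $x\mapsto x+|c_j|$ pushes beads upward, so now negative black beads in $\widetilde{\Ar}_j^-(\lambda)$ may cross above the fence to create arms of value $|c_j|-a-1$ (when $a<|c_j|$) alongside the shifted arms $a+|c_j|$, while only the negative white beads with $l\ge|c_j|$ survive as legs $l-|c_j|$; the same sign check then gives~(ii).

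I expect the only genuine obstacle to be the bookkeeping of the white and black ``empty'' beads: one must verify that the positive white beads and negative black beads on $\mathcal R_j$ are indexed \emph{exactly} by $\widetilde{\Le}_j^-(\lambda)$ and $\widetilde{\Ar}_j^-(\lambda)$, including their virtual tails, so that no leg or arm created by a bead crossing the fence is either missed or counted twice. Once the complementarity relations $\widetilde{\Ar}_j^\varepsilon\cup\widetilde{\Le}_j^{-\varepsilon}=\N$ are in hand, each of the four contributions reduces to a one-line translation-and-sign computation.
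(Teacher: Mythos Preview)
Your proposal is correct and follows essentially the same approach as the paper: both argue by tracking beads on the $j$th runner under the push $p^{-c_j}$, observing which black and white beads cross the fence and reading off the resulting arm and leg positions. Your introduction of a global $\Z$-coordinate on the runner and explicit invocation of the complementarity relations $\widetilde{\Ar}_j^\varepsilon\cup\widetilde{\Le}_j^{-\varepsilon}=\N$ are slight elaborations, but the underlying argument is identical to the paper's.
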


\begin{proof}
Assume first that $c_j\geq 0$. By Definition~\ref{def:charquotient}, the
pointed abacus of the partition $\lambda^j$ is obtained by applying
$p^{-c_j}$ to the $j$th-runner of the $t$-pointed abacus of $\lambda$. 
Now, let $a\in \widetilde{\Ar}_j^+(\lambda)$. Then by the construction of
the $t$-pointed abacus $\mathcal T$ of $\lambda$ given
in~\S\ref{subsec:quotient}, the bead on the slot labeled by $a$ on the
$j$th runner of $\mathcal T$ is black.  After applying $p^{-c_j}$, this
bead lies on the positive slot labeled by $a-c_j$ if $a\geq c_j$ or on the
negative slot labeled by $c_j-a-1$ if $a<c_j$. In particular, $a$
contributes to an arm of $\lambda^j$ if and only if $a\geq c_j$. In this
case, $a-c_j\in\Ar_j$. Furthermore, to any $l\in \widetilde{\Le}_{j}^+$
there is a white bead on the negative slot $l$ of the $j$th runner of
$\mathcal T$. After applying $p^{-c_j}$, we obtain a negative white bead
of the pointed abacus of $\lambda^j$ on the slot labeled by $c_j+l$. Hence
$c_j+l\in \Le_j$. Finally, by the same argument as above, each coleg
$l\in\widetilde{\Le}_j^-$ such that $l<c_j$ give a leg of $\lambda^j$ on
the negative slot $c_j-l-1$. It follows that
$$\Le_j=\{c_j+l\mid l\in\widetilde{\Le}_j^+(\lambda)\}\ \cup\
\{c_j-l-1\mid l\in \widetilde{\Le}_j^-(\lambda)\ \text{and }l<c_j\}.$$
The argument is similar when $c_j\leq 0$.
\end{proof}

\begin{remark}
The proof of Proposition~\ref{prop:mainsensdirect} can be interpreted
graphically as follows.
\medskip
\begin{center}
\definecolor{sqsqsq}{rgb}{0.12549019607843137,0.12549019607843137,0.12549019607843137}
\begin{tikzpicture}[line cap=round,line join=round,>=triangle
45,x=0.7cm,y=0.7cm, scale=0.8,every node/.style={scale=0.8}]

\draw [dash pattern=on 2pt off 2pt](-1.5,1.5)-- (15,1.5);
\draw(-3,1.5)node{$\mathfrak f$};

\draw [line width=1.pt,latex-](3.5,0) --(3.5,5.5);
\draw (3.5,1.3)  node[anchor=north
west] {$-c_{j}$}; 
\draw [line width=1.pt](0.3,1.7)--(2.9,1.7);
\draw [line width=1.pt](0.3,1.7)--(0.3,5.5);
\draw [line width=1.pt](0.3,5.5)--(2.9,5.5);
\draw [line width=1.pt](2.9,5.5)--(2.9,1.7);

\draw (-0.8,0)node{$\dots$};
\draw (-0.8,3)node{$\dots$};
\draw (1.2,-3.4) node[anchor=north west] {$j$,\ $c_j\geq 0$};
\draw (1.7,-2.5)-- (1.7,-3.4);
\draw[dashed] (1.7,-2.5)-- (1.7,1);
\draw (1.7,1)-- (1.7,3.4);
\draw[dashed] (1.7,3.4)-- (1.7,4.5);
\draw (1.7,4.6)-- (1.7,6.5);
\begin{scriptsize}

\draw(1.3,2)node{$0$};
\draw(1.3,3)node{$1$};
\draw(1.3,4)node{$x$};
\draw(1,5)node{$c_j-1$};
\draw(1.2,6)node{$c_j$};
\draw(0.5,-1)node{$c_j-x-1$};
\path[line width=0.5pt,-latex,dashed](1.7,4) edge [bend left=30] (1.9,-1);

\draw (1.7,5)-- ++(-2.5pt,0 pt) -- ++(5.0pt,0 pt) ++(-2.5pt,-2.5pt) -- ++(0 pt,5.0pt);
\draw (1.7,6)-- ++(-2.5pt,0 pt) -- ++(5.0pt,0 pt) ++(-2.5pt,-2.5pt) -- ++(0 pt,5.0pt);
\draw (1.7,2)-- ++(-2.5pt,0 pt) -- ++(5.0pt,0 pt) ++(-2.5pt,-2.5pt) -- ++(0 pt,5.0pt);
\draw (1.7,3)-- ++(-2.5pt,0 pt) -- ++(5.0pt,0 pt) ++(-2.5pt,-2.5pt) -- ++(0 pt,5.0pt);
\draw (1.7,1)-- ++(-2.5pt,0 pt) -- ++(5.0pt,0 pt) ++(-2.5pt,-2.5pt) -- ++(0 pt,5.0pt);
\draw  (1.7,-1)-- ++(-2.5pt,0 pt) -- ++(5.0pt,0 pt) ++(-2.5pt,-2.5pt) -- ++(0 pt,5.0pt);
\draw  (1.7,-3)-- ++(-2.5pt,0 pt) -- ++(5.0pt,0 pt) ++(-2.5pt,-2.5pt) -- ++(0 pt,5.0pt);
\end{scriptsize}

\draw (5,0) node{$\cdots$};
\draw (5,3) node{$\cdots$};
\draw (10.2,-3.4) node[anchor=north west] {$k$,\ $c_k\leq 0$};
\draw (10.7,-1.7)-- (10.7,-3.4);
\draw[dashed] (10.7,-1.7)-- (10.7,0);
\draw[dashed] (10.7,1.2)-- (10.7,5.5);
\draw (10.7,0)-- (10.7,2);
\draw (10.7,5.5)-- (10.7,6.5);
\draw [line width=1.pt,-latex](8.7,-2.5) --(8.7,3);
\draw (7,2.5)  node[anchor=north
west] {$+|c_{k}|$}; 
\draw [line width=1.pt](9.4,1.3)--(12.3,1.3);
\draw [line width=1.pt](12.3,1.3)--(12.3,-2.5);
\draw [line width=1.pt](12.3,-2.5)--(9.4,-2.5);
\draw [line width=1.pt](9.4,-2.5)--(9.4,1.3);

\begin{scriptsize}

\draw(11.2,1)node{$0$};
\draw(11.2,0)node{$1$};
\draw(11.2,-1)node{$x$};
\draw(11.6,-2)node{$|c_k|-1$};
\draw(11.3,-3)node{$|c_k|$};
\draw(12,4)node{$|c_k|-x-1$};
\path[line width=0.5pt,latex-,dashed](10.5,4) edge [bend right=30] (10.7,-1);
\draw (10.7,4)-- ++(-2.5pt,0 pt) -- ++(5.0pt,0 pt) ++(-2.5pt,-2.5pt) -- ++(0 pt,5.0pt);

\draw (10.7,6)-- ++(-2.5pt,0 pt) -- ++(5.0pt,0 pt) ++(-2.5pt,-2.5pt) -- ++(0 pt,5.0pt);
\draw (10.7,2)-- ++(-2.5pt,0 pt) -- ++(5.0pt,0 pt) ++(-2.5pt,-2.5pt) -- ++(0 pt,5.0pt);
\draw (10.7,1)-- ++(-2.5pt,0 pt) -- ++(5.0pt,0 pt) ++(-2.5pt,-2.5pt) -- ++(0 pt,5.0pt);
\draw  (10.7,0)-- ++(-2.5pt,0 pt) -- ++(5.0pt,0 pt) ++(-2.5pt,-2.5pt) -- ++(0 pt,5.0pt);
\draw  (10.7,-2)-- ++(-2.5pt,0 pt) -- ++(5.0pt,0 pt) ++(-2.5pt,-2.5pt) -- ++(0 pt,5.0pt);
\draw  (10.7,-3)-- ++(-2.5pt,0 pt) -- ++(5.0pt,0 pt) ++(-2.5pt,-2.5pt) -- ++(0 pt,5.0pt);
\end{scriptsize}
\end{tikzpicture}
\end{center}
If $c_j\geq 0$, then any black beads (resp. white beads) in the windows
give coarms (resp. legs) of $\lambda^j$ after applying $p^{-c_j}$. If
$c_j\leq 0$, then any white beads (resp. black beads) in the windows give
colegs (resp.  arms) of $\lambda^j$ after applying the $|c_j|$-push.
\end{remark}

\begin{example}
We consider again $\lambda=(5,5,4,2,1,1)$ and $t=5$. By
Example~\ref{ex:example2}, we have
$\Le_0^+(\lambda)=\Le_1^+(\lambda)=\Le_3^+(\lambda)=\emptyset,
\,\Le_2^+(\lambda)=\{2\},\, \Le_4^+(\lambda)=\{0,5\}$ and
$\Ar_0^+(\lambda)=\Ar_2^+(\lambda)=\emptyset$, $\Ar_1^+(\lambda)=\{1\}$,
$\Ar_3^+(\lambda)=\{3\}$, and $\Ar_4^+(\lambda)=\{4\}$. Hence,
$$\widetilde{\Le}_4^+(\lambda)=\{0,1\},\,\widetilde{\Le}_2^+(\lambda)=\{0\},
\
\widetilde{\Ar}^+_1(\lambda)=\widetilde{\Ar}^+_3(\lambda)
=\widetilde{\Ar}^+_4(\lambda)=\{0\}.$$
By Remark Equation~\ref{rk:abacusinfo}(ii), we obtain $c_0=0$, $c_1=1$,
$c_2=-1$, $c_3=1$, and $c_4=-1$.  Note that we recover
Example~\ref{ex:carquotient}.  Now, Proposition~\ref{prop:mainsensdirect}
gives
$$\Ar_0=\Ar_1=\Ar_2=\Ar_3=\emptyset,\,\Ar_4=\{1\}\ \text{and}\
\Le_0=\Le_1=\Le_2=\Le_3=\emptyset,\,\Le_4=\{0\},$$ and
$\lambda^{(5)}=(\emptyset,\emptyset,\emptyset,\emptyset,(2))$.
Furthermore, $\Fr(\lambda_5)=(2,0|1,3)$, that is $\lambda_{5}=(4,3,1)$. 
\end{example}

\begin{remark}
We do not need a graphical construction of the pointed $t$-abacus of
$\lambda$ to determine the $t$-quotient of $\lambda$. The process is
completely numerical. 
\end{remark}
Now we arrive at the main result of this section.
\begin{theorem}
\label{thm:main} 
Let $t$ be a positive integer. Let $\underline c=(c_0,\ldots,c_{t-1})\in
\core_t$ be a characteristic vector and a $t$-multipartition
$\underline\lambda=(\lambda^0,\ldots,\lambda^{t-1})$.  For $0\leq j\leq
t-1$, denote by $\Fr(\lambda^j)=(\Le_j\mid\Ar_j)$ the Frobenius symbol of
$\lambda^j$, and define $L_j$ and $A_j$ as follows.
\begin{enumerate}[(i)]
\item If $c_j\geq 0$, then
$$L_j=\{(l-c_j)t+t-j-1\mid l\in\Le_j\ \text{and}\ l\geq c_j\}$$
and
$$A_j=\{(a+c_j)t+j\mid a\in\Ar_j\}\cup\{(c_j-a-1)t+j\mid a\in \Arv_j^-\ \text{and
}a<c_j\},$$
where $\Arv_j^-=\Arv_j^-(\lambda^j)$ with the convention that
$\Arv_j^-=\N$ is $\lambda^j=\emptyset$. 
\item If $c_j\leq 0$, then
$$L_j=\{(l+|c_j|)t+t-j-1\mid l\in\Le_j\}\cup \{(|c_j|-l-1)t+t-j-1\mid
l\in \Lev_j^-\ \text{and
}l<|c_j|\},$$
where $\Lev_j^-=\Lev^-_j(\lambda^j)$ with the convention that
$\Arv_j^-=\N$ is $\lambda^j=\emptyset$, 
and
$$A_j=\{(a-|c_j|)t+j\mid a\in\Ar_j\ \text{and
}a\geq |c_j|\}.$$
\end{enumerate}
Set
\begin{equation}
\label{eq:setarmleg}
\Le=\bigcup_{j=0}^{t-1}L_j\quad\text{and}\quad\Ar=\bigcup_{j=0}^{t-1}A_j,
\end{equation}
and consider the partition $\lambda$ with Frobenius symbol
$(\Le\mid\Ar)$.  
Then we have
$$\psi_t(\lambda)=(\underline c,\,\underline \lambda).$$
\end{theorem}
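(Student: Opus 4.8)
The plan is to exploit the bijectivity of $\psi_t$ recorded in Remark~\ref{rk:carquotient}(i). Since $\psi_t\colon\mathcal P\to\core_t\times\mathcal P^t$ is a bijection, it suffices to prove that the partition $\mu:=\psi_t^{-1}(\underline c,\underline\lambda)$ has Frobenius symbol $(\Le\mid\Ar)$; uniqueness then forces $\mu=\lambda$, whence $\psi_t(\lambda)=(\underline c,\underline\lambda)$. By the explicit description of $\psi_t^{-1}$ in Remark~\ref{rk:carquotient}(i), the partition $\mu$ is read off the pointed $t$-abacus $\mathcal T=p^{\underline c}(\mathcal T')$, where $\mathcal T'=(\mathcal R_0,\dots,\mathcal R_{t-1})$ has $\mathcal R_j$ equal to the pointed abacus of $\lambda^j$, and Lemma~\ref{lem:addchar} guarantees that $\mathcal T$ really is the pointed $t$-abacus of a partition. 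Thus the whole proof reduces to reading the arms and legs of $\mu$ off $\mathcal T$ after the pushes.

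First I would recall the slot-to-label dictionary: a positive slot at runner-position $q$ on runner $j$ carries the abacus label $tq+j$, and a negative slot at runner-position $q$ on runner $j$ carries the label $tq+t-j-1$. Hence (Remark~\ref{rk:abacusinfo}) a positive black bead at runner-position $q$ on runner $j$ contributes the arm $tq+j$ to $\mu$, and a negative white bead there contributes the leg $tq+t-j-1$. The one extra ingredient is that, on $\mathcal R_j$, the coarm set $\Arv^-(\lambda^j)$ and coleg set $\Lev^-(\lambda^j)$, as enlarged in~(\ref{eq:virtualarm}) and~(\ref{eq:virtualleg}), record precisely the positions of \emph{all} negative black beads and of \emph{all} positive white beads, respectively; equivalently they comprise all of $\N$ not occupied by a leg (resp.\ an arm) of $\lambda^j$, with the degenerate case $\lambda^j=\emptyset$ giving $\Arv^-=\Lev^-=\N$ exactly as in the stated convention.

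With this dictionary, the computation is the bead-tracking of Proposition~\ref{prop:mainsensdirect} run in reverse, replacing $p^{-c_j}$ by $p^{c_j}$. When $c_j\geq0$ I would push $\mathcal R_j$ up by $c_j$ and follow three families: a positive black bead at $a\in\Ar_j$ rises to positive position $a+c_j$, giving the arm $(a+c_j)t+j$; a negative black bead at a coarm $a\in\Arv^-_j$ with $a<c_j$ crosses the fence to positive position $c_j-a-1$, giving the arm $(c_j-a-1)t+j$; and a negative white bead at a leg $l\in\Le_j$ stays below the fence exactly when $l\geq c_j$, landing at negative position $l-c_j$ and giving the leg $(l-c_j)t+t-j-1$ (if $l<c_j$ it crosses over and becomes a coleg, contributing nothing). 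These are precisely $A_j$ and $L_j$ of case~(i). The case $c_j\leq0$ is symmetric: pushing down by $|c_j|$, an arm $a\in\Ar_j$ survives as an arm exactly when $a\geq|c_j|$, a leg $l\in\Le_j$ descends to a leg, and a coleg $l\in\Lev^-_j$ with $l<|c_j|$ crosses the fence into a leg, yielding $A_j$ and $L_j$ of case~(ii). Taking the union over $0\leq j\leq t-1$ assembles all positive black beads and negative white beads of $\mathcal T$, so $\Fr(\mu)=(\bigcup_jL_j\mid\bigcup_jA_j)=(\Le\mid\Ar)$, as required.

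The hard part is not any single identity but the bookkeeping: one must check that the three families above account for \emph{every} positive black bead and negative white bead of $\mathcal T$ with no omission or overlap, and that the fence-crossing reflection $a\mapsto c_j-a-1$ (respectively $l\mapsto|c_j|-l-1$) is indexed correctly. This is exactly where the virtualization~(\ref{eq:virtualarm})--(\ref{eq:virtualleg}) is essential: only by declaring all of $\N\setminus\Le^+(\lambda^j)$ to be coarms (and all of $\N\setminus\Ar^+(\lambda^j)$ colegs) does one capture the beads from the ``default sea'' below (resp.\ above) the fence that are swept across it by the push, and it is this that makes the union over $j$ exhaust the arms and legs of $\mu$. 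The empty-partition convention $\Arv^-_j=\Lev^-_j=\N$ then handles the runners coming from trivial quotient parts, and in particular recovers Lemma~\ref{lem:armlegcore} when $\underline\lambda$ is the tuple of empty partitions.
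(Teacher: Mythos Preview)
Your argument is correct, and it takes a genuinely different route from the paper. The paper proceeds \emph{forward}: it first verifies directly, via an $\alpha_j,\beta_j$ counting argument, that $|\Le|=|\Ar|$ so that $(\Le\mid\Ar)$ really defines a partition $\lambda$; it then computes $c_t(\lambda)$ from~(\ref{eq:valchar}) and checks $c_j'=c_j$, and finally feeds the sets $\widetilde\Ar_j^+(\lambda),\widetilde\Le_j^{\pm}(\lambda)$ into Proposition~\ref{prop:mainsensdirect} to recover $\Ar_j'=\Ar_j$ and $\Le_j'=\Le_j$. You instead work \emph{backward}: you start from $\mu:=\psi_t^{-1}(\underline c,\underline\lambda)$, whose existence is already guaranteed by Lemma~\ref{lem:addchar} and Remark~\ref{rk:carquotient}(i), and read off its Frobenius symbol by tracking beads through $p^{\underline c}$. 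This sidesteps the separate $|\Le|=|\Ar|$ verification entirely (it is automatic once $\mu$ is known to be a partition) and collapses the two checks (characteristic vector and quotient) into a single bead-tracking pass. The trade-off is that the paper's explicit $\alpha_j,\beta_j$ identities, in particular $\alpha_j+\beta_j=|c_j|$, are exactly what is reused in Corollary~\ref{cor:durfee} on Durfee numbers; your approach would have to extract those relations separately if that corollary were the next goal.
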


\begin{proof}
First, we will show that the partition $\lambda$ is well-defined. We
remark that the sets $\Le_j$ (resp. the sets $\Ar_j$) are disjoint.  Let
$0\leq j\leq t-1$. If $c_j\geq 0$, then the definitions of $L_j$ and $A_j$
give
\begin{equation}
\label{eq:preuvemain1}
|\Le_j|=|L_j|+\alpha_j\quad\text{and}\quad |\Ar_j|=|A_j|-\beta_j, 
\end{equation}
where $\alpha_j=|\{l\in\Le_j\mid l<c_j\}|$ and
$\beta_j=|\{a\in\Arv_j^-\mid a<c_j\}|$.
Similarly, if $c_j\leq 0$, then
\begin{equation}
\label{eq:preuvemain2}
|\Le_j|=|L_j|-\alpha_j\quad\text{and}\quad |\Ar_j|=|A_j|+\beta_j, 
\end{equation}
where $\alpha_j=|\{l\in\Lev_j^-\mid l<|c_j|\}|$ and
$\beta_j=|\{a\in\Ar_j\mid a<|c_j|\}|$.
Furthermore, for any $0\leq j\leq t-1$
\begin{equation}
\label{eq:preuvemain3}
\alpha_j+\beta_j=|c_j|.
\end{equation}
Write $J^+$ (resp. $J^-$) for the set of $0\leq j\leq t-1$ such that
$c_j\geq 0$ (resp. $c_j<0$). By definition of
$\Le$,~(\ref{eq:preuvemain1}) and~(\ref{eq:preuvemain2}) we have
\begin{equation}
\label{eq:preuvemain4}
|\Le|=\sum_{j\in J^+}(|\Le_j|-\alpha_j)+\sum_{j\in
J^-}(|\Le_j|+\alpha_j)=\sum_{j=0}^{t-1}|\Le_j|-\sum_{j\in
J^+}\alpha_j+\sum_{j\in J^-}\alpha_j.
\end{equation}
Similarly,
\begin{equation}
\label{eq:preuvemain5}
|\Ar|=\sum_{j\in J^+}(|\Ar_j|+\beta_j)+\sum_{j\in
J^-}(|\Ar_j|-\beta_j)=\sum_{j=0}^{t-1}|\Ar_j|+\sum_{j\in
J^+}\beta_j-\sum_{j\in J^-}\beta_j.
\end{equation}
Now Equations (\ref{eq:preuvemain3}), (\ref{eq:preuvemain4}) and
(\ref{eq:preuvemain5}) and the fact that $|\Le_j|=|\Ar_j|$ (for any
$0\leq j\leq t-1$) imply
$$
|\Ar|-|\Le|=\sum_{j=0}^{t-1}(\underbrace{|\Ar_j|-|\Le_j|}_{=0})+
\sum_{j\in J^+}(\underbrace{\beta_j+\alpha_j}_{=c_j})-\sum_{j\in
J^-}(\underbrace{\beta_j+\alpha_j}_{=-c_j})=\sum_{j=0}^{t-1}c_j=0,$$
because $\underline c$ is a characteristic vector.
Hence, $(\Le\mid\Ar)$ defines a
partition $\lambda$ by Remark~\ref{rk:armlegbijpart}. Write
$({\lambda'}^0,\ldots,{\lambda'}^{t-1})$ for the $t$-quotient of
$\lambda$, and $c_t(\lambda)=(c'_0,\ldots,c'_{t-1})$.
For $0\leq j\leq t-1$, we set $\Fr({\lambda'}^j)=(\Le'_j\mid\Ar'_j)$.
First assume that $j\in J^+$.
By Remark~\ref{rk:carquotient}(ii),
$c'_j=|\widetilde{\Ar}_j^+(\lambda)|-|\widetilde{\Le}_j^+(\lambda)|$.
Furthermore, by definition of $\lambda$, we have 
\begin{equation}
\label{eq:compatibilite1}
\Ar_j^+(\lambda)=A_j\quad \text{and}\quad
\Le_j^+(\lambda)=L_j,
\end{equation}
hence
$|\widetilde{\Ar}_j^+(\lambda)|=|A_j|$ and
$|\widetilde{\Le}_j^+(\lambda)|=|L_j|$. 
Since $|\Ar_j|=|\Le_j|$ and by Equations~(\ref{eq:preuvemain1}),
(\ref{eq:preuvemain2}) and (\ref{eq:preuvemain3}), we obtain
$$c'_j=|A_j|-|L_j|=|\Ar_j|+\beta_j-(|\Le_j|-\alpha_j)=\beta_j+\alpha_j=c_j.$$
Now, using that $\Ar_j^+(\lambda)=A_j$ and the definition of
$\widetilde{\Ar}_j^+(\lambda)$, we deduce that
$$
\widetilde{\Ar}_j^+(\lambda)=\{a+c_j\mid a\in\Ar_j\}\cup\{c_j-a-1\mid
a\in\Arv_j^-\ \text{and }a<c_j\}.
$$
We remark that $a+c_j\geq c_j$ for $a\in\Ar_j$ and $c_j-a-1<c_j$ for
$a<c_j$. Hence, Proposition~\ref{prop:mainsensdirect} gives
$$\Ar'_j=\Ar_j.$$
We remark that
$$
\widetilde{\Le}_j^+(\lambda)=\{l-c_j\mid l\in\Le_j\ \text{and}\
l\geq c_j\},
$$
and that
$$\widetilde{\Le}_j^-(\lambda)=\{l+c_j\mid
l\in\Lev_j^-\}\cup\{c_j-l-1\mid l\in L_j\ \text{and}\
l<c_j\}.$$
Thus,
$$\{l+c_j\mid l\in\widetilde{\Le}_j^+(\lambda)\}=\{l\in\Le_j\mid j\geq
c_j\}$$
and
$$\{l\in\widetilde{\Le}_j^-(\lambda)\mid l<c_j\}=\{c_j-l-1\mid
l\in\Le_j\ \text{and}\ l<c_j\}.$$
\smallskip
Since $l=c_j-(c_j-l-1)-1$,
Proposition~\ref{prop:mainsensdirect} gives
$$\Le'_j=\{l\in\Le_j \mid l\geq c_j\}\cup \{l\in\Le_j\mid
 l< c_j\}=\Le_j.$$

A similar argument shows that, if $c_j\leq 0$, then
$$c_j'=c_j,\quad \Ar'_j=\Ar_j\quad\text{and}\quad \Le'_j=\Le_j.$$
It follows that $\lambda'^j=\lambda^j$ as required.
\end{proof}
\medskip

\begin{example} Let $t=5$. Consider $\underline c=(0,1,-1,1,-1)$ and
$\underline{\lambda}=(\emptyset,\emptyset,\emptyset,\emptyset,(2))$. We
have $\Fr((2))=(0\mid 1)$. Hence, Theorem~\ref{thm:main} gives
$$L_0=L_1=L_3=\emptyset,\ L_2=\{2\}\quad\text{and}\quad
L_4=\{5\}\cup\{0\},$$
and 
$$A_0=A_2=\emptyset,\ A_1=\{1\},\ A_3=\{3\}\quad\text{and}\quad
A_4=\{4\},$$
whence $\Fr(\lambda)=(5,2,0\mid 1,3,4)$.  Although, this process is
completely numerical, it has an abacus interpretation. Indeed, we can
recover the pointed $t$-abacus of $\lambda$ from the abacus of each
$\lambda^j$ and the characteristic vector. 
\begin{enumerate}[(i)]
\item First, we contruct the abacus of $\lambda^0,\,
\ldots,\,\lambda^{t-1}$ to obtain a pointed $t$-abacus. 
\medskip
\begin{center}
\definecolor{sqsqsq}{rgb}{0.12549019607843137,0.12549019607843137,0.12549019607843137}
\begin{tikzpicture}[line cap=round,line join=round,>=triangle
45,x=0.7cm,y=0.7cm, scale=0.8,every node/.style={scale=0.8}]

\draw [dash pattern=on 2pt off 2pt](-2.5,1.5)-- (7,1.5);
\draw(-3,1.5)node{$\mathfrak f$};
\draw(-3,-1.5)node{$\underline c$};

\draw (-2,-1.1) node[anchor=north west] {$0$};
\draw (-0.3,-1.1) node[anchor=north west] {$1$};
\draw (1.1,-1.1) node[anchor=north west] {$-1$};
\draw (3.1,-1.1) node[anchor=north west] {$1$};
\draw (4.5,-1.1) node[anchor=north west] {$-1$};

\draw (-2,-0.3) node[anchor=north west] {$0$};

\draw (-1.7,3)-- (-1.7,-0.3);

\begin{scriptsize}

\draw (-1.7,2) circle (2.5pt);
\draw (-1.7,3) circle (2.5pt);

\draw [fill=black] (-1.7,1) circle (2.5pt);
\draw [fill=black] (-1.7,0) circle (2.5pt);
\end{scriptsize}

\draw (-0.3,-0.3) node[anchor=north west] {$1$};

\draw (0.,3)-- (0.,-0.3);

\begin{scriptsize}
	
\draw  (0,2) circle (2.5pt);
\draw (0,3) circle (2.5pt);

\draw [fill=black](0,1) circle (2.5pt);
\draw [fill=black](0,0) circle (2.5pt);

\draw [line width=1.pt](0.4,1.3)--(0.4,0.6);
\draw [line width=1.pt](0.4,0.6)--(-0.4,0.6);
\draw [line width=1.pt](-0.4,0.6)--(-0.4,1.3);
\draw [line width=1.pt](-0.4,1.3)--(0.4,1.3);

\draw [-latex](-0.65,0.55)--(-0.65,1.4);

\draw [line width=1.pt](3,1.3)--(3,0.6);
\draw [line width=1.pt](3,0.6)--(3.8,0.6);
\draw [line width=1.pt](3.8,0.6)--(3.8,1.3);
\draw [line width=1.pt](3.8,1.3)--(3,1.3);

\draw [-latex](2.75,0.55)--(2.75,1.4);

\draw [line width=1.pt](2.1,2.3)--(2.1,1.7);
\draw [line width=1.pt](2.1,1.7)--(1.3,1.7);
\draw [line width=1.pt](1.3,1.7)--(1.3,2.3);
\draw [line width=1.pt](2.1,2.3)--(1.3,2.3);

\draw [latex-](1.05,1.6)--(1.05,2.4);

\draw [line width=1.pt](4.7,2.3)--(4.7,1.7);
\draw [line width=1.pt](5.5,1.7)--(4.7,1.7);
\draw [line width=1.pt](5.5,1.7)--(5.5,2.3);
\draw [line width=1.pt](5.5,2.3)--(4.7,2.3);

\draw [latex-](4.45,1.6)--(4.45,2.4);

\end{scriptsize}

\draw (1.4,-0.3) node[anchor=north west] {$2$};

\draw (1.7,3)-- (1.7,-0.3);

\begin{scriptsize}
	
\draw (1.7,2) circle (2.5pt);
\draw (1.7,3) circle (2.5pt);

\draw [fill=black](1.7,1) circle (2.5pt);
\draw [fill=black](1.7,0) circle (2.5pt);
\end{scriptsize}

\draw (3.1,-0.3) node[anchor=north west] {$3$};

\draw (3.4,3)-- (3.4,-0.3);

\begin{scriptsize}
	
\draw (3.4,2) circle (2.5pt);
\draw (3.4,3) circle (2.5pt);

\draw [fill=black] (3.4,1) circle (2.5pt);
\draw [fill=black] (3.4,0) circle (2.5pt);
\end{scriptsize}

\draw (4.8,-0.3) node[anchor=north west] {$4$};

\draw (5.1,3)-- (5.1,-0.3);

\begin{scriptsize}

\draw (5.1,2) circle (2.5pt);
\draw [fill=black](5.1,3) circle (2.5pt);

\draw (5.1,1) circle (2.5pt);
\draw [fill=black](5.1,0) circle (2.5pt);
\end{scriptsize}
\end{tikzpicture}
\end{center}
Here we represent, using a window on each runner, the
characteristic vector before the $c$-push on the abacus.
\item In the example, we observe that no arms or legs ``disappear''.
However, two coarms (resp. two colegs) become two arms (resp. two legs),
hence two arms and two legs ``appear''. Applying the
$c$-push, where $c=(0,1,-1,1,-1)$, 
we obtain the pointed $5$-abacus of
$\lambda$. 
\medskip
\begin{center}
\definecolor{sqsqsq}{rgb}{0.12549019607843137,0.12549019607843137,0.12549019607843137}
\begin{tikzpicture}[line cap=round,line join=round,>=triangle
45,x=0.7cm,y=0.7cm, scale=0.8,every node/.style={scale=0.8}]

\draw [dash pattern=on 2pt off 2pt](-2.5,1.5)-- (7,1.5);
\draw(-3,1.5)node{$\mathfrak f$};

\draw (-2,-.3) node[anchor=north west] {$0$};

\draw (-1.7,3)-- (-1.7,-0.3);

\begin{scriptsize}

\draw (-1.7,2) circle (2.5pt);
\draw (-1.7,3) circle (2.5pt);

\draw [fill=black] (-1.7,1) circle (2.5pt);
\draw [fill=black] (-1.7,0) circle (2.5pt);
\end{scriptsize}

\draw (-0.3,-0.3) node[anchor=north west] {$1$};

\draw (0.,3)-- (0.,-0.3);

\draw(-0.4,2)node{$1$};
\begin{scriptsize}
	
\draw [fill=black] (0,2) circle (2.5pt);
\draw (0,3) circle (2.5pt);

\draw [fill=black](0,1) circle (2.5pt);
\draw [fill=black](0,0) circle (2.5pt);

\end{scriptsize}

\draw (1.4,-0.3) node[anchor=north west] {$2$};

\draw (1.7,3)-- (1.7,-0.3);

\draw(2.1,1)node{$2$};
\begin{scriptsize}
	
\draw (1.7,2) circle (2.5pt);
\draw (1.7,3) circle (2.5pt);

\draw (1.7,1) circle (2.5pt);
\draw [fill=black](1.7,0) circle (2.5pt);
\end{scriptsize}

\draw (3.1,-0.3) node[anchor=north west] {$3$};

\draw (3.4,3)-- (3.4,-0.3);

 \draw(3,2)node{$3$};
\begin{scriptsize}
	
\draw (3.4,3) circle (2.5pt);

\draw [fill=black] (3.4,1) circle (2.5pt);
\draw [fill=black] (3.4,0) circle (2.5pt);
\draw [fill=black] (3.4,2) circle (2.5pt);
\end{scriptsize}

\draw (4.8,-0.3) node[anchor=north west] {$4$};

\draw (5.1,3)-- (5.1,-0.3);

\draw(5.5,1)node{$0$};
\draw(5.5,0)node{$5$};
\draw(4.7,2)node{$4$};
\begin{scriptsize}

\draw (5.1,2) circle (2.5pt);
\draw (5.1,3) circle (2.5pt);
\draw [fill=black](5.1,2) circle (2.5pt);
\draw (5.1,1) circle (2.5pt);
\draw (5.1,0) circle (2.5pt);
\end{scriptsize}
\end{tikzpicture}
\end{center}
\end{enumerate}
\end{example}
\subsection{Durfee number}
\label{subsec:durfee}

Let $t$ be a positive integer and $\lambda$ be a partition with
$t$-quotient $(\lambda^0,\ldots,\lambda^{t-1})$ and characteristic vector
$c(\lambda)=(c_0,\ldots,c_{t-1})$. We denote by $s(\lambda)$ the Durfee
number of $\lambda$. In this section, we propose to explain how to recover
the Durfee number of a partition $\lambda$ from the Durfee number of
$\lambda^j$ and $c(\lambda)$. To do this, however, we will need some
additional information. More precisely, for $0\leq j\leq t-1$, we return
to $\alpha_j$ and $\beta_j$ as in the proof of Theorem~\ref{thm:main}
in~(\ref{eq:preuvemain1}) and~(\ref{eq:preuvemain2}). Write
\begin{equation}
\label{eq:alphabeta}
\alpha=\sum_{j\in J^+}\alpha_j\quad\text{and}\quad \beta=\sum_{j\in
J^-}\beta_j,
\end{equation}
where  $J^+$ (resp. $J^-$) is the set of $0\leq j\leq t-1$ such that
$c_j\geq 0$ (resp. $c_j<0$). We also define $\mathcal Q_t(\lambda)$ to be
the partition satisfying
\begin{equation}
\label{eq:emptycorepart}
\psi_t(\mathcal
Q_t(\lambda))=((0,\cdots,0),(\lambda^0,\ldots,\lambda^{t-1})),
\end{equation}
where $\psi_t:\mathcal P\rightarrow \Z^t\times\mathcal P^t$ is the
bijection given in~(\ref{eq:bijectioncharquo}).

\begin{corollary}
\label{cor:durfee}
We keep the above notation. In particular, $s(\lambda)$ is the Durfee
number of $\lambda$. Then
$$s\left(\mathcal
Q_t(\lambda)\right)=\sum_{j=0}^{t-1}s\left(\lambda^j\right),$$
and
\begin{equation}
\label{eq:cordurfee}
s(\lambda)+\alpha+\beta=s\left(\mathcal Q_t(\lambda)\right)+
s\left(\lambda_{(t)}\right).
\end{equation}
\end{corollary}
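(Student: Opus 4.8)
The plan is to use the elementary fact that the Durfee number of a partition equals the cardinality of its arm set (equivalently its leg set), and then to recycle the bookkeeping already established inside the proof of Theorem~\ref{thm:main}, together with the $t$-core Durfee computation from Corollary~\ref{cor:sizecore}. Throughout I write $s(\mu)=|\Ar^+(\mu)|=|\Le^+(\mu)|$ for the Durfee number of a partition $\mu$.

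First I would establish $s(\mathcal Q_t(\lambda))=\sum_{j=0}^{t-1}s(\lambda^j)$. By~(\ref{eq:emptycorepart}), $\mathcal Q_t(\lambda)$ is the partition with trivial characteristic vector $(0,\ldots,0)$ and $t$-quotient $(\lambda^0,\ldots,\lambda^{t-1})$, so I apply Theorem~\ref{thm:main} with $\underline c=(0,\ldots,0)$. When each $c_j=0$, the formulas for $A_j$ and $L_j$ collapse to $A_j=\{at+j\mid a\in\Ar_j\}$ and $L_j=\{lt+t-j-1\mid l\in\Le_j\}$ (the ``disappearing'' and ``appearing'' terms all vanish), whence $|A_j|=|\Ar_j|$ and $|L_j|=|\Le_j|$. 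Since $|\Ar_j|=|\Le_j|=s(\lambda^j)$, the Durfee number of $\mathcal Q_t(\lambda)$, which is $|\Ar|=\sum_{j}|A_j|$, equals $\sum_{j}s(\lambda^j)$.

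For the second identity I would start from~(\ref{eq:preuvemain5}) in the proof of Theorem~\ref{thm:main}, which reads
$$s(\lambda)=|\Ar|=\sum_{j=0}^{t-1}|\Ar_j|+\sum_{j\in J^+}\beta_j-\sum_{j\in J^-}\beta_j.$$
Substituting $\sum_j|\Ar_j|=\sum_j s(\lambda^j)=s(\mathcal Q_t(\lambda))$ and recalling $\beta=\sum_{j\in J^-}\beta_j$ from~(\ref{eq:alphabeta}), this rearranges to $s(\lambda)+\beta=s(\mathcal Q_t(\lambda))+\sum_{j\in J^+}\beta_j$. Adding $\alpha=\sum_{j\in J^+}\alpha_j$ to both sides and invoking $\alpha_j+\beta_j=|c_j|=c_j$ for $j\in J^+$ from~(\ref{eq:preuvemain3}), the combined correction term becomes $\sum_{j\in J^+}(\alpha_j+\beta_j)=\sum_{j\in J^+}c_j$, so that $s(\lambda)+\alpha+\beta=s(\mathcal Q_t(\lambda))+\sum_{j\in J^+}c_j$.

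The final step identifies this sum with $s(\lambda_{(t)})$. Since $\lambda_{(t)}=\lambda_{c(\lambda)}$, the computation~(\ref{eq:duftycore1}) in the proof of Corollary~\ref{cor:sizecore} shows that the Durfee number of the $t$-core is $\sum_{j:\,c_j>0}c_j=\sum_{j\in J^+}c_j$, the contribution of the indices with $c_j=0$ being zero. Combining the displays yields the claimed equality~(\ref{eq:cordurfee}). I expect the main obstacle to be purely organizational rather than conceptual: one must keep the $\alpha_j,\beta_j$ bookkeeping over $J^+$ versus $J^-$ aligned with the sign conventions inherited from Theorem~\ref{thm:main}, and then recognize that the ``extra'' arms created by the $c$-push on the positive runners are counted precisely by the Durfee number of the core.
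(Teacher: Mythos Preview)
Your proof is correct and follows essentially the same approach as the paper's: both extract $s(\lambda)$ from the cardinality bookkeeping in the proof of Theorem~\ref{thm:main}, collapse the correction terms via~(\ref{eq:preuvemain3}), and identify the remaining sum with $s(\lambda_{(t)})$ through~(\ref{eq:duftycore1}). The only cosmetic differences are that you work with the arm identity~(\ref{eq:preuvemain5}) and summation over $J^+$, while the paper uses the dual leg identity~(\ref{eq:preuvemain4}) and summation over $J^-$, and that you establish $s(\mathcal Q_t(\lambda))=\sum_j s(\lambda^j)$ first rather than as a specialization of the main computation.
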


\begin{proof}
Write $(\Le\mid\Ar)$ for the Frobenius symbol of $\lambda$.  By the
definition of the Frobenius symbol, $|\Le|=|\Ar|=s(\lambda)$.
Now,~(\ref{eq:preuvemain4}) gives
\begin{align*}
s(\lambda)&=\sum_{j=0}^{t-1}|\Le_j|-\sum_{j\in
J^+}\alpha_j+\sum_{j\in J^-}\alpha_j\\
&=\sum_{j=0}^{t-1}s(\lambda^j)-\alpha-\beta+\beta+\sum_{j\in
J^-}\alpha_j\\
&=\sum_{j=0}^{t-1}s(\lambda^j)-\alpha-\beta+\sum_{j\in
J^-}\underbrace{\alpha_j+\beta_j}_{=|c_j|\ \text{by
}(\ref{eq:preuvemain3})}\\
&=\sum_{j=0}^{t-1}s(\lambda^j)-\alpha-\beta+s(\lambda_{(t)}),
\end{align*}
by~(\ref{eq:duftycore1}). Applying this equality to the partition
$\mathcal Q_t(\lambda)$, we obtain
$$s\left(\mathcal Q_t(\lambda)\right)=\sum_{j=0}^{t-1}s(\lambda^j),$$
and the result follows.
\end{proof}

\begin{remark}
A key argument in~\cite{BrNa} is that, if $t$ is an odd prime number and
$\lambda$ is a self-conjugate partition, then 
\begin{equation}
\label{eq:syman}
s(\lambda)+s\left(\mathcal Q_t(\lambda)\right)\equiv
s(\lambda_{(t)})-2\beta\mod 4,
\end{equation}
In~\cite{BrNa}, this congruence is proved using specific properties of
self-conjugate partitions, and without the Frobenius symbol approach
developed here. However, Equation~(\ref{eq:syman}) is in fact a
consequence of Corollary~\ref{cor:durfee}, as we will see now.  Denote by
$(\mathcal R_0,\ldots,\mathcal R_{t-1})$ the pointed $t$-abacus of
$\mathcal Q_t(\lambda)$. Each $\mathcal R_j$ (for $0\leq j\leq t-1$) is
pointed, and $\mathcal R_{t-j-1}=\overline{\mathcal R}_j$ by
Corollary~\ref{cor:descconjugate}. Consider the runners $\mathcal R_j$ and
$\mathcal R_{t-j-1}$. Since $\mathcal R_j$ is pointed, the numbers of
positive black beads and of negative white beads on the runner $\mathcal
R_j$ are equal. On the other hand, the negative white beads correspond to
the positive black beads of $\mathcal R_{t-j-1}$ because $\mathcal
R_{t-j-1}=\overline{\mathcal R}_j$. Hence, the number of positive black
beads on the runners $\mathcal R_j$ and $\mathcal R_{t-j-1}$ is the same,
and we deduce that $s(\mathcal Q_t(\lambda))$ is even. It follows that
$2s(\mathcal Q_t(\lambda))\equiv 0\mod 4$. Now, adding $s(\mathcal
Q_t(\lambda))$ to the equation~(\ref{eq:cordurfee}) and looking it modulo
$4$, we obtain
$$s(\lambda)+s(\mathcal Q_t(\lambda))\equiv
s(\lambda_{(t)})-\alpha-\beta\mod 4.$$
Let $j\in J^+$. Then $t-j-1\in J^-$, and $\mathcal
R_{t-j-1}=\overline{\mathcal R}_j$ implies that $\alpha_j=\beta_{t-j-1}$.
Hence, $\alpha=\beta$ by~(\ref{eq:alphabeta}) and the result follows.
\end{remark}

\begin{remark}
We now give a geometric interpretation of Relation~(\ref{eq:preuvemain4}),
that can be rewritten by
\begin{equation}
\label{eq:nouvforme}
s(\lambda)=\sum_{j\in J^-}\left(s(\lambda^j)+\alpha_j\right)+
\sum_{j\in J^+}\left(s(\lambda^j)-\alpha_j\right).
\end{equation}
For any $0\leq j\leq t-1$, we consider the Young diagram $[\lambda^j]$.
We begin with the corner of the box $\mathfrak b_{s(\lambda^j)}$.
\begin{enumerate}[(i)]
\item If $c_j\geq 0$, then we follow the rim from the right to the left
during $c_j$ steps. To each step $1\leq k\leq c_j$, we set
$\varepsilon_k=-1$ if we go left, and $\varepsilon_k=0$ if we go down.
\item If $c_j\leq 0$, then we follow the rim from the left to the right
during $|c_j|$ steps. To each step $1\leq k\leq |c_j|$, we set
$\varepsilon_k=1$ if we go right, and $\varepsilon_k=0$ if we go up. 
\end{enumerate}
Note that, even if we arrive ``at the end'' of the diagram (that is, on
the $x$-axis or $y$-axis), we continue the movement to have globally
$|c_j|$ steps. In particular, for the empty partition, we begin at $(0,0)$
and we follow the axis.

When we follow the rim, each horizontal step below (resp. above)  the
diagonal corresponds to a leg (resp. to a coleg).  On the following
picture, we show the correspondence between the colegs (resp. legs) and
the horizontal path on the rim above (resp. below) the diagonal.

\bigskip
\begin{center}
\begin{tikzpicture}[scale=0.4,draw/.append style={black},baseline=\shadedBaseline]
      \draw(0,0)+(-.5,-.5)rectangle++(.5,.5);
      \draw(1,0)+(-.5,-.5)rectangle++(.5,.5);
      \draw(2,0)+(-.5,-.5)rectangle++(.5,.5);
      \draw(3,0)+(-.5,-.5)rectangle++(.5,.5);
      \draw(4,0)+(-.5,-.5)rectangle++(.5,.5);
      \draw(0,-1)+(-.5,-.5)rectangle++(.5,.5);
      \draw(1,-1)+(-.5,-.5)rectangle++(.5,.5);
      \draw(2,-1)+(-.5,-.5)rectangle++(.5,.5);
      \draw(3,-1)+(-.5,-.5)rectangle++(.5,.5);
      \draw(4,-1)+(-.5,-.5)rectangle++(.5,.5);
      \draw(0,-2)+(-.5,-.5)rectangle++(.5,.5);
      \draw(1,-2)+(-.5,-.5)rectangle++(.5,.5);
      \draw(2,-2)+(-.5,-.5)rectangle++(.5,.5);
      \draw(3,-2)+(-.5,-.5)rectangle++(.5,.5);
      \draw(0,-3)+(-.5,-.5)rectangle++(.5,.5);
      \draw(1,-3)+(-.5,-.5)rectangle++(.5,.5);
      \draw(0,-4)+(-.5,-.5)rectangle++(.5,.5);
      \draw[dashed](3,-3)+(-.5,-.5)rectangle++(.5,.5);
      \draw[dashed](4,-4)+(-.5,-.5)rectangle++(.5,.5);
      \draw[dashed](5,-5)+(-.5,-.5)rectangle++(.5,.5);
      \draw[dashed](6,-6)+(-.5,-.5)rectangle++(.5,.5);

      \draw[dashed](-0.5,0.5)--(6.5,-6.5);
\draw [line width=1.5pt](-0.5,-4.5)--(0.5,-4.5);
\draw [line width=1.5pt](0.5,-3.5)--(1.5,-3.5);
\draw [line width=1.5pt](1.5,-2.5)--(3.5,-2.5);
\draw [line width=1.5pt](3.5,-1.5)--(4.5,-1.5);
\draw [line width=1.5pt](4.5,.5)--(6.5,.5);
\draw [line width=.5pt](5.5,.3)--(5.5,.7);
\draw [line width=.5pt](6.5,.3)--(6.5,.7);
\draw [line width=2pt,gray!60](0,-0.5)--(0,-4.4);
\draw [line width=2pt,gray!60](1,-1.5)--(1,-3.4);
\draw [line width=2pt,gray!60](4,-1.6)--(4,-3.5);
\draw [line width=2pt,gray!60](5,.4)--(5,-4.5);
\draw [line width=2pt,gray!60](6,.4)--(6,-5.5);
\end{tikzpicture}
\end{center}
Hence,
$$\sum_{k=1}^j\varepsilon_k=\alpha_j\ \text{if $j\in
J^-$}\quad\text{and}\quad
\sum_{k=1}^j\varepsilon_k=-\alpha_j\ \text{if $j\in J^+$}.$$
It follows that
$$s(\lambda)=\sum_{j=0}^{t-1}\left(s(\lambda^j)+\sum_{k=1}^j \varepsilon_k\right).$$
\end{remark}
\begin{example}
For $\lambda=(5,5,4,2,1)$, we have
$\lambda^{(5)}=(\emptyset,\emptyset,\emptyset,\emptyset,(2))$ and
$c_5(\lambda)=(0,1,-1,1,-1)$. Thus,
\begin{center}
\begin{tabular}{cccccc}
$\lambda^j$&$\emptyset$&$\emptyset$&$\emptyset$&$\emptyset$&$(2)$
\vspace{0.3cm}
\\
&
\begin{tikzpicture}[scale=0.4,draw/.append style={black},baseline=\shadedBaseline]
\draw [fill=black] (0.,2) circle (4pt);
\draw (0,-.2)--(0,2);
\draw (0,2)--(2.2,2);
\draw (1,2.2)--(1,1.8);
\draw (2,2.2)--(2,1.8);
\draw (-0.2,1)--(0.2,1);
\draw (-0.2,0)--(0.2,0);
\end{tikzpicture}
&
\begin{tikzpicture}[scale=0.4,draw/.append style={black},baseline=\shadedBaseline]
\draw [fill=black] (0.,2) circle (4pt);
\draw (0,-.2)--(0,2);
\draw (0,2)--(2.2,2);
\draw (1,2.2)--(1,1.8);
\draw (2,2.2)--(2,1.8);
\draw (-0.2,1)--(0.2,1);
\draw (-0.2,0)--(0.2,0);
\draw [line width=1.5pt](0,2)--(0,1);
\end{tikzpicture}
&
\begin{tikzpicture}[scale=0.4,draw/.append style={black},baseline=\shadedBaseline]
\draw [fill=black] (0.,2) circle (4pt);
\draw [line width=1.5pt](0,2)--(1,2);
\draw (0,-.2)--(0,2);
\draw (0,2)--(2.2,2);
\draw (1,2.2)--(1,1.8);
\draw (2,2.2)--(2,1.8);
\draw (-0.2,1)--(0.2,1);
\draw (-0.2,0)--(0.2,0);
\draw(0.5,1.5)node{\tiny$+1$};
\end{tikzpicture}
&
\begin{tikzpicture}[scale=0.4,draw/.append style={black},baseline=\shadedBaseline]
\draw [fill=black] (0.,2) circle (4pt);
\draw (0,-.2)--(0,2);
\draw (0,2)--(2.2,2);
\draw (1,2.2)--(1,1.8);
\draw (2,2.2)--(2,1.8);
\draw (-0.2,1)--(0.2,1);
\draw (-0.2,0)--(0.2,0);
\draw [line width=1.5pt](0,2)--(0,1);
\end{tikzpicture}
&
\begin{tikzpicture}[scale=0.4,draw/.append style={black},baseline=\shadedBaseline]
\draw [fill=black] (1.,1) circle (4pt);
\draw (0,-.2)--(0,2);
\draw (0,2)--(3.2,2);
\draw (0,1)--(2,1);
\draw (1,1)--(1,2);
\draw (2,1)--(2,2);
\draw (1,2.2)--(1,1.8);
\draw (2,2.2)--(2,1.8);
\draw (3,2.2)--(3,1.8);
\draw (-0.2,1)--(0.2,1);
\draw (-0.2,0)--(0.2,0);
\draw [line width=1.5pt](1,1)--(2,1);
\draw(1.5,0.5)node{\tiny$+1$};
\draw(0.5,1.5)node{\tiny$+1$};
\end{tikzpicture}
\\
$c_5(\lambda)$&$0$&$1$&$-1$&$1$&$-1$
\vspace{0.2cm}
\\
\end{tabular}
\end{center} 
\medskip
Hence, $s(\lambda)=1+1+1=3$.
\end{example}

\begin{example}
Consider $\lambda$ such that
$\lambda^{(5)}=((3,2),(1,1),(2,2),(1),(2))$ and
$c_5(\lambda)=(-5,2,4,1,-2)$. We have
\begin{center}
\begin{tabular}{cccccc}
$\lambda^j$&$(3,2)$&$(1,1)$&$(2,2)$&$(1)$&$(2)$
\vspace{0.3cm}
\\

&
\begin{tikzpicture}[scale=0.4,draw/.append style={black},baseline=\shadedBaseline]
\draw [fill=black] (2.,0) circle (4pt);
\draw (0,-1.2)--(0,2);
\draw (1,0)--(1,2);
\draw (2,0)--(2,2);
\draw (3,1)--(3,2);
\draw (0,2)--(5.2,2);
\draw (0,1)--(3.2,1);
\draw (0,0)--(2.2,0);
\draw (1,2.2)--(1,1.8);
\draw (2,2.2)--(2,1.8);
\draw (3,2.2)--(3,1.8);
\draw (4,2.2)--(4,1.8);
\draw (5,2.2)--(5,1.8);
\draw (-0.2,1)--(0.2,1);
\draw (-0.2,0)--(0.2,0);
\draw (-0.2,-1)--(0.2,-1);
\draw [line width=1.5pt](2,0)--(2,1);
\draw [line width=1.5pt](2,1)--(3,1);
\draw [line width=1.5pt](3,1)--(3,2);
\draw [line width=1.5pt](3,2)--(4,2);
\draw [line width=1.5pt](4,2)--(5,2);
\draw(2.5,0.5)node{\tiny$+1$};
\draw(3.5,1.5)node{\tiny$+1$};
\draw(4.5,1.5)node{\tiny$+1$};
\draw(0.5,1.5)node{\tiny$+1$};
\draw(1.5,0.5)node{\tiny$+1$};
\end{tikzpicture}
&
\begin{tikzpicture}[scale=0.4,draw/.append style={black},baseline=\shadedBaseline]
\draw [fill=black] (1.,1) circle (4pt);

\draw (0,-1.2)--(0,2);
\draw (0,1)--(1,1);
\draw (0,0)--(1,0);
\draw (1,2)--(1,0);
\draw (0,2)--(2.2,2);
\draw (1,2.2)--(1,1.8);
\draw (2,2.2)--(2,1.8);
\draw (-0.2,1)--(0.2,1);
\draw (-0.2,0)--(0.2,0);
\draw (-0.2,-1)--(0.2,-1);
\draw [line width=1.5pt](1,1)--(1,0);
\draw [line width=1.5pt](1,0)--(0,0);
\draw(0.5,-0.5)node{\tiny$-1$};
\draw(0.5,1.5)node{\tiny$+1$};
\end{tikzpicture}
&
\begin{tikzpicture}[scale=0.4,draw/.append style={black},baseline=\shadedBaseline]
\draw [fill=black] (2.,0) circle (4pt);
\draw [line width=1.5pt](0,0)--(1,0);
\draw [line width=1.5pt](1,0)--(2,0);
\draw [line width=1.5pt](0,0)--(0,-2);
\draw (0,-2.2)--(0,2);
\draw (0,2)--(2.2,2);
\draw (0,1)--(2,1);
\draw (1,2)--(1,0);
\draw (2,2)--(2,0);
\draw (0,0)--(2,0);
\draw (1,2.2)--(1,1.8);
\draw (2,2.2)--(2,1.8);
\draw (-0.2,1)--(0.2,1);
\draw (-0.2,0)--(0.2,0);
\draw (-0.2,-1)--(0.2,-1);
\draw (-0.2,-2)--(0.2,-2);
\draw(0.5,-0.5)node{\tiny$-1$};
\draw(1.5,-0.5)node{\tiny$-1$};
\draw(0.5,1.5)node{\tiny$+1$};
\draw(1.5,0.5)node{\tiny$+1$};
\end{tikzpicture}
&
\begin{tikzpicture}[scale=0.4,draw/.append style={black},baseline=\shadedBaseline]
\draw [fill=black] (1.,1) circle (4pt);
\draw (0,-.2)--(0,2);
\draw (0,1)--(1,1);
\draw (1,1)--(1,2);
\draw (0,2)--(2.2,2);
\draw (1,2.2)--(1,1.8);
\draw (2,2.2)--(2,1.8);
\draw (-0.2,1)--(0.2,1);
\draw (-0.2,0)--(0.2,0);
\draw [line width=1.5pt](1,1)--(0,1);
\draw(0.5,0.5)node{\tiny$-1$};
\draw(0.5,1.5)node{\tiny$+1$};
\end{tikzpicture}
&
\begin{tikzpicture}[scale=0.4,draw/.append style={black},baseline=\shadedBaseline]
\draw [fill=black] (1.,1) circle (4pt);
\draw (0,-.2)--(0,2);
\draw (0,2)--(3.2,2);
\draw (0,1)--(2,1);
\draw (1,1)--(1,2);
\draw (2,1)--(2,2);
\draw (1,2.2)--(1,1.8);
\draw (2,2.2)--(2,1.8);
\draw (3,2.2)--(3,1.8);
\draw (-0.2,1)--(0.2,1);
\draw (-0.2,0)--(0.2,0);
\draw [line width=1.5pt](1,1)--(2,1);
\draw [line width=1.5pt](2,1)--(2,2);
\draw(0.5,1.5)node{\tiny$+1$};
\draw(1.5,0.5)node{\tiny$+1$};
\end{tikzpicture}
\\
$c_5(\lambda)$&$-5$&$2$&$4$&$1$&$-2$
\vspace{0.2cm}
\\
\end{tabular}
\end{center} 
\medskip
Hence, $s(\lambda)=5+0+0+0+2=7$.
\end{example}

\section{Consequences in representation theory}
\label{sec:part3}

Let $\mathcal R$ be a pointed abacus, and $0\leq j\leq t-1$. We denote by
$\iota(\mathcal R)$ the abacus with the same beads as $\mathcal R$, but
with negative slots labeled by $x\in\N$ on $\mathcal R$ labeled instead by
by $-x-1$ on $\iota(\mathcal R)$. The image of $\iota$ is the set of
$\Z$-labeled abacus with a finite number of positive black beads and of
negative white beads.

\subsection{Level $h$ action of the affine Weyl group $W_{t}$ on the
abacus}

Let $t\geq 2$ be an integer. Denote by $W_t$ the affine Weyl group
$\widetilde{A}_{t-1}$. Recall that $W_t$ is generated by $
w_0,\ldots,w_{t-1}$ with relations
$$\begin{array}{llll}
w_j^2&=&1&\text{for all } 0\leq j\leq t-1,\\
w_jw_i&=&w_iw_j&\text{if }i\neq j\pm 1,\\
w_jw_{j+1}w_j&=&w_{j+1}w_jw_{j+1}&\text{if }t\neq 2,\\
\end{array}$$
where the indices are interpreted modulo $t$. Now,
following~\cite[\S3]{Fayers}, recall that for any $h\in\Z$ prime to $t$,
the group $W_t$ acts on $\Z$ by
$$w_j(k)=\left\{
\begin{array}{ll}
k+h&\text{if } k\in (j-1)h-\frac 1 2(t-1)(h-1) +t\Z\\
k-h&\text{if } k\in jh-\frac 1 2(t-1)(h-1) +t\Z\\
k&\text{otherwise,}
\end{array}
\right.$$
for any $0\leq j\leq t-1$ and $k\in\Z$. This is the so-called level $h$
action of $W_t$ on $\Z$. Note that $\frac 1 2(t-1)(h-1)\in\Z$ since $h$
and $t$ are coprime.\medskip

We remark that the level $h$ action of $W_t$ on $\Z$ induces an action on
the set of pointed abaci as follows. Let $\mathcal R$ be a pointed abacus.
Write $\mathcal S=\iota(\mathcal R)$. For $0\leq j\leq t-1$, denote by
$w_j(\mathcal S)$ the $\Z$-labeled abacus obtained from $\mathcal S$ by
applying $w_j$ to each labels. Since the numbers of positive black beads
and of negative white beads of $\mathcal S$ are finite, this is the same
for $w_j(\mathcal S)$.  Hence, $\iota^{-1}(w_j(\mathcal S))$ is a pointed
abacus, denoted by $w_j(\mathcal R)$.  
 
\begin{lemma}
\label{lem:haction}
Let $t\geq 2$ be an integer and $h\in\Z$ be coprime to $t$. If $\mathcal
R$ is a pointed abacus of a partition, then for all $0\leq j\leq t-1$, the
abacus $w_j(\mathcal R)$ obtained by the level $h$ action of $W_t$ is the
pointed abacus of a partition. 
\end{lemma}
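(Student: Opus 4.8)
The plan is to transport the problem to the $\Z$-labelled abacus through $\iota$ and to reduce the assertion to the invariance of a single integer, the \emph{charge}. Write $\mathcal S=\iota(\mathcal R)$ and let $X\subseteq\Z$ be the set of positions carrying a black bead; since $\mathcal R$ is a pointed abacus, $X$ is a $\beta$-sequence, i.e. $X$ contains all sufficiently negative integers and only finitely many nonnegative ones. Set $\operatorname{ch}(X)=|X\cap\Z_{\geq 0}|-|\Z_{<0}\setminus X|$, so that $\operatorname{ch}(X)$ is exactly the number of positive black beads minus the number of negative white beads; thus $\mathcal R$ is the pointed abacus of a partition precisely when $\operatorname{ch}(X)=0$. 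The discussion preceding the lemma already guarantees that $w_j(\mathcal R)$ is a pointed abacus, so it suffices to prove $\operatorname{ch}(w_j(X))=\operatorname{ch}(X)$. For this I would record the windowed description $\operatorname{ch}(X)=|X\cap[-M,M)|-M$, valid for every $M$ large enough that $X$ is ``saturated'' outside $[-M,M)$ (all black below $-M$, all white above $M$).

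Next I would make the combinatorics of $w_j$ explicit. Put $c=\tfrac12(t-1)(h-1)\in\Z$ and $a_j\equiv (j-1)h-c$, $b_j\equiv jh-c\pmod t$; since $\gcd(h,t)=1$ these two residues are distinct and satisfy $b_j\equiv a_j+h$. By definition of the level $h$ action, $w_j$ fixes every integer outside the classes $a_j$ and $b_j$, sends each $k\equiv a_j$ to $k+h$, and each $k\equiv b_j$ to $k-h$. Hence (assuming $h>0$; the case $h<0$ is symmetric) $w_j$ is the product of the pairwise disjoint transpositions $(k,k+h)$ with $k\equiv a_j\pmod t$, an involution that moves the whole class $a_j$ up by $h$ and the whole class $b_j$ down by $h$.

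Finally I would compute the change of $|X\cap[-M,M)|$ under $w_j$ by bookkeeping the black beads that cross the window boundary, for $M\gg 0$. Near $+M$ the configuration is all white, so no black bead crosses there. Near $-M$ the configuration is all black: the black beads entering the window are those in class $a_j$ lying in $[-M-h,-M)$ (they move up into $[-M,-M+h)$), and the black beads leaving are those in class $b_j$ lying in $[-M,-M+h)$ (they move down below $-M$). The number entering is the number of integers $\equiv a_j$ in an interval of length $h$, and the number leaving is the number of integers $\equiv b_j$ in an interval of length $h$; because $b_j\equiv a_j+h$, translating the interval and the residue simultaneously by $h$ shows these two counts coincide, so the net crossing is $0$. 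Therefore $|w_j(X)\cap[-M,M)|=|X\cap[-M,M)|$ and $\operatorname{ch}$ is preserved, giving $\operatorname{ch}(w_j(X))=0$ as required. I expect the only delicate point to be exactly this boundary count: one must check that no black bead crosses near $+M$ and that the two contributions near $-M$ cancel, which is where the identity $b_j\equiv a_j+h\pmod t$ is used.
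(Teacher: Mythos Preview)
Your argument is correct and takes a genuinely different route from the paper's. The paper transports the problem to the pointed $t$-abacus $\mathcal T=(\mathcal R_0,\ldots,\mathcal R_{t-1})$ and shows that the level $h$ action of $w_j$ amounts to swapping the two runners indexed by $r_j$ and $r'_j$ and then applying a push $p^{\epsilon_j}$ with $\epsilon_j\in\core_t$ (Equation~(\ref{eq:pushlascoux})); the conclusion then follows from Lemma~\ref{lem:addchar}. Your approach instead stays on the single $\Z$-labelled abacus, introduces the charge $\operatorname{ch}(X)=|X\cap\Z_{\ge 0}|-|\Z_{<0}\setminus X|$, and shows directly that $w_j$ preserves it by a boundary count in a large window; the key cancellation comes from the translation $k\mapsto k-h$ matching the residue class $b_j$ to $a_j$. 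Your method is shorter and more self-contained for the bare statement of the lemma, and avoids the machinery of \S\ref{subsec:quotient}. The paper's argument, on the other hand, yields the explicit structural formula~(\ref{eq:pushlascoux}), which is precisely what is needed for the subsequent Corollary describing $\Fr(w_j(\lambda))$ and for the analysis of Scopes maps in \S\ref{subsec:scopes}; your charge argument does not produce this by-product.
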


\begin{proof}
Let $\mathcal T=(\mathcal R_0,\ldots,\mathcal R_{t-1})$ be the pointed
$t$-abacus of $\lambda$.  Set $\mathcal S=\iota(\mathcal R)$ and $\mathcal
S_k=\iota(\mathcal R_k)$ for all $0\leq k\leq t-1$. Write
$$h=\alpha t+r,$$
where $\alpha\in \Z$ and $0\leq r\leq t-1$. 

Let $0\leq j\leq t-1$. Write $\mathcal R'=w_j(\mathcal R)$ and $\mathcal
S'=w_j(\mathcal S)$. Denote $(\mathcal R'_0,\ldots,\mathcal R'_{t-1})$ for
the pointed $t$-abacus of $\mathcal R'$ and set $\mathcal
S'_k=\iota(\mathcal R'_k)$ for all $0\leq k\leq t-1$.  We denote by $0\leq
r_j\leq t-1$ and $0\leq r_j'\leq t-1$ the residues of $hj-\frac 1
2(t-1)(h-1)$ and $h(j-1)-\frac 1 2(t-1)(h-1)$ modulo $t$.  We have
$$r_j-r\equiv hj-\frac 1 2(t-1)(h-1)-h\equiv r'_j\mod t.$$ Furthermore, if
$r_j-r<0$, then $t+r_j-r\in\{0,\ldots,t-1\}$. In particular
\begin{equation}
\label{eq:lienrjrjprine}
r'_j=\left\{
\begin{array}{ll}
r_j-r&\text{if }r_j-r\geq 0\\
t+r_j-r&\text{otherwise}.
\end{array}
\right. 
\end{equation}
Let $b\in\Z$ be a label of a slot of $\mathcal S$. Let $q,\,k\in\Z$ such
that $b=qt+k$ and $0\leq k\leq t-1$. In particular, the bead labeled by
$k$ on $\mathcal S$ is moved on the $q$th slot of $\mathcal S_k$.

Suppose $k\notin\{r_j,\,r'_j\}$. Then $w_j(b)=b$, and 
\begin{equation}
\label{eq:Zabac1}
\mathcal S'_k=\mathcal S_k.
\end{equation}
Suppose now that $k=r_j$. We have 
$w_j(b)=b-h=(q-\alpha)t+r_j-r$. Hence,~(\ref{eq:lienrjrjprine}) gives
$$w_j(b)=\left\{
\begin{array}{ll}
(q-\alpha)t+r_j'&\text{if }r_j-r\geq 0\\
(q-\alpha-1)t+r'_j&\text{otherwise}.
\end{array}
\right.$$
It follows that 
\begin{equation}
\label{eq:Zabac2}
\mathcal S'_{r'_j}=p^{-\alpha}(\mathcal S_{r_j})\ \text{if
}r_j-r\geq 0\quad \text{and}\quad
\mathcal S'_{r'_j}=p^{-\alpha-1}(\mathcal S_{r_j})\ \text{otherwise.
}
\end{equation}
Finally, assume that $k=r'_j$. We have 
$w_j(b)=b+h=(q+\alpha)t+r'_j+r$. Hence,~(\ref{eq:lienrjrjprine}) gives
\begin{equation}
\label{eq:Zabac3}
\mathcal S'_{r_j}=p^{\alpha}(\mathcal S_{r'_j})\ \text{if
}r_j-r\geq 0\quad \text{and}\quad
\mathcal S'_{r_j}=p^{\alpha+1}(\mathcal S_{r'_j})\ \text{otherwise.
}
\end{equation}
Now, define $d_j=\alpha$ if $r_j-r\geq 0$ and $d_j=\alpha+1$ otherwise. We
also set
\begin{equation}
\label{eq:epsilonpush}
\epsilon_j=(0,\ldots,0,\underbrace{-d_j}_{r'_j-\text{th}},0,\ldots,0,\underbrace{d_j}_{r_j-\text{th}},0,\ldots,0)\in
\core_t.\end{equation}
Then~(\ref{eq:Zabac1}), (\ref{eq:Zabac2}) and~(\ref{eq:Zabac3}) imply that
\begin{equation}
\label{eq:pushlascoux}
(\mathcal R'_0,\ldots,\mathcal R'_{r'_j},\ldots,\mathcal R'_{r_j},\ldots\mathcal R'_{t-1})=p^{\epsilon_j}(\mathcal
R_0,\ldots,\mathcal R_{r_j},\ldots,\mathcal R_{r'_j},\ldots,\mathcal R_{t-1}),
\end{equation}
where the $\epsilon_j$-push $p^{\epsilon_j}$ is defined
in~(\ref{eq:push}). Furthermore, $(\mathcal R_0,\ldots,\mathcal R_{t-1})$
is a pointed $t$-abacus of a partition. Hence, $(\mathcal
R_0',\ldots,\mathcal R'_{t-1})$ is also a pointed $t$-abacus of a
partition by Lemma~\ref{lem:addchar}. It follows that $\mathcal R'$ is a
pointed abacus of a partition, as required.
\end{proof}

\begin{remark}
\noindent
\begin{enumerate}[(i)]
\item By Lemma~\ref{lem:haction}, if $\lambda$ is a partition with pointed
abacus $\mathcal T$, then $w_j(\mathcal R)$ is the pointed abacus of a
partition $w_i(\lambda)$. The map $\lambda\mapsto w_i(\lambda)$ is
introduced and studied in~\cite{Fayers}.
\item The level $1$ action gives the Lascoux maps~\cite{Lascoux}.
\end{enumerate} 
\end{remark}

\begin{corollary}
Let $t\geq 2$ be an integer and $h\in\Z$ be coprime to $t$. Write
$h=\alpha t+r$ with $\alpha\in\Z$ and $0\leq r\leq t-1$. Let $0\leq j\leq
t-1$, and define $\epsilon_j$ as in Equation~(\ref{eq:epsilonpush}).  Let
$\lambda$ be a partition with Frobenius symbol
$\Fr(\lambda)=(\Le\mid\Ar)$.  For $0\leq k\leq t-1$ with $k\notin
\{r_j,\,r'_j\}$ set $\Ar'_k=\Ar^+_k(\lambda)$ and
$\Le'_k=\Le^+_k(\lambda)$.  

If $d_j\geq 0$, then set 
\begin{align*}
\Ar_j=&\{(q-d_j)t+r'_j\mid q\in \widetilde{\Ar}^+_{r_j}, q\geq d_j\}\cup
\{(q+d_j)t+r_j\mid q\in \widetilde{\Ar}^+_{r'_j} \}\\
&\cup \{(d_j-q-1)t+r_j\mid q\in \widetilde{\Ar}^-_{r'_j}\ \text{and}\
q < d_j \},
\end{align*}
and
\begin{align*}
\Le_j=&\{(q-d_j)t+r_j\mid q\in \widetilde{\Le}^+_{r'_j}, q\geq d_j\}\cup
\{(q+d_j)t+r'_j\mid q\in \widetilde{\Le}^+_{r_j} \}\\
&\cup \{(d_j-q-1)t+r'_j\mid q\in \widetilde{\Le}^-_{r_j}\ \text{and}\
q < d_j \},
\end{align*}

If $d_j\leq 0$, then set 
\begin{align*}
\Ar_j=&\{(q+d_j)t+r_j\mid q\in \widetilde{\Ar}^+_{r'_j}, q\geq |d_j|\}\cup
\{(q-d_j)t+r'_j\mid q\in \widetilde{\Ar}^+_{r_j} \}\\
&\cup \{(|d_j|-q-1)t+r'_j\mid q\in \widetilde{\Ar}^-_{r_j}\ \text{and}\
q < |d_j| \},
\end{align*}
and
\begin{align*}
\Le_j=&\{(q+d_j)t+r'_j\mid q\in \widetilde{\Le}^+_{r_j}, q\geq d_j\}\cup
\{(q-d_j)t+r_j\mid q\in \widetilde{\Le}^+_{r'_j} \}\\
&\cup \{(|d_j|-q-1)t+r_j\mid q\in \widetilde{\Le}^-_{r'_j}\ \text{and}\
q < |d_j| \}.
\end{align*}
Then the Frobenius symbol of $w_j(\lambda)$ is $(\Le'\mid\Ar')$
where
$$\Le'=\bigcup_{k\notin\{r_j,r'_j\}} \Le'_k\cup \Le_j\quad\text{and}\quad
\Ar'=\bigcup_{k\notin\{r_j,r'_j\}} \Ar'_k\cup \Ar_j.$$
\end{corollary}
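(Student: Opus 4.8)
The plan is to read off the Frobenius symbol of $w_j(\lambda)$ directly from the pointed $t$-abacus, by combining the runner-by-runner description of the $W_t$-action established inside the proof of Lemma~\ref{lem:haction} with the single-runner push analysis already carried out in Proposition~\ref{prop:mainsensdirect}. Let $\mathcal T=(\mathcal R_0,\ldots,\mathcal R_{t-1})$ be the pointed $t$-abacus of $\lambda$ and $\mathcal T'=(\mathcal R'_0,\ldots,\mathcal R'_{t-1})$ that of $w_j(\lambda)$. By Lemma~\ref{lem:haction} the latter is again a pointed $t$-abacus of a partition, so it does define a Frobenius symbol $(\Le'\mid\Ar')$, and it suffices to compute the arms and legs contributed by each runner separately.

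First I would extract from the identities (\ref{eq:Zabac1}), (\ref{eq:Zabac2}), (\ref{eq:Zabac3}) and (\ref{eq:pushlascoux}) the precise effect of $w_j$ on runners. Since $h=\alpha t+r$ with $\gcd(r,t)=1$ we have $r\neq 0$, hence $r_j\neq r'_j$, and the action reads: $\mathcal R'_k=\mathcal R_k$ for every $k\notin\{r_j,r'_j\}$, while $\mathcal R'_{r'_j}=p^{-d_j}(\mathcal R_{r_j})$ and $\mathcal R'_{r_j}=p^{d_j}(\mathcal R_{r'_j})$, with $d_j$ as defined in the proof of Lemma~\ref{lem:haction}. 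For the untouched runners the arm- and leg-residue classes are unchanged, which immediately gives $\Ar'_k=\Ar^+_k(\lambda)$ and $\Le'_k=\Le^+_k(\lambda)$ for $k\notin\{r_j,r'_j\}$; this is the first block of the asserted formulas.

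The heart of the computation is the two swapped runners. Here I would invoke Proposition~\ref{prop:mainsensdirect}, whose proof is precisely the analysis of how one $\delta$-push redistributes the black and white beads of a single runner into arms, legs, coarms and colegs; the same argument applies verbatim with $c_j$ replaced by $d_j$. Taking $d_j\geq 0$: applying $p^{-d_j}$ to $\mathcal R_{r_j}$ (the push-down case, Proposition~\ref{prop:mainsensdirect}(i)) turns $\widetilde{\Ar}^+_{r_j}$, $\widetilde{\Le}^+_{r_j}$, $\widetilde{\Le}^-_{r_j}$ into runner-arms $\{q-d_j\mid q\in\widetilde{\Ar}^+_{r_j},\,q\geq d_j\}$ and runner-legs $\{q+d_j\mid q\in\widetilde{\Le}^+_{r_j}\}\cup\{d_j-q-1\mid q\in\widetilde{\Le}^-_{r_j},\,q<d_j\}$, while applying $p^{d_j}$ to $\mathcal R_{r'_j}$ (the push-up case, Proposition~\ref{prop:mainsensdirect}(ii) with $|c_j|=d_j$) does the symmetric thing to $\mathcal R_{r'_j}$. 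Because the first result now sits on runner $r'_j$ and the second on runner $r_j$, I would convert the resulting runner-slots back to genuine labels via (\ref{eq:armjrecover}) and (\ref{eq:legjrecover}), collect the three contributions on each of the two runners, and thereby obtain the sets $\Ar_j$ and $\Le_j$ of the statement. The case $d_j\leq 0$ is identical after exchanging the roles of the two runners and of push-up and push-down.

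Finally, setting $\Ar'=\bigcup_k\Ar'_k$ and $\Le'=\bigcup_k\Le'_k$ and invoking Remark~\ref{rk:armlegbijpart} produces the Frobenius symbol $(\Le'\mid\Ar')$ of $w_j(\lambda)$. I expect the only genuine difficulty to be the bookkeeping: one must keep straight the swap (the old runner $r_j$ becomes the new runner $r'_j$ and conversely), the opposite signs of the two pushes, and the fact that a leg on runner $k$ carries residue $t-k-1$ while an arm carries residue $k$, so that the conversion via (\ref{eq:armjrecover})–(\ref{eq:legjrecover}) must be applied with the correct residue on each runner. One must also be careful that the coarm and coleg data $\widetilde{\Ar}^-_{r'_j}$ and $\widetilde{\Le}^-_{r_j}$ fed into Proposition~\ref{prop:mainsensdirect} are those of the \emph{original} runners, completed to $\N$ beyond the last genuine arm or leg as in the convention of (\ref{eq:coarmj})–(\ref{eq:colegj}); this is exactly what licenses the appearance of the ``new'' arms and legs coming from empty slots.
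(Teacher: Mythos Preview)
Your proposal is correct and follows the same approach as the paper: the paper's entire proof is the single sentence ``This is just a reinterpretation in term of arms and legs of Equation~(\ref{eq:pushlascoux}),'' and your argument is precisely that reinterpretation spelled out in detail, invoking the single-runner push analysis from the proof of Proposition~\ref{prop:mainsensdirect} and the conversion formulas (\ref{eq:armjrecover})--(\ref{eq:legjrecover}). Your careful tracking of the runner swap, the opposite push signs, and the different residues carried by arms versus legs is exactly the bookkeeping the paper leaves implicit.
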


\begin{proof}
This is just a reinterpretation in term of arms and legs of
Equation~(\ref{eq:pushlascoux}). 
\end{proof}

\begin{example}
Let $t=2$, $h=3$ and $\lambda=(4,1)$ as in~\cite[Example p.\,65]{Fayers}.
Since $\Fr(\lambda)=(1\mid 3)$, the pointed $2$-abacus of $\lambda$ is
\medskip
\begin{center}
\definecolor{sqsqsq}{rgb}{0.12549019607843137,0.12549019607843137,0.12549019607843137}
\begin{tikzpicture}[line cap=round,line join=round,>=triangle
45,x=0.7cm,y=0.7cm, scale=0.8,every node/.style={scale=0.8}]

\draw [dash pattern=on 2pt off 2pt](-2.5,1.5)-- (1,1.5);
\draw(-3,1.5)node{$\mathfrak f$};

\draw (-2,0.7) node[anchor=north west] {$0$};

\draw (-1.7,3)-- (-1.7,0.7);

\begin{scriptsize}

\draw (-1.7,2) circle (2.5pt);
\draw (-1.7,3) circle (2.5pt);

\draw (-1.7,1) circle (2.5pt);
\end{scriptsize}

\draw (-0.3,0.7) node[anchor=north west] {$1$};

\draw (0.,3)-- (0.,0.7);

\begin{scriptsize}
	
\draw  (0,2) circle (2.5pt);
\draw (0,3) [fill=black] circle (2.5pt);

\draw [fill=black](0,1) circle (2.5pt);

\end{scriptsize}
\end{tikzpicture}
\end{center}
Furthermore, we have $3=2\alpha+r$ with $\alpha=1$ and $r=1$.  Now, for
$j=0$, we have $r_0=1$ and $r'_0=0$. Hence, $r_0-r\geq 0$ and
$\epsilon_0=(-1,1)$.  For $j=1$, we have $r_1=0$ and $r'_1=1$, hence
$r_1-r<0$ and $\epsilon_1=(2,-2)$. It follows that $$w_0(\mathcal
R_0,\mathcal R_1)=p^{(-1,1)}(\mathcal R_1,\mathcal
R_0)\quad\text{and}\quad w_1(\mathcal R_0,\mathcal
R_1)=p^{(2,-2)}(\mathcal R_1,\mathcal R_0),$$ and the pointed $2$-abacus
of $w_0(\lambda)$ and $w_1(\lambda)$ are
\medskip
\begin{center}
\begin{tabular}{lcl}
\definecolor{sqsqsq}{rgb}{0.12549019607843137,0.12549019607843137,0.12549019607843137}
\begin{tikzpicture}[line cap=round,line join=round,>=triangle
45,x=0.7cm,y=0.7cm, scale=0.8,every node/.style={scale=0.8}]

\draw [dash pattern=on 2pt off 2pt](-2.5,1.5)-- (1,1.5);
\draw(-3,1.5)node{$\mathfrak f$};

\draw (-2,0.7) node[anchor=north west] {$0$};

\draw (-1.7,3)-- (-1.7,0.7);

\begin{scriptsize}

\draw (-1.7,2)[fill=black] circle (2.5pt);
\draw (-1.7,3) circle (2.5pt);

\draw (-1.7,1) circle (2.5pt);
\end{scriptsize}

\draw (-0.3,0.7) node[anchor=north west] {$1$};

\draw (0.,3)-- (0.,0.7);

\begin{scriptsize}
	
\draw  (0,2) circle (2.5pt);
\draw (0,3)  circle (2.5pt);

\draw [fill=black](0,1) circle (2.5pt);
\end{scriptsize}
\end{tikzpicture}
&
\quad
&
\begin{tikzpicture}[line cap=round,line join=round,>=triangle
45,x=0.7cm,y=0.7cm, scale=0.8,every node/.style={scale=0.8}]

\draw [dash pattern=on 2pt off 2pt](-2.5,1.5)-- (1,1.5);
\draw(-3,1.5)node{$\mathfrak f$};

\draw (-2,-1.3) node[anchor=north west] {$0$};

\draw (-1.7,5)-- (-1.7,-1.3);

\begin{scriptsize}

\draw (-1.7,4) circle (2.5pt);
\draw (-1.7,5)[fill=black] circle (2.5pt);

\draw (-1.7,2)[fill=black] circle (2.5pt);
\draw (-1.7,3)[fill=black] circle (2.5pt);

\draw (-1.7,1)[fill=black] circle (2.5pt);
\draw (-1.7,0)[fill=black] circle (2.5pt);
\draw (-1.7,-1)[fill=black] circle (2.5pt);
\end{scriptsize}

\draw (-0.3,-1.3) node[anchor=north west] {$1$};

\draw (0.,5)-- (0.,-1.3);

\begin{scriptsize}

\draw  (0,4) circle (2.5pt);
\draw  (0,5) circle (2.5pt);
\draw  (0,2) circle (2.5pt);
\draw (0,3) circle (2.5pt);

\draw (0,1) circle (2.5pt);
\draw  (0,0) circle (2.5pt);
\draw  (0,-1) circle (2.5pt);

\end{scriptsize}
\end{tikzpicture}\\
\vspace{3pt}
\hspace{1.1cm}$w_0(\lambda)$
&&
\hspace{0.9cm}$w_1(\lambda)$
\end{tabular}
\end{center}
Thus, $w_0(\lambda)=(1,1)$ and $w_1(\lambda)=(7,4,3,2,1)$, and
$\Fr(w_0)=(1\mid 0)$ and $\Fr(w_1)=(4,2,0\mid 0,2,6)$.
\end{example}

\begin{remark}
\label{rk:choixabacus} 
To define $\lambda\mapsto w_j(\lambda)$, we need to associate first to
$\lambda$ a subset of $\Z$, here the $\beta$-sequence
$Y=(\lambda_j-j)_{j\geq 1}$ (see Remark~\ref{rk:usualabacus}), so that
$W_t$ can act on this set. Note that if we change the $\beta$-sequence,
$w_j(\lambda)$ changes. So, in a way, $w_j(\lambda)$ is not canonically
well-defined, and depends on the choice of a $\beta$-sequence associated
to $\lambda$.
\end{remark}

\subsection{Scopes maps}
\label{subsec:scopes}
In this section, $p$ denotes a prime number, and we consider the level $1$
action of the affine Weyl group $W_p=\langle w_0,\ldots,w_{p-1}\rangle$ on
$\Z$. By \cite[6.1.21]{James-Kerber}, the $p$-blocks of symmetric groups
are labeled by the set of tuples $(\lambda,w)$ where $\lambda$ is a
$p$-core partition and $w\in\N$ is the $p$-weight of the block.
Conversely, for any such a tuple $(\lambda,w)\in\mathcal C_p\times \N$,
there is a unique $p$-block with $p$-core $\lambda$ and $p$-weight $w$.  

Let $w\in\N$. Recall that in~\cite{scopes} page 443, Scopes associates to
any $p$-core partition $\lambda$ a $p$-abacus, called \emph{the Scopes
abacus of $\lambda$}, and denoted by $\mathcal
R_{\operatorname{sc}}(\lambda)$. We write $X_{\text{sc}}(\lambda)$ for its
corresponding $\beta$-sequence; see Remark~\ref{rk:usualabacus}. For any
$0\leq j\leq p-1$, denote by $b_j$ the number of black beads on the $j$th
runner of $\mathcal R_{\operatorname{sc}}(\lambda)$. Assume that $1\leq
j\leq p-1$. When $b_j-b_{j-1}\geq w$, we write
$\operatorname{Sc}_j(\lambda)$ for the $p$-core partition associated to
the $\beta$-sequence $w_j(X_{\operatorname{sc}}(\lambda))$.  The map
$\operatorname{Sc}_j$, only defined for $p$-core partition $\lambda$ whose
Scopes abacus satisfies the condition $b_j-b_{j-1}\geq w$, is called
\emph{the $j$-Scopes map}. Note that in the following, a Scopes map will
designate a $j$-Scopes map for some $1\leq j\leq p-1$.  Now, we can define
the Scopes process, abbreviated by \emph{Sc-process}: begin with a
$p$-core $\lambda$ and apply iteratively a Scopes map until no Scopes map
can be applied.

We now describe Scopes process on characteristic vectors corresponding to
$p$-core partitions. First, for any $c=(c_0,\ldots,c_{p-1})\in \core_p$,
$w\in\N$, and $0\leq j\leq p-1$, we say that $c$ is \emph{ $j$-allowed}
whenever the following condition is satisfied.
\begin{equation}
\label{eq:condscopes}
\begin{cases}
c_j-c_{j-1}\geq w &\text{if }1\leq j\leq p-1,\\
c_0-c_{t-1}> w&\text{if }j=0.
\end{cases}  
\end{equation}
Let $0\leq j\le p-1$. If $c\in \core_p$ is $j$-allowed, then
we set
\begin{equation}
\label{eq:scopesmap}
\operatorname{sc}_j(c)=
\begin{cases}
(c_0,\ldots,c_{j-2},c_j,c_{j-1},c_{j+1},\ldots,c_{p-1}) &\text{if }1\leq j\leq 1,\\
(c_{p-1}+1,c_1,\ldots,c_{p-1},c_0-1)&\text{if }j=0.
\end{cases}  
\end{equation}
Similarly as above, we define the \emph{sc-process} with respect to these
maps, beginning with a characteristic vector $c\in \core_p$ and applying
iteratively such a map until no map can be applied.
\begin{lemma}
Let $w\in\N$ and $p$ be a prime number. Identifying the $p$-core
partitions with their characteristic vectors as in
\S\ref{subsec:charcore}. The Sc-process and sc-process are the same.
\end{lemma}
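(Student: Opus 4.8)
The plan is to push both procedures onto characteristic vectors and to check, step by step, that the available moves and their effects agree. The backbone is the level $1$ action already analyzed in Lemma~\ref{lem:haction}: specializing $h=1$ there gives $\alpha=0$ and $r=1$, so that for $1\le j\le p-1$ one reads off $r_j=j$, $r'_j=j-1$, $d_j=0$, while for $j=0$ one gets $r_0=0$, $r'_0=p-1$, $d_0=1$. Feeding these into \eqref{eq:epsilonpush} and the Corollary following Lemma~\ref{lem:haction}, the action $w_j$ on the pointed abacus of a $p$-core amounts, on the tuple of runners, to transposing runners $j-1$ and $j$ when $1\le j\le p-1$, and to transposing runners $0$ and $p-1$ together with the push $\epsilon_0=(1,0,\dots,0,-1)$ when $j=0$. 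Since a $p$-core has empty quotient and each runner of its pointed abacus is the $c_k$-push of the empty runner (Lemma~\ref{lem:armlegcore}), and since a $\delta$-push shifts the corresponding component of the characteristic vector by $\delta$ (this is exactly the bead count in the proof of Lemma~\ref{lem:pushdown}), these operations read on $c$ precisely as the transposition $c_{j-1}\leftrightarrow c_j$ and, for $j=0$, as $c\mapsto(c_{p-1}+1,c_1,\dots,c_{p-2},c_0-1)$. This is the map $\operatorname{sc}_j$ of \eqref{eq:scopesmap}.

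Next I would match the domains of definition. Counting, in a common large window on each runner of the pointed abacus of $\lambda$, the black beads coming from the $u_j$ arms above $\mathfrak f$ and the $M-v_j$ default black beads below, one finds $b_j=M+c_j$ with $M$ independent of $j$, using \eqref{eq:valchar}; hence $b_j-b_{j-1}=c_j-c_{j-1}$, so Scopes' condition $b_j-b_{j-1}\ge w$ is literally the $j$-allowed condition \eqref{eq:condscopes} for $1\le j\le p-1$. To see that each move is genuinely descending I would invoke Corollary~\ref{cor:sizecore}: a direct computation shows $\operatorname{sc}_j$ decreases $|\lambda_c|$ by $c_j-c_{j-1}$ for $1\le j\le p-1$, and by $c_0-c_{p-1}-1$ for $j=0$. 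This is exactly why the $j=0$ case carries the strict inequality $c_0-c_{p-1}>w$ while the others are non-strict, and it guarantees that both processes terminate (the core sizes strictly decrease) at the minimal core of the orbit.

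The delicate point, and the step I expect to be the main obstacle, is the reconciliation of the two choices of $\beta$-sequence, which is forced on us by Remark~\ref{rk:choixabacus}: the assignment $\lambda\mapsto w_j(\lambda)$ depends on the chosen $\beta$-sequence, and Scopes' $X_{\operatorname{sc}}(\lambda)$ need not coincide with the $r=0$ sequence $Y$ attached to the pointed abacus in Remark~\ref{rk:usualabacus}. Writing $X_{\operatorname{sc}}(\lambda)=Y+m$ for the relevant shift $m$, one checks the equivariance $w_j(Y+m)=w_{j-m}(Y)+m$, so that Scopes' $\operatorname{Sc}_j$, indexed on her abacus, corresponds to the map $\operatorname{sc}_{(j-m)\bmod p}$ on the characteristic vector (and the bead-count identity $b_j-b_{j-1}=c_{j-m}-c_{j-1-m}$ is relabelled accordingly). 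The heart of the argument is then to pin down $m$ from Scopes' normalization of the number of beads, and to verify that, as $j$ ranges over $1,\dots,p-1$ and as the abacus is re-associated to each new core along the process, these relabelled moves sweep out precisely the $j$-allowed moves for all $0\le j\le p-1$; in particular they must produce the wrap-around move $\operatorname{sc}_0$ with its shift $(+1,-1)$ and its strict inequality. Once this index bookkeeping is settled, both processes apply the same move at each stage and halt at the same $p$-core, which is the assertion.
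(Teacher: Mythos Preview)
Your approach is essentially the paper's: both hinge on the shift-equivariance $w_j(Y^{+r})=w_{j-r}(Y)^{+r}$, the level-$1$ specialization of Lemma~\ref{lem:haction} (giving the runner transpositions and the push $\epsilon_0=(1,0,\dots,0,-1)$), and a bead-count translation between the Scopes abacus and the pointed abacus. The paper organizes the argument by first proving the equivariance, then reading off the identity \eqref{eq:scopecond}: $b_i-b_{i-1}=c_{i-r}-c_{i-r-1}$ when $i-r\not\equiv 0$ and $b_i-b_{i-1}=c_0-c_{p-1}-1$ when $i-r\equiv 0$; this is what produces the strict inequality in the $j=0$ case of \eqref{eq:condscopes} and simultaneously shows that as $i$ runs over $1,\dots,p-1$ the corresponding $j=(i-r)\bmod p$ hits every residue except the one attached to the minimal-bead runner of $X_{\operatorname{sc}}$, which can never satisfy Scopes' condition.

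One point of emphasis to adjust: your appeal to Corollary~\ref{cor:sizecore} and the size drop $c_0-c_{p-1}-1$ is a nice heuristic for why the $j=0$ condition \emph{ought} to be strict, but it does not by itself prove that the domains of $\operatorname{Sc}_i$ and $\operatorname{sc}_{j}$ coincide; what must be checked is that Scopes' bead condition $b_i-b_{i-1}\ge w$ is literally equivalent to the $j$-allowed condition after the index shift, and that comes from the $-1$ in the bead-count formula at the wrap, not from the size computation. Once you carry out the bookkeeping you flag in your last paragraph (which the paper does exactly as you anticipate), the argument is complete.
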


\begin{proof}
First, we remark that for any $\beta$-sequence $X$ representing $\lambda$
and $r\in\Z$, we have
\begin{equation}
\label{eq:lienbetaset} 
w_j(X^{+r})=w_{j-r}(X)^{+r},
\end{equation}
where the index is taken modulo $p$.  Indeed, for any $x\in X$,
\begin{align*}
w_{j-r}(x)+r&=
\begin{cases}
x+r+1&\text{if } x\equiv j-r-1\mod t\\ 
x+r-1&\text{if } x\equiv j-r\mod t\\
x+r&\text{otherwise}
\end{cases}\\
&=
\begin{cases}
(x+r)+1&\text{if } x+r\equiv j-1\mod t\\ 
(x+r)-1&\text{if } x+r\equiv j\mod t\\
(x+r)&\text{otherwise}
\end{cases}\\
&=w_j(x+r),
\end{align*}
as required.

Let $c=(c_0,\ldots,c_{p-1})$ be a characteristic vector, and
$\lambda_c\in\mathcal C_p$ its associated $p$-core partition as in
\S\ref{subsec:charcore}. Denote by $Y=(\lambda_k-k)_{k\geq 1}$, where
$\lambda_c=(\lambda_1,\lambda_2,\ldots)$, the $\beta$-sequence
corresponding to the pointed abacus of $\lambda_c$ as in
Remark~\ref{rk:usualabacus}. Then there is $r\in\Z$ such that
$Y^{+r}=X_{\operatorname{sc}}(\lambda_c)$. Let $1\leq i\leq t-1$.  Then
$\operatorname{Sc}_i(\lambda_c)$ has $\beta$-sequence
$w_i(X_{\operatorname{sc}}(\lambda_c))=w_i(Y^{+r})=w_{i-r}(Y)^{+r}$ by
Equation~(\ref{eq:lienbetaset}). In particular, $w_{i-r}(Y)$ is also a
$\beta$-sequence of the $\operatorname{Sc}_i(\lambda_c)$.  Furthermore,
for $0\leq j\leq p-1$, the $j$-runner of the $p$-abacus of $Y$ is sent to
the $(j+r)$-runner of $X_{\operatorname{sc}}(\lambda_c)$. Hence, the
$(i-r)$-runner and $(i-r-1)$-runner of $Y$ are sent to the $i$ and $(i-1)$
runners of $X_{\operatorname{sc}}(\lambda_c)$, and by construction of
$\lambda_c$, we have 
\begin{equation}
\label{eq:scopecond}
b_{i}-b_{i-1}=
\begin{cases}c_{i-r}-c_{i-r-1}& \text{if }i-r\not\equiv 0\mod p\\
c_{0}-c_{p-1}-1&\text{if } i-r\equiv 0\mod p.
\end{cases}
\end{equation}
We then can apply $\operatorname{Sc}_i$ to $\lambda_c$ if and only if
$b_i-b_{i-1}\geq w$ if and only if $c$ is $(i-r)$-allowed, that is we can
apply $\operatorname{sc}_{i-r}$ to $c$.  Conversely, if $j$ satisfies
condition~(\ref{eq:condscopes}), then we cannot have $j\equiv -r\mod p$,
because the $0$-runner of $X_{\operatorname{sc}}(\lambda_c)$ corresponds
to a runner with a minimum number of black beads.

Finally, assume $0\leq j\leq p-1$ such that
condition~(\ref{eq:condscopes}) is satisfied, then $w_j(Y)$ is a
$\beta$-sequence of $\operatorname{Sc}_{j+r}(\lambda_c)$, and if $\mathcal
R$ denotes the pointed abacus of $\lambda_c$, then $w_j(\mathcal R)$ is
the one of $w_j(\lambda)$ by Remark~\ref{rk:usualabacus} and
Lemma~\ref{lem:haction}.  Since $1=h=\alpha p+r$ with $\alpha=0$ and
$r=1$, and $r_j=j$, $r'_j=j-1$ for $1\leq j\leq p-1$ and $r_0=0$,
$r'_0=p-1$, we deduce that $\epsilon_j=(0,\ldots,0)$ for $1\leq j\leq p-1$
and $\epsilon_0=(1,0,\ldots,0,-1)$. Now, Equation~(\ref{eq:pushlascoux})
and the construction of $\lambda_c$ from $c$ give the result.
\end{proof}

\begin{example}
\label{ex:scopes} 
Let $w=2$. Consider the $3$-core partition $\mu=(4,2)$.  We have
$\Fr(\mu)=(1,0\mid 0,3)$, hence $c(\mu)=(2,-1,-1)$, and
$Y=(3,0,-3,-4,\ldots)$. Furthermore, 
$$X_{\operatorname{sc}(\mu)}=\{11,8,5,4,3,2,1,0,-1,\ldots\}=Y^{+8}.$$
Then, the $0$-runner of the pointed abacus $\mathcal R$ of $\mu$ is sent
on the $0+8\equiv 2$ runner of the Scopes abacus $\mathcal
R_{\operatorname{sc}}(\mu)$ of $\mu$. Similarly, the $1$-runner and the
$2$-runner of $\mathcal R$ are sent to the $0$-runner and the $1$-runner
of $\mathcal R_{\operatorname{sc}}(\mu)$.

\medskip
\begin{center}
\begin{tabular}{lllll}
\begin{tikzpicture}[line cap=round,line join=round,>=triangle
45,x=0.7cm,y=0.7cm, scale=0.8,every node/.style={scale=0.8}]

\draw [dash pattern=on 2pt off 2pt](-2.5,1.5)-- (1.5,1.5);
\draw(-3,1.5)node{$\mathfrak f$};

\draw (-2,-0.3) node[anchor=north west] {$0$};

\draw (-1.7,3)-- (-1.7,-0.3);

\begin{scriptsize}

\draw (-1.7,2)[fill=black] circle (2.5pt);

\draw (-1.7,3)[fill=black]circle (2.5pt);

\draw (-1.7,1)[fill=black] circle (2.5pt);

\draw (-1.7,0)[fill=black] circle (2.5pt);
\end{scriptsize}

\draw (-0.9,-0.3) node[anchor=north west] {$1$};

\draw (-0.6,3)-- (-0.6,-0.3);

\begin{scriptsize}
	
\draw  (-0.6,2) circle (2.5pt);
\draw (-0.6,3)  circle (2.5pt);

\draw (-0.6,1) circle (2.5pt);

\draw (-0.6,0)[fill=black]  circle (2.5pt);
\end{scriptsize}

\draw (0.2,-0.3) node[anchor=north west] {$2$};

\draw (0.5,3)-- (0.5,-0.3);

\begin{scriptsize}

\draw  (0.5,2) circle (2.5pt);
\draw (0.5,3) circle (2.5pt);
\draw (0.5,1) circle (2.5pt);
\draw (0.5,0)[fill=black]  circle (2.5pt);
\end{scriptsize}

\end{tikzpicture}
&
\begin{tikzpicture}[line cap=round,line join=round,>=triangle
45,x=0.7cm,y=0.7cm, scale=0.8,every node/.style={scale=0.8}]
\path[line width=1pt,-latex](-1,3.5) edge (1,3.5);
\draw(-0.1,3.9)node{$w_0$};
\draw(-0.1,1.2)node{\relax};
\end{tikzpicture}
\hspace{-0.3cm}
&
\begin{tikzpicture}[line cap=round,line join=round,>=triangle
45,x=0.7cm,y=0.7cm, scale=0.8,every node/.style={scale=0.8}]

\draw [dash pattern=on 2pt off 2pt](-2.5,1.5)-- (1.5,1.5);
\draw(-3,1.5)node{$\mathfrak f$};

\draw (-2,-0.3) node[anchor=north west] {$0$};

\draw (-1.7,3)-- (-1.7,-0.3);

\begin{scriptsize}

\draw (-1.7,2) circle (2.5pt);

\draw (-1.7,3) circle (2.5pt);

\draw (-1.7,1)[fill=black] circle (2.5pt);

\draw (-1.7,0)[fill=black] circle (2.5pt);
\end{scriptsize}

\draw (-0.9,-0.3) node[anchor=north west] {$1$};

\draw (-0.6,3)-- (-0.6,-0.3);

\begin{scriptsize}
	
\draw  (-0.6,2) circle (2.5pt);
\draw (-0.6,3)  circle (2.5pt);

\draw (-0.6,1) circle (2.5pt);
\draw (-0.6,0)[fill=black]  circle (2.5pt);

\end{scriptsize}

\draw (0.2,-0.3) node[anchor=north west] {$2$};

\draw (0.5,3)-- (0.5,-0.3);

\begin{scriptsize}

\draw  (0.5,2)[fill=black] circle (2.5pt);
\draw (0.5,3) circle (2.5pt);
\draw (0.5,1)[fill=black] circle (2.5pt);
\draw (0.5,0)[fill=black] circle (2.5pt);

\end{scriptsize}
\end{tikzpicture}
&
\begin{tikzpicture}[line cap=round,line join=round,>=triangle
45,x=0.7cm,y=0.7cm, scale=0.8,every node/.style={scale=0.8}]
\path[line width=1pt,-latex](-1,3.5) edge (1,3.5);
\draw(-0.1,3.9)node{$w_2$};
\draw(-0.1,1.2)node{\relax};
\end{tikzpicture}
\hspace{-0.3cm}
&
\begin{tikzpicture}[line cap=round,line join=round,>=triangle
45,x=0.7cm,y=0.7cm, scale=0.8,every node/.style={scale=0.8}]

\draw [dash pattern=on 2pt off 2pt](-2.5,1.5)-- (1.5,1.5);
\draw(-3,1.5)node{$\mathfrak f$};

\draw (-2,-0.3) node[anchor=north west] {$0$};

\draw (-1.7,3)-- (-1.7,-0.3);

\begin{scriptsize}

\draw (-1.7,2) circle (2.5pt);

\draw (-1.7,3)circle (2.5pt);

\draw (-1.7,1)[fill=black] circle (2.5pt);

\draw (-1.7,0)[fill=black] circle (2.5pt);
\end{scriptsize}

\draw (-0.9,-0.3) node[anchor=north west] {$1$};

\draw (-0.6,3)-- (-0.6,-0.3);

\begin{scriptsize}
	
\draw  (-0.6,2)[fill=black] circle (2.5pt);
\draw (-0.6,3)  circle (2.5pt);

\draw (-0.6,1)[fill=black] circle (2.5pt);

\draw (-0.6,0)[fill=black] circle (2.5pt);
\end{scriptsize}

\draw (0.2,-0.3) node[anchor=north west] {$2$};

\draw (0.5,3)-- (0.5,-0.3);

\begin{scriptsize}

\draw  (0.5,2)circle (2.5pt);
\draw (0.5,3) circle (2.5pt);
\draw (0.5,1) circle (2.5pt);
\draw (0.5,0)[fill=black] circle (2.5pt);
\end{scriptsize}
\end{tikzpicture}
\\
\vspace{3pt}
$\hspace{-10pt}c(\lambda)$\hspace{0.4cm} $(2,-1,-1)$&&$\hspace{0.8cm}(0,-1,1)$&&
\hspace{0.8cm}$(0,1,-1)$\vspace{3pt}\\
$\hspace{-5pt}\lambda$\hspace{0.8cm} $\mu=(4,2)$&&$\hspace{1.1cm}(3,1)$&&
\hspace{1.3cm}$(2)$
\vspace{0.2cm}
\\

\begin{tikzpicture}[line cap=round,line join=round,>=triangle
45,x=0.7cm,y=0.7cm, scale=0.8,every node/.style={scale=0.8}]

\draw(-3,1.5)node{\relax\ };

\draw (-2,0.7) node[anchor=north west] {$0$};

\draw (-1.7,4)-- (-1.7,0.7);

\draw(-2,1)node{$0$};
\draw(-2,2)node{$3$};

\begin{scriptsize}

\draw (-1.7,2)[fill=black] circle (2.5pt);

\draw (-1.7,3) circle (2.5pt);
\draw (-1.7,4) circle (2.5pt);
\draw (-1.7,1)[fill=black] circle (2.5pt);
\end{scriptsize}

\draw (-0.9,0.7) node[anchor=north west] {$1$};

\draw (-0.6,4)-- (-0.6,0.7);

\draw(-0.9,1)node{$1$};
\draw(-0.9,2)node{$4$};

\begin{scriptsize}
	
\draw  (-0.6,2)[fill=black] circle (2.5pt);
\draw (-0.6,3)  circle (2.5pt);
\draw (-0.6,4)  circle (2.5pt);
\draw (-0.6,1)[fill=black] circle (2.5pt);

\end{scriptsize}

\draw (0.2,0.7) node[anchor=north west] {$2$};

\draw (0.5,4)-- (0.5,0.7);

\draw(0.2,1)node{$2$};
\draw(0.2,2)node{$5$};
\draw(0.2,3)node{$8$};
\draw(0.1,4)node{$11$};

\begin{scriptsize}

\draw  (0.5,2)[fill=black] circle (2.5pt);
\draw  (0.5,4)[fill=black] circle (2.5pt);
\draw (0.5,3)[fill=black] circle (2.5pt);
\draw (0.5,1)[fill=black] circle (2.5pt);

\end{scriptsize}
\end{tikzpicture}
&
\begin{tikzpicture}[line cap=round,line join=round,>=triangle
45,x=0.7cm,y=0.7cm, scale=0.8,every node/.style={scale=0.8}]
\path[line width=1pt,-latex](-1,3.5) edge (1,3.5);
\draw(-0.1,3.9)node{$w_2$};
\draw(-0.1,1.2)node{\relax};
\end{tikzpicture}
\hspace{-0.3cm}
&

\begin{tikzpicture}[line cap=round,line join=round,>=triangle
45,x=0.7cm,y=0.7cm, scale=0.8,every node/.style={scale=0.8}]

\draw(-3,1.5)node{\relax};

\draw (-2,0.7) node[anchor=north west] {$0$};

\draw (-1.7,4)-- (-1.7,0.7);

\draw(-2,1)node{$0$};
\draw(-2,2)node{$3$};

\begin{scriptsize}

\draw (-1.7,2)[fill=black] circle (2.5pt);

\draw (-1.7,3) circle (2.5pt);
\draw (-1.7,4) circle (2.5pt);
\draw (-1.7,1)[fill=black] circle (2.5pt);
\end{scriptsize}

\draw (-0.9,0.7) node[anchor=north west] {$1$};

\draw (-0.6,4)-- (-0.6,0.7);

\draw(-0.9,1)node{$1$};
\draw(-0.9,2)node{$4$};
\draw(-0.9,3)node{$7$};
\draw(-1.1,4)node{$10$};
\begin{scriptsize}
	
\draw  (-0.6,2)[fill=black] circle (2.5pt);
\draw (-0.6,3)[fill=black]  circle (2.5pt);
\draw (-0.6,4)[fill=black]  circle (2.5pt);
\draw (-0.6,1)[fill=black] circle (2.5pt);

\end{scriptsize}

\draw (0.2,0.7) node[anchor=north west] {$2$};

\draw (0.5,4)-- (0.5,0.7);

\draw(0.2,1)node{$2$};
\draw(0.2,2)node{$5$};

\begin{scriptsize}

\draw  (0.5,2)[fill=black] circle (2.5pt);
\draw  (0.5,4) circle (2.5pt);
\draw (0.5,3) circle (2.5pt);
\draw (0.5,1)[fill=black] circle (2.5pt);

\end{scriptsize}
\end{tikzpicture}
&
\begin{tikzpicture}[line cap=round,line join=round,>=triangle
45,x=0.7cm,y=0.7cm, scale=0.8,every node/.style={scale=0.8}]
\path[line width=1pt,-latex](-1,3.5) edge (1,3.5);
\draw(-0.1,3.9)node{$w_1$};
\draw(-0.1,1.2)node{\relax};
\end{tikzpicture}
\hspace{-0.3cm}
&

\begin{tikzpicture}[line cap=round,line join=round,>=triangle
45,x=0.7cm,y=0.7cm, scale=0.8,every node/.style={scale=0.8}]

\draw(-3,1.5)node{\relax};

\draw (-2,0.7) node[anchor=north west] {$0$};

\draw (-1.7,4)-- (-1.7,0.7);

\draw(-2,1)node{$0$};
\draw(-2,2)node{$3$};
\draw(-2,3)node{$6$};
\draw(-2,4)node{$9$};

\begin{scriptsize}

\draw (-1.7,2)[fill=black] circle (2.5pt);

\draw (-1.7,3)[fill=black] circle (2.5pt);
\draw (-1.7,4)[fill=black] circle (2.5pt);
\draw (-1.7,1)[fill=black] circle (2.5pt);
\end{scriptsize}

\draw (-0.9,0.7) node[anchor=north west] {$1$};

\draw (-0.6,4)-- (-0.6,0.7);

\draw(-0.9,1)node{$1$};
\draw(-0.9,2)node{$4$};

\begin{scriptsize}
	
\draw  (-0.6,2)[fill=black] circle (2.5pt);
\draw (-0.6,3)  circle (2.5pt);
\draw (-0.6,4)  circle (2.5pt);
\draw (-0.6,1)[fill=black] circle (2.5pt);
\end{scriptsize}

\draw (0.2,0.7) node[anchor=north west] {$2$};

\draw (0.5,4)-- (0.5,0.7);

\draw(0.2,1)node{$2$};
\draw(0.2,2)node{$5$};

\begin{scriptsize}

\draw  (0.5,2)[fill=black] circle (2.5pt);
\draw  (0.5,4)circle (2.5pt);
\draw (0.5,3) circle (2.5pt);
\draw (0.5,1)[fill=black] circle (2.5pt);

\end{scriptsize}
\end{tikzpicture}
\vspace{2pt}\\
\hspace{0.9cm}
$\mathcal R_{\operatorname{sc}}(\mu)$&&$\hspace{0.6cm}\mathcal R_{\operatorname{sc}}(w_2(\mu))$&&
\hspace{1.35cm}$\mathcal T$\\
\end{tabular}
\end{center}
\medskip
We first apply $w_0$ to the pointed $3$-abacus of $\mu$. Since $i\equiv
j+r\equiv 0+8\equiv 2\mod 3$, it corresponds to applying $w_2$ to
$X_{\operatorname{sc}}(\mu)$. Similarly, the second step where we apply
$w_2$ to the pointed abacus, corresponds applying $w_1$ to the Scopes
abacus, since $i\equiv j+r\equiv 2+8\equiv 1\mod 3$. Note that $\mathcal
T$ is not a Scopes abacus of the $3$-core partition $(2)$. After the third
step, we have to replace $\mathcal T$ by the Scopes abacus
$\mathcal R_{\operatorname{sc}}((2))$, namely

\begin{center}
\begin{tikzpicture}[line cap=round,line join=round,>=triangle
45,x=0.7cm,y=0.7cm, scale=0.8,every node/.style={scale=0.8}]

\draw(-3,1.5)node{\relax};

\draw (-2,0.7) node[anchor=north west] {$0$};

\draw (-1.7,3)-- (-1.7,0.7);

\draw(-2,1)node{$0$};
\draw(-2,2)node{$3$};
\begin{scriptsize}

\draw (-1.7,2)[fill=black] circle (2.5pt);

\draw (-1.7,3) circle (2.5pt);
\draw (-1.7,1)[fill=black] circle (2.5pt);
\end{scriptsize}

\draw (-0.9,0.7) node[anchor=north west] {$1$};

\draw (-0.6,3)-- (-0.6,0.7);

\draw(-0.9,1)node{$1$};
\draw(-0.9,2)node{$4$};
\begin{scriptsize}
	
\draw  (-0.6,2)[fill=black] circle (2.5pt);
\draw (-0.6,3)  circle (2.5pt);
\draw (-0.6,1)[fill=black] circle (2.5pt);

\end{scriptsize}

\draw (0.2,0.7) node[anchor=north west] {$2$};

\draw (0.5,3)-- (0.5,0.7);

\draw(0.2,1)node{$2$};
\draw(0.2,2)node{$5$};
\draw(0.2,3)node{$8$};

\begin{scriptsize}

\draw  (0.5,2)[fill=black] circle (2.5pt);
\draw (0.5,3)[fill=black] circle (2.5pt);
\draw (0.5,1)[fill=black] circle (2.5pt);
\end{scriptsize}
\end{tikzpicture}
\end{center}
Now we can clearly see that there is no allowed Scopes map on this abacus
because the condition on beads is not satisfied. Hence, the process is
finished.
\end{example}

\begin{definition}
Let $c\in \core_p$.  If $c$ is $j$-allowed for some $0\leq j\leq p-1$,
then we say that $c$ is \emph{allowed}. We say that $c$ is an
\emph{ancestor} when $c$ is not allowed.  If there is no $j$-allowed
$c'\in \core_p$ for all $0\leq j\leq p-1$ such that
$\operatorname{sc}_j(c')=c$, then we say that $c$ is a \emph{coancestor}.
\end{definition}

\begin{example}
Let $w=2$. The characteristic vector $(0,1,-1)$ is an ancestor. It is not
a coancestor because $\operatorname{Sc}_2(0,-1,1)=(0,1,-1)$. The vector
$(1,0,-1)$ is a coancestor. 
\end{example}

Now we introduce the following relation $\mathcal \sim$ on $\core_p$
defined by $c\sim c'$ if and only if there is $0\leq j\leq p-1$ such that
either $c'$ is $j$-allowed and $c=\operatorname{Sc}_j(c')$ or $c$ is
$j$-allowed $c'=\operatorname{Sc}_j(c)$. We complete $\sim$ by reflexivity
and transitivity. The classes on $\mathcal C_p$ for this equivalence
relation are called \emph{the Scopes families}. Note that each Scopes
family contains exactly one ancestor. In particular, the set of ancestors
parametrizes the Scopes families. 
\medskip

Define 
$$\mathcal E=\left\{(k_1,\ldots,k_p)\in\N^t\mid
\sum\limits_{j=1}^pk_j=pw-(p-1)\right\},$$
and for all $0\leq j\leq p-1$,
$$\mathcal E_j=\left\{(k_1,\ldots,k_{p})\in\mathcal E\mid 
\sum_{i=1}^p ik_i\equiv j \mod p\right\}.$$
We notice that $(\mathcal E_j)_{0\leq j\leq p-1}$ forms a
set-partition of $\mathcal E$.
\begin{theorem}
\label{thm:ancesters} 
Let $p$ be a prime number and $w\geq 1$.
Write $j_0\in\N$ such that $0\leq j_0\leq p-1$ and 
\begin{equation}
\label{eq:j0}
j_0\equiv \frac 1 2(w-1)p(p+1)\mod p.
\end{equation}
For any $\underline k=(k_1,\ldots,k_p)\in\mathcal E_{j_0}$, we set
$$c_{\underline k,0}=\frac 1 p\left(\frac{(w-1)p(p+1)}2 -\sum_{j=1}^p
jk_j\right)+1,$$
and for all $1\leq j\leq p-1$,
$$c_{\underline k,j}=c_{\underline
k,0}+j(w-1)-\sum\limits_{i=1}^jk_j.$$  
Then the set
$$\mathcal S=\{(c_{\underline k,0},\ldots,c_{\underline k, p-1})\in
\core_p\mid \underline k\in\mathcal E_{j_0}\}$$
is the set of ancestors for $p$ and $w$, and parametrizes the Scopes
families.
\end{theorem}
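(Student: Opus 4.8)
\section*{Proof proposal}

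The plan is to realize the set of ancestors as the image of an explicit bijection from $\mathcal E_{j_0}$, obtained by passing to the cyclic consecutive differences of the entries of a characteristic vector. First I would unwind the definition of an ancestor. By~(\ref{eq:condscopes}), a vector $c=(c_0,\ldots,c_{p-1})\in\core_p$ fails to be $j$-allowed for every $j$ precisely when $c_j-c_{j-1}\leq w-1$ for all $1\leq j\leq p-1$ and, because of the \emph{strict} inequality in the $j=0$ case, $c_0-c_{p-1}\leq w$. Thus an ancestor is exactly a characteristic vector whose cyclic consecutive differences are bounded above, the bound being $w-1$ for the $p-1$ ``interior'' differences and $w$ for the single ``wrap-around'' difference $c_0-c_{p-1}$.

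Next I would introduce slack variables matched to these two different bounds: set $k_j=(w-1)-(c_j-c_{j-1})$ for $1\leq j\leq p-1$ and $k_p=w-(c_0-c_{p-1})$. The ancestor condition then reads simply $k_j\geq 0$ for all $1\leq j\leq p$. Two identities must be checked. The sum $\sum_{j=1}^p k_j$ is independent of $c$: since the cyclic differences telescope to $0$, one gets $\sum_{j=1}^p k_j=(p-1)(w-1)+w=pw-(p-1)$, so $\underline k=(k_1,\ldots,k_p)\in\mathcal E$. For the residue, I would use that $c_0\in\Z$ (as $c\in\core_p\subset\Z^p$) together with the relation obtained by solving $\sum_{j=0}^{p-1}c_j=0$ for $c_0$; this yields $c_0=\tfrac{(w-1)(p+1)}{2}+1-\tfrac1p\sum_{j=1}^{p}jk_j$, which is exactly the displayed expression in the statement. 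Integrality of $c_0$ then forces $\sum_{j=1}^{p}jk_j\equiv\tfrac12(w-1)p(p+1)\pmod p$, i.e.\ $\underline k\in\mathcal E_{j_0}$. This is precisely where the definition of $j_0$ in~(\ref{eq:j0}) enters, and it is the step I expect to require the most care, since the possibly half-integral quantity $\tfrac{(w-1)(p+1)}{2}$ must be tracked carefully (in particular for $p=2$) when reducing modulo $p$.

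With the map $\Phi\colon c\mapsto\underline k$ in hand, I would construct its inverse directly from the formulas in the statement. Given $\underline k\in\mathcal E_{j_0}$, define $c_{\underline k,0}$ by the displayed expression and $c_{\underline k,j}=c_{\underline k,0}+j(w-1)-\sum_{i=1}^{j}k_i$. The computation of the previous paragraph, read backwards, shows that this choice of $c_{\underline k,0}$ is exactly the one making $\sum_{j=0}^{p-1}c_{\underline k,j}=0$, so the resulting vector lies in $\core_p$; integrality of every entry follows from $\underline k\in\mathcal E_{j_0}$. Moreover $c_{\underline k,j}-c_{\underline k,j-1}=(w-1)-k_j\leq w-1$ for $1\leq j\leq p-1$, and a short computation using $\sum_{i=1}^{p-1}k_i=pw-(p-1)-k_p$ gives $c_{\underline k,0}-c_{\underline k,p-1}=w-k_p\leq w$; hence the vector is an ancestor. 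As these two constructions are visibly mutually inverse, $\Phi$ is a bijection between the set of ancestors and $\mathcal E_{j_0}$, so $\mathcal S$, being by definition the image of $\Phi^{-1}$ on $\mathcal E_{j_0}$, is exactly the set of ancestors.

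Finally, to conclude that $\mathcal S$ parametrizes the Scopes families, I would invoke the fact recorded just before the statement that every Scopes family contains exactly one ancestor, so the set of ancestors is in canonical bijection with the set of families. The only genuinely delicate point is the modular bookkeeping identifying $j_0$; everything else is the routine verification that the two explicit affine maps between $c$ and $\underline k$ are inverse to one another and respect the constraints $\sum_j c_j=0$ and $k_j\geq 0$.
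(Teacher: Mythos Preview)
Your proposal is correct and follows essentially the same route as the paper: characterize ancestors by the inequalities~(\ref{eq:condscopes}), introduce nonnegative slack variables $k_j$ on the cyclic differences, use $\sum_j c_j=0$ to solve for $c_0$, and read off the congruence defining $\mathcal E_{j_0}$. The only cosmetic difference is that the paper first passes to the auxiliary variables $c'_j=c_j-c_{j-1}$ and then computes $\sum_{j=0}^{p-1}(p-j)c'_j$, whereas you compute $\sum_j c_j$ directly; your presentation as an explicit bijection with inverse is slightly more detailed than the paper's, but the argument is the same.
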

\begin{proof}
By (\ref{eq:condscopes}),
$c=(c_0,\ldots,c_{p-1})\in\Z^p$ is an ancestor if and only if
\begin{equation}
\label{eq:systemeancester} 
\begin{cases}
c_j-c_{j-1}\leq w-1\quad\text{for all } 1\leq j\leq p-1,\\
c_0-c_{p-1}\leq w,\\
c_0+\cdots+c_{p-1}=0.
\end{cases}
\end{equation}

We set $c'_0=c_0$ and $c'_j=c_j-c_{j-1}$ for $1\leq j\leq p-1$. In
particular, $c_j=c'_0+\cdots+c'_j$ for all $0\leq j\leq p-1$.
Furthermore, the condition $c_0+\ldots+c_{t-1}=0$ is equivalent to
$\sum\limits_{j=0}^{p-1}(p-j)c'_j=0$. Hence, (\ref{eq:systemeancester}) is
equivalent to
\begin{equation}
\label{eq:systemeancester2} 
\begin{cases}
c'_0=c_0\\
c'_j\leq w-1\quad\text{for all } 1\leq j\leq p-1,\\
-\sum_{j=1}^{p-1}c'_j\leq w,\\
\sum_{j=0}^{p-1}(p-j)c'_j=0.
\end{cases}
\end{equation}
\smallskip

Now, note that the system~(\ref{eq:systemeancester2}) is equivalent to the
existence of $k_1,\ldots, k_p\in\N$ such that
$$
\begin{cases}
c'_0=c_0\\
c'_j+k_j = w-1,\ 1\leq j\leq p-1,\\
-\sum_{j=1}^{p-1}c'_j+k_p=w,\\
\sum_{j=0}^{p-1}(p-j)c'_j=0.
\end{cases}
\Longleftrightarrow
\
\begin{cases}
c'_0=c_0\\
c'_j= w-1-k_j,\ 1\leq j\leq p-1,\\
\sum_{j=1}^{p}k_j=pw-(p-1),\\
\sum_{j=0}^{p-1}(p-j)c'_j=0.
\end{cases}
$$
\smallskip
We note that since
\begin{eqnarray*}
\sum_{j=0}^{p-1}(p-j)c'_j&=&pc_0+\sum_{j=1}^p(p-j)c'_j\\
&=&pc_0+\sum_{j=1}^p(p-j)(w-1-k_j)\\
&=&pc_0+\sum_{j=1}^p p(w-1)-(w-1)\sum_{j=1}^pj-p\sum_{j=1}^p
k_j+\sum_{j=1}^pj k_j\\
&=&pc_0+p^2(w-1)-\frac 1 2 (w-1)p(p+1)-p(pw-p-1)
+\sum_{j=1}^pj k_j\\
&=&p(c_0-1)-\frac 1 2(w-1)p(p+1)+\sum_{j=1}^pj k_j,
\end{eqnarray*}
it follows that $\sum\limits_{j=0}^{p-1}(p-j)c'_j=0$ if and only if
$$p(c_0-1)=\frac 1 2(w-1)p(p+1)-\sum_{j=1}^pj k_j,$$
if and only if $p$ divides $\frac 1 2(w-1)p(p+1)-\sum\limits_{j=1}^pj k_j$ and
$$c_0=\frac 1 p\left(\frac{(w-1)p(p+1)}2 -\sum_{j=1}^p
jk_j\right)+1.$$
The result follows.
\end{proof}
\begin{remark}
\noindent
\begin{enumerate}[(i)]
\item The condition that $p$ is a prime number is not relevant to
solve the system~(\ref{eq:systemeancester}).
\item If $p$ or $w$ is odd, then $(w-1)(p+1)$ is even. In particular,
$\frac 1 2(w-1)p(p+1)\equiv 0\mod p$. Hence, $j_0=0$ and
$$c_{\underline k,0}=\frac 1
2(w-1)(p+1)+1+\frac 1 p\sum_{j=1}^pjk_j.$$
\end{enumerate}
\end{remark}
\begin{theorem}
\label{thm:nbscopes}
Let $p$ be a prime number and $w\geq 1$. Then the number of Scopes
families is $\frac 1 p\binom{pw}{p-1}$. 
\end{theorem}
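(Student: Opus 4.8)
The plan is to reduce the count to the cardinality of $\mathcal E_{j_0}$ and then to show that the $p$ classes $\mathcal E_0,\ldots,\mathcal E_{p-1}$ all have the same size. By Theorem~\ref{thm:ancesters}, the assignment $\underline k\mapsto (c_{\underline k,0},\ldots,c_{\underline k,p-1})$ is a bijection from $\mathcal E_{j_0}$ onto the set $\mathcal S$ of ancestors (it is injective because one recovers $c'_j=c_j-c_{j-1}$ and hence $k_j=w-1-c'_j$ for $1\le j\le p-1$, and $k_p$ from the sum condition), and the ancestors parametrize the Scopes families, each family containing exactly one ancestor. Hence the number of Scopes families equals $|\mathcal E_{j_0}|$, and it suffices to compute this integer.

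First I would count $\mathcal E$ itself. By definition $\mathcal E$ is the set of $(k_1,\ldots,k_p)\in\N^p$ with $\sum_{j=1}^p k_j=pw-(p-1)$, so a stars-and-bars argument gives
$$|\mathcal E|=\binom{(pw-(p-1))+(p-1)}{p-1}=\binom{pw}{p-1}.$$
Since the excerpt already records that $(\mathcal E_j)_{0\le j\le p-1}$ is a set-partition of $\mathcal E$, the theorem will follow once I prove that $|\mathcal E_0|=\cdots=|\mathcal E_{p-1}|$; for then each class has size $\tfrac1p\binom{pw}{p-1}$, and in particular so does $\mathcal E_{j_0}$.

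The heart of the argument, and the step I expect to be the only real obstacle, is producing explicit bijections between consecutive classes. I would use the cyclic shift $\sigma\colon\mathcal E\to\mathcal E$ defined by $\sigma(k_1,\ldots,k_p)=(k_2,\ldots,k_p,k_1)$, which preserves the coordinate sum and is a bijection (its inverse is the reverse shift). The point is to track the statistic $\sum_i i k_i$ modulo $p$. Writing $\sigma(k)_i=k_{i+1}$ with indices read cyclically, the reindexing $j=i+1$ (so that the term $j=p+1$ contributes $p\,k_1$) gives
$$\sum_{i=1}^p i\,\sigma(k)_i=\sum_{j=1}^p (j-1)k_j + p\,k_1\equiv \sum_{j=1}^p j k_j-\sum_{j=1}^p k_j \pmod p.$$
Because $\sum_j k_j=pw-(p-1)\equiv 1\pmod p$, this yields $\sum_i i\,\sigma(k)_i\equiv \sum_j j k_j-1\pmod p$, so $\sigma$ restricts to a bijection $\mathcal E_j\to\mathcal E_{j-1}$ for every $j$ (indices mod $p$). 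Consequently all the classes are equinumerous, and combining this with $|\mathcal E|=\binom{pw}{p-1}$ gives $|\mathcal E_{j_0}|=\tfrac1p\binom{pw}{p-1}$, which is the number of Scopes families. The only verification left is the modular reindexing above, which is routine once the cyclic relabeling is set up carefully.
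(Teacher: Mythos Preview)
Your proof is correct and follows essentially the same approach as the paper: reduce to $|\mathcal E_{j_0}|$ via Theorem~\ref{thm:ancesters}, compute $|\mathcal E|$ by stars-and-bars, and then use a cyclic shift on coordinates to show all the $\mathcal E_j$ are equinumerous. The only cosmetic difference is that the paper shifts in the opposite direction, $\sigma(k_1,\ldots,k_p)=(k_p,k_1,\ldots,k_{p-1})$, obtaining $\mathcal E_j\to\mathcal E_{j+1}$ instead of your $\mathcal E_j\to\mathcal E_{j-1}$.
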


\begin{proof}
The number of Scopes families is equal to the number of ancestors, which,
by Theorem~\ref{thm:ancesters}, is equal to $|\mathcal E_{j_0}|$, where
$j_0$ is defined in Equation~(\ref{eq:j0}).  By a standard argument, we
have
$$|\mathcal E|=\binom{pw-(p-1)+(p-1)}{p-1}=\binom{pw}{p-1}.$$
Now, consider the map
$$\sigma:\N^p\longrightarrow \N^p,\,(k_1,\ldots,k_p)\longmapsto
(k_p,k_1,\ldots,k_{p-1}).
$$
We prove that $\sigma$ induces a bijection
$$\sigma:\mathcal E_j\rightarrow \mathcal E_{j+1},$$
for all $0\leq j\leq p-1$, where the indices are taken modulo $p$.  For
$(k_1,\ldots,k_p)\in\mathcal E$, write $k'_i=k_{i-1}$ for $1\leq i\leq
p-1$ and $k'_1=k_p$ so that $(k'_1,\ldots,k'_p)=\sigma(k_1,\ldots,k_p)$.
Then
\begin{eqnarray*}
\sum_{i=1}^pik'_i&\equiv&k_p+\sum_{i=1}^{p-1}(i+1)k_i \mod p\\
&\equiv&\sum_{i=1}^p k_i+\sum_{i=1}^p i k_i\mod p\\
&\equiv&1+\sum_{i=1}^p i k_i\mod p,
\end{eqnarray*}
because $\sum\limits_{i=1}^p k_i=pw-(p-1)\equiv 1\mod p$. We deduce that,
if $(k_1,\ldots,k_p)\in\mathcal K_j$, then $\sum\limits_{i=1}^p i
k_i\equiv j\mod p$ and $\sum\limits_{i=1}^pi k'_i\equiv j+1\mod p$.
Hence, $\sigma$ induces a bijective map from $\mathcal E_j$ to $\mathcal
E_{j+1}$, with inverse $\sigma^{p-1}$. It follows that 
$$|\mathcal E_j|=|\mathcal E_{j+1}|\ \text{for $0\leq j\leq
p-2$}\quad\text{and}\quad 
|\mathcal E_{p-1}|=|\mathcal E_0|.$$ 
Using the fact that $(\mathcal E_j)_{0\leq j\leq p-1}$ is a partition of
$\mathcal E$,  we obtain
$$|\mathcal E_{j_0}|=\frac 1 p\binom{pw}{p-1},$$
as required.  
\end{proof}
\medskip

\begin{example}
\noindent
\begin{enumerate}[(i)]
\item If $w=1$ then for any prime number $p$, there is only one Scopes
family.
\item Assume $p=3$. Then for any $w\geq 1$, there are $\frac 1 2 w(3w-1)$
Scopes families. When $w=2$, $j_0=0$, and
$$ \mathcal E_{j_0}=\{(0, 0, 4),\, (0, 3, 1),\, (1, 1, 2),\, (2, 2, 0),\,
(3, 0, 1) \}.$$ Theorem~\ref{thm:ancesters} then gives the set of
ancestors
$$\mathcal S=\{(-1,0,1),\,(0,1,-1),\,(0,0,0),\,(1,0,-1),\,(1,-1,0)\}.$$
\end{enumerate}
\end{example}

\begin{lemma}
\label{lem:coancestors}
Let $p$ be a prime number and $w\geq 1$. A characteristic vector
$c=(c_0,\ldots,c_{p-1})$ is a coancestor if and only if it is a solution
of the system 
\begin{equation}
\label{eq:systemecoancester} 
\begin{cases}
c_{j-1}-c_{j}\leq w-1\quad\text{for all } 1\leq j\leq p-1,\\
c_{p-1}-c_{0}\leq w-2,\\
c_0+\cdots+c_{p-1}=0.
\end{cases}
\end{equation}
\end{lemma}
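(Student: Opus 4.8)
The plan is to extract the coancestor condition directly from the definition of the maps $\operatorname{sc}_j$ given in~\eqref{eq:scopesmap}. By definition, $c\in\core_p$ is a coancestor precisely when, for every $0\le j\le p-1$, there is no $j$-allowed $c'\in\core_p$ with $\operatorname{sc}_j(c')=c$. So the approach is to characterize, one index $j$ at a time, the image of $\operatorname{sc}_j$ restricted to the set of $j$-allowed vectors, by explicitly inverting the map; then $c$ is a coancestor if and only if it lies outside all $p$ of these images. Since $c\in\core_p$ automatically satisfies the relation $c_0+\cdots+c_{p-1}=0$, and since the explicit preimages below also sum to zero, the only content to produce is the two families of inequalities.

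First I would treat the transpositions, i.e.\ $1\le j\le p-1$. Here $\operatorname{sc}_j$ simply swaps the entries in positions $j-1$ and $j$, and its domain is cut out by the $j$-allowed condition $c'_j-c'_{j-1}\ge w$ from~\eqref{eq:condscopes}. Inverting, the unique candidate preimage of $c$ is obtained by swapping $c_{j-1}$ and $c_j$ back, and this candidate is $j$-allowed exactly when $c_{j-1}-c_j\ge w$. Negating, $c$ has no $j$-allowed preimage under $\operatorname{sc}_j$ precisely when $c_{j-1}-c_j\le w-1$, which is the first line of~\eqref{eq:systemecoancester}.

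The main obstacle is the wrap-around map $\operatorname{sc}_0$, because of the $\pm1$ shifts it carries. Using the $j=0$ case of~\eqref{eq:scopesmap}, namely $\operatorname{sc}_0(c')=(c'_{p-1}+1,c'_1,\ldots,c'_{p-2},c'_0-1)$, I would solve $\operatorname{sc}_0(c')=c$ componentwise, obtaining $c'_{p-1}=c_0-1$, $c'_0=c_{p-1}+1$, and $c'_i=c_i$ for $1\le i\le p-2$. The delicate point is that the $0$-allowed condition is the \emph{strict} inequality $c'_0-c'_{p-1}> w$ from~\eqref{eq:condscopes}, i.e.\ $c'_0-c'_{p-1}\ge w+1$ since the entries are integers. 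Substituting the expressions for $c'_0$ and $c'_{p-1}$ turns this into $(c_{p-1}+1)-(c_0-1)\ge w+1$, that is $c_{p-1}-c_0\ge w-1$. Hence $c$ admits a $0$-allowed preimage under $\operatorname{sc}_0$ exactly when $c_{p-1}-c_0\ge w-1$, and negating gives $c_{p-1}-c_0\le w-2$, the second line of~\eqref{eq:systemecoancester}.

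Finally I would conjoin the conditions: $c$ is a coancestor if and only if it has no $j$-allowed preimage for every $0\le j\le p-1$, which by the two preceding steps is exactly the simultaneous validity of $c_{j-1}-c_j\le w-1$ for $1\le j\le p-1$ and $c_{p-1}-c_0\le w-2$, together with the defining relation $c_0+\cdots+c_{p-1}=0$ of $\core_p$. This is precisely system~\eqref{eq:systemecoancester}, which completes the proof.
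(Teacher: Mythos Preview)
Your proof is correct and follows essentially the same approach as the paper: both invert $\operatorname{sc}_j$ explicitly for each $j$, translate the $j$-allowed condition on the preimage into an inequality on $c$, and negate. Your treatment of the wrap-around case $j=0$ is slightly more careful than the paper's in noting that the preimage still lies in $\core_p$, but the substance is identical.
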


\begin{proof}
Assume $c$ is not a coancestor. Then there is $0\leq j\leq p-1$ and a
$j$-allowed $c'=(c'_0,\ldots,c'_{p-1})\in \core_p$ such that
$\operatorname{Sc}_j(c')=c$. Assume $j\neq 0$.  By
Condition~(\ref{eq:condscopes}), we have $c'_j-c'_{j-1}\geq w$.
Furthermore, Equation~(\ref{eq:scopesmap}) imply that $c'_i=c_i$ if
$i\notin\{ j-1,j\}$ and $c'_j=c_{j-1}$ and $c'_{j-1}=c_j$. It follows that
$$c_{j-1}-c_j\geq w.$$ 
If $j=0$, then $c'_0-c'_{p-1}\geq w$ by~(\ref{eq:condscopes}), and
$c'_0-1=c_{p-1}$ and $c'_{p-1}+1=c_0$, that is $c'_0=1+c_{p-1}$ and
$c'_{p-1}=c_0-1$. Hence, Equation~(\ref{eq:scopesmap}) gives
$$c'_{0}-c'_{p-1}>w\quad\Longleftrightarrow\quad
c_{p-1}+1-c_{0}+1>w\quad\Longleftrightarrow\quad c_{p-1}-c_{0}>w-2.$$
This proves that $c$ is a coancestor if and only if
Condition~(\ref{eq:systemecoancester}) is satisfied.
\end{proof}

\begin{theorem}
\label{thm:finitefamilies}
The number of Scopes families with finite cardinality is $\frac 1
p\binom{pw-2}{p-1}$.
\end{theorem}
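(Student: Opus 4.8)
The plan is to prove that the number of finite Scopes families equals the number of coancestors, and then to count the coancestors by the same method used for ancestors in Theorems~\ref{thm:ancesters} and~\ref{thm:nbscopes}, now starting from the characterization of Lemma~\ref{lem:coancestors}. Recall that each family contains exactly one ancestor (the unique normal form of the Sc-process); the analogous role at the ``bottom'' is played by coancestors, and making this precise is the whole content.

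For the counting step I would mirror the proof of Theorem~\ref{thm:ancesters} verbatim. Starting from system~(\ref{eq:systemecoancester}), set $c'_0=c_0$ and $c'_j=c_j-c_{j-1}$ for $1\le j\le p-1$, so that $\sum c_j=0$ is encoded by $\sum_{j=0}^{p-1}(p-j)c'_j=0$. The first two lines of~(\ref{eq:systemecoancester}) become $c'_j\ge 1-w$ for $1\le j\le p-1$ and $\sum_{j=1}^{p-1}c'_j\le w-2$; introducing slack variables $k_j=c'_j+(w-1)$ and $k_p=(w-2)-\sum_{j=1}^{p-1}c'_j$ in $\N$ gives $\sum_{j=1}^p k_j=(p-1)(w-1)+(w-2)=pw-(p+1)$. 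The divisibility constraint coming from $\sum(p-j)c'_j=0$ selects exactly one residue class of $\underline k$ modulo $p$, and the cyclic shift $\sigma$ of the proof of Theorem~\ref{thm:nbscopes} shows the $p$ residue classes are equinumerous. Hence the number of coancestors is $\frac1p\binom{(pw-(p+1))+(p-1)}{p-1}=\frac1p\binom{pw-2}{p-1}$, which is the claimed value.

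The core of the argument is the bijection between finite families and coancestors. The tool is the core size $N(c)=\frac p2\sum_j c_j^2+\sum_j jc_j$ from Corollary~\ref{cor:sizecore}, used as a strict monovariant: a direct computation (including the wrap-around move $\operatorname{sc}_0$, where the $\pm1$ shift contributes) shows that every Scopes move strictly decreases $N$, by at least $w$. Since $N\ge 0$ and $N$ grows quadratically in $\|c\|$, each sublevel set $\{c\in\core_p\mid N(c)\le M\}$ is finite, so the move graph on each family is acyclic and terminating, with the unique ancestor as its global $N$-minimum. Coancestors are precisely the sources of this graph. A finite family has an $N$-maximal vertex, which admits no incoming move and is therefore a coancestor (existence); conversely I would show that a coancestor $v$ dominates its whole family in $N$, so that the family is contained in $\{N\le N(v)\}$ and is finite. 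Together with uniqueness this yields: a family is finite if and only if it contains a coancestor, and then exactly one.

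The hard part will be controlling the move graph, which for $p\ge 3$ genuinely branches — a vector may be $j$-allowed for several $j$ — so a family is a directed acyclic graph, not a path, and both in- and out-degrees can exceed one. To prove that a coancestor is the global $N$-maximum of its family and is unique, I would establish local confluence of the maps $\operatorname{sc}_j$ in~(\ref{eq:scopesmap}), the relevant overlaps being the commutation and braid relations among the underlying reflections. Local confluence together with termination of the reverse moves restricted to a finite family forces a unique coancestor, while combining it with the downward confluence producing the unique ancestor forces every family member to be reachable from a coancestor by descending moves, hence bounded in $N$ — which also rules out coancestors in infinite families. This confluence analysis of the branching move graph, rather than the routine slack-variable count, is where the real work lies.
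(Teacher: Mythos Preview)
Your counting argument is correct and matches the paper exactly: parametrize coancestors via slack variables in system~(\ref{eq:systemecoancester}), obtain $\sum_{j}k_j=p(w-1)-1$, and divide the composition count $\binom{pw-2}{p-1}$ by $p$ using the cyclic shift $\sigma$. The paper in fact does no more than this on the bijection side: it simply asserts, as a remark, that a Scopes family has a coancestor if and only if it is finite (implicitly also that the coancestor is then unique), and proceeds straight to the count. So you are being more careful than the source in trying to justify that remark.

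Your monovariant is right --- each $\operatorname{sc}_j$ drops $N(c)=|\lambda_c|$ by at least $w$ --- and ``finite $\Rightarrow$ has a coancestor'' follows by taking an $N$-maximum. For uniqueness in a finite family, upward local confluence plus upward termination (automatic in a finite family) gives a unique source by Newman; fine. The gap is in the converse ``coancestor $\Rightarrow$ finite.'' You write that combining local confluence with downward confluence ``forces every family member to be reachable from a coancestor by descending moves,'' but downward confluence only produces common \emph{lower} bounds (the unique ancestor); it says nothing about upper bounds. What you actually need is Church--Rosser for the \emph{upward} moves on all of $\core_p$: granted that, any $u$ equivalent to a coancestor $v$ admits a common upper bound $w$ with $w\to^* v$, whence $w=v$ and $v\to^* u$, so $N(u)\le N(v)$ and the family is finite. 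Newman's lemma will not deliver this, since upward termination is exactly the finiteness you are trying to establish. A cleaner fix is to check \emph{strong} upward confluence directly from the Coxeter relations you already identified: for non-adjacent $i,j$ the diamond closes in one step each; for adjacent $i,i{+}1$ the braid closes in two steps each, with the threshold inequalities preserved (e.g.\ $c_{i-1}-c_{i+1}\ge 2w\ge w$); the wrap-around $j=0$ case is handled the same way with the shifted bounds. Strong confluence implies global confluence without any termination hypothesis, which then yields both ``coancestor $\Rightarrow$ finite'' and uniqueness in one stroke.
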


\begin{proof}
We remark that a Scopes family has a coancestor if and only if its
cardinality is finite. On the other hand, by Lemma~\ref{lem:coancestors} a
vector $c=(c_0,\ldots,c_{p-1})\in\Z^p$ is a coancestor if and only if
Condition~\ref{eq:systemecoancester} holds. By the same method used in the
proof of Theorem~\ref{thm:ancesters}, we prove that $c$ is a coancestor if
and only if there is $(k_1,\ldots,k_p)\in\N^p$ satisfying
$k_1+\cdots+k_p=p(w-1)-1$, $p$ divides $\sum\limits_{j=1}^pj k_j-\frac 1
2(w-1)p(p+1)$ and
$$c_{0}=\frac 1 p\left(\frac{(w-1)p(p+1)}2 -\sum_{j=1}^p
jk_j\right)-1,$$
and $c_{j}=c_{0}+j(w-1)-\sum\limits_{i=1}^jk_j$ for all $1\leq j\leq p-1$.
A similar argument as in the proof of Theorem~\ref{thm:nbscopes} gives the
number of coancestors is
$$\frac 1 t\binom{pw-2}{p-1}.$$
\end{proof}

\begin{remark}
\noindent
\begin{enumerate}[(i)]
\item From Theorems~\ref{thm:nbscopes} and~\ref{thm:finitefamilies}, we
deduce that the number of infinite Scopes families is
$$\frac{1}p \left(\binom{pw}{p-1}-\binom{pw-2}{p-1}\right).$$ 
\item The proof of Theorem~\ref{thm:finitefamilies} gives a
parametrization of the set of ancestors corresponding to finite Scopes
families.
\end{enumerate}
\end{remark}

\begin{example}
For $p=3$ and $w=2$, the set of finite Scopes families is parametrized by
$(0,0,2)$ and $(1,1,0)$. The corresponding coancestors are $(1,0,-1)$ and
$(0,0,0)$.
\end{example}
Consider $b$ the principal $p$-block (that is the $p$-block that contains
the trivial representation) of $\Sym_{pw}$. Then the characteristic vector
associated to $b$ is $c=(0,\ldots,0)\in \core_p$. By
Conditions~(\ref{eq:systemeancester}) and~(\ref{eq:systemecoancester}),
the Scopes family of $b$ is $\{b\}$. We now compute the number of
$p$-blocks of weight $w$ with the same property. 

\begin{proposition}
\label{prop:sizeone} Let $p$ be a prime number and $w\geq 1$. Then the
number of Scopes families of cardinality $1$ is
$$\frac 1 p
\sum_{j}(-1)^j\binom{p}{j}\binom{pw-j(2w-1)}{p(w-1)+1-j(2w-1)},$$ 
where the sum runs over $0\leq j\leq
\operatorname{min}(p,\frac{p(w-1)+1}{2(w-1)+1})$.
\end{proposition}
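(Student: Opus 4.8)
The plan is to characterize the Scopes families of cardinality $1$ and then count them by the same generating-combinatorics already used in Theorems~\ref{thm:ancesters} and~\ref{thm:nbscopes}. First I would observe that a family has cardinality $1$ if and only if its unique element $c$ is simultaneously an ancestor and a coancestor. Indeed, if $c$ is allowed then $\operatorname{sc}_j(c)\neq c$ lies in the same family, so the family has size at least $2$; dually, if $c$ is not a coancestor then some $c'\neq c$ satisfies $\operatorname{sc}_j(c')=c$ and again the family is larger. Conversely, if $c$ is both an ancestor and a coancestor, no Scopes map can be applied to $c$ and none produces $c$, so $\{c\}$ is a full equivalence class. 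Since each family contains exactly one ancestor, counting size-$1$ families amounts to counting vectors $c\in\core_p$ satisfying both systems~(\ref{eq:systemeancester}) and~(\ref{eq:systemecoancester}).

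Next I would merge the two systems. Writing $c'_0=c_0$ and $c'_j=c_j-c_{j-1}$ for $1\le j\le p-1$ as in the proof of Theorem~\ref{thm:ancesters}, the inner inequalities of both systems collapse to $|c'_j|\le w-1$, while the two ``wrap-around'' conditions $c_0-c_{p-1}\le w$ and $c_{p-1}-c_0\le w-2$ become $-w\le\sum_{j=1}^{p-1}c'_j\le w-2$. Introducing $k_j=(w-1)-c'_j$ for $1\le j\le p-1$ and $k_p=w+\sum_{j=1}^{p-1}c'_j$, exactly as in Theorem~\ref{thm:ancesters}, the ancestor half of each constraint gives $k_j\ge0$ and $\sum_{j=1}^p k_j=p(w-1)+1$, while the coancestor half gives the new uniform upper bound $k_j\le 2(w-1)$ for all $1\le j\le p$. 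The condition $\sum_{j=0}^{p-1}(p-j)c'_j=0$ translates, as before, into the residue condition $\sum_{j=1}^p jk_j\equiv j_0\pmod p$ together with integrality of $c_0$. Thus size-$1$ families are in bijection with
$$\mathcal F_{j_0}=\Big\{(k_1,\ldots,k_p)\in\N^p\ \Big|\ \textstyle\sum_j k_j=p(w-1)+1,\ 0\le k_j\le 2(w-1),\ \sum_j jk_j\equiv j_0\pmod p\Big\}.$$

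Finally I would count $|\mathcal F_{j_0}|$. The cyclic shift $\sigma$ of Theorem~\ref{thm:nbscopes} preserves the bounded set $\mathcal F=\{(k_j)\mid\sum_j k_j=p(w-1)+1,\ 0\le k_j\le 2(w-1)\}$ and sends the residue class $j$ to $j+1$, so $|\mathcal F_{j_0}|=\frac1p|\mathcal F|$. An inclusion--exclusion over the subsets of indices on which the ceiling $k_j\le 2(w-1)$ is violated (replacing such a variable by $k_j-(2w-1)\ge0$) then yields
$$|\mathcal F|=\sum_{j}(-1)^j\binom pj\binom{p(w-1)+1-j(2w-1)+(p-1)}{p-1},$$
and simplifying $p(w-1)+1+(p-1)=pw$ together with $\binom{pw-j(2w-1)}{p-1}=\binom{pw-j(2w-1)}{p(w-1)+1-j(2w-1)}$ gives the stated formula, the summation range $0\le j\le\min\big(p,\frac{p(w-1)+1}{2(w-1)+1}\big)$ coming from nonnegativity of the top entry. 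The main obstacle is the first step: one must argue cleanly that ``ancestor and coancestor'' is equivalent to ``family of size $1$'', and in particular that every allowed move and every preimage move genuinely changes $c$. After that the computation is a routine re-run of the arguments already established for Theorems~\ref{thm:ancesters} and~\ref{thm:nbscopes}, the single new feature being the two-sided bound $0\le k_j\le 2(w-1)$.
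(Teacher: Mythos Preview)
Your proposal is correct and follows essentially the same route as the paper: characterize size-$1$ families as those $c$ that are simultaneously ancestors and coancestors, translate the combined systems~(\ref{eq:systemeancester}) and~(\ref{eq:systemecoancester}) via $k_j=(w-1)-c'_j$ into the bounded composition problem $\sum k_j=p(w-1)+1$ with $0\le k_j\le 2(w-1)$, and then divide by $p$ using the cyclic shift of Theorem~\ref{thm:nbscopes}. The only cosmetic difference is in the final count: the paper encodes the upper bound via auxiliary variables $k'_j$ with $k_j+k'_j=2(w-1)$ and extracts the coefficient from $(1+x+\cdots+x^{2w-2})^p=(1-x^{2w-1})^p(1-x)^{-p}$, whereas you apply inclusion--exclusion directly; expanding the generating function is exactly that inclusion--exclusion, so the two computations coincide line for line.
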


\begin{proof}
A vector $c=(c_0,\ldots,c_{p-1})\in\Z^p$ is both an ancestor and a
coancestor if and only if Conditions (\ref{eq:systemeancester}) and
(\ref{eq:systemecoancester}) are satisfied. We set $c'_0=c_0$ and
$c'_j=c_j-c_{j-1}$ for all $1\leq j\leq p-1$. The above conditions are
equivalent to the existence of nonnegative integers
$k_1,\ldots,k_p,k'_1,\ldots,k'_p$ such that
$$
\begin{cases}
c'_0=c_0\\
c'_j+k_j= w-1\quad\text{for all } 1\leq j\leq p-1,\\
k_j+k'_j= 2w-2\quad\text{for all } 1\leq j\leq p-1,\\
-(c'_1+\cdots+c'_{p-1})+k_p=w,\\
k_p+k'_p=2w-2,\\
\sum_{j=0}^{p-1}(p-j)c'_j=0.
\end{cases}
$$

Following the proof of Theorem~\ref{thm:ancesters}, we show that this
system is parametrized by the set
$$\mathcal K=\left\{(k_1,\ldots,k_p)\in\mathcal
E_{j_0},(k'_1,\ldots,k'_p)\in\N\mid k_j+k'_j=2(w-1)\right\}.
$$
Using the argument in the proof of Theorem~\ref{thm:nbscopes}, we have 
$$|\mathcal K|=\frac 1 p|\mathcal K'|,$$
where $\mathcal K'=\left\{(k_1,\ldots,k_p)\in\mathcal
E,\,(k'_1,\ldots,k'_p)\in\N^p\mid k'_j+k_j=2(w-1)\right\}$. We remark that
the cardinal of $\mathcal K'$ is equal to the number of
$(k_1\ldots,k_p)\in\{0,\ldots,2w-2\}^p$ such that
$k_1+\cdots+k_p=p(w-1)+1$.  If we write 
$$\left(1+x+\cdots+x^{2w-2}\right)^p=\sum_{n=0}^{p(2w-2)}a_n x^n,$$ 
where $a_n\in\N$, then $|K'|=a_{p(w-1)+1}$. Set $r=2w-2$. We have
\begin{eqnarray*}
(1+x+\cdots+x^r)^p&=&\left(\frac{1-x^{r+1}}{1-x}\right)^p\\
&=&\left(\sum_{j\geq
0}\binom{k+j-1}{j}x^j\right)\cdot\left(\sum_{j=0}^p(-1)^j\binom{p}{j}x^{(r+1)j}\right)\\
&=&\sum_{n=0}^{rp}\left(\sum_{0\leq j\leq m}
(-1)^j\binom{p}{j}\binom{p+n-j(r+1)-1}{n-j(r+1)}\right)x^n,
\end{eqnarray*}
where $m=\operatorname{min}(p,\frac n {r+1})$. The result follows.
\end{proof}

\section{Some applications in Number theory}
\label{sec:part4}
For positive integers $t$ and $n$, we define
$$C_{t,n}=\{c\in \core_t\mid |\lambda_c|=n\}\quad\text{and}\quad
SC_{t,n}=\{c\in C_{t,n}\mid \lambda_c^*=\lambda_c\},$$
where $\core_t$ and $\lambda_c$ are defined in
Equation~(\ref{eq:defct}) and in~\S\ref{subsec:charcore}. We write
\begin{equation}
\label{eq:asc}
\as_t(n)=|C_{t,n}|\quad\text{and}\quad \asc_t(n)=|SC_{t,n}|. 
\end{equation}

In~\cite[Corollary 2(1)]{GarvanKimStanton}, Garvan, Kim and Stanton prove,
using generating functions, that 
\begin{equation}
\label{eq:egalite1}
\asc_5(n)=\asc_{5}(2n+1).
\end{equation}
They also give a combinatorial proof of this equality by constructing a
bijection between $SC_{5,n}$ and $SC_{5,2n+1}$. More precisely, they
observe that the map 
$$\varphi:SC_{5,n}\longrightarrow SC_{5,2n+1},\
(-b,-a,0,a,b)\mapsto(a+b+1,a-b,0,b-a,-a-b-1)$$
is a bijection.  Indeed, if we write $c=(-b,-a,0,a,b)\in SC_{5,n}$ for
some integers $a,\,b$, then Corollary~\ref{cor:sizesym} implies
\begin{eqnarray}
\label{eq:loc3}
|\lambda_{\varphi(c)}|&=&5((b-a)^2+(-a-b-1)^2)+2(b-a)-4(a+b+1)\nonumber\\
&=&10a^2+10b^2+10a+10b+5-6a-2b-4\\
&=&2(5a^2+5b^2+2a+4b)+1\nonumber\\
&=&2|\lambda_c|+1.\nonumber
\end{eqnarray} 
In particular, the map $\varphi$ is well-defined and clearly injective. It
is surjective, because for $(a',b')\in\Z^2$ such that
$(-b',-a',0,a',b')\in SC_{5,2n+1}$, we have
\begin{equation}
\label{eq:loc2}
5a'^2+5b'^2+2a'+4b'=2n+1
\end{equation}
by Corollary~\ref{cor:sizesym}.  On the other hand, there are $a,\,b\in\Z$
such that $a'=b-a$ and $b'=-a-b-1$ if and only if
$$a=-\frac 1 2 (a'+b'+1)\quad\text{and}\quad b=\frac 1 2(a'-b'-1).$$ 
However, $a'^2+b'^2=1\mod 2$ implies that $a'+b'+1\equiv 0 \mod 2$ and
$a'-b'-1\equiv 0\mod 2$. Hence, $a$ and $b$ are integers satisfying
$\varphi(-b,-a,0,a,b)=(-b',-a',0,a',b')$, and $(-b,-a,0,a,b)\in SC_{5,n}$
by Equation~(\ref{eq:loc3}) because $(-b',-a',0,a',b')\in SC_{5,2n+1}$, as
required.
\smallskip

In this part, we will give, in the same spirit, combinatorial proofs of
equalities similar to Equation~(\ref{eq:egalite1}) by constructing
explicit bijections.

\subsection{Results on $3$-core partitions}

\begin{theorem}
\label{thm:3coreinjection}
Let $k$ be a positive integer. Assume that $k\not\equiv 0\mod 3$. Let
$q\in\N$ and $\varepsilon\in\{-1,1\}$ be such that $k=3q+\varepsilon$.
Then the map 
$$\varphi:C_{3,n}\longrightarrow C_{3,k^2n+\frac{k^2-1}3},\ (-x-y,x,y)\longmapsto
(-\varepsilon(k(y+x)+q),\varepsilon k x,\varepsilon(ky+q))$$ 
is injective.
\end{theorem}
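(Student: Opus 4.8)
The plan is to translate the statement entirely into the language of characteristic vectors, where the size of a $3$-core is a simple quadratic form, and then to verify directly that $\varphi$ preserves membership in the target set and is injective. First I would invoke the bijection between $3$-core partitions and elements of $\core_3$ from \S\ref{subsec:charcore}, together with the size formula of Corollary~\ref{cor:sizecore}. Writing a generic element of $\core_3$ as $c=(-x-y,x,y)$, so that $c_0+c_1+c_2=0$ holds automatically, the formula of Corollary~\ref{cor:sizecore} gives
$$|\lambda_c|=\tfrac{3}{2}\bigl((x+y)^2+x^2+y^2\bigr)+(x+2y)=3(x^2+xy+y^2)+x+2y.$$

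The next step is to check that the image $\varphi(c)=(-\varepsilon(k(x+y)+q),\varepsilon kx,\varepsilon(ky+q))$ lies in $\core_3$: summing the three entries and factoring out $\varepsilon$ shows they cancel, so $\varphi(c)$ is a legitimate characteristic vector, of the shape $(-x'-y',x',y')$ with $x'=\varepsilon kx$ and $y'=\varepsilon(ky+q)$. The heart of the argument is then to compute $|\lambda_{\varphi(c)}|$ from the same size formula applied to $(x',y')$ and to show it equals $k^2 n+\frac{k^2-1}{3}$ with $n=|\lambda_c|$. Expanding $3(x'^2+x'y'+y'^2)+x'+2y'$ and using $\varepsilon^2=1$ isolates the term $3k^2(x^2+xy+y^2)=k^2\bigl(|\lambda_c|-x-2y\bigr)$; the remaining terms regroup into a coefficient of $x$, a coefficient of $y$, and a constant.

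Here the hypothesis $k=3q+\varepsilon$ does all the work. The coefficient of $x$ is $k(-k+3q+\varepsilon)$ and the coefficient of $y$ is $2k(-k+3q+\varepsilon)$, both of which vanish because $3q+\varepsilon=k$, while the constant term is $3q^2+2\varepsilon q$, which equals $\frac{k^2-1}{3}$ since $k^2=9q^2+6\varepsilon q+1$. This yields $|\lambda_{\varphi(c)}|=k^2|\lambda_c|+\frac{k^2-1}{3}$, establishing that $\varphi$ maps $C_{3,n}$ into $C_{3,k^2n+(k^2-1)/3}$ and hence is well defined.

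Finally, injectivity is immediate: the assignment $(x,y)\mapsto(\varepsilon kx,\varepsilon(ky+q))$ is injective on $\Z^2$ because $k\neq 0$, so distinct characteristic vectors $(-x-y,x,y)$ produce distinct images, and the bijection between $\core_3$ and $3$-core partitions transfers this to $\varphi$. I expect the only delicate point to be the bookkeeping in the expansion — making the linear-in-$x$ and linear-in-$y$ cancellations explicit — but this is entirely routine once the substitution $k=3q+\varepsilon$ is carried out, and there is no conceptual obstacle.
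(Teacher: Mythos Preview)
Your proposal is correct and follows essentially the same approach as the paper: both arguments invoke the size formula of Corollary~\ref{cor:sizecore} for $3$-cores, expand $|\lambda_{\varphi(c)}|$ in terms of $(x',y')=(\varepsilon kx,\varepsilon(ky+q))$, use the relation $k=3q+\varepsilon$ to collapse the cross terms and identify the constant as $\frac{k^2-1}{3}$, and then note that injectivity is immediate. Your write-up is in fact a bit more explicit about the cancellation of the linear-in-$x$ and linear-in-$y$ terms than the paper's, but there is no substantive difference in method.
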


\begin{proof}
First, we remark that if $(-x-y,x,y)\in C_{3,n}$ then 
\begin{equation}
\label{eq:loc0}
n=\frac{3}{2}\left((-x-y)^2+x^2+y^2\right)+x+2y=3x^2+3y^2+3xy+x+2y
\end{equation}
by Corollary~\ref{cor:sizecore}. The vector
$(-\varepsilon(k(y+x)+q),\varepsilon k x,\varepsilon(ky+q))\in
core_3$
is a characteristic vector, and  Equation~(\ref{eq:loc0}) gives 
\begin{eqnarray}
\left|\lambda_{\varphi(-x-y,x,y)}\right|&=&
3((kx)^2+(ky+q)^2+kx(ky+q))+\varepsilon kx+2\varepsilon
(ky+q)\nonumber\\
&=&k^2(3x^2+3y^2+3xy)+k(x+2y)(3q+\varepsilon)+q\nonumber\\
&=&k^2(3x^2+3y^2+3xy)+k(x+2y)k+\frac{k^2-1}{3}\\
&=&k^2|\lambda_{(-x-y,x,y)}|+\frac{k^2-1}{3}\nonumber,
\end{eqnarray}
hence $(-\varepsilon(k(y+x)+q),\varepsilon k x,\varepsilon(ky+q))\in
C_{3,k^2n+\frac{k^2-1}3}$, and $\varphi$ is well-defined.
It is immediate that $\varphi$ is injective.
\end{proof}

\begin{remark}
\label{rk:3coreconj}
\noindent
\begin{enumerate}[(i)]
\item For any $c=(-x-y,x,y)\in C_{3,n}$, we have
\begin{eqnarray*}
\varphi(c^*)&=&\varphi(-y,-x,y+x)\\
&=&\left(-\varepsilon(k(-x+x+y)+q),-\varepsilon kx,\varepsilon
(k(y+x)+q)\right)\\
&=&\left(-\varepsilon(ky+q),-\varepsilon kx,\varepsilon
(k(y+x)+q)\right)\\
&=&\left(-\varepsilon(k(y+x)+q),\varepsilon kb,\varepsilon
(ky+q)\right)^*\\
&=&\varphi(c)^*.
\end{eqnarray*}
\item Since $\varphi(c^*)=\varphi(c)^*$, the map $\varphi$ induces by
restriction an
injective map
$$\varphi:
SC_{3,n}\longrightarrow SC_{3,k^2n+\frac{k^2-1}3}, (-y,0,y)\longmapsto
(-\varepsilon(ky+q),0,\varepsilon(ky+q)).$$
By Corollary~\ref{cor:sizesym}, $c=(-x,0,x)\in C_{3,n}$ if and only if
$3x^2+2x=n$ if and only if
$$3(x+\frac 1 3)^2-\frac 1 3=n \Longleftrightarrow (3x+1)^2=3n+1.$$
However, when $3n+1$ is a square, it has two roots with only one congruent
to $1$ mod $3$. Hence, $|C_{3,n}|\in\{0,1\}$ and $|C_{3,n}|=1$ if and only
if $3n+1$ is a square.  Now, we remark that $3n+1$ is a square if and only
if $k^2(3n+1)=3(k^2n+\frac{k^2-1}3)+1$ is a square. Hence, for any integer
$k$ not dividing by $3$, the map $\varphi:SC_{3,n}\longrightarrow
SC_{3,k^2n+\frac{k^2-1}3}$ is bijective, and
$$\asc_3(n)=\asc_3\left(k^2n+\frac{k^2-1}3\right).$$
In particular, we recover~\cite[Theorem 5.1]{BaruahBerndt} for $k=2$
and~\cite[Theorem 3.6]{BaruahNath} for $k=p^{2m}$ where $p$ is a prime
number such that $p\equiv 2 \mod 3$.
\end{enumerate}
\end{remark}

\begin{corollary}
\label{cor:3core4n1}
The map
$$\varphi:C_{3,n}\longrightarrow C_{3,4n+1},\ (-x-y,x,y)\longmapsto
(2(y+x)+1,-2x,-2y-1)$$ 
is a bijection.
\end{corollary}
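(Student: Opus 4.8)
The plan is to recognize the map as the special case $k=2$ of Theorem~\ref{thm:3coreinjection} and then to supply the one missing ingredient, surjectivity. Writing $2 = 3\cdot 1 - 1$, so that $q=1$ and $\varepsilon=-1$, a direct substitution shows that the map of Theorem~\ref{thm:3coreinjection} specializes to $(-x-y,x,y)\mapsto(2(y+x)+1,-2x,-2y-1)$, and that the target set $C_{3,k^2n+(k^2-1)/3}$ becomes $C_{3,4n+1}$ since $4n+\tfrac{4-1}{3}=4n+1$. Thus Theorem~\ref{thm:3coreinjection} immediately gives that $\varphi\colon C_{3,n}\to C_{3,4n+1}$ is well defined and injective, and moreover, from its proof, that $|\lambda_{\varphi(c)}|=4\,|\lambda_c|+1$ for every $c$. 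Only surjectivity remains to be shown.

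For surjectivity the crux is a parity observation, and this is the step I expect to carry the real weight. Any element of $C_{3,4n+1}$ can be written $(-a-b,a,b)$ with $a,b\in\Z$, and by Corollary~\ref{cor:sizecore} its associated core has size $3a^2+3b^2+3ab+a+2b$. First I would reduce this quantity modulo $2$: using $3\equiv 1$ and $x^2\equiv x\pmod 2$, the expression collapses to $b(1+a)\pmod 2$. Since the size equals $4n+1$, which is odd, this forces $b$ to be odd and $a$ to be even. This congruence is the heart of the argument; once it is in place the rest is bookkeeping.

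With $a$ even and $b$ odd, I would set $x=-a/2$ and $y=-(b+1)/2$, both integers, and check directly that $\varphi(-x-y,x,y)=(-a-b,a,b)$: indeed $-2x=a$, $-2y-1=b$, and $2(y+x)+1=-a-(b+1)+1=-a-b$. Finally, the size relation inherited from Theorem~\ref{thm:3coreinjection} gives $4\,|\lambda_{(-x-y,x,y)}|+1=|\lambda_{(-a-b,a,b)}|=4n+1$, so $|\lambda_{(-x-y,x,y)}|=n$ and the constructed preimage lies in $C_{3,n}$. This establishes surjectivity; combined with the injectivity already provided by Theorem~\ref{thm:3coreinjection}, it proves that $\varphi$ is a bijection. (One could alternatively deduce bijectivity from the equality $|C_{3,n}|=|C_{3,4n+1}|$, but the explicit inverse obtained here is both cleaner and more in keeping with the combinatorial spirit of the section.)
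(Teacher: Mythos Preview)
Your proof is correct and follows essentially the same route as the paper: invoke Theorem~\ref{thm:3coreinjection} with $k=2$ for well-definedness and injectivity, then establish surjectivity via the parity reduction $3a^2+3b^2+3ab+a+2b\equiv b(1+a)\pmod 2$ to force $a$ even and $b$ odd, and write down the explicit preimage. The paper's argument is identical up to notation (it writes $x',y'$ for your $a,b$ and phrases the parity step as $3x'^2+x'\equiv 0\pmod 2$, whence $y'(x'+1)\equiv 1$).
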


\begin{proof}
Applying Theorem~\ref{thm:3coreinjection} for $k=2$, we deduce that
$\varphi$ is well-defined and injective. Now, we prove that $\varphi$ is
surjective. Let $(-x'-y',x',y')\in C_{3,4n+1}$. Hence,
\begin{equation}
\label{eq:loc1}
4n+1=3(x'^2+y'^2+x'y')+x'+2y'.
\end{equation}
Remark that $3x'^2+x'\equiv 0\mod 2$. Thus, Equation~(\ref{eq:loc1})
implies that $y'(x'+1)\equiv 1\mod 2$. It follows that $y\equiv 1\mod 2$
and $x\equiv 0\mod 2$. There exist integers $x$ and $y$ such that $x'=-2x$
and $y'=-2y-1$. Hence, $(-x-y,x,y)\in\C_{3,n}$ and
$$\varphi(-x-y,x,y)=(-x'-y',x',y').$$
The result follows.
\end{proof}

\begin{remark}
Since $\varphi$ is bijective, we have
$$\as_3(4n+1)=|C_{3,4n+1}|=|\varphi(C_{3,n})|=|C_{3,n}|=\as_3(n),$$
and we recover~\cite[Theorem 4.1]{BaruahBerndt}. 
\end{remark}

\begin{example}
Assume $n=2$. There are two $3$-core partitions of $2$, $(2)$ and $(1,1)$,
with characteristic vectors $(0,1,-1)$ and $(1,-1,0)$. Then the
characteristic vectors of $3$-core partitions of $4\times 2+1=9$ are
$(-1,2,-1)=\varphi(1,-1,0)$ and $(1,-2,1)=\varphi(0,1,-1)$ corresponding
to the $3$-core partitions $(5,3,1)$ and $(3,2,2,1,1)$ of $9$.
\end{example}

\begin{corollary}
\label{cor:gene3core}
We keep the notation as above. We assume that $k=p^{2m}$ for some positive
integer $m$ and prime number $p$ such that $p\equiv 2\mod 3$. Then the map
$\varphi:C_{3,n}\longrightarrow C_{3,p^{2m}n+\frac{p^{2m}-1}3}$ defined by
$$\varphi(-x-y,x,y)=
\left(p^m(y+x)+\frac{p^{m}+1}3,-p^mk x,-p^m y-\frac{p^{m}+1}3\right)$$
is bijective.
\end{corollary}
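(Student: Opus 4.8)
The plan is to recognize $\varphi$ as a special case of Theorem~\ref{thm:3coreinjection} and then to supply the one missing ingredient, surjectivity. Since $p\equiv 2\pmod 3$ and $(p^m+1)/3\in\Z$, we have $p^m\equiv -1\pmod 3$, so writing $p^m=3q-1$ with $q=(p^m+1)/3$ puts us in the case $\varepsilon=-1$ of Theorem~\ref{thm:3coreinjection}, applied with $p^m$ in the role of the integer $k$ there. Substituting $\varepsilon=-1$ and this $q$ into the formula of that theorem reproduces exactly the map of the present statement, and its target becomes $C_{3,(p^m)^2 n+((p^m)^2-1)/3}=C_{3,p^{2m}n+(p^{2m}-1)/3}$. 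Thus $\varphi$ is well-defined and injective by Theorem~\ref{thm:3coreinjection}, and it remains only to prove that $\varphi$ is onto.

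First I would record the quadratic identity underlying $3$-cores. By Corollary~\ref{cor:sizecore}, a vector $(-x-y,x,y)\in\core_3$ lies in $C_{3,n}$ if and only if $n=3x^2+3y^2+3xy+x+2y$, and a direct rearrangement shows this is equivalent to
\begin{equation}
\label{eq:eisensteinnorm}
3n+1=(3x)^2+(3x)(3y+1)+(3y+1)^2.
\end{equation}
The right-hand side is the Eisenstein norm $U^2+UV+V^2$ of $U+V\omega$, where $U=3x$, $V=3y+1$ and $\omega$ is a primitive cube root of unity.

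For surjectivity, take $(-x'-y',x',y')\in C_{3,N}$ with $N=p^{2m}n+(p^{2m}-1)/3$, and set $U=3x'$, $V=3y'+1$. Then \eqref{eq:eisensteinnorm} gives $U^2+UV+V^2=3N+1=p^{2m}(3n+1)$, so $p^{2m}\mid U^2+UV+V^2$. The crux is the following descent lemma: if $p\equiv 2\pmod 3$ is prime and $p^{2a}\mid U^2+UV+V^2$, then $p^a\mid U$ and $p^a\mid V$. I would prove it by induction on $a$. If $p\mid U^2+UV+V^2$ with $p\nmid V$, then $t=UV^{-1}$ would be a root of $t^2+t+1$ in $\F_p$, i.e. a primitive cube root of unity, forcing $3\mid p-1$ and contradicting $p\equiv 2\pmod 3$; hence $p\mid V$ and then $p\mid U$, and writing $U=pU_1$, $V=pV_1$ reduces $p^{2a}\mid p^2(U_1^2+U_1V_1+V_1^2)$ to the case $a-1$, so the induction closes. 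Applying the lemma with $a=m$ yields $p^m\mid U$ and $p^m\mid V$, hence $p^m\mid x'$ (as $\gcd(3,p)=1$) and $p^m\mid 3y'+1$.

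Finally I would exhibit the preimage. Put $x=-x'/p^m$ and $y=-\bigl(V/p^m+1\bigr)/3$; here $y\in\Z$ because $V\equiv 1$ and $p^m\equiv -1\pmod 3$ give $V/p^m\equiv -1\pmod 3$. A short computation shows $-p^m x=x'$ and $-p^m y-(p^m+1)/3=y'$, and since the image of $\varphi$ always lies in $\core_3$ its first coordinate is then forced to equal $-x'-y'$; thus $\varphi(-x-y,x,y)=(-x'-y',x',y')$, proving surjectivity. The main obstacle is the descent lemma, where both the primality of $p$ and the congruence $p\equiv 2\pmod 3$ are indispensable: together they say precisely that $p$ is inert in $\Z[\omega]$, so that the norm form $U^2+UV+V^2$ is anisotropic modulo $p$. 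Everything else is a routine specialization of Theorem~\ref{thm:3coreinjection} together with bookkeeping on \eqref{eq:eisensteinnorm}.
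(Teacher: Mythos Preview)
Your argument is correct, and it takes a genuinely different route from the paper's proof. The paper obtains surjectivity purely by cardinality: after quoting Theorem~\ref{thm:3coreinjection} for well-definedness and injectivity, it invokes \cite[Corollary~8]{HirschhornSellers}, which establishes the identity $\as_3(n)=\as_3\bigl(p^{2m}n+\tfrac{p^{2m}-1}{3}\bigr)$ by analytic means (generating functions), and concludes that an injection between finite sets of equal size is a bijection. Your proof, by contrast, is entirely self-contained: rewriting the size condition as the Eisenstein norm identity $3n+1=(3x)^2+(3x)(3y+1)+(3y+1)^2$ and using that a prime $p\equiv 2\pmod 3$ is inert in $\Z[\omega]$, you run an elementary descent to show $p^m\mid 3x'$ and $p^m\mid 3y'+1$, and then exhibit the preimage explicitly. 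This buys you an honest combinatorial/arithmetic explanation of the bijection that does not rely on the external enumeration result, and in fact your argument \emph{reproves} the Hirschhorn--Sellers equality as a corollary rather than consuming it as input. The paper's approach is shorter but less illuminating; yours explains precisely where the hypotheses ``$p$ prime'' and ``$p\equiv 2\pmod 3$'' are used (anisotropy of the norm form modulo $p$), which the cardinality argument obscures.

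One small point worth making explicit in your write-up: after constructing $x,y\in\Z$ with $\varphi(-x-y,x,y)=(-x'-y',x',y')$, you should note that $(-x-y,x,y)\in C_{3,n}$ follows from the size relation $|\lambda_{\varphi(c)}|=p^{2m}|\lambda_c|+(p^{2m}-1)/3$ established in the proof of Theorem~\ref{thm:3coreinjection}, applied in reverse. You gesture at this, but it deserves a sentence.
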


\begin{proof}
By Theorem~\ref{thm:3coreinjection}, the map $\varphi$ is well-defined and
injective. The result follows by cardinality since
$\as_3(n)=\as_3(p^{2m}n+\frac{p^{2m}-1}3)$ by~\cite[Corollary
8]{HirschhornSellers}.
\end{proof}

\begin{remark}
\noindent
The condition $p\equiv 2\mod 3$ is relevant. For example, for $k=7$, we
have $C_{65,3}=\{\lambda_1,\,\lambda_2,\,\lambda_3\}$, where
$\Fr(\lambda_1)=(14, 11, 8, 5, 2 \mid 1, 2, 4, 5, 8)$,
$\Fr(\lambda_1)=(12, 9, 6, 3, 0 \mid  0, 3, 6, 9, 12)$ and
$\Fr(\lambda_3)=(8, 5, 4, 2, 1 \mid 2, 5, 8, 11, 14)$. It follows that
$$c_3(\lambda_1)=(-5,2,3),\,
c_3(\lambda_2)=(5,0,-5)\ \text{and}\ 
c_3(\lambda_1)=(-3,-2,5).$$
We have $c_3((1))=(1,0,-1)$. Hence, $\varphi(1,0,-1)=(5,0,-5)$. Note that
$\varphi$ is not bijective since $\as_3(65)=3\neq 1=\as_3(1)$.
\item
\end{remark}

\subsection{Results on self-conjugate $5$-core partitions}
\label{subsec:5core}

In this paragraph, we consider self-conjugate $5$-core partitions. To
simplify the notation, we identify $\Z^2$ and the set characteristic
vectors of self-conjugate $5$-core partitions by 
$$(x,y)\in\Z^2\longmapsto (-y,-x,0,x,y)\in \core_5.$$

Let $k$ be a positive integer.  Now, we define a map
$\varphi_k:SC_{5,n}\rightarrow SC_{5,(k^2+1)n+k^2}$ depending on the
congruence of $k$ modulo $5$. Write $q$ and $r$ for the quotient and the
remainder obtained by the euclidean division of $k$ by $5$. For $(x,y)\in
SC_{5,n}$, set
\begin{equation}
\label{eq:mapcore5gen1}
 \varphi_k(x,y)=\begin{cases}
 (x+ky+2q,-kx+y-q)&\text{if } k\equiv 0\mod 5\\
 (-kx+y-q,-x-ky-2q-1)&\text{if } k\equiv 1\mod 5\\
 (-kx-y-q-1,-yk+x-2q-1)&\text{if } k\equiv 2\mod 5\\
 (kx-y+q,x+ky+2q+1)&\text{if } k\equiv 3\mod 5\\
 (-x-ky-2q-2,kx-y+q)&\text{if } k\equiv 4\mod 5\\
\end{cases}
\end{equation}

\begin{proposition}
\label{prop:map5cores}
The map $\varphi_k:SC_{5,n}\longrightarrow SC_{5,(k^2+1)n+k^2}$ defined in
Equation~(\ref{eq:mapcore5gen1}) is injective.
\end{proposition}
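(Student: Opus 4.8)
The plan is to mirror the proof of Theorem~\ref{thm:3coreinjection}: first establish that $\varphi_k$ is well-defined, i.e.\ that the image of a vector in $SC_{5,n}$ really lies in $SC_{5,(k^2+1)n+k^2}$, and then deduce that injectivity is immediate. Throughout I will use the identification $(x,y)\leftrightarrow(-y,-x,0,x,y)\in\core_5$ together with the size formula of Corollary~\ref{cor:sizesym}(i), which for $t=5$, $m=2$ (so $c_1=x$, $c_2=y$) reads
$$|\lambda_{(x,y)}|=5(x^2+y^2)+2x+4y.$$
Since every pair $(X,Y)\in\Z^2$ corresponds under this identification to a genuine self-conjugate $5$-core, the only substantive content of well-definedness is the size identity: writing $\varphi_k(x,y)=(X,Y)$, I must verify that $5(X^2+Y^2)+2X+4Y=(k^2+1)n+k^2$ whenever $n=5(x^2+y^2)+2x+4y$.

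This size computation is the principal obstacle, and it has to be carried out case by case on the residue of $k$ modulo $5$. In each class I will substitute $k=5q+r$ (with $r$ the residue and $q$ the quotient), expand $5(X^2+Y^2)+2X+4Y$, and collect the coefficients of $x^2+y^2$, of $x$, of $y$, and the constant term. The quadratic part always produces $5(k^2+1)(x^2+y^2)$ at once, matching $(k^2+1)\cdot5(x^2+y^2)$. The delicate step is that the linear and constant coefficients only match the target $2(k^2+1)x+4(k^2+1)y+k^2$ after one uses $k=5q+r$ to absorb the cross terms. For instance, in the case $k\equiv0\pmod5$ (so $k=5q$) one finds the $x$-coefficient equal to $20q+10qk+2-4k$, which collapses to $2k^2+2$ precisely because $10qk=2k^2$ and $20q=4k$ when $k=5q$; the $y$-coefficient and the constant $25q^2=k^2$ match by the same mechanism. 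The remaining four classes work identically: the constant shifts $-q$, $-2q-1$, $q$, $-2q-2$, etc.\ appearing in~(\ref{eq:mapcore5gen1}) are tuned exactly so that the congruence $k\equiv r$ forces every coefficient to agree.

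For injectivity, the key observation is that in each congruence class the map $(x,y)\mapsto(X,Y)$ is affine-linear, $(X,Y)^{T}=M(x,y)^{T}+b$, with linear part
$$M\in\left\{\begin{pmatrix}1&k\\-k&1\end{pmatrix},\ \begin{pmatrix}-k&1\\-1&-k\end{pmatrix},\ \begin{pmatrix}-k&-1\\1&-k\end{pmatrix},\ \begin{pmatrix}k&-1\\1&k\end{pmatrix},\ \begin{pmatrix}-1&-k\\k&-1\end{pmatrix}\right\}$$
according as $k\equiv0,1,2,3,4\pmod5$. In every case $\det M=k^2+1\neq0$, so $M$ is injective on $\Z^2$; hence the affine map, and therefore $\varphi_k$, is injective on $SC_{5,n}$. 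Conceptually these matrices are the $2\times2$ matrices of multiplication by a Gaussian integer of norm $k^2+1$, which simultaneously explains why the determinant is $k^2+1$ and why the quadratic form $X^2+Y^2$ gets scaled by $k^2+1$ in the size computation of the previous paragraph.
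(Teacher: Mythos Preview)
Your proof is correct and follows essentially the same approach as the paper's: both verify well-definedness via the size formula of Corollary~\ref{cor:sizesym} in the representative case $k\equiv 0\pmod 5$ (the paper computes $|\lambda_{\varphi_k(c)}|$ directly, you track coefficients of $x^2+y^2$, $x$, $y$ and the constant, arriving at the same identity), and both declare injectivity immediate. Your justification of injectivity via $\det M=k^2+1\neq 0$ and the Gaussian-integer remark are a pleasant addition but not a departure in method.
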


\begin{proof}
The fact that $\varphi_k$ is injective is immediate. We have to prove that
$\varphi$ is well-defined. We only consider the case $k\equiv 0\mod 5$,
the other cases are similar. Let $c=(x,y)\in SC_{5,n}$.
Corollary~\ref{cor:sizesym} gives
\begin{eqnarray*}
\left|\lambda_{\varphi_k(c)}\right|&=&
5(x+ky+2q)^2+5(-kx+y-q)^2+2(x+ky+2q)+4(-kx+y-q)\\
&=&(k^2+1)(5x^2+5y^2+2x+4y)+x(20q-2k^2+10kq-4k)\\
&&\quad +y(20kq-10q+2k-4k^2)+25q^2\\
&=&(k^2+1)|\lambda_{c}|+k^2,
\end{eqnarray*}
as required.
\end{proof}

\begin{remark}
\noindent
\begin{enumerate}[(i)]
\item For $k=1$, we recover the map of Garvan, Kim and
Stanton~\cite{GarvanKimStanton} giving Equation~(\ref{eq:egalite1}).
\item For $k=2$, the map $\varphi_2:SC_{5,n}\rightarrow SC_{5,5n+4},
(x,y)\mapsto (-2x-y-1,x-2y-1)$ is bijective.  By
Proposition~\ref{prop:map5cores}, $\varphi_2$ is injective. Let
$(x',y')\in SC_{5,5n+4}$. There are $(x,y)\in SC_{5,n}$ such that
$\varphi(x,y)=(x',y')$ if and only 
$$x=\frac{y'-2x'-1}5\quad\text{and}\quad y=\frac{-x'-2y'-3}{5}.$$
However, considering the relation $5x'^2+5y'^2+2x'+4y'=5n+4$ modulo $5$,
we obtain that $x'+2y'+3\equiv 0\mod 5$ and $2x'-y'+1\equiv 0\mod 5$.
Hence, $\varphi_2$ is surjective.
In particular, we recover the equality~\cite[Corollary 2]{GarvanKimStanton}
$$\asc_5(n)=|C_{5,n}|=|C_{5,5n+4}|=\asc_5(5n+4).$$
\item For $k=3$, we can check that the corresponding map
$\varphi_3:SC_{5,n}\rightarrow SC_{5,10n+9}$ is bijective, giving by
cardinality the relation
$$\asc_5(10n+9)=\asc_5(n).$$
In fact, this last equality is not surprising by (i) and (ii), since
$10n+9=5(2n+1)+4$. We also remark that 
$$\varphi_3=\varphi_2\circ\varphi_1.$$
\end{enumerate}
\end{remark}

\begin{theorem}
\label{thm:5core2}
Let $k$ be a positive integer not divisible by $5$. We define a map
$\psi_k: SC_{5,n}\longrightarrow SC_{5,k^2n+k^2-1}$ by setting, for all
$(x,y)\in SC_{5,n}$
$$\psi_k(x,y)=
\begin{cases}
(\varepsilon(kx+q),\varepsilon(ky+2q))&\text{if }k=5q+\varepsilon \\
(\varepsilon(-ky+2q+1),\varepsilon(kx+q))&\text{if
}k=5q+2\varepsilon\\
\end{cases},
$$
where $\varepsilon\in\{-1,1\}$. Then $\psi_k$ is injective.
\end{theorem}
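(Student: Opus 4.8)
The plan is to extract from the statement the two things actually to be proved: that $\psi_k$ is \emph{well-defined}, i.e.\ its image really lies in $SC_{5,k^2n+k^2-1}$, and that it is \emph{injective}. Both follow the template already used in the proofs of Theorem~\ref{thm:3coreinjection} and Proposition~\ref{prop:map5cores}; the only substantive point is the size identity
$$
|\lambda_{\psi_k(x,y)}| = k^2\,|\lambda_{(x,y)}| + k^2-1,
$$
after which injectivity is a formality.

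For the size identity I would start from Corollary~\ref{cor:sizesym}(i), which for $t=5$ and the identification $(x,y)\mapsto(-y,-x,0,x,y)$ gives $|\lambda_{(x,y)}| = 5(x^2+y^2)+2x+4y$. Writing $(x',y')=\psi_k(x,y)$, one substitutes into this same formula and expands. In the first branch $k=5q+\varepsilon$, where $x'=\varepsilon(kx+q)$ and $y'=\varepsilon(ky+2q)$, collecting terms shows that the coefficient of $x$ is $10kq+2\varepsilon k=2k(5q+\varepsilon)=2k^2$, that of $y$ is $20kq+4\varepsilon k=4k^2$, and the constant is $25q^2+10\varepsilon q=(5q+\varepsilon)^2-1=k^2-1$, which is exactly $k^2|\lambda_{(x,y)}|+k^2-1$. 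The second branch $k=5q+2\varepsilon$ is treated identically, the relation $k=5q+2\varepsilon$ being precisely what makes the linear coefficients collapse to $2k^2$ and $4k^2$.

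A cleaner way to organize the bookkeeping (and to see why there are exactly these branches, each further split by the sign of $\varepsilon$) is to set $u=5x+1$, $v=5y+2$. Corollary~\ref{cor:sizesym} then reads $5(|\lambda_{(x,y)}|+1)=u^2+v^2$, and the target identity is equivalent to
$$
(5x'+1)^2+(5y'+2)^2 = k^2\big((5x+1)^2+(5y+2)^2\big).
$$
Thus $u'+iv'$ must arise from $u+iv$ by multiplication by a Gaussian integer of norm $k^2$; the choices that keep the map integral and respect the congruences $u'\equiv1,\ v'\equiv2 \pmod 5$ are $u'+iv'=\pm k(u+iv)$ and $u'+iv'=\pm ik(u+iv)$, and which one is forced is dictated by whether $k\bmod 5\in\{1,4\}$ or $k\bmod 5\in\{2,3\}$. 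Unwinding $u'=5x'+1$, $v'=5y'+2$ reproduces the formulas defining $\psi_k$, with the integers $q$, $2q$ and the $\pm1$ constants being exactly the shifts that guarantee $x',y'\in\Z$; this is also where $5\nmid k$ is needed.

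Finally, injectivity is immediate: in every case $\psi_k$ is an affine map $\Z^2\to\Z^2$ whose linear part is multiplication by $\pm k$ or $\pm ik$, hence has determinant $k^2\geq 1$ and is injective over $\Q$, a fortiori on $\Z^2$. I expect the main obstacle to be purely the sign-bookkeeping: pinning down the constant shifts ($q$, $2q$, and the trailing $\pm1$) so that the image genuinely lands on the sublattice $\{u\equiv1,\ v\equiv2 \pmod 5\}$, and recording the $\varepsilon$-splitting correctly in the branch $k\equiv 2,3 \pmod 5$. The Gaussian-integer reformulation above is the device I would use to keep these constants and signs under control. Note that only injectivity is claimed; surjectivity fails in general, consistent with $\asc_5$ not being constant on the relevant progressions.
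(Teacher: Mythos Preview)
Your proposal is correct and follows essentially the same approach as the paper: a direct computation using Corollary~\ref{cor:sizesym}(i) to verify $|\lambda_{\psi_k(c)}|=k^2|\lambda_c|+k^2-1$ in the case $k=5q+\varepsilon$ (with the other case declared similar), and the observation that injectivity is immediate from the affine form of $\psi_k$. The paper's proof does exactly this and nothing more. Your Gaussian-integer reformulation via $u=5x+1$, $v=5y+2$ and $5(|\lambda_{(x,y)}|+1)=u^2+v^2$ is a genuine addition: it is not in the paper, but it nicely explains \emph{why} the four branches arise and how the shifts $q$, $2q$, $2q+1$ are forced by the congruences $u'\equiv 1$, $v'\equiv 2\pmod 5$, whereas the paper simply presents the formulas and checks them.
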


\begin{proof}
Let $\varepsilon\in\{-1,1\}$. The map is clearly injective. We only have
to prove that it is well-defined. We only consider the case
$k=5q+\varepsilon$ for some $q\in\N$ and $\varepsilon\in\{-1,1\}$. The
case $k=5q+2\varepsilon$ is similar. Let $c=(x,y)\in SC_{5,n}$. Then
Corollary~\ref{cor:sizesym} implies
\begin{eqnarray*}
\left|\lambda_{\psi_k(c)}\right|&=&
5(kx+q)^2+5(ky+2q)^2+2\varepsilon(kx+q)+4\varepsilon(ky+2q)\\
&=&k^2(5x^2+5y^2)+2xk(5q+\varepsilon)+4yk(5q+\varepsilon)+25q^2+10\varepsilon
q\\
&=&k^2|\lambda_{c}|+(5q+\varepsilon)^2-1\\
&=&k^2|\lambda_{c}|+k^2-1,
\end{eqnarray*}
and the result follows.
\end{proof}

\begin{corollary}
Assume $p$ is a prime number such that $p\equiv 3\mod 4$. Then for all
$m\in\N$, the map $\psi_{p^m}:SC_{5,n}\longrightarrow SC_{5,p^{2m}
n+p^{2m}-1}$ given in Theorem~\ref{thm:5core2} is a bijection.
\end{corollary}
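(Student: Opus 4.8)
The plan is to reduce the statement to an equality of cardinalities and then to the arithmetic of sums of two squares. By Theorem~\ref{thm:5core2} the map $\psi_{p^m}$ is already injective (note that $p\equiv 3\pmod 4$ forces $p\neq 5$, so $p^m$ is not divisible by $5$ and $\psi_{p^m}$ is defined). Since $SC_{5,n}$ and $SC_{5,p^{2m}n+p^{2m}-1}$ are finite sets, it therefore suffices to prove the cardinality identity $\asc_5(n)=\asc_5(p^{2m}n+p^{2m}-1)$, after which surjectivity of $\psi_{p^m}$ is automatic. So the entire content is the number-theoretic equality of counts.

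First I would translate $\asc_5$ into a count of representations as a sum of two squares. By Corollary~\ref{cor:sizesym} (with $t=5$, $m=2$), a self-conjugate $5$-core of size $n$ corresponds bijectively to $(x,y)\in\Z^2$ with $n=5x^2+5y^2+2x+4y$; completing the square gives $5(n+1)=(5x+1)^2+(5y+2)^2$. Writing $u=5x+1$ and $v=5y+2$ identifies $SC_{5,n}$ with the set of $(u,v)\in\Z^2$ satisfying $u^2+v^2=5(n+1)$, $u\equiv 1\pmod 5$ and $v\equiv 2\pmod 5$. Denoting by $N_{(a,b)}(M)$ the number of $(u,v)\in\Z^2$ with $u^2+v^2=M$, $u\equiv a$ and $v\equiv b\pmod 5$, we thus have $\asc_5(n)=N_{(1,2)}(M)$ with $M=5(n+1)$. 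Since $5(p^{2m}n+p^{2m})=p^{2m}M$, the goal becomes $N_{(1,2)}(M)=N_{(1,2)}(p^{2m}M)$.

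The heart of the argument is to compare representations of $M$ and of $p^{2m}M$ inside $\Z[i]$, and this is where the hypothesis $p\equiv 3\pmod 4$ (and $p$ prime) is essential: such a $p$ is inert, i.e.\ a real Gaussian prime. Hence for any $z\in\Z[i]$ with $N(z)=p^{2m}M$ the valuation of $z$ at $p$ is at least $m$, so $z=p^m w$ with $N(w)=M$; multiplication by $p^m$ is therefore a bijection between the representations of $M$ and those of $p^{2m}M$, and it sends a representation of residue type $(a,b)\pmod 5$ to one of type $(p^m a,\,p^m b)$. This yields $N_{(1,2)}(M)=N_{(p^m,\,2p^m)}(p^{2m}M)$. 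Finally I would invoke the dihedral symmetry group $G$ of order $8$ generated by the sign changes and the coordinate swap of the form $u^2+v^2$: each element of $G$ is a global bijection on the representations of $p^{2m}M$ and commutes with reduction mod $5$, so $N_{(a,b)}(p^{2m}M)=N_{g(a,b)}(p^{2m}M)$ for all $g\in G$. A direct check shows that for every $p^m\not\equiv 0\pmod 5$ the residue type $(p^m,2p^m)$ lies in the $G$-orbit of $(1,2)$ (the four cases $p^m\equiv 1,2,3,4$ give $(1,2),(2,4),(3,1),(4,3)$, all in that orbit), whence $N_{(p^m,2p^m)}(p^{2m}M)=N_{(1,2)}(p^{2m}M)$. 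Chaining these equalities gives $\asc_5(n)=\asc_5(p^{2m}n+p^{2m}-1)$, completing the proof.

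I expect the main obstacle to be isolating and justifying the inertness step cleanly: one must argue that every representation of $p^{2m}M$ is divisible by $p^m$ in $\Z[i]$, even when $p\mid M$ (where one checks $v_p(z)=m+\tfrac12 v_p(M)\geq m$), since it is precisely this that fails for $p\equiv 1\pmod 4$ and explains why the hypothesis cannot be dropped. As an alternative to the self-contained computation above, one could instead cite the known identity $\asc_5(p^{2m}n+p^{2m}-1)=\asc_5(n)$ for $p\equiv 3\pmod 4$ (in the spirit of the citation of~\cite{HirschhornSellers} used for Corollary~\ref{cor:gene3core}) and deduce bijectivity from injectivity and equal finite cardinality.
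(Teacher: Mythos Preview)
Your proof is correct and follows the same overall skeleton as the paper: you first invoke Theorem~\ref{thm:5core2} for injectivity (correctly noting that $p\equiv 3\pmod 4$ forces $p\neq 5$), then reduce bijectivity to the cardinality identity $\asc_5(n)=\asc_5(p^{2m}n+p^{2m}-1)$. The paper's proof does exactly this but for the cardinality step simply cites \cite[Theorem~5.4]{BaruahNath2}, whereas you supply a self-contained number-theoretic argument via the sum-of-two-squares translation $5(n+1)=(5x+1)^2+(5y+2)^2$ and the inertness of $p$ in $\Z[i]$.

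Your route is genuinely more informative: it makes transparent why the hypothesis $p\equiv 3\pmod 4$ is exactly what is needed (inert primes force $p^m\mid z$ whenever $N(z)=p^{2m}M$), and the dihedral-orbit check on residue classes mod $5$ is a clean way to reconcile the congruence constraints after multiplication by $p^m$. One small point worth making explicit: when $v_p(M)$ is odd your formula $v_p(z)=m+\tfrac{1}{2}v_p(M)$ is non-integral, which should be read as saying that both $M$ and $p^{2m}M$ then have no representations as sums of two squares, so both counts vanish and the identity holds trivially. Your closing remark that one could alternatively cite the known identity is precisely what the paper does, so you have in fact anticipated the paper's proof as a special case of your discussion.
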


\begin{proof}
Since $p$ is a prime and $p\neq 5$ because $p\equiv 3\mod 4$, we can apply
Theorem~\ref{thm:5core2} to $k=p^{2m}$ and we conclude with~\cite[Theorem
5.4]{BaruahNath2}.
\end{proof}
\subsection{Results on self-conjugate $7$-core partitions}
\label{subset:7core}
As in \S\ref{subsec:5core}, we identify $\Z^3$ with the set of
characteristic vectors of self-conjugate $7$-core partitions by
$$(x,y,z)\in\Z^3\longmapsto
(-z,-y,-x,0,x,y,z)\in \core_.$$
Let $k$ be a positive integer not dividing by $7$. For $(x,y,z)\in
SC_{7,n}$, we define
\begin{equation}
\label{eq:mapcore7gen1}
 \varphi_k(x,y,z)=\begin{cases}
 (kx+q,kx+2q,kx+3q)&\text{if } k=7q+1\\
 (-kz-3q-1,kx+q,-yk-2q-1)&\text{if } k=7q +2\\
 (-ky-2q-1,zk+3q+1,kx+q)&\text{if } k=7q+3\\
 (ky+2q+1,-kz-3q-2,-kx-q-1)&\text{if } k=7q+4\\
 (kz+3q+2,-kx-q-1,ky+2q+1)&\text{if } k=7q+5\\
(-kx-q-1,-kx-2q-2,-kz-3q-3)&\text{if } k=7q+6\\
\end{cases}
\end{equation}

Using Corollary~\ref{cor:sizesym}, we can show that
$\varphi_k:SC_{7,n}\longrightarrow SC_{7,k^2(n+2)-2}$ is well-defined by a
computation analogue to that in the proof of the
proposition~\ref{prop:map5cores}. Note that $\varphi_k$ is injective.

\begin{theorem}
The map $\varphi_2:SC_{7,n}\longrightarrow SC_{7,4n+6}$ given by
$$\varphi_2(-z,-y,-x,0,x,y,z)=(2y+1,-2x,2z+1,0,-2z-1,2x,-2y-1).$$
is a bijection.
\end{theorem}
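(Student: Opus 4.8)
Since the remarks preceding the statement already establish that $\varphi_2$ is well-defined (it sends $SC_{7,m}$ into $SC_{7,4m+6}$ for every $m$, via Corollary~\ref{cor:sizesym}) and injective, the whole content of the theorem is the \emph{surjectivity} of $\varphi_2$. It is convenient to work in the coordinates $(x,y,z)\in\Z^3$ identified with $(-z,-y,-x,0,x,y,z)\in\core_7$; in these coordinates the map reads $\varphi_2(x,y,z)=(-2z-1,\,2x,\,-2y-1)$. Thus a triple $(x',y',z')$ lies in the image exactly when the equations $x'=-2z-1$, $y'=2x$, $z'=-2y-1$ admit an integer solution, i.e.\ when $x=y'/2$, $y=-(z'+1)/2$, $z=-(x'+1)/2$ are integers. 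The plan is therefore to show that every element of $SC_{7,4n+6}$ automatically has $y'$ even and $x',z'$ odd, and then to check that the resulting preimage sits in $SC_{7,n}$.

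First I would write down the size of a self-conjugate $7$-core from Corollary~\ref{cor:sizesym}(i) (the case $t=2m+1$ with $t=7$, $m=3$): a vector $(-z',-y',-x',0,x',y',z')$ has associated core of size $7(x'^2+y'^2+z'^2)+2x'+4y'+6z'$. Hence $(x',y',z')\in SC_{7,4n+6}$ means $7(x'^2+y'^2+z'^2)+2x'+4y'+6z'=4n+6$. The key step is to reduce this relation modulo $4$. Writing $a,b,c\in\{0,1\}$ for the parities of $x',y',z'$ and using $x'^2\equiv a$, $2x'\equiv 2a$, $6z'\equiv 2c$, $4y'\equiv 0\pmod 4$, the relation becomes $3(a+b+c)+2a+2c\equiv 2\pmod 4$. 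Running through the four parity patterns compatible with the constraint $a+b+c\equiv 0\pmod 2$ (which comes from the same relation read modulo $2$), only $(a,b,c)=(1,0,1)$ produces the residue $2$, while $(0,0,0)$, $(1,1,0)$ and $(0,1,1)$ all give $0$. This forces $x'$ and $z'$ odd and $y'$ even, exactly the congruences needed to invert $\varphi_2$ over $\Z$. I expect this modular computation to be the main (indeed essentially the only) obstacle; it is the $7$-core analogue of the modulo-$5$ argument used for $\varphi_2$ on $5$-cores in the remark after Proposition~\ref{prop:map5cores}.

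With the parities settled, set $x=y'/2$, $y=-(z'+1)/2$, $z=-(x'+1)/2\in\Z$, so that $(x,y,z)$ is a genuine self-conjugate $7$-core parameter with $\varphi_2(x,y,z)=(x',y',z')$. Let $m$ be the size of the core attached to $(x,y,z)$. Well-definedness of $\varphi_2$ gives that $\varphi_2(x,y,z)$ has size $4m+6$; but this image is $(x',y',z')$, of size $4n+6$, so $4m+6=4n+6$ and $m=n$. Thus $(x,y,z)\in SC_{7,n}$ maps onto the arbitrarily chosen $(x',y',z')$, proving surjectivity. Combined with the already-known injectivity, $\varphi_2$ is a bijection.
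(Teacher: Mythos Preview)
Your proof is correct and follows essentially the same approach as the paper: both use the size formula from Corollary~\ref{cor:sizesym} to get $7(x'^2+y'^2+z'^2)+2x'+4y'+6z'=4n+6$, then run a parity analysis to force $y'$ even and $x',z'$ odd, which is exactly what is needed to invert $\varphi_2$ over $\Z$. Your mod~$4$ computation is slightly more systematic than the paper's case-by-case elimination (the paper assumes $y'$ odd and derives contradictions, then handles $y'$ even), and you explicitly verify that the preimage lands in $SC_{7,n}$ via well-definedness, which the paper leaves implicit; but the underlying argument is the same.
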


\begin{proof}
First, we remark that $\varphi_2$ is the map given in
Equation~(\ref{eq:mapcore7gen1}) for $k=2$. Hence, $\varphi_2$ is
well-defined and surjective. Now, let $(x',y',z')\in SC_{7,4n+6}$.
\begin{equation}
\label{eq:loc5}
7x'^2+7y'^2+7z'^2+2x'+4y'+6z'=4n+6
\end{equation}
by Corollary~\ref{cor:sizesym}.  On the other hand, we have $x,\,y,\,z
\in\Z$ such that $x'=-2z-1$ and $y'=2x$ and $z'=-2y-1$ if and only if
$$x=\frac 1 2 y',\quad y=-\frac 1 2 (z'+1)\quad\text{and}\quad z=-\frac 1
2(x'+1).$$ 
By Equation~(\ref{eq:loc5}), either $x'$ or $y'$ or $z'$ is odd. Assume
$y'$ is odd. If $x'$ (resp. $z'$) is odd, then $z'$ (resp. $x'$) is odd,
and $x'^2\equiv y'^2\equiv z'^2\equiv 1 \mod 4$. We also necessarily have
$2x'\equiv 6z'\equiv 2\mod 4$. Thus, Equation~(\ref{eq:loc5}) gives
$1\equiv 2\mod 4$, which is a contradiction. It follows that $x'$ and $z'$
are even. Then $x'^2\equiv z'^2\equiv 2x'\equiv 6z'\equiv 0\mod 4$, and
Equation~(\ref{eq:loc5}) gives $3\equiv 2\mod 4$. We deduce that $y'$ have
to be even, and Equation~(\ref{eq:loc5}) now implies that $x'$ and $z'$
are both odd. Thus, $x,\,y,\,z$ are integers. The result follows.
\end{proof}

\begin{remark}
We recover~\cite[(9.1)]{GarvanKimStanton} the relation
$$\asc_7(4n+6)=\asc_7(n).$$ 
\end{remark}

\begin{proposition}
The map $\varphi_{4^k}:SC_{7,n}\longrightarrow SC_{7,4^k(n+2)-2}$ defined
in Equation~(\ref{eq:mapcore7gen1}) is bijective.  
\end{proposition}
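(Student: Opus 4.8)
The plan is to deduce bijectivity from injectivity together with a cardinality count, using the $k=1$ case — the preceding theorem that $\varphi_2$ is a bijection — as the engine. Throughout, the map landing in $SC_{7,4^k(n+2)-2}$ is $\varphi_{2^k}$: by construction $\varphi_K$ sends $SC_{7,n}$ into $SC_{7,K^2(n+2)-2}$, and $(2^k)^2=4^k$, which also explains why $2^k$ is admissible, being coprime to $7$. First I would record that $\varphi_{2^k}$ is injective; this is exactly the injectivity noted after Equation~(\ref{eq:mapcore7gen1}). Since every set $SC_{7,m}$ is finite — by Corollary~\ref{cor:sizesym}(i) its elements $(x,y,z)$ satisfy $7(x^2+y^2+z^2)+2x+4y+6z=m$, an equation whose quadratic part is positive definite and hence has only finitely many integer solutions — it then suffices to prove
$$\asc_7\!\left(4^k(n+2)-2\right)=\asc_7(n).$$

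Next I would set $g(m)=4(m+2)-2=4m+6$ and observe that the preceding theorem gives precisely $\asc_7(g(m))=\asc_7(m)$ for every $m\in\N$, because $\varphi_2\colon SC_{7,m}\to SC_{7,4m+6}$ is a bijection there. Iterating this identity along the orbit $m=n,\,g(n),\,g^{(2)}(n),\dots$ yields $\asc_7(g^{(k)}(n))=\asc_7(n)$, and an easy induction evaluates the iterate:
$$g^{(k)}(n)=4^k n+6\sum_{i=0}^{k-1}4^i=4^k n+2(4^k-1)=4^k(n+2)-2.$$
Hence $\asc_7(4^k(n+2)-2)=\asc_7(n)$. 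Combining this equality of cardinalities with the injectivity of $\varphi_{2^k}$ between finite sets forces $\varphi_{2^k}$ to be surjective as well, so it is a bijection.

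The only genuine subtlety — and the step I would treat most carefully — is that I am \emph{not} asserting that the explicit piecewise formula for $\varphi_{2^k}$ in Equation~(\ref{eq:mapcore7gen1}) agrees with the $k$-fold composite $\varphi_2\circ\cdots\circ\varphi_2$ (an identity of the same flavour as $\varphi_3=\varphi_2\circ\varphi_1$ in the $5$-core case, which we do not need here). The cardinality identity is extracted purely from the $k=1$ bijection by iteration, so the argument is insensitive to whether the two maps coincide: it uses only that $\varphi_2$ is a bijection, that $\varphi_{2^k}$ is injective, and that source and target are finite. Should one prefer an entirely self-contained route, the composite $\varphi_2^{(k)}\colon SC_{7,n}\to SC_{7,4^k(n+2)-2}$ is a bijection directly as a composition of bijections; I would mention this as an alternative but carry out the cardinality argument as the main proof.
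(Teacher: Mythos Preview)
Your proof is correct, and you correctly diagnose the typo: since $\varphi_K$ lands in $SC_{7,K^2(n+2)-2}$, the map with target $SC_{7,4^k(n+2)-2}$ must be $\varphi_{2^k}$, not $\varphi_{4^k}$ as printed. Your iteration of $g(m)=4m+6$ is clean, and the resulting equality $\asc_7(4^k(n+2)-2)=\asc_7(n)$ combined with injectivity and finiteness gives the bijection.

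Your route differs from the paper's in one substantive way. Both arguments are ``injective plus equal cardinalities'', but the paper imports the cardinality identity $\asc_7(n)=\asc_7(4^k(n+2)-2)$ from an external source (Baruah--Nath, \emph{op.\ cit.}, Corollary~6.4), whereas you derive it internally by iterating the preceding theorem on $\varphi_2$. Your approach is therefore more self-contained: once $\varphi_2$ is known to be a bijection, nothing further is needed. The paper's approach is shorter on the page but depends on an outside result proved by different (modular-form) methods. Your closing remark that the composite $\varphi_2^{(k)}$ is itself a bijection---sidestepping even the injectivity of $\varphi_{2^k}$---is a nice alternative and arguably the most transparent version of the argument.
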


\begin{proof}
First we remark that $\varphi_{4^k}$ is well-defined and injective, and e
conclude by cardinality with~\cite[Corollary 6.4]{BaruahNath2}.
\end{proof}

\subsection{Results on self-conjugate $9$-core partitions} 
\label{subsec:7cores}

We identify $\Z^4$ with the set of characteristic vectors of
self-conjugate $9$-core partitions by
$$(x,y,z,w)\in\Z^4\longmapsto
(-w,-z,-y,-x,0,x,y,z,w)\in \core_9.$$

\begin{theorem}
\label{thm:9core}
The map
$$\varphi:SC_{9,n}\longrightarrow SC_{9,4n+10},\
(x,y,z,w)\longrightarrow (-2w-1,2x,-2z-1,2y)$$
is injective.
\end{theorem}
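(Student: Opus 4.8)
The plan is to follow the same two-part strategy already used for the analogous maps in \S\ref{subsec:5core} and \S\ref{subset:7core}, for instance in the proof of Proposition~\ref{prop:map5cores}: first verify that $\varphi$ is well-defined, that is, that it genuinely lands in $SC_{9,4n+10}$, and then observe that injectivity is immediate. Under the identification $(x,y,z,w)\mapsto(-w,-z,-y,-x,0,x,y,z,w)\in\core_9$, every tuple in $\Z^4$ produces a bona fide self-conjugate $9$-core characteristic vector, since the displayed form is automatically antisymmetric and its entries sum to $0$. Hence the image $(-2w-1,2x,-2z-1,2y)$ is a self-conjugate $9$-core characteristic vector for any $(x,y,z,w)\in\Z^4$, and the only substantive point in well-definedness is to check the size of the associated $9$-core.

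For this I would specialize Corollary~\ref{cor:sizesym}(i) to $t=9=2\cdot 4+1$, so $m=4$. Writing $(x,y,z,w)=(c_1,c_2,c_3,c_4)$, this gives, for $(x,y,z,w)\in SC_{9,n}$, the explicit size formula
$$n=9(x^2+y^2+z^2+w^2)+2x+4y+6z+8w.$$
The heart of the proof is then the direct computation of $|\lambda_{\varphi(x,y,z,w)}|$ obtained by substituting $(x',y',z',w')=(-2w-1,2x,-2z-1,2y)$ into the same formula. The quadratic part contributes $9\bigl(4(x^2+y^2+z^2+w^2)+4z+4w+2\bigr)$, while the linear part is $2x'+4y'+6z'+8w'=8x+16y-12z-4w-8$; collecting terms yields
$$|\lambda_{\varphi(x,y,z,w)}|=36(x^2+y^2+z^2+w^2)+8x+16y+24z+32w+10=4n+10.$$
This confirms that $\varphi(x,y,z,w)\in SC_{9,4n+10}$, so $\varphi$ is well-defined.

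Finally, injectivity is read off coordinate by coordinate: an equality $\varphi(x_1,y_1,z_1,w_1)=\varphi(x_2,y_2,z_2,w_2)$ forces $-2w_1-1=-2w_2-1$, $2x_1=2x_2$, $-2z_1-1=-2z_2-1$ and $2y_1=2y_2$, whence $(x_1,y_1,z_1,w_1)=(x_2,y_2,z_2,w_2)$. The only real work is the size computation above, which is entirely routine, so I do not expect a genuine obstacle. Note that surjectivity is \emph{not} asserted here (matching the pattern in which surjectivity is established separately, by a congruence argument modulo $4$, only for the specialized bijective statements); accordingly the proof stops once well-definedness and injectivity are in hand.
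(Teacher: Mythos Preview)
Your proof is correct and follows essentially the same approach as the paper: verify well-definedness by computing $|\lambda_{\varphi(c)}|=4|\lambda_c|+10$ via Corollary~\ref{cor:sizesym}(i), and then note that injectivity is immediate coordinate by coordinate. The computations match those of the paper, just organized slightly differently.
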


\begin{proof}
Let $c=(x,y,z,w)\in SC_{9,n}$. Then Corollary~\ref{cor:sizesym} gives
\begin{eqnarray*}
\left|\lambda_{\varphi(c)}\right|&=&
9(2w+1)^2+9(2x)^2+9(2z+1)^2+9(2y)^2-2(2w+1)\\
&&+4(2x)-6(2z+1)+8(2y)\\
&=&9(4w^2+4w+4x^2+4z^2+4z++4y^2+2)-4w+8x-12z+16y-8\\
&=&4(9x^2+9y^2+9z^2+9w^2+2x+4y+6z+8w)+10\\
&=&4|\lambda_{c}|+10.
\end{eqnarray*}
Hence, $\varphi$ is well-defined. On the other hand, it is immediate that
$\varphi$ is injective.
\end{proof}

\begin{corollary}
\label{cor:9core}
The restriction to $SC_{9,2n}$ of the map $\varphi$ given in
Theorem~\ref{thm:9core} induces a bijection $\varphi:
SC_{9,2n}\longrightarrow SC_{9,8n+10}$.
\end{corollary}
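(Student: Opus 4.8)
The plan is to exploit the fact that the restriction of an injective map is injective, so that Theorem~\ref{thm:9core} already supplies injectivity of $\varphi\colon SC_{9,2n}\to SC_{9,8n+10}$, and only surjectivity needs argument. First I would record the shape of the image: under the identification $(x,y,z,w)\mapsto(-w,-z,-y,-x,0,x,y,z,w)$ we have $\varphi(x,y,z,w)=(-2w-1,2x,-2z-1,2y)$, so the image of $\varphi$ consists exactly of those $(x',y',z',w')$ with $x',z'$ odd and $y',w'$ even, and any such vector has the unique integral preimage $x=\tfrac12 y'$, $y=\tfrac12 w'$, $z=-\tfrac12(z'+1)$, $w=-\tfrac12(x'+1)$. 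Since $\varphi$ multiplies the size by $4$ and adds $10$, a preimage of an element of $SC_{9,8n+10}$ automatically lies in $SC_{9,2n}$. Thus surjectivity reduces to the single parity claim that every $(x',y',z',w')\in SC_{9,8n+10}$ has $x',z'$ odd and $y',w'$ even.

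The heart of the proof is this parity claim, which I would settle by a congruence modulo $8$. By Corollary~\ref{cor:sizesym}(i) (with $t=9$, $m=4$), an element $(x',y',z',w')\in SC_{9,8n+10}$ satisfies
\[
9\bigl(x'^2+y'^2+z'^2+w'^2\bigr)+2x'+4y'+6z'+8w'=8n+10.
\]
Using $9\equiv1\pmod 8$, $8n+10\equiv2\pmod 8$, and completing the square in each variable, this becomes
\[
(x'+1)^2+(y'+2)^2+(z'+3)^2+w'^2\equiv 0\pmod 8.
\]
Reducing modulo $4$ shows that the number of odd terms among $x'+1,\ y'+2,\ z'+3,\ w'$ is $\equiv0\pmod 4$, hence equals $0$ or $4$; and if it were $4$, all four squares would be $\equiv1\pmod 8$ and the sum would be $\equiv4\pmod 8$, a contradiction. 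Therefore all four of $x'+1,\ y'+2,\ z'+3,\ w'$ are even, i.e.\ $x'$ and $z'$ are odd while $y'$ and $w'$ are even, which is exactly the required condition.

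With the parity claim established, each $(x',y',z',w')\in SC_{9,8n+10}$ has the integral preimage displayed above; a direct substitution confirms $\varphi$ sends it to $(x',y',z',w')$, and the size relation of Theorem~\ref{thm:9core} places this preimage in $SC_{9,2n}$. This gives surjectivity, and together with injectivity yields the bijection. The only delicate point is the congruence computation: restricting to $SC_{9,2n}$ is precisely what forces the target size to be $\equiv2\pmod 8$, and it is this residue that rules out the competing parity pattern in which $x',z'$ are even and $y',w'$ odd — a pattern that remains compatible with the size equation modulo $4$ and is eliminated only modulo $8$.
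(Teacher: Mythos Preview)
Your proof is correct and takes a genuinely different route from the paper. The paper's own argument is two lines: injectivity comes from Theorem~\ref{thm:9core}, and bijectivity then follows \emph{by cardinality} from the identity $\asc_9(2n)=\asc_9(8n+10)$ established by Baruah and Sarmah using modular forms and theta-function manipulations. You instead prove surjectivity directly, via the congruence
\[
(x'+1)^2+(y'+2)^2+(z'+3)^2+w'^2\equiv 0\pmod 8,
\]
and the standard fact that odd squares are $\equiv 1\pmod 8$ to pin down the parity pattern of any element of $SC_{9,8n+10}$. This is entirely elementary and self-contained, and in fact reverses the logical dependence: your argument yields the Baruah--Sarmah identity as a corollary rather than consuming it. The paper's approach is shorter on the page but imports a nontrivial analytic result; yours is more in keeping with the paper's stated aim of giving explicit combinatorial bijections that \emph{explain} such identities. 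The only thing to watch is that the restriction to sizes of the form $2n$ (hence target size $\equiv 2\pmod 8$) is indeed essential, as you note: for odd $n$ the competing parity pattern survives, which is consistent with the remark following the corollary that $\varphi$ is not surjective on $SC_{9,4n+10}$ in general.
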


\begin{proof}
By Theorem~\ref{thm:9core}, the map is well-defined and injective. It is
bijective by cardinality using~\cite[Theorem 4.1]{BaruahSarmah}.
\end{proof}

\begin{remark}
The map $\varphi$ given in Theorem~\ref{thm:9core} is not surjective in
general. Let $(1)\in\mathcal C_1$. We have $\Fr((2,1))=(1\mid 1)$ and
$c_9((2,1))=(0,0,-1,0)$. Furthermore, $\varphi(c_9((2,1)))=(-1,0,1,0)$ is
the characteristic vector of the $9$-core partition $(8,5,2^3,1^3)$ of
$22=4\times 3+10$. However, $\asc_9(3)=1$ and $\asc_9(22)=2$ since the
partition $(9,4,2^2,1^5)$  is a $9$-core partition of $22$ with
characteristic vector $(0,-1,0,1)$.
\end{remark}

\section{Remarks on the hooklengths}
\label{sec:part5}

In this section, $t$ denotes a positive integer.  Let $\lambda$ be a
partition with $t$-quotient
$\lambda^{(t)}=(\lambda^0,\ldots,\lambda^{t-1})\in\mathcal P^t$ and
characteristic vector $c=(c_0,\ldots,c_{t-1})\in \core_t$. 

\begin{definition}
The hooks of $\lambda^{(t)}$ are the hooks of $\lambda^{j}$ for $0\leq
j\leq t-1$.
\end{definition}

\subsection{Description of the hooklenghts}

\begin{lemma} 
\label{lem:hooktabacus}
Let $b$ be a box of $\lambda$.
\begin{enumerate}[(i)]
\item Assume $b$ is a Durfee box; See
Definition~\ref{def:nombdesboxes}.
Then there are $0\leq i,\,j\leq t-1$, $a\in\widetilde{\Ar}^+_j(\lambda)$
and $l\in\widetilde{\Le}^+_i(\lambda)$ such that
$$\mathfrak h(b)=(a+l+1)t+j-i.$$
\item Assume $b$ is an Arm-Coleg box. Then there are $0\leq i,\,j\leq t-1$,
$a\in\widetilde{\Ar}^+_j(\lambda)$, $0\leq i\leq t-1$ and $l\in\N$
satisfying either $i\leq j$ and $l\leq a$ or $j>i$ and $l<a$, such that
$$\mathfrak h(b)=(a-l)t+j-i.$$
\item Assume $b$ is a Leg-Coarm box. Then there are $0\leq i,\,j\leq t-1$,
$l\in\widetilde{\Le}^+_j(\lambda)$, $0\leq i\leq t-1$ and $a\in\N$
satisfying either $i\geq j$ and $a\leq l$ or $j<i$ and $a<l$, such that
$$\mathfrak h(b)=(l-a)t+j-i.$$
\end{enumerate}
\end{lemma}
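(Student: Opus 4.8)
The plan is to reduce each of the three formulas to the single-runner identities of Lemma~\ref{lem:hooklength} and then rewrite the arms, legs, coarms and colegs that occur through the $t$-adic bijection $\varphi_t$ of \S\ref{subsec:quotient}. Recall from Remark~\ref{rk:abacusinfo}(iii)--(iv) that each box $b$ of $[\lambda]$ is the corner cut out by a black bead and a white bead on the pointed abacus, and that its hooklength is the difference of their $\Z$-labels; equivalently, Lemma~\ref{lem:hooklength} gives $\mathfrak h(b)=a+l+1$, $a-l$ or $l-a$ according to whether $b$ is a Durfee box, an Arm-Coleg box or a Leg-Coarm box in the sense of Definition~\ref{def:nombdesboxes}, where $a$ and $l$ denote the participating arm/coarm and leg/coleg. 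Since reading the pointed $t$-abacus amounts to relabelling the slot $m$ by the pair $(q_t(m),r_t(m))$ and leaves the bead configuration untouched, these identities persist verbatim once one substitutes $m=q_t(m)\,t+r_t(m)$ for each integer involved.

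First I would dispatch (i). By~(\ref{eq:armj}) an arm on runner $j$ is $a\,t+j$ with $a\in\widetilde{\Ar}^+_j(\lambda)$, while by~(\ref{eq:legj}) a leg on runner $i$ is $l\,t+(t-i-1)$ with $l\in\widetilde{\Le}^+_i(\lambda)$; substituting into $\mathfrak h(b)=a_{\mathrm{arm}}+l_{\mathrm{leg}}+1$ and collapsing the residues $j+(t-i-1)+1=t+(j-i)$ yields $\mathfrak h(b)=(a+l+1)\,t+(j-i)$. Cases (ii) and (iii) proceed identically, using~(\ref{eq:colegj}) and~(\ref{eq:coarmj}) to decompose the coleg and the coarm. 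The one point to watch is that a \emph{positive} slot on runner $j$ carries residue $j$, whereas a \emph{negative} slot on runner $j$ carries residue $t-j-1$ (the two bullet points defining $\varphi_t$); consequently, when the black and white beads of the corner lie on different sides of the fence, the residue contributions subtract to $\pm(j-i)$ after the multiples of $t$ cancel, and one must track which of the two beads carries the tilde-indexed runner to fix the sign. In each case what remains is exactly the corresponding identity of Lemma~\ref{lem:hooklength}.

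There remain the inequality hypotheses constraining $(a,l,i,j)$. These simply record that the chosen pair genuinely labels a box of the stated type, i.e. the admissibility conditions of Proposition~\ref{prop:froblabelprop} and Remark~\ref{rk:froblabelparam}: $a_{\mathrm{arm}}>l_{\mathrm{coleg}}$ for an Arm-Coleg box and $l_{\mathrm{leg}}>a_{\mathrm{coarm}}$ for a Leg-Coarm box. The plan is to transport these strict inequalities between ambient integers into inequalities between their quotient--residue pairs by invoking the lexicographic compatibility $x\leq y\iff(q_t(x),r_t(x))\leq(q_t(y),r_t(y))$ recorded just after~(\ref{eq:legjrecover}); membership of the free index in $\N$ rather than in a specific arm/coleg set is harmless because $\widetilde{\Ar}^\varepsilon_j\cup\widetilde{\Le}^{-\varepsilon}_j=\N$ exhausts every slot of the runner.

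The main obstacle is precisely this last translation. Because the residue difference $j-i$ ranges over $[-(t-1),t-1]$, it can absorb or release one multiple of $t$, so a single strict inequality on the integers splits into two regimes according to whether the black bead's runner index is at least, or strictly less than, the white bead's; this is the source of the disjunctive hypotheses displayed in (ii) and (iii). Checking that the disjunction is exactly equivalent to admissibility --- in particular settling the boundary cases $a=l$ and $i=j$, and confirming the direction of the residue term --- is the only genuinely delicate bookkeeping, the remainder being the routine substitution described above.
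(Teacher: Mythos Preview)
Your proposal is correct and follows essentially the same route as the paper: start from the Frobenius label $(a',l')_{\pm\pm}$ of $b$, invoke Lemma~\ref{lem:hooklength} to write $\mathfrak h(b)$ as $a'+l'+1$, $a'-l'$ or $l'-a'$, then substitute the $t$-adic expansions $a'=at+j$ and $l'=lt+(t-i-1)$ (or $l'=lt+i$ in the coleg case) coming from~(\ref{eq:armjrecover})--(\ref{eq:legjrecover}); the admissibility inequality $a'>l'$ or $l'>a'$ then translates, via the lexicographic compatibility recorded after~(\ref{eq:legjrecover}), into the disjunctive constraints on $(a,l,i,j)$. The paper's proof is terser but identical in substance; your extra care about which bead carries residue $j$ versus $t-j-1$ and about the boundary cases $a=l$, $i=j$ is exactly the bookkeeping the paper leaves implicit.
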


\begin{proof}
Let $b$ be a Durfee box. By (i) of Proposition~\ref{prop:froblabelprop},
its Frobenius label is $(a',l')_{++}$ for some $a'\in\Ar^+(\lambda)$ and
$l'\in\Le^+(\lambda)$.  By Equation~(\ref{eq:armjrecover}) and
Equation~(\ref{eq:legjrecover}), there are $0\leq i\leq t-1$ and $0\leq
j\leq t-1$, and $a\in \widetilde{\Ar}^+_j(\lambda)$,
$l\in\widetilde{\Le}^+_i(\lambda)$ such that
$$a'=at+j\quad\text{and}\quad l'=lt+t-i-1.$$
Now, Lemma~\ref{lem:hooklength}(i) gives
$$h(b)=a'+l'+1=(a+l+1)t+j-i.$$
Assume $b$ is an Arm-Coleg box of $\lambda$. By (ii) of
Proposition~\ref{prop:froblabelprop}, its Frobenius label is
$(a',l')_{+-}$ for some admissible pair
$(a',l')\in\Ar^+(\lambda)\times\Le^-(\lambda)$. By
Equation~(\ref{eq:armjrecover}) and Equation~(\ref{eq:legjrecover}), there
are $0\leq j\leq t-1$ and $a\in\widetilde{\Ar}_j^+$ such that $a'=at+j$.
Set $i=r_t(l')$ and $l=q_t(l')$. Since $(a',l')$ is admissible,
$(a,j)>(l,i)$ for the lexicographic order. This implies the conditions on
$l$ and $i$ of the statement and Lemma~\ref{lem:hooklength}(ii) gives
$$\mathfrak h(b)=a'-l'=at+j-lt-i=(a-l)t+j-i,$$
as required.  The proof of (iii) is similar.
\end{proof}

By Theorem~\ref{thm:main}, for  all $0\leq j\leq t-1$, we can construct
the set $L_j$ and $A_j$ from the Frobenius symbol of the quotient
partitions and the characteristic vector. Hence, we obtain the sets
$\widetilde\Ar^+_j(\lambda)$ and $\widetilde\Le^+_j(\lambda)$ by
Equation~(\ref{eq:compatibilite1}).

On the other hand, we notice that 
$$\Le^-(\lambda)=\{0,\ldots,\mathfrak a\}\backslash
\bigcup_{j=0}^{t-1}\Ar^+_j(\lambda)\quad\text{and}\quad
\Ar^-(\lambda)=\{0,\ldots,\mathfrak l\}\backslash
\bigcup_{j=0}^{t-1}\Le^+_j(\lambda),$$
where $\mathfrak a=\max(\cup A_j)$ and $\mathfrak l=\max(\cup L_j)$.  In
particular, we have a way to recover the Frobenius label of $\lambda$.
However, it seems, in general, to be difficult to recover $\Le^-(\lambda)$
and $\Ar^-(\lambda)$ from the characteristic vector and the Frobenius
label of $\lambda^j$.

\begin{example}
\label{ex:virtualpb}
Consider $\lambda=(10)$ with pointed $5$-abacus :
\medskip
\begin{center}
\definecolor{sqsqsq}{rgb}{0.12549019607843137,0.12549019607843137,0.12549019607843137}
\begin{tikzpicture}[line cap=round,line join=round,>=triangle
45,x=0.7cm,y=0.7cm, scale=0.8,every node/.style={scale=0.8}]

\draw [dash pattern=on 2pt off 2pt](-2.5,1.5)-- (7,1.5);
\draw(-3,1.5)node{$\mathfrak f$};

\draw (-2,0.7) node[anchor=north west] {$0$};

\draw (-1.7,4)-- (-1.7,0.7);

\begin{scriptsize}

\draw (-1.7,2) node[draw] {\relax};
\draw (-1.7,3) node[draw] {\relax};

\draw (-1.7,4) circle (2.5pt);

\draw [fill=black] (-1.7,1) circle (2.5pt);
\end{scriptsize}

\draw (-0.3,0.7) node[anchor=north west] {$1$};

\draw (0.,4)-- (0.,0.7);

\begin{scriptsize}
	
\draw (0,2) node[draw] {\relax};
\draw (0,3) node[draw] {\relax};

\draw (0,4) circle (2.5pt);

\draw [fill=black](0,1) circle (2.5pt);

\end{scriptsize}

\draw (1.4,0.7) node[anchor=north west] {$2$};

\draw (1.7,4)-- (1.7,0.7);

\begin{scriptsize}
	
\draw (1.7,2) node[draw] {\relax};
\draw (1.7,3) node[draw] {\relax};
\draw (1.7,4) circle (2.5pt);
\draw [fill=black](1.7,1) circle (2.5pt);

\end{scriptsize}

\draw (3.1,0.7) node[anchor=north west] {$3$};

\draw (3.4,4)-- (3.4,0.7);

\begin{scriptsize}
	
\draw (3.4,2) node[draw] {\relax};

\draw (3.4,3) node[draw] {\relax};

\draw (3.4,4) circle (2.5pt);
\draw [fill=black] (3.4,1) circle (2.5pt);

\end{scriptsize}

\draw (4.8,0.7) node[anchor=north west] {$4$};

\draw (5.1,4)-- (5.1,0.7);

\begin{scriptsize}

\draw (5.1,2) circle (2.5pt);
\draw [fill=black](5.1,3) circle (2.5pt);
\draw (5.1,4) circle (2.5pt);
\draw (5.1,1) circle (2.5pt);

\end{scriptsize}
\end{tikzpicture}
\end{center}
The ``square'' beads are colegs of $\lambda$, but not colegs of
$\lambda^0$, $\lambda^1$, $\lambda^2$ and $\lambda^3$. Note that, without
the information of the position of the highest black bead of the runner of
$\lambda^4$, and considering only the beads of the first runner $\mathcal
R_0$, we cannot detect the two ``square'' boxes of $\mathcal R_0$. 
\end{example}
In~\cite{JOB}, C. Bessenrodt, J.B. Gramain and J. Olsson give a surprising
result connecting the hooklengths of the $t$-core of $\lambda$ with those
of $\lambda$. The following is \cite[Theorem 4.7]{JOB}.
\begin{theorem}
\label{thm:JOB}
Let $t$ be a positive integer. For any partition $\lambda$ with $t$-core
$\lambda_{(t)}$, the set (with multiplicities) of hooklengths of
$\lambda_{(t)}$ is a subset (with multiplicities) of the one of $\lambda$.
\end{theorem}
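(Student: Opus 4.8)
The plan is to reduce the statement to a comparison of \emph{multiplicities} of hooklengths read off the pointed abacus, and then to treat each hooklength runner-by-runner. First I would pass to the $\Z$-labeled abacus $\iota(\mathcal R)$ of $\lambda$. By Remark~\ref{rk:abacusinfo}(iv) (equivalently Lemma~\ref{lem:hooklength}), the boxes of $[\lambda]$ correspond to pairs $(x,y)$ with $x$ a black bead, $y$ a white bead and $y<x$, the associated hooklength being $\mathfrak h=x-y$. Hence for every $h\geq 1$ the multiplicity of $h$ in the hooklength multiset of $\lambda$ is $m_\lambda(h)=\#\{x : x\text{ black},\ x-h\text{ white}\}$, and the theorem is equivalent to the inequality $m_{\lambda_{(t)}}(h)\leq m_\lambda(h)$ for all $h$.

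Second, I would distribute this count over the $t$ runners using $\varphi_t$. Writing $x=q_1t+r$ and $y=q_2t+r'$ with $0\leq r,r'\leq t-1$, a pair with $x-y=h$ forces $q_1-q_2=e:=(h-r+r')/t$; this is an integer exactly when $r-r'\equiv h\pmod t$, and then $e\geq 0$ since $h>0$ and $|r-r'|<t$. Thus $m_\lambda(h)=\sum N_{r,r'}(e)$, the sum running over ordered runner-pairs with $r-r'\equiv h$, where $N_{r,r'}(e)=\#\{q\in\widetilde B_r : q-e\in\widetilde W_{r'}\}$ and $\widetilde B_r$, $\widetilde W_{r'}$ denote the black respectively white positions (in $q$-coordinates) on runners $r$ and $r'$.

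Third, the key point is that $\lambda_{(t)}$ has the \emph{same} characteristic vector $c=(c_0,\ldots,c_{t-1})$ as $\lambda$, and by Remark~\ref{rk:usualcore} its pointed $t$-abacus is obtained by sorting each runner so that all black beads lie below all white beads. A short computation with $c_r$ shows the sorted black set on runner $r$ is $\{q<c_r\}$ and the sorted white set on runner $r'$ is $\{q\geq c_{r'}\}$, so that $N^{(t)}_{r,r'}(e)=\max(0,\,c_r-c_{r'}-e)$. It therefore suffices to prove the term-wise inequality $N_{r,r'}(e)\geq\max(0,\,c_r-c_{r'}-e)$; summing over runner-pairs then yields $m_{\lambda_{(t)}}(h)\leq m_\lambda(h)$. (When $h\equiv 0\pmod t$ the only runner-pairs have $r=r'$ and $e\geq 1$, so $N^{(t)}_{r,r}(e)=0$, recovering that a $t$-core has no hooklength divisible by $t$.)

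The main obstacle is the rigorous treatment of this term-wise inequality, since the sets involved are infinite. I would make sense of the relevant counts relative to the empty-core reference, for which $c_r=\sum_q\bigl([q\in\widetilde B_r]-[q<0]\bigr)$ is a convergent (finite) regularized difference, in agreement with Lemma~\ref{lem:char}. With this convention the identity
$$N_{r,r'}(e)-\#\{q\notin\widetilde B_r : q-e\in\widetilde B_{r'}\}=c_r-c_{r'}-e$$
holds, both terms on the left being honest finite cardinalities because $\widetilde B_r$ and $\widetilde B_{r'}$ are each cofinite below. As $N_{r,r'}(e)\geq 0$ and the subtracted cardinality is $\geq 0$, we obtain $N_{r,r'}(e)\geq\max(0,\,c_r-c_{r'}-e)$, which is exactly the sorted value. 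Once this bookkeeping is in place the argument is immediate, and it has the pleasant feature of explaining the containment runner-pair by runner-pair and hooklength by hooklength.
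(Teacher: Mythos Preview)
Your proof is correct and follows the same overall architecture as the paper's: both read hooks off the abacus as black/white bead pairs, decompose the multiplicity of a given hooklength $h$ according to the ordered pair of runners $(r,r')$ carrying the two beads, and then establish the inequality runner-pair by runner-pair at a fixed ``level'' $e$.

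The difference is in how the term-wise inequality is obtained. The paper fixes a pair of runners, passes to an ad~hoc shifted abacus on which the $t$-core has $r_i$ and $r_j$ black beads stacked at the bottom, and then argues combinatorially: the level-$d$ hooks of the core number $l-d$, and a pigeonhole/``killing'' count shows that at least as many survive in $\lambda$. Your route is more algebraic: from the indicator identity $A(1-B)-(1-A)B=A-B$ you extract
\[
N_{r,r'}(e)\;-\;\#\{q\notin\widetilde B_r:\ q-e\in\widetilde B_{r'}\}\;=\;\sum_q\bigl([q\in\widetilde B_r]-[q-e\in\widetilde B_{r'}]\bigr)\;=\;c_r-c_{r'}-e,
\]
using the regularization $c_r=\sum_q\bigl([q\in\widetilde B_r]-[q<0]\bigr)$, and since the second term on the left is a nonnegative finite cardinality the desired bound $N_{r,r'}(e)\geq\max(0,c_r-c_{r'}-e)=N^{(t)}_{r,r'}(e)$ is immediate. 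This is a cleaner bookkeeping than the paper's killing argument, it makes transparent that the only data that matters is the characteristic vector, and it identifies the exact defect $N_{r,r'}(e)-N^{(t)}_{r,r'}(e)$ as a concrete bead count. Conversely, the paper's argument is more visual and avoids having to justify regularized infinite sums. One small point worth making explicit in your write-up: the finiteness of both counts on the left follows because $\widetilde B_r$ and $\widetilde B_{r'}$ are cofinite below and finite above, so only finitely many $q$ contribute to either term---you mention this, and it is indeed the only place where care is needed.
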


\begin{proof}
The hooklengths of $\lambda$ and $\lambda_{(t)}$ can be read off their
pointed $t$-abacus. Let $0\leq i<j\leq t-1$. We will prove that the hooks
of $\lambda_{(t)}$ arising from the runners labeled by $i$ and $j$ can be
associated, in a one-to-one fashion, with a subset of hooks of $\lambda$
arising from the same runners and with the same hooklengths.  Since the
hooklengths do not depend on the choice of a $t$-abacus, we consider the
following one: On runner $i$ and $j$, we put $r_i$ and $r_j$ black beads,
where $r_i$ and $r_j$ are the numbers of beads that are exchanged in the
procedure of Remark~\ref{rk:usualcore}. We obtain here the runners $R_i$
and $R_j$ of a $t$-abacus of $\lambda_{(t)}$. Exchanging the beads  in the
reverse procedure of Remark~\ref{rk:usualcore}, we recover the runners
$R'_i$ and $R'_j$ of $\lambda$. On each runner, we label the slots by
$\N^*$ from the bottom to the top. Let positive integers $\alpha$ and
$\beta$ be labels of a slot on the runner $i$ and $j$ respectively.  We
then define the level between these two slots by
$\ell(\alpha,\beta)=\alpha-\beta$.  Any hook is labeled by a tuple $(b,w)$
of black and white beads such that either $\ell(b,w)>0$ or $\ell(b,w)=0$
and $b$ is to the left of $w$.  Furthermore, if $b$ is a black bead, we
say that  $b$ gives a level $d$ hook if there is a white bead $w$ such
that $(b,w)$ is a hook and $\ell(b,w)=d$.  If $r_j=r_i$, then
$\lambda_{(t)}$ has no hook coming from the runner $R_i$ and $R_j$.

Assume $r_i < r_j$, and write $k=r_i$ and $l=r_i-r_j$.
\medskip 

\begin{center}
\begin{tikzpicture}[line cap=round,line join=round,>=triangle
45,x=0.7cm,y=0.7cm, scale=0.8,every node/.style={scale=0.8}]

\draw[latex-latex](0.5,-1.1)--(0.5,0);
\draw(1,-0.5)node{$k$};
\draw[latex-latex](0.5,0.9)--(0.5,3);
\draw(1,2)node{$l$};

\draw (-2,-1.3) node[anchor=north west] {$i$};

\draw (-1.7,3)-- (-1.7,-1.3);

\begin{scriptsize}

\draw (-1.7,2) circle (2.5pt);
\draw (-1.7,3) circle (2.5pt);

\draw (-1.7,1) circle (2.5pt);
\draw (-1.7,0)[fill=black] circle (2.5pt);
\draw (-1.7,-1)[fill=black] circle (2.5pt);
\end{scriptsize}

\draw (-0.3,-1.3) node[anchor=north west] {$j$};

\draw (0.,3)-- (0.,-1.3);

\begin{scriptsize}

\draw  (0,2)[fill=black] circle (2.5pt);
\draw (0,3)[fill=black] circle (2.5pt);

\draw (0,1)[fill=black] circle (2.5pt);
\draw  (0,0)[fill=black] circle (2.5pt);
\draw  (0,-1)[fill=black] circle (2.5pt);

\end{scriptsize}
\end{tikzpicture}\\
\end{center}

Let $0\leq d\leq l-1$. Note that there are $N_d=l-d$ hooks of level $d$ on
the runners $R_i$ and $R_j$. Denote by $N'_d$ the number of hooks of level
$d$ on $R'_i$ and $R'_j$. For $b$ a black bead of $R'_j$, we say that a
black bead $x$ on $R'_i$ kills the level $d$ hook given by $b$ if
$\ell(b,x)=d$. Suppose there are $m$ black beads on $R'_j$ whose level $d$
hooks are killed by some black beads of $R'_i$. We remark that the first
$d$ beads of $R'_j$ cannot give a level $d$ hook. Hence, there are at
least $k+l-m-d$ black beads on $R'_j$ that give a level $d$ hook, that is,
$N'_d\geq k+l-m-d$. However, $k\geq m$, and thus
$$N'_d\geq k+l-m-d\geq l-d=N_d.$$ 
We conclude by noting that two hooks have level $d$ if and only if they
have length $d(j-i+1)+j-i$.

Assume $r_i> r_j$, and write again $k=r_j$ and $l=r_i-r_j$. Then for all
$1\leq d\leq l-1$, there are $N_d=l-d$ hooks of level $d$ on $R_i$ and
$R_j$. The same computation as above shows that the number of $d$ level
hooks coming from a black bead of $R_i$ is bigger that $N_j$.  Since in
this case, two hooks have level $d$ if and only if they have length
$d(j-i-1)+j-i$. The result follows.
\end{proof}

\begin{remark}
The property described in Theorem \ref{thm:JOB} is difficult to see on the
Frobenius symbol. Let $t=2$ and $\lambda=(4,2,2,1,1,1)$. Then
$\lambda_{(2)}=(2,1)$. 
\medskip
\begin{center}
\begin{tabular}{lcl}
\definecolor{sqsqsq}{rgb}{0.12549019607843137,0.12549019607843137,0.12549019607843137}
\begin{tikzpicture}[line cap=round,line join=round,>=triangle
45,x=0.7cm,y=0.7cm, scale=0.8,every node/.style={scale=0.8}]

\draw [dash pattern=on 2pt off 2pt](-2.5,1.5)-- (1,1.5);
\draw(-3,1.5)node{$\mathfrak f$};

\draw (-2,0.7) node[anchor=north west] {$0$};

\draw (-1.7,2)-- (-1.7,0.7);

\draw(-1.3,1)node{$1$};
\begin{scriptsize}

\draw (-1.7,2) circle (2.5pt);

\draw (-1.7,1) circle (2.5pt);
\end{scriptsize}

\draw (-0.3,0.7) node[anchor=north west] {$1$};

\draw (0.,2)-- (0.,0.7);

\draw(0.4,2)node{$1$};
\begin{scriptsize}
	
\draw [fill=black] (0,2) circle (2.5pt);
\draw [fill=black](0,1) circle (2.5pt);

\end{scriptsize}
\end{tikzpicture}
&
\quad
&
\begin{tikzpicture}[line cap=round,line join=round,>=triangle
45,x=0.7cm,y=0.7cm, scale=0.8,every node/.style={scale=0.8}]

\draw [dash pattern=on 2pt off 2pt](-2.5,1.5)-- (1,1.5);
\draw(-3,1.5)node{$\mathfrak f$};

\draw (-2,-1.3) node[anchor=north west] {$0$};

\draw (-1.7,3)-- (-1.7,-1.3);

 \draw(-2.1,2)node{$0$};
\draw(-1.3,0)node{$3$};
\draw(-1.3,-1)node{$5$};
\begin{scriptsize}

\draw (-1.7,2)[fill=black] circle (2.5pt);
\draw (-1.7,3) circle (2.5pt);

\draw (-1.7,1) circle (2.5pt);
\draw (-1.7,0)[fill=black] circle (2.5pt);
\draw (-1.7,-1) circle (2.5pt);
\end{scriptsize}

\draw (-0.3,-1.3) node[anchor=north west] {$1$};

\draw (0.,3)-- (0.,-1.3);

\draw(0.4,3)node{$3$};
\draw(0.4,1)node{$0$};
\draw(0.4,0)node{$2$};
\begin{scriptsize}

\draw  (0,2) circle (2.5pt);
\draw (0,3)[fill=black] circle (2.5pt);

\draw (0,1)[fill=black] circle (2.5pt);
\draw  (0,0)[fill=black] circle (2.5pt);
\draw  (0,-1)[fill=black] circle (2.5pt);

\end{scriptsize}
\end{tikzpicture}\\
\vspace{3pt}
\hspace{1.2cm}$\lambda_{(2)}$
&&
\hspace{1.25cm}$\lambda$
\end{tabular}
\end{center}
Notice the arm $a$ of $\lambda_{(2)}$ labeled by $1$ moves to the arm $a'$
of $\lambda$ labeled by $3$. The partition $\lambda_{(2)}$ has a $3$-hook
corresponding to $a$ and the leg $l$ of $\lambda_{(2)}$ labeled by $1$.
However, there are no $3$-hook from $a'$. The arm of $\lambda$ at position
$0$ does not come from a coarm of $\lambda_{(2)}$. Even so, $\lambda$ has
a $3$-hook (corresponding to the coarm $0$ and the leg $5$ of $\lambda$),
as expected. One of the difficulties with interpreting
Theorem~\ref{thm:JOB} using the concrete description of the hooklengths by
the Frobenius label, is that the hooks of a $t$-core fall into the
phenomena of Example~\ref{ex:virtualpb}. This example illustrates the
challenge inherent in finding a canonical bijection between the hooks of
$\lambda$ and $\lambda_{(t)}$.
\end{remark}

\newpage
\subsection{Parametrization of the diagonal hooks of self-conjugate
partitions}
\label{subsec:diagsym}

Let $\lambda$ be a partition with Frobenius symbol
$\Fr(\lambda)=(l_{s-1},\ldots,l_0\mid a_0,\ldots,a_{s-1})$. Write
$\mathcal D(\lambda)$ for the set of diagonal hooklengths of $\lambda$.
By definition of the Frobenius symbol, we have 
$$\mathcal D(\lambda)=\{l_j+a_j+1\mid 0\leq j\leq s-1\}.$$ 
Strictly speaking, these numbers are not completely determined from
$\Le^+(\lambda)$ and $\Ar^+(\lambda)$ in the sense that these two sets
have to be ordered to recover $\mathcal D(\lambda)$.  However, when
$\lambda$ is a self-conjugate partition, we have
$\Ar^+(\lambda)=\Le^+(\lambda)$, and
\begin{equation}
\label{eq:diaghooksym}
\varphi:\Ar^+(\lambda)\longrightarrow \mathcal D(\lambda),\,x\in\Ar^+(\lambda)\longmapsto 2x+1. 
\end{equation}
Now, we will see that can derive the set of diagonal hooklengths of
self-conjugate partitions from their quotient and core.

\begin{corollary}
\label{cor:diagsym}
Let $t$ be a positive integer. Let $\lambda$ be a self-conjugate partition
with characteristic vector $c(\lambda)=(c_0,\ldots,c_{t-1})$ and
$t$-quotient $(\lambda^0,\ldots,\lambda^{t-1})$. Then
$$\mathcal D(\lambda)=\bigcup_{j=0}^{t-1}\{2(xt+j)+1\mid x\in
A_j\},$$
where $A_j$ is described in Theorem~\ref{thm:main}.
\end{corollary}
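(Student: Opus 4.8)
The plan is to reduce everything to the description of $\Ar^+(\lambda)$ furnished by Theorem~\ref{thm:main}. First I would recall the starting observation~(\ref{eq:diaghooksym}): since $\lambda$ is self-conjugate we have $\Ar^+(\lambda)=\Le^+(\lambda)$, so the $j$th diagonal hook has length $a_j+l_j+1=2a_j+1$, and $x\mapsto 2x+1$ is a bijection from $\Ar^+(\lambda)$ onto $\mathcal D(\lambda)$. Consequently $\mathcal D(\lambda)=\{2a+1\mid a\in\Ar^+(\lambda)\}$, and the entire problem collapses to writing the arm set $\Ar^+(\lambda)$ in terms of the characteristic vector $c(\lambda)$ and the $t$-quotient.

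Next I would invoke Theorem~\ref{thm:main}. Because $\psi_t(\lambda)=(\underline c,\underline\lambda)$, the theorem reconstructs $\Fr(\lambda)=(\Le\mid\Ar)$ with $\Ar=\bigcup_{j=0}^{t-1}A_j$ by~(\ref{eq:setarmleg}); hence $\Ar^+(\lambda)=\bigcup_{j=0}^{t-1}A_j$. Moreover, equation~(\ref{eq:compatibilite1}) in the proof of that theorem identifies $A_j$ with $\Ar_j^+(\lambda)$, the set of arms congruent to $j$ modulo $t$. By definition~(\ref{eq:armj}) and the recovery formula~(\ref{eq:armjrecover}), every such arm is uniquely of the form $xt+j$ with $x=q_t(a)\in\widetilde{\Ar}_j^+(\lambda)$.

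I would then simply substitute:
$$\mathcal D(\lambda)=\{2a+1\mid a\in\textstyle\bigcup_j A_j\}=\bigcup_{j=0}^{t-1}\{2a+1\mid a\in A_j\}=\bigcup_{j=0}^{t-1}\{2(xt+j)+1\mid x\in\widetilde{\Ar}_j^+(\lambda)\},$$
which is exactly the asserted formula once one reads the index $x$ in the statement as ranging over the quotients $q_t(a)$ of the elements of $A_j$. Distinctness of the arms guarantees the listed hooklengths are genuinely distinct, so the set equality holds without any multiplicity bookkeeping.

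The argument is essentially a one-line substitution, so there is no deep obstacle; the only point demanding care is the reconciliation of notation I flagged above --- namely whether $A_j$ in the displayed union denotes the full arm set $\Ar_j^+(\lambda)$ (in which case the formula reads $\{2a+1\mid a\in A_j\}$) or the set $\widetilde{\Ar}_j^+(\lambda)$ of its quotients modulo $t$ (in which case $2(xt+j)+1$ is the correct expression). I would make this explicit at the outset so that the rewriting $a=xt+j$ is unambiguous, after which the result is an immediate consequence of~(\ref{eq:diaghooksym}) and Theorem~\ref{thm:main}.
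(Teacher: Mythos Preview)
Your proposal is correct and follows exactly the same route as the paper's proof, which consists of two sentences: invoke Theorem~\ref{thm:main} to identify $\bigcup_j A_j$ with $\Ar^+(\lambda)$, then apply~(\ref{eq:diaghooksym}). Your observation about the notational ambiguity is well taken: the example following the corollary confirms that the intended reading has $x$ ranging over the quotients $\widetilde{\Ar}_j^+(\lambda)$ rather than over the literal elements of $A_j$ as defined in Theorem~\ref{thm:main}, so your clarification is a genuine improvement on the paper's exposition.
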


\begin{proof}
By Theorem~\ref{thm:main}, $\bigcup_j A_j$ is the set of arms of
$\lambda$.  We then conclude by Equation~(\ref{eq:diaghooksym}). 
\end{proof}

\begin{example} We consider $t=5$ and the partition $\lambda$ with 
$$c_5(\lambda)=(0,2,0,-2,0)\quad \text{and}\quad
\lambda^{(5)}=((1^2),(1),(1),(1),(2)).$$
Then $\lambda$ is self-conjugate by Corollary~\ref{cor:descconjugate}; see
also Example~\ref{ex:sympart}. 
Then 
$$\Fr(\lambda_0)=(1\mid 0),\
 \Fr(\lambda_1)=(0\mid 0),\ 
\Fr(\lambda_2)=(0\mid 0),\ 
\Fr(\lambda_3)=(0\mid 0),\ 
\Fr(\lambda_4)=(0\mid 1),$$
and
$$A_0=\{0\},\quad A_{1}=\{0,2\},\quad A_{2}=\{0\},\quad
A_3=\emptyset,\quad A_4=\{1\}.$$
We recover that $\lambda$ has five diagonal hooks and
Corollary~\ref{cor:diagsym} gives
\begin{eqnarray*}
\mathcal D(\lambda)&=&\{2(5\cdot 0+0)+1,\,2(5\cdot 0+1)+1,\,2(5\cdot
2+1)+1,\,2(5\cdot
0+2)+1,\\
&&\quad 2(5\cdot 1+4)+1\}\\
&=&
\{1,\,3,\,23,\,5,\,19\}.
\end{eqnarray*}

\end{example}

\subsection{A well-known correspondence revisited}
\label{subsec:bij}

We now discuss a special case where our approach allows us to compare,
canonically, some hook information of the partition, with that of its
characteristic vector and $t$-quotient.  Recall that there is a well-known
correspondence~\cite[Proposition 3.1]{MorrisOlsson} between the set of the
$kt$-hooks of a partition, and the set of $k$-hooks of its $t$ quotient.
We now make precise this correspondence by constructing an explicit and
canonical bijection between boxes of $\lambda^{(t)}$ and the ones of
$\lambda$ whose hooklength is divisible by $t$.

\begin{proposition}
\label{prop:olssonmap}
Let $\lambda$ be a partition and a positive integer $t$. We denote by
$\lambda^{(t)}=(\lambda^0,\ldots,\lambda^{t-1})$ and
$c(\lambda)=(c_0,\ldots,c_{t-1})$ its $t$-quotient and its characteristic
vector. Let $0\leq j\leq t-1$ and $b$ be a box of $\lambda^j$.
\begin{enumerate}[(i)]
\item Suppose $b$ is an Durfee box, \emph{ie} $\fl(b)=(a,l)_{++}$ for some
$a\in\Ar^{+}(\lambda^j)$ and $l\in\Le^+(\lambda^j)$.
\begin{enumerate}[(a)]
\item Assume $c_j\geq 0$. If $l\leq c_j-1$, then we define $f(b)$ to be
the Arm-Coleg box of $\lambda$ with Frobenius label
\begin{equation}
\label{eq:olss1}
((a+c_j)t+j,(c_j-l-1)t+j)_{+-}.
\end{equation}
If $l\geq c_j$, then we define $f(b)$ to be the Durfee box of $\lambda$
with Frobenius label
\begin{equation}
\label{eq:olss2}
((a+c_j)t+j,(l-c_j)t+t-j-1)_{++}.
\end{equation}
\item Assume $c_j\leq 0$. If $a\leq c_j-1$, then we define $f(b)$ to be
the Leg-Coarm box of $\lambda$ with Frobenius label
\begin{equation}
\label{eq:olss3}
((|c_j|-a-1)t+t-j-1,(|c_j|+l)t+t-j-1)_{-+}.
\end{equation}
If $a\geq c_j$, then we define $f(b)$ to be the Durfee box of $\lambda$
with Frobenius label
\begin{equation}
\label{eq:olss4}
((a-|c_j|)t+j,(b+|c_j|)t+t-j-1)_{++}.
\end{equation}
\end{enumerate}

\item Suppose $b$ is an Leg-Coarm box with Frobenius label
$\fl(b)=(a,l)_{-+}$.
\begin{enumerate}[(a)]
\item Assume $c_j\geq 0$. 
If $a<l\leq c_j-1$, then we define $f(b)$ to be the Arm-Coleg box of
$\lambda$ with Frobenius label
\begin{equation}
\label{eq:olss5}
((c_j-a-1)t+j,(c_j-a-1)t+j)_{+-}.
\end{equation}
If $a\geq c_j-1<l$, then we define $f(b)$ to be the Durfee box of
$\lambda$ with Frobenius label
\begin{equation}
\label{eq:olss6}
((c_j-a-1)t+j,(b-c_j)t+t-j-1)_{++}.
\end{equation}
If $c_j\leq a$, then we define $f(b)$ to be the Leg-Coarm box of $\lambda$
with Frobenius label
\begin{equation}
\label{eq:olss7}
((a-c_j)t+t-j-1,(b-c_j)t+t-j-1)_{-+}.
\end{equation}
\item Assume $c_j\leq 0$. We define $f(b)$ to be the Leg-Coarm box of
$\lambda$ with Frobenius label
\begin{equation}
\label{eq:olss8}
((|c_j|+a)t+t-j-1,(|c_j|+l)t+t-j-1)_{-+}.
\end{equation}
\end{enumerate}
\item Suppose $b$ is an Arm-Coleg box  with Frobenius label
$\fl(b)=(a,l)_{+-}$.
\begin{enumerate}[(a)]
\item Assume $c_j\geq 0$. We define $f(b)$ to be the Arm-Coleg box of
$\lambda$ with Frobenius label
\begin{equation}
\label{eq:olss9}
((a+c_j)t+j,(c_j+l)t+j)_{+-}.
\end{equation}
\item Assume $c_j\leq 0$. If $l<a\leq c_j-1$, then we define $f(b)$ to be
the Leg-Coarm box of $\lambda$ with Frobenius label
\begin{equation}
\label{eq:olss10}
((|c_j|-a-1)t+t-j-1,(|c_j|-l-1)t+t-j-1)_{-+}.
\end{equation}
If $l\leq c_j-1<a$, then we define $f(b)$ to be the Durfee box of
$\lambda$ with Frobenius label
\begin{equation}
\label{eq:olss11}
((a-|c_j|)t+j,(|c_j|-l-1)t+t-j-1)_{++}.
\end{equation}
If $c_j\leq l$, then we define $f(b)$ to be the Arm-Coleg box of $\lambda$
with Frobenius label
\begin{equation}
\label{eq:olss12}
((a-|c_j|)t+j,(l-|c_j|)t+j)_{+-}.
\end{equation}
\end{enumerate}
\end{enumerate}
Then the map $f$ from the set of boxes of $\lambda^0$, \ldots,
$\lambda^{t-1}$ into the set of boxes of $\lambda$ induces a bijection
between the set of $k$-hooks of $\lambda^{(t)}$ and the set of $kt$-hooks
of $\lambda$.
\end{proposition}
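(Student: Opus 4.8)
The plan is to reduce the twelve explicit cases to a single transparent statement on the pointed $t$-abacus, and then to check that the listed formulas merely transcribe that statement into the language of Frobenius labels. Let $\mathcal T=(\mathcal R_0,\ldots,\mathcal R_{t-1})$ be the pointed $t$-abacus of $\lambda$ and let $\mathcal R$ be its underlying pointed abacus. By Remark~\ref{rk:abacusinfo}(iii)--(iv), the boxes of $[\lambda]$ are in bijection with the pairs $(B,W)$ of a black bead $B$ lying above a white bead $W$ on $\mathcal R$, and the hooklength of such a box is one more than the number of beads strictly between $B$ and $W$, i.e.\ the difference of their labels. First I would record the observation, already implicit in Lemma~\ref{lem:hooktabacus} (where every hooklength has the shape $(\cdots)t+(j-i)$ with $0\le i,j\le t-1$), that this difference is divisible by $t$ precisely when $B$ and $W$ lie on the same runner $\mathcal R_j$; in that case the quotient is exactly the intra-runner distance between $B$ and $W$. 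Thus the $kt$-hooks of $\lambda$ are exactly the boxes coming from same-runner black-above-white pairs at distance $k$.

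Next I would transport this to the quotient. By Definition~\ref{def:charquotient} the pointed abacus of $\lambda^j$ is $p^{-c_j}(\mathcal R_j)$, and a $\delta$-push is a global translation of beads that changes neither their colours nor their relative positions. Consequently the black-above-white pairs on $\mathcal R_j$ at distance $k$ are in canonical bijection with those on $p^{-c_j}(\mathcal R_j)$, that is, with the $k$-hooks of $\lambda^j$ by Remark~\ref{rk:abacusinfo}(iii)--(iv) applied to $\lambda^j$. Composing the two bijections yields, for every $k$, a bijection between the $k$-hooks of $\lambda^{(t)}$ (the $k$-hooks of $\lambda^0,\ldots,\lambda^{t-1}$) and the $kt$-hooks of $\lambda$, sending a box of $\lambda^j$ to the box of $\lambda$ determined by the same bead pair read on $\mathcal R_j\subseteq\mathcal T$. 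This abacus map is the canonical correspondence; it is manifestly a bijection and manifestly multiplies hooklengths by $t$, so bijectivity is obtained for free at this conceptual level.

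It then remains to verify that the map $f$ of the statement equals this abacus map under the Frobenius-label dictionary. Here I would translate, case by case, a bead pair on $p^{-c_j}(\mathcal R_j)$ into its Frobenius label in $\lambda^j$ and the same pair on $\mathcal R_j$ into its Frobenius label in $\lambda$, using the slot-labelling convention that the positive (resp.\ negative) slot at height $q$ of runner $j$ carries the label $tq+j$ (resp.\ $tq+t-j-1$), together with the push bookkeeping from the proofs of Proposition~\ref{prop:mainsensdirect} and Theorem~\ref{thm:main}. The hooklength identity $\mathfrak h(f(b))=t\,\mathfrak h(b)$ can then be cross-checked directly from Lemma~\ref{lem:hooklength}: for instance in case (i)(a) with $l\ge c_j$ the Durfee label $\bigl((a+c_j)t+j,\,(l-c_j)t+t-j-1\bigr)_{++}$ has hooklength $(a+c_j)t+j+(l-c_j)t+t-j-1+1=(a+l+1)t=t\,\mathfrak h(b)$, and every other case collapses in the same way. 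Well-definedness, namely that each stated label is a genuine box of $\lambda$ of the declared type, follows from the descriptions of $\Ar^+(\lambda)$ and $\Le^+(\lambda)$ in Theorem~\ref{thm:main}, which identify $A_j$ and $L_j$ with $\Ar^+_j(\lambda)$ and $\Le^+_j(\lambda)$.

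The hard part will be exactly this last bookkeeping. When we undo the $c_j$-push to pass from $\lambda^j$ to the runner $\mathcal R_j$, a bead may cross the fence: an arm of $\lambda^j$ can become a coarm of the runner (or conversely), and likewise for legs and colegs. The sign of $c_j$ together with the inequalities in the statement (such as $l\le c_j-1$ versus $l\ge c_j$, or $a<l\le c_j-1$ versus $c_j\le a$) are precisely the conditions deciding on which side of the fence each of the two beads lands, hence whether $f(b)$ is a Durfee, an Arm-Coleg, or a Leg-Coarm box; they also encode the admissibility of the resulting pair in the sense of Definition~\ref{def:nombdesboxes}. Organising the argument so that these fence-crossing conditions are read off once and for all from the push, rather than re-derived inside each of the twelve subcases, is what makes the verification manageable; the remaining steps are the routine arithmetic illustrated above.
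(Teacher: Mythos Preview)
Your proposal is correct and follows essentially the same route as the paper: both arguments identify the $kt$-hooks of $\lambda$ with same-runner black-above-white bead pairs at distance $k$ on the pointed $t$-abacus (via Lemma~\ref{lem:hooktabacus} and Remark~\ref{rk:abacusinfo}), transport these through the $c_j$-push to the pointed abacus of $\lambda^j$, and then verify the explicit Frobenius-label formulas by tracking which beads cross the fence under the push. Your presentation separates the abstract abacus bijection from the label bookkeeping more cleanly than the paper does, but the mathematical content is the same; the paper likewise only spells out case~(i)(a) and declares the rest similar.
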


\begin{proof}
Let $\widetilde b$ be a box of $\lambda$ with Frobenius label $(\widetilde
a,\widetilde l)_{\pm,\pm}$ and such that $\mathfrak h(\widetilde b)$ is a
$\widetilde k$-hook for some integer $\widetilde k$. Denote by $\mathcal
R$ the pointed abacus of $\lambda$, and $\widetilde u$ and $\widetilde v$
the black and white beads of $\mathcal R$ associated with $\widetilde b$
as in Remark~\ref{rk:abacusinfo}(iii). Write $\mathcal T=(\mathcal
R_0,\ldots,\mathcal R_{t-1})$ for the pointed $t$-abacus of $\lambda$ and
set $u=\varphi_t(\widetilde u)$ and $v=\varphi_t(\widetilde v)$.  Suppose
$u\in\mathcal R_i$ and $v\in\mathcal R_j$ for some $0\leq i\leq t-1$ and
$0\leq j\leq t-1$. By Lemma~\ref{lem:hooktabacus}, we have  $\mathfrak
h(\widetilde b)\equiv 0\mod t$ if and only if $i\equiv j\mod t$, if and
only if $i=j$ if and only if $u$ and $v$ lie on the same runner.  More
precisely, suppose that the $u$ and $v$ are on the runner $j$. On
$\mathcal T$, there are $t-j-1$ slots to the right of $v$ and $j$ slots to
the left of $u$. On the other hand, since $\mathcal R_j$ is the image of
the pointed abacus of $\lambda^j$ by $p^{c_j}$, and that the number of
beads between $u$ and $v$ on this runner is invariant under $p^{c_j}$, it
corresponds to a $k$-hook of $\lambda^j$. This means there are strictly
$k-1$ beads between $u$ and $v$ on the runner $j$ by
Remark~\ref{rk:abacusinfo}(iv). Hence, Remark~\ref{rk:abacusinfo}(iv)
gives
$$\mathfrak h(\widetilde b)=(t-j-1)+j+t(k-1)+1=kt.$$
The map $f$ between the hooks of $\lambda^{(t)}$ and the hooks of
$\lambda$ dividing by $t$ is then constructed following the reverse
procedure. First, consider a box $b$ of $\lambda^j$, that corresponds to a
black bead $u$ and a white bead $v$ of the pointed abacus of $\lambda^j$.
Then, apply $p^{c_j}$ to this abacus and derive $(\widetilde u,\widetilde
v)$ on $\mathcal R$ using the bijection $\varphi_t$, which gives a box
$f(b)$ of $\lambda$. The preceding discussion says that $f$ is
well-defined, and sends bijectively the $k$-hooks of $\lambda^{(t)}$ into
the $kt$-hooks of $\lambda$.

Following the procedure, we can deduce the statement. We only prove (i),
(a). The other cases are similar.  Assume that $c_j\geq 0$ and $l\leq
c_j-1$. Suppose $u$ and $v$ correspond to $b$ as above, and are in
position $a$ and $l$ on the pointed abacus of $\lambda^j$.  After applying
$p^{c_j}$ to the runner, the new position of $u$ is $a+c_j$ and the one of
$v$ is $c_j-l-1$. Hence, the $f(b)$ is a Arm-Coleg box of $\lambda$ with
Frobenius label $((a+c_j)t+j, (c_j-l-1)t+j)_{+-}$.  If $l\geq c_j$, then
after the $c_j$-push, $v$ is still a leg at position $l-c_j$.  Thus,
$f(b)$ is Durfee box with Frobenius label
$(a+c_j)t+j,(l-c_j)t+t-j-1)_{++}$, as required.  
\end{proof}

\begin{remark}
\noindent
\begin{enumerate}[(i)]
\item In the proof of Proposition~\ref{prop:olssonmap}, we show that the
$k$-hook corresponding to a box $b$ of $\lambda^j$ is sent by $f$ to a
$kt$-hook of $\lambda$ corresponding to $f(b)$.  We can also check this
directly using the Frobenius labels of boxes and
Lemma~\ref{lem:hooklength}. For example, if $c_j\geq 0$ and $b$ is a
Durfee box of $\lambda^j$ with Frobenius label $(a,l)_{++}$ corresponding
to a $k$-hook, then $k=a+l+1$ and the hook of $f(b)$ has length 
$$\mathfrak h(f(b))= (a+c_j)t+j-(c_j-l-1)t-j=(a+l+1)t=kt$$
when $l\leq c_j-1$ and
$$\mathfrak h(f(b))= ((a+c_j)t+j)+((l-c_j)t+t-j-1)+1=(a+l+1)t=kt$$
when $l\geq c_j$. The other cases are similar.
\item The map $f$ of Proposition~\ref{prop:olssonmap} gives a
parametrization of the boxes of $\lambda$ with hooklength dividing by $t$
by the boxes of $\lambda^{(t)}$.
\end{enumerate}
\end{remark}

\begin{example}
Consider $\lambda=(7,7,3,2,1^7)$ with $t=3$. Then $c(\lambda)=(1,-1,0)$
and $\lambda^{(3)}=((2,1),(1,1,1),(2))$. We have $\Fr(\lambda)=(10,2,0\mid
0,5,6)$, $\Fr(2,1)=(1\mid 1)$, $\Fr(1,1,1)=(2\mid 0)$ and $\Fr(2)=(0\mid
1)$. Now, we label the boxes of $\lambda^0$, $\lambda^1$ and $\lambda^2$
as follows.
\bigskip

\begin{center} 
\begin{tabular}{lcr}
\frob[]
    {{\small $a$,\small $b$},{\small $c$}}
    {}
    {}
    {}
    {}
    {}
&
\hspace{1.5cm}
\frob[]
    {{\small $d$},{\small $e$},{\small $g$}}
    {}
    {}
    {}
    {}
    {}
&
\hspace{1.5cm}
\frob[]
    {{\small $h$,\small $i$}}
    {}
    {}
    {}
    {}
    {}
\\
\\
\small$(2,1)$
&
\hspace{1.5cm}
\small$(1,1,1)$
&
\small$(2)$\hspace{5pt}
\end{tabular}
\end{center} 
We have
$$\fl(a)=(1,1)_{++},\ \fl(b)=(1,0)_{+-},\ \fl(c)=(0,1)_{-+},\
\fl(d)=(0,2)_{++},$$
and
$$\fl(e)=(0,2)_{-+},\ \fl(g)=(1,2)_{-+},\ \fl(h)=(1,0)_{++},\
\fl(i)=(1,0)_{+-}.$$
By Proposition~\ref{prop:olssonmap}, we deduce that
$$\fl(f(a))=(6,2)_{++},\ \fl(f(b))=(6,3)_{+-},\ \fl(f(c))=(0,2)_{++},\
\fl(d)=(1,10)_{-+},$$
and
$$\fl(f(e))=(4,10)_{-+},\ \fl(f(g))=(7,10)_{-+},\ \fl(f(h))=(5,0)_{++},\
\fl(i)=(5,2)_{+-}.$$
\normalsize
Thus,
\bigskip

\begin{center}
\vspace{-2.5cm}
\begin{tabular}{lcr}
\begin{tikzpicture}[scale=0.4,draw/.append style={black},baseline=\shadedBaseline]
      \draw(0,0)+(-.5,-.5)rectangle++(.5,.5);
      \draw(1,0)+(-.5,-.5)rectangle++(.5,.5);
      \draw(2,0)+(-.5,-.5)rectangle++(.5,.5);
      \draw(3,0)+(-.5,-.5)rectangle++(.5,.5);
      \draw(4,0)+(-.5,-.5)rectangle++(.5,.5);
      \draw(5,0)+(-.5,-.5)rectangle++(.5,.5);
      \draw(6,0)+(-.5,-.5)rectangle++(.5,.5);
      \draw(0,-1)+(-.5,-.5)rectangle++(.5,.5);
      \draw(1,-1)+(-.5,-.5)rectangle++(.5,.5);
      \draw(2,-1)+(-.5,-.5)rectangle++(.5,.5);
      \draw(3,-1)+(-.5,-.5)rectangle++(.5,.5);
      \draw(4,-1)+(-.5,-.5)rectangle++(.5,.5);
      \draw(5,-1)+(-.5,-.5)rectangle++(.5,.5);
      \draw(6,-1)+(-.5,-.5)rectangle++(.5,.5);
      \draw(0,-2)+(-.5,-.5)rectangle++(.5,.5);
      \draw(1,-2)+(-.5,-.5)rectangle++(.5,.5);
      \draw(2,-2)+(-.5,-.5)rectangle++(.5,.5);
      \draw(0,-3)+(-.5,-.5)rectangle++(.5,.5);
      \draw(1,-3)+(-.5,-.5)rectangle++(.5,.5);
      \draw(0,-4)+(-.5,-.5)rectangle++(.5,.5);
      \draw(0,-5)+(-.5,-.5)rectangle++(.5,.5);
      \draw(0,-6)+(-.5,-.5)rectangle++(.5,.5);
      \draw(0,-7)+(-.5,-.5)rectangle++(.5,.5);
      \draw(0,-8)+(-.5,-.5)rectangle++(.5,.5);
      \draw(0,-9)+(-.5,-.5)rectangle++(.5,.5);
      \draw(0,-10)+(-.5,-.5)rectangle++(.5,.5);

      \draw(1,0)node{\footnotesize$a'$};
      \draw(5,0)node{\footnotesize$b'$};
      \draw(1,-2)node{\footnotesize$c'$};
      \draw(0,-3)node{\footnotesize$d'$};
      \draw(0,-5)node{\footnotesize$e'$};
      \draw(0,-8)node{\footnotesize$g'$};
      \draw(2,-1)node{\footnotesize$h'$};
      \draw(4,-1)node{\footnotesize$i'$};
\end{tikzpicture}
&
\hspace{0.8cm}
$\small\begin{array}{l}
\relax\\
\relax\\
\relax\\
\relax\\
\relax\\
\relax\\

a'=f(a),\\
b'=f(b),\\
c'=f(c),\\
d'=f(d),\\
e'=f(e),\\
g'=f(f),\\
h'=f(g),\\
i'=f(h),\\
\end{array}$
\normalsize
&
\hspace{0.5cm}
$\small\begin{array}{l}
\relax\\
\relax\\
\relax\\
\relax\\
\relax\\
\relax\\

\mathfrak h(a')=6+2+1=9,\\
\mathfrak h(b')=6-3=3,\\
\mathfrak h(c')=0+2+1=3,\\
\mathfrak h(d')=10-1=9,\\
\mathfrak h(e')=10-4=6,\\
\mathfrak h(g')=10-7=3,\\
\mathfrak h(h')=5+0+1=6,\\
\mathfrak h(i')=5-2=3.\\
\end{array}$
\normalsize
\end{tabular}
\end{center}
\end{example}
\bigskip

\bigskip
{\bf Acknowledgements.} The first author acknowledges the support of the
ANR grant GeRepMod ANR-16-CE40-0010-01. The second author acknowledges the
support of PSC-CUNY TRADA-50-645. The authors would also like to thank the
IMJ-PRG at the University of Paris Diderot and the Department of
Mathematics and Computer Science at York College, City University of New
York for the financial and logistical support which allowed the completion
of this project. The second author would like to thank Matt Fayers for a
conversation which lead to investigations on level $h$ actions. Both authors
sincerely wish to thank Pascal Alessandri for helpful discussions,
especially for the resolution of system~(\ref{eq:systemeancester}).

\bibliographystyle{abbrv}
\bibliography{references_19_1}
\end{document}